\documentclass[12pt]{article}
\usepackage[utf8]{inputenc}
\usepackage{graphicx}
\graphicspath{ {images/} }
\usepackage{caption}
\usepackage{subcaption}
\usepackage{float}
\usepackage[width=160mm,top=25mm,bottom=25mm,bindingoffset=6mm]{geometry}
\usepackage{fancyhdr}
\pagestyle{fancy}
\fancyhead[R]{Section \thesection}
\fancyfoot[R]{\thepage}
\fancyfoot[C]{}

\setlength{\headheight}{15pt}

\usepackage[style=numeric,sorting=nty, defernumbers=false]{biblatex}
\addbibresource{references.bib}

\usepackage{amsmath,amsthm,amsfonts,amssymb,amscd}
\usepackage{bbm}
\usepackage{lastpage}
\usepackage{enumerate}
\usepackage{mathrsfs}
\usepackage{xcolor}
\usepackage{listings}
\usepackage{array}
\usepackage[T1]{fontenc}
\usepackage{tikz-cd}
\usepackage{titlesec}
\usepackage{tikz}
\usepackage{tensor}
\usepackage[toc,page]{appendix}
\usepackage{datetime}
\usepackage{verbatim}
\usepackage{mathtools}
\usepackage{imakeidx}
\usepackage[export]{adjustbox}
\usepackage{setspace}
\makeindex[intoc]
\usepackage{hyperref}
\hypersetup{%
  colorlinks=true,
  linkcolor=blue,
  citecolor=magenta,
  linkbordercolor={0 0 1}
}

\DeclareBibliographyCategory{cited}
\AtEveryCitekey{\addtocategory{cited}{\thefield{entrykey}}}

\def\Xint#1{\mathchoice
{\XXint\displaystyle\textstyle{#1}}%
{\XXint\textstyle\scriptstyle{#1}}%
{\XXint\scriptstyle\scriptscriptstyle{#1}}%
{\XXint\scriptscriptstyle\scriptscriptstyle{#1}}%
\!\int}
\def\XXint#1#2#3{{\setbox0=\hbox{$#1{#2#3}{\int}$ }
\vcenter{\hbox{$#2#3$ }}\kern-.6\wd0}}

\def\dashint{\Xint-}

\DeclareFontFamily{U}{wncy}{}
    \DeclareFontShape{U}{wncy}{m}{n}{<->wncyr10}{}
    \DeclareSymbolFont{mcy}{U}{wncy}{m}{n}
    \DeclareMathSymbol{\Sh}{\mathord}{mcy}{"58}

\titleformat{\section}[hang]{\Large\normalfont\bfseries\raggedright}
{\thesection.}{1ex}{}[]
\titleformat{\subsection}[hang]
{\large\normalfont\bfseries\raggedright}
{\thesubsection.}{1ex}{}[]

\newdateformat{monthyeardate}{%
\monthname[\THEMONTH], \THEYEAR}

\begin{document}

\theoremstyle{plain}
\newtheorem{prop}{Proposition}[section]
\theoremstyle{plain}
\newtheorem{conj}{Conjecture}[section]
\theoremstyle{plain}
\newtheorem*{conj*}{Conjecture}
\theoremstyle{plain}
\newtheorem{thm}[prop]{Theorem}
\theoremstyle{plain}
\newtheorem{cor}[prop]{Corollary}
\theoremstyle{plain}
\newtheorem{lemma}[prop]{Lemma}
\theoremstyle{definition}
\newtheorem{defn/}[prop]{Definition}

\newenvironment{defn}
{\renewcommand{\qedsymbol}{$\diamond$}%
	\pushQED{\qed}\begin{defn/}}
	{\popQED\end{defn/}}

\theoremstyle{remark}
\newtheorem{remark/}[prop]{Remark}

\newenvironment{remark}
{\renewcommand{\qedsymbol}{$\diamond$}%
	\pushQED{\qed}\begin{remark/}}
	{\popQED\end{remark/}}

\theoremstyle{remark}
\newtheorem{example/}[prop]{Example}

\newenvironment{example}
{\renewcommand{\qedsymbol}{$\diamond$}%
	\pushQED{\qed}\begin{example/}}
	{\popQED\end{example/}}

\newenvironment{claim}[1]{\par\noindent\underline{Claim:}\space#1}{}
\newenvironment{claimproof}[1]{\par\noindent\underline{Proof:}\space#1}{\leavevmode\unskip\penalty9999 \hbox{}\nobreak\hfill\quad\hbox{$\centerdot$}}

\newcommand{\N}{\mathbb{N}}
\newcommand{\C}{\mathbb{C}}
\newcommand{\R}{\mathbb{R}}
\newcommand{\Z}{\mathbb{Z}}
\newcommand{\Q}{\mathbb{Q}}
\newcommand{\bbP}{\mathbb{P}}
\newcommand{\csigma}{\mathfrak{S}}
\newcommand{\1}{\mathbbm{1}}
\newcommand{\bb}[1]{\mathbb{#1}}
\newcommand{\mcal}[1]{\mathcal{#1}}
\newcommand{\T}{\mathbb{T}}
\newcommand{\A}{\mathcal{A}}
\renewcommand{\S}{\mathcal{S}}
\renewcommand{\div}{\operatorname{div}}
\newcommand{\supp}{\operatorname{supp}}
\newcommand{\ma}{\text{max}}
\newcommand{\D}{\mcal{D}}
\newcommand{\Des}{\text{Des}}
\newcommand{\I}{\mcal{I}}
\newcommand{\pv}{\text{ p.v.}}
\renewcommand{\Re}{\operatorname{Re}}

\begin{titlepage}
    \begin{center}

        \vspace*{2cm}

        \Huge
        \textbf{An Introduction to Pointwise Sparse Domination}

        \vspace{3cm}

        \includegraphics[width=0.4\textwidth]{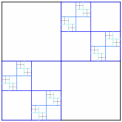}

        \vspace{3cm}

        \Large
        \text{Rodrigo Duarte}

        \vspace{1cm}

        \monthyeardate\today

    \end{center}
\end{titlepage}

\pagenumbering{roman} 
\pagestyle{plain}
\tableofcontents

\clearpage
\section*{Introduction}
\addcontentsline{toc}{section}{Introduction}
\pagenumbering{arabic}

The goal of this expository paper is to give a self-contained introduction to sparse domination. This is a method relying on techniques from 
dyadic Harmonic Analysis which has received a lot of attention in recent years. Essentially, it allows for a unified approach to proving weighted norm inequalities 
for a large variety of operators. In this work, we will introduce the basic ideas of dyadic Harmonic Analysis, which we use 
to build up to the main result we discuss on pointwise sparse domination, which is the Lerner-Ombrosi theorem. We also give applications of this theorem to some 
families of operators, mainly relating to singular integral operators. The text has been structured so as to motivate 
the introduction of new ideas through the lens of solving specific problems in Harmonic Analysis. Before starting the study of dyadic methods, we begin by 
motivating the need for weighted estimates. 

One classical problem in Harmonic Analysis is that of understanding the pointwise convergence behavior of Fourier series. A celebrated theorem of Lennart Carleson
\footnote{See \cite{Carleson}.} states that for a function \(f\in L^2(\T)\) we have 
\[ 
    \sum_{|n| \leq N}\hat{f}(n)e^{2\pi i n x} \xrightarrow[N\rightarrow +\infty]{} f(x),\text{ for a.e. }x\in \T,
\] 
where we use the convention of inserting the factor of \(2\pi\) in the exponential when defining the Fourier transform. That is,
\[ 
    \hat{f}(n) = \int_{\T}e^{-2\pi i nx}f(x)dx 
\] 
and \(\T = \R / \Z\). As usual, results on pointwise almost everywhere convergence of Fourier series rely on estimates for the associated maximal operator. 
Using transference arguments, 
the problem can be stated on the real line instead of the torus.\footnote{See for instance Theorem 4.3.12 in \cite{Cgrafakos}.} The pointwise convergence 
result then follows from the estimate
\[ 
    \|Cf\|_{L^{2, \infty}(\R)}\lesssim \|f\|_{L^2(\R)},\text{ for all }f \in L^2(\R),
\] 
where \(C\) is the Carleson operator,\index{Carleson operator} defined by
\[ 
    Cf(x) = \sup_{R > 0}\left|\int_{-R}^R \hat{f}(\xi)e^{2\pi i \xi x}d\xi \right|. 
\] 
This estimate was later improved by Hunt to a general Lebesgue exponent.\footnote{See \cite{Hunt}.} More precisely, if \(f\in L^p(\R)\), where \(1 < p < \infty\), then 
\[ 
    \|Cf\|_{L^p(\R)}\lesssim \|f\|_{L^p(\R)},\text{ for all }f\in L^p(\R). 
\] 
In turn, this estimate implies some pointwise convergence results. Firstly, if \(f\in L^p(\R)\) with \(1 \leq p \leq 2\), then \(\hat{f}\in L^{p'}\) by the 
Hausdorff-Young inequality, which implies that \(\hat{f}\) is locally integrable. In turn, this means that the partial Fourier integrals 
\[ 
    \int_{|\xi|\leq R}\hat{f}(\xi)e^{2\pi i \xi x}d\xi 
\] 
are well-defined. Using the Carleson-Hunt theorem one obtains
\[ 
    \int_{|\xi|\leq R}\hat{f}(\xi)e^{2\pi i \xi x}d\xi \xrightarrow[R\rightarrow +\infty]{} f(x),\text{ for a.e. }x\in \R,
\] 
for any \(f\in L^p(\R)\), where \(1 < p \leq 2\). There are several natural questions one might ask about variants of this problem. One example is to think about 
what happens near \(p = 1\) and there are several results proving pointwise almost everywhere convergence for spaces which lie between \(L^p(\T), p > 1\), and 
\(L^1(\T)\).\footnote{See for instance Antonov's theorem \cite{antonov}.} However, another natural question would be to ask what happens in higher dimensions. 
The situation in higher dimensions becomes very interesting 
because there are several ways to define the partial Fourier integrals. One way is to consider the integrals over cubes
\[ 
    \int_{[-R, R]^d}\hat{f}(\xi)e^{2\pi i \xi \cdot x}d\xi. 
\] 
In this case, one can reduce the problem to several applications of the one-dimensional result, thereby showing that 
\[ 
    \int_{[-R, R]^d}\hat{f}(\xi)e^{2\pi i \xi \cdot x}d\xi \xrightarrow[R\rightarrow +\infty]{}f(x), \text{ for a.e. }x\in \R^d,
\] 
where \(f\in L^p(\R^d)\) with \(1 < p \leq 2\).\footnote{For a proof see Theorems 4.3.16 and 4.3.12 in \cite{Cgrafakos}.} However, another natural approach would be to consider the spherical partial Fourier integrals
\[ 
    S_Rf(x) := \int_{|\xi|\leq R}\hat{f}(\xi)e^{2\pi i \xi \cdot x}d\xi. 
\] 
This case is much more subtle than when integrating over cubes. Intuitively, the fact that we are projecting the frequencies onto a region with 
a curved boundary allows one to use the infinite tangent directions to construct Kakeya type sets which yield counter-examples to boundedness properties. 
An important example is Fefferman's ball multiplier theorem (see \cite{fefferman}), which states that the operator 
\[ 
    f \mapsto (\chi_{B(0, 1)}\hat{f})^\vee 
\] 
is bounded on \(L^p(\R^d)\) with \(d > 1\) if and only if \(p = 2\). To prove pointwise convergence of the spherical Fourier integrals we are interested 
in obtaining estimates for the spherical Carleson operator
\[ 
    C_Sf(x) = \sup_{R > 0}\left|S_Rf(x)\right|.
\] 
The ball multiplier theorem then shows that \(C_S\) is not bounded on \(L^p(\R^d)\) with \(p \neq 2\). This leaves open the case when \(p = 2\).

\begin{conj*}[Conjecture 9.19 from \cite{lacey}]
    Let \(d > 1\). Then,
    \[ 
        \|C_Sf\|_{L^{2,\infty}(\R^d)}\lesssim \|f\|_{L^2(\R^d)},\text{ for all }f\in L^2(\R^d). 
    \] 
\end{conj*}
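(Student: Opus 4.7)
The plan is to adapt the time-frequency machinery underlying the one-dimensional Carleson-Hunt theorem to the spherical setting. First I would linearize the supremum: for any measurable $R\colon \R^d \to (0, \infty)$, set $T_R f(x) = S_{R(x)} f(x)$, so that it suffices to establish $\|T_R f\|_{L^{2,\infty}(\R^d)} \lesssim \|f\|_{L^2(\R^d)}$ with a constant uniform in the choice of $R$. After a dyadic frequency decomposition isolating annuli around the moving surface $|\xi| = R(x)$, the operator breaks into pieces of the form $\int \hat{f}(\xi)\,\psi(2^j(|\xi|-R(x)))\,e^{2\pi i x \cdot \xi}\,d\xi$, where $\psi$ is a smooth bump and $j \geq 0$ measures the fineness of the spherical shell being isolated. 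I would attack each scale separately before attempting to sum the resulting bounds in $j$.

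Next I would attempt a wave packet decomposition adapted to the sphere. By the uncertainty principle, a symbol localized to a spherical shell of thickness $2^{-j}$ has associated wave packets supported in tubes of length $\sim 2^j$ and cross-section $\sim 2^{j/2}$ — this is the Bochner-Riesz or restriction geometry. Each scale of $T_R$ then becomes a superposition of tube-projections, with the linearizing function $R$ choosing for each $x$ a tube passing through it. A natural strategy is a tree-selection argument in the spirit of Lacey and Thiele: group tubes into trees with a common top on the sphere, control each tree via the trivial fixed-$R$ Plancherel estimate, and sum over trees by exploiting near-orthogonality. In the philosophy of this paper, an alternative route would be to seek a pointwise sparse domination of $C_S$ by an operator built out of local restriction-type quantities, to which the Lerner-Ombrosi theorem could be applied to derive the weak $(2,2)$ endpoint.

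The main obstacle I expect is the Kakeya phenomenon. Unlike the one-dimensional case, whose tiles enjoy clean orthogonality, spherical tubes of aspect ratio $2^{j/2}$ can pack into Kakeya-type configurations that destroy naive square-function bounds; this is precisely the mechanism behind Fefferman's disproof of $L^p$ boundedness of the ball multiplier for $p \neq 2$. At $p = 2$ each individual $S_R$ is contractive by Plancherel, but controlling $\sup_R |S_R f|$ requires extracting cancellation within trees while simultaneously avoiding Kakeya loss across trees, and it is entirely unclear how to orchestrate both constraints. This is why the conjecture remains open: any resolution will likely require genuinely new input from the restriction and decoupling program, well beyond the sparse-domination framework developed in this paper, which is tailored to Calder\'{o}n-Zygmund type operators whose kernels do not exhibit the curved oscillation intrinsic to spherical Fourier multipliers.
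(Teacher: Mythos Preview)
The paper does not prove this statement. It is presented explicitly as an open conjecture (attributed to Lacey), and the text immediately following it says ``This conjecture is likely to be very difficult.'' The paper uses the conjecture only as motivation: it then specializes to radial functions, observes that the spherical Carleson operator on radial data reduces to a one-dimensional operator against a power weight, and cites Prestini's result for that restricted setting. No attempt is made to attack the full conjecture.

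Your proposal is therefore not competing against any proof in the paper. To your credit, you seem to recognize this: your final paragraph correctly identifies the Kakeya obstruction and concludes that ``the conjecture remains open.'' But then what you have written is not a proof proposal at all --- it is a survey of plausible strategies (linearization, wave-packet decomposition, tree selection, sparse domination) together with an explanation of why none of them is known to work. That is a reasonable informal discussion, but it should not be labeled a proof. In particular, the suggestion that the Lerner--Ombrosi framework might apply is off the mark for the reason you yourself give: the sparse-domination machinery in this paper is built for operators whose truncations have controlled oscillation in the sense of $M_{T,\lambda}$, and the spherical multiplier geometry does not fit that mold.
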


This conjecture is likely to be very difficult, but it is interesting to consider some simplifications of the general setup. One idea is to think about 
functions which have symmetries relevant to the problem at hand. Given that we are using spherical Fourier integrals it is natural to consider the case of 
radial functions. Suppose then that \(f\) is a radial function, with radial projection \(f_0\), that is, \(f(x) = f_0(|x|)\). 
Since the Fourier transform of a radial function is radial, and also given that \(S_Rf\) 
is defined as a projection in frequency to a ball, we see that \(S_Rf(x)\) is itself a radial function. In turn, this means that \(C_Sf\) will be radial. 
Let's denote its radial projection by \(Tf_0(\rho)\). We expect \(T\) to be related to the Carleson operator in one dimension, however, the exact relationship is unclear at 
this point. We now have
\[ 
    \begin{split}
        \|C_Sf\|_{L^p(\R^d)} &= \left(\int_{\R^d}|Tf_0(|x|)|^pdx\right)^{1/p} \\ 
                             &\sim \left(\int_0^{+\infty}|Tf_0(\rho)|^p\rho^{d-1}d\rho\right)^{1/p} \\ 
                             &= \|Tf_0\|_{L^p(\R^+, \rho^{d-1}d\rho)}.
    \end{split}
\]
This way, we see that the boundedness of the spherical Carleson operator applied to radial functions follows from weighted norm inequalities for a certain 
one-dimensional operator \(T\), and the weights appear quite naturally in this problem. This line of reasoning was followed by Prestini, who proved in 
\cite{prestini} that \(C_S\) is bounded on \(L^p\) for radial functions when 
\[ 
    \frac{2d}{d+1} < p < \frac{2d}{d-1}. 
\]
This gives one reason to be interested in weighted estimates, but these show up 
in other contexts as well. For example, in quantum mechanics, the square of the absolute 
value of the wave function determines the probability density for the position of a particle, while its Fourier transform is 
associated to the probability density of the momentum.
In this context, it is natural to consider \(L^2\) functions \(f\) such that both the position and momentum operator applied to \(f\) map to \(L^2\). 
But this is the same as asking that \(f \in H^1 \cap L^2(|x|^2)\). Sometimes higher moments are of interest, and in those cases, one might instead consider
more general power law weights \(|x|^\gamma\). It is also useful to consider weights when we compare \(L^p\) spaces with different exponents. 
If we are working in a space with finite measure then we have nested \(L^p\) spaces. However, if the space has infinite measure, this is no longer true. 
Despite this, we can still compare different exponents by introducing weights. For example, if \(p < q\),
\[
\|f\|_{L^p(\R^d)}\leq \|f\|_{L^q(w)}\left(\int_{\R^d}\frac{1}{w(x)^\frac{p}{q-p}}dx\right)^\frac{q-p}{pq}.
\]
To do this of course we cannot consider power law weights for then the right-hand side always blows up. 
In this context, it is often useful to modify power law weights to remove the non-smoothness at the origin. 
Usually one then considers the inhomogeneous weights \(w(x) = \langle x \rangle^\gamma\), 
where \(\langle x \rangle = (1 + |x|^2)^{1/2}\). There are also some other situations that might warrant the study of some specific families of weights besides 
the examples above. For example, Xavier Cabre and Xavier Ros-Oton, studied in \cite{Cabre1} the 
regularity of stable solutions to reaction-diffusion problems in bounded domains with symmetry of double revolution. Motivated by this problem, 
they were led to the consideration of Sobolev inequalities with monomial weights \(w(x) = |x_1|^{\gamma_1}\dots |x_d|^{\gamma_d}\) 
(see \cite{Cabre2}).

Weighted norm inequalities received a considerable amount of attention in the 1970s after Muckenhoupt's work on the Hardy-Littlewood maximal operator (see 
\cite{muckenhoupt}).
These results were mainly one-weight qualitative weighted norm inequalities. By qualitative we mean that the exact dependence of the implicit constant on the 
weight was unknown. More recently, motivated by problems in quasi-conformal theory, several researchers became interested in obtaining quantitative weighted 
estimates. For some time, it was conjectured that a Calderón-Zygmund operator \(T\) should satisfy the estimate
\[
\|Tf\|_{L^2(w)}\lesssim [w]_{A_2}\|f\|_{L^2(w)},
\]
where the implicit constant does not depend on the weight. This became known as the \(A_2\) conjecture and was solved by Hytönen (\cite{Hytonen}) in 2012. 
Shortly after, Andrei Lerner published a simpler proof of the same result which introduced many of the ideas that led to the method of sparse 
domination (see \cite{LernerA2}). These methods have now been found to have a wide range of applicability, and many operators can be controlled by sparse 
operators. For example, there are sparse domination results for pseudodifferential operators, Bochner-Riesz multipliers, singular oscillatory integrals, 
rough singular integrals, variational Carleson operators, maximal spherical functions, the Hilbert transform along curves and many others. For more 
information about recent sparse domination results see \cite{MariaSparseRevolution} and references therein. One of the most general results on pointwise 
sparse domination is the theorem of Lerner and Ombrosi in \cite{LernerOmbrosi}. Our main goal is to present the proof of this theorem in a self-contained 
and motivated way. 

Although the method of sparse domination was originally motivated by quantitative weighted estimates, it is also useful for obtaining two-weight norm 
inequalities. In this work, we focus less on the exact dependence of the implicit constants on the weights, seeing the method of sparse domination as a useful 
approach to provide unified proofs for two-weight norm inequalities for a vast array of operators. After proving the Lerner-Ombrosi theorem in Section 
\ref{sec: general sparse domination}, we 
will also apply the theorem to some concrete operators.

\clearpage
\section{Dyadic Lattices}\label{sec: dyadic lattices}
\pagestyle{fancy}

In this section, we will explore the fundamental ideas of dyadic Harmonic Analysis.
Dyadic methods have had a long history in Harmonic Analysis and many important results rely on some kind of dyadic decomposition. 
In general terms, this refers to the idea of partitioning space into regions that fit neatly into one another capturing scales with geometric ratios, 
usually given by powers of two. One example of this is the idea of using dyadic intervals \(I = [2^{-k}m, 2^{-k}(m+1)[\) to decompose a function as 
\[
f = \sum_{I}\langle f , h_I\rangle h_I,
\]
where \(h_I\) is the Haar function associated to \(I\). The Haar system is a complete orthonormal system in \(L^2(\R)\) with properties which are 
much easier to understand than for non-dyadic decompositions. One may think of dyadic methods as providing a model of the real 
line where we trade the advantages of having a continuum for the advantages of having an easy-to-work-with algebraic structure. However, in the process of 
stripping away the notion of a continuum, the dyadic model of the line does not go so far as a complete discretization of the line, like \(\Z\). The problem 
with using \(\Z\) as a model for the line is the lack of small scales. This way, the dyadic model finds a careful balance between \(\Z\) and \(\R\). On the one 
hand, the dyadic model has an algebraic structure that makes its use simple, but on the other hand, it is still able to capture all the relevant scales that 
are present in the continuum of the line. These are, of course, very imprecise statements, and we will shortly build on these intuitions to show exactly how 
to use dyadic methods to prove many non-obvious statements. 

Dyadic methods are useful in several different ways. One approach is to use dyadic 
structures to decompose the domain space of a function, using this decomposition to identify key elements in the behavior of the function. This is 
for example the approach used in Calderón-Zygmund theory, where dyadic cubes are fundamental in defining the good and the bad part of a function. 
Another approach is to decompose the frequency space in dyadic blocks, which is used for example in Littlewood-Paley theory. Yet another option is 
to employ a dyadic decomposition of phase space, in such a way as to respect the uncertainty principle, which is the case in time-frequency analysis. 
Another common approach is to consider a dyadic version of an operator one is interested in studying to serve as a toy model for this operator's behavior. 
It is usually the case that dyadic variants are better behaved and easier to deal with than their continuous counterparts and therefore they serve as good 
toy models. However, for some operators, they are more useful than that. Perhaps surprisingly, in some cases, it is possible to prove results about 
continuous operators from their dyadic counterparts. An example is the result of Petermichl on the dyadic representability of the Hilbert transform 
(\cite{Petermichl}). Intuitively, the Hilbert transform can be recovered from averages of dyadic shifts of the dyadic Hilbert transform, from which 
one can deduce properties of the Hilbert transform from those of the dyadic Hilbert transform.

In this section, our main goal is to introduce the most important ideas of dyadic Harmonic Analysis, 
and we start by motivating the notion of a dyadic lattice by investigating the \(L^p\) boundedness properties of the 
fractional maximal operator.

\subsection{The basic definitions}\label{sec: the basic definitions}

Given \(f\in L^1_\text{loc}(\R^d)\) we define the fractional maximal\index{fractional maximal operator} operator as
\[
M_\alpha f(x) = \sup_{Q \ni x} |Q|^\frac{\alpha}{d}\dashint_Q|f| = \sup_{Q\ni x}|Q|^\frac{\alpha}{d}\frac{1}{|Q|}\int_Q|f|,
\]
where the supremum runs over all cubes with sides parallel to the coordinate axes. In fact, in this text, all cubes will be assumed to have the form
\[
Q = [x_1, x_1 + l[ \times \dots \times [x_d, x_d + l[\ =: Q(x, l),
\]
where \(x = (x_1, \dots, x_d) = x(Q)\) is called the corner of \(Q\), and \(l = l(Q)>0\) is the sidelength. When \(\alpha = 0\) we get the Hardy-Littlewood maximal operator\index{Hardy-Littlewood maximal operator} which we denote simply by \(Mf\) instead of \(M_0f\).

Our immediate goal is to obtain the \(L^p\) boundedness properties of the fractional maximal operator. This can be done by using Vitali-type covering lemmas, however, we wish to proceed here in a different way so as to motivate the introduction of dyadic lattices. Before proving strong type estimates we begin by investigating the weak type \((p, q)\) inequality
\[
|\{x\in \R^d: M_\alpha f(x) > \lambda \}|\lesssim \lambda^{-q}\|f\|_{L^p}^q.
\]
Suppose that \(f\) is not identically zero, and without loss of generality assume also that \(f\) is non-negative. Now let \(x\in \R^d\) be such that \(M_\alpha f(x) > \lambda\), for some \(\lambda > 0\). This means that we can find some cube \(Q_x \ni x\) such that 
\[
|Q_x|^\frac{\alpha}{d}\dashint_{Q_x}f > \lambda.
\]
If we put \(E_\lambda = \{x\in \R^d: M_\alpha f(x) > \lambda\}\) we see that
\[
E_\lambda = \bigcup_{x\in E_\lambda} Q_x.
\]
In general, the cubes \(Q_x\) will have a lot of overlap and some of these cubes will be redundant, for instance, if they are completely contained in another cube from the family. Suppose that, after removing all the redundant cubes we are left with a countable family \(\{Q_j\}\) of disjoint cubes such that
\[
E_\lambda = \bigcup_{j}Q_j\text{ and }|Q_j|^\frac{\alpha}{d} \dashint_{Q_j}f > \lambda.
\]
Then, 
\[
\begin{split}
  \lambda^q|E_\lambda| & = \sum_j \lambda^q|Q_j|\leq \sum_j |Q_j|\left(|Q_j|^\frac{\alpha}{d}\dashint_{Q_j}f\right)^q \leq \sum_j |Q_j|^{1+\frac{\alpha}{d}q -q+\frac{q}{p'}}\left(\int_{Q_j}f^p\right)^\frac{q}{p}.
\end{split}
\]
This suggests we should take \(p, q, \alpha\) so that
\[
1+\frac{\alpha}{d}q-q+\frac{q}{p'} = 0 \iff \frac{1}{q}+\frac{\alpha}{d} = \frac{1}{p}.
\]
Assuming this condition and also that \(p\leq q\), we get
\[
\lambda^q|E_\lambda| \leq \sum_j \left(\int_{Q_j}f^p\right)^\frac{q}{p}\leq \left(\sum_j \int_{Q_j}f^p\right)^\frac{q}{p} \leq \|f\|_{L^p}^q,
\]
which yields the weak type \((p,q)\) estimate. 

Of course, this argument depends crucially on the fact that the non-redundant cubes are disjoint. However, this is not necessarily the case since the cubes can overlap in many complicated ways. So, for this argument to work, we would have to consider, instead of all cubes, only cubes belonging to some family \(\D\) with the following property: any two cubes from \(\D\) are either disjoint, or one is contained in the other. We will refer to this property as the `disjointness' property.\index{disjointness property} Another way of understanding the disjointness property is by considering the order \(Q \leq P\) given by inclusion \(Q\subseteq P\). Then, the disjointness property states that two cubes are comparable if and only if they intersect. If the supremum is taken over cubes coming from such a family \(\D\), then, after removing the redundant cubes, we are left only with maximal cubes, and therefore all these cubes would be pairwise disjoint allowing us to finish the argument. The question now becomes how to obtain such a family \(\D\). Note that, besides the requirement that this family should satisfy the disjointness property, we also want these cubes to be able to approximate well any given cube. In other words, the cubes from \(\D\) should be able to approximate any scale and any location in space. 

To construct such a family we start with any given cube, \(B\), which we call the base. If we want to approximate every location at the scale of \(B\), a simple way to do this is to just translate \(B\) to form a partition of \(\R^d\). In other words, we add to \(\D\) all the cubes of the form
\[
B + l(B)m,\ m\in \Z^d.
\]
Now let's think about cubes with scales smaller than \(B\). In order to preserve the disjointness property, any cube with a scale smaller than \(B\) that intersects \(B\) must be completely contained in it, and the same has to be true for any other cube of the family inside \(B\). To do this, we first form a partition of \(B\) using the cubes
\[
x(B) + \frac{1}{2}(B-x(B)) + \frac{l(B)}{2}u,\ u\in \{0, 1\}^d.
\]
These cubes are called the dyadic children\index{dyadic children} of \(B\), or the first order descendants\index{first order dyadic descendants} of \(B\), and we denote the collection of these \(2^d\) cubes by \(\Des_1(B)\) (see Figure \ref{fig: dyadic children}).

\begin{figure}[ht] 
\centering
\includegraphics[width=0.3\textwidth]{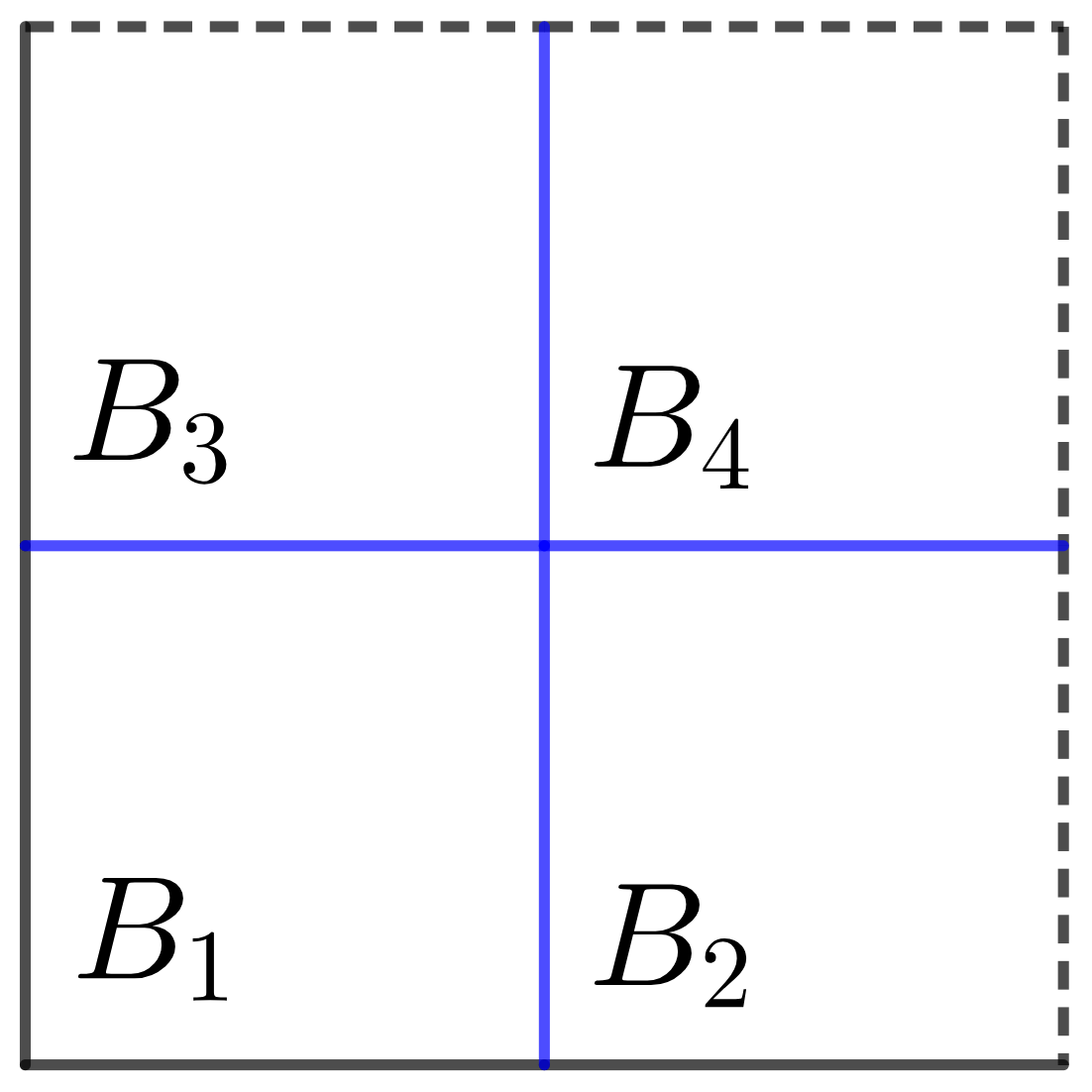}
\caption{The cube \(B\) with its dyadic children, \(B_1, B_2, B_3\) and \(B_4\).}\label{fig: dyadic children}
\end{figure}

We now add these cubes to \(\D\) and note that we maintain the property that either two cubes are comparable or they are disjoint. This way we are able to capture scales of the order of \(l(B)/2\). To approximate even smaller scales we simply keep going by adding the dyadic children of all cubes in \(\Des_1(B)\), thereby obtaining the family of second order descendants of \(B\), \(\Des_2(B)\). Proceeding in this way we get the family of all descendants\index{dyadic descendants} of \(B\),
\[
\Des(B) = \bigcup_{k\geq 0} \Des_k(B),
\]
where the cubes in \(\Des_k(B)\) are the \(k\)-th order descendants of \(B\), which have sidelength \(2^{-k} l(B)\).\footnote{Here we consider \(\Des_0(B) = \{B\}\).} At this point, we can approximate all small scales inside \(B\), but we still need to approximate small scales outside \(B\) as well, so we add to \(\D\) all the cubes in \(\Des(Q)\) for all cubes \(Q\in \D\) congruent to \(B\). This gives a structure that can be naturally understood in terms of generations. All the cubes congruent to \(B\) are said to be in generation zero, i.e.
\[
\D_0 = \{B + l(B)m : m\in \Z^d\}.
\]
Then, the \(k\)-th generation\index{dyadic generation} is 
\[
\D_k = \{Q: Q\in \Des_k(P)\text{ for some }P\in \D_0\},\ k\geq 1.
\]
The only thing we are missing in this construction is the addition of cubes with large scales. This introduces some element of choice into the construction. The idea is that we want to add a cube \(B_1\) such that \(B \in \Des_1(B_1)\). However, there are \(2^d\) possible cubes we could choose for \(B_1\). Indeed, any cube of the form
\[
x(B) + 2(B-x(B)) - l(B)u,\ u\in \{0, 1\}^d
\]
would work (see Figure \ref{fig: choices for parent}). 

\begin{figure}[ht]
    \centering
    \includegraphics[width=0.6\textwidth]{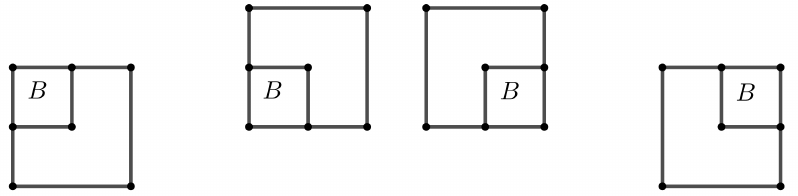}
    \caption{The cube \(B\) together with the possible choices for its parent.}
    \label{fig: choices for parent}
\end{figure}

Suppose we make some choice \(B_1\) associated to some \(u^{(1)}\in \{0, 1\}^d\). Then, \(B_1\) is called the parent\index{dyadic parent} of \(B\), or the first order ancestor\index{dyadic ancestor} of \(B\) and we denote this by \(B_1 = A_1(B)\). We then add \(B_1\) to \(\D\) together with all the translates \(B_1 + l(B_1)m,\ m\in \Z^d\). This forms the generation of order \(-1\), \(\D_{-1}\). We then make some choice for the parent of \(B_1\), say \(B_2\) associated with \(u^{(2)}\in \{0, 1\}^d\), we add it to the family with its translates, forming the generation of order \(-2\), \(\D_{-2}\). Continuing in this way, we obtain a sequence of cubes \(B_1, B_2, \dots, B_n,\dots\), such that \(B_n \in \Des_1(B_{n+1})\), each associated to some \(u^{(n)} \in \{0, 1\}^d\). At each step we get the corresponding generation \(\D_{-n} = \{B_n + l(B_n)m: m\in \Z^d\}\). One easily proves by induction that
\[
B_n = 2^n B - (2^n - 1)x(B) - l(B) \sum_{j=1}^n 2^{j-1}u^{(j)}.
\]
We finally obtain our family of cubes
\[
\D = \bigcup_{k \in \Z} \D_k.
\]
We call a family of cubes obtained this way a \textit{dyadic lattice}\index{dyadic lattice}. In general, a dyadic lattice can be defined from a base cube \(B\) and a choice of ancestors of \(B\), \(u\in (\{0,1\}^d)^{\N_1}\).\footnote{To avoid confusion we use the notation \(\N_k = \{k, k+1, k+2,\dots\}\).} If it is not clear from the context what these choices are we may write \(\D(B, u)\). In summary, we give the following definition:
\begin{defn}\label{definition of dyadic lattice}
Given a cube \(B\) and \(u\in (\{0,1\}^d)^{\N_1}\), \(u = (u^{(1)}, u^{(2)},\dots)\), the dyadic lattice with base \(B\) and ancestor sequence \(u\) is 
\[
\D(B, u) = \bigcup_{\substack{n\geq 1 \\ m\in \Z^d}}\Des(B_n(B,u) + 2^nl(B)m),
\]
where 
\[
B_n(B, u) = 2^n B - (2^n - 1)x(B) - l(B) \sum_{j=1}^n 2^{j-1}u^{(j)}.
\]
\end{defn}

Sometimes we may refer to these families as dyadic grids, instead of dyadic lattices. Any dyadic lattice \(\D(B, u)\) has a number of useful properties:
\begin{itemize}
    \item The cubes in \(\D_k\) are all congruent, have a sidelength of \(2^{-k}l(B)\), and form a partition of \(\R^d\);
    \item Any cube \(Q\in \D\) has exactly \(2^d\) first-order descendants, and exactly one first-order ancestor which we denote by \(A_1(Q)\). Note that the property of having exactly one ancestor is true also for \(A_1(Q)\) which gives a unique second-order ancestor for \(Q\), \(A_2(Q)\). This way we obtain a well-defined sequence of ancestors of \(Q\): \(A_1(Q), A_2(Q), A_3(Q), \dots\);
    \item If \(P, Q \in \D\), then \(P\cap Q \in \{\varnothing, P, Q\}\) (the disjointness property). Alternatively, we could say that for all \(P, Q\in \D\), either they are comparable or they are disjoint.
\end{itemize}
Definition \ref{definition of dyadic lattice} is easy to understand, but it is hard to work with in general. It is simpler to characterize a dyadic lattice by some of its fundamental properties.
\begin{prop}\label{characterization of dyadic lattices}
    A collection of cubes \(\D\) is a dyadic lattice if and only if the following are true:
    \begin{enumerate}
        \item We can split \(\D = \cup_{k\in \Z}\D_k\) such that \(\D_k = \{Q + l(Q)m: m\in \Z^d\}\), for some \(Q\in \D_k\);
        \item For every \(k\), there exists \(Q\in \D_k\) such that \(x(Q) + (1/2)(Q-x(Q)) \in \D_{k+1}\).
    \end{enumerate}
\end{prop}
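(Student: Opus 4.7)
The forward direction is essentially a verification from Definition \ref{definition of dyadic lattice}. Assuming $\D = \D(B, u)$, for each $k \in \Z$ one picks any $n_0 \geq -k$ and checks that the $(n_0+k)$-th descendants of the cubes $\{B_{n_0}(B,u) + 2^{n_0}l(B)m : m \in \Z^d\}$ partition $\R^d$ into congruent cubes of sidelength $2^{-k}l(B)$, and that this partition is independent of $n_0$; this is property (1). Property (2) holds because every $Q \in \D$ has all $2^d$ of its dyadic children in the next generation by the very definition of $\D(B, u)$ via $\Des$, and in particular its corner child $x(Q) + (1/2)(Q-x(Q))$ lies in $\D_{k+1}$ whenever $Q \in \D_k$.

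For the reverse direction, given $\D$ satisfying (1) and (2), the plan is to pick any $B \in \D_0$ and recover the ancestor sequence $u$ from $\D$ itself. A first observation is that (2) combined with (1) forces $l(\D_{k+1}) = l(\D_k)/2$, so $l(\D_k) = 2^{-k}l(B)$ for every $k$. The key intermediate claim is a nesting property: for every $j$, every cube of $\D_j$ is a dyadic child of a unique cube in $\D_{j-1}$. To prove this, combine the corner-child witness from (2) at level $j-1$ with the translational structure of (1), and check that the corners $\{x(Q) + l(\D_j)\,v : Q \in \D_{j-1},\ v \in \{0,1\}^d\}$ are exactly the corners of $\D_j$; iterating yields containment of every $\D_j$-cube in a unique cube of $\D_k$ for $k \leq j$.

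With nesting in hand, let $P_n$ denote the unique cube of $\D_{-n}$ containing $B$, yielding a chain $B = P_0 \subset P_1 \subset P_2 \subset \cdots$ in which each $P_{n-1}$ is a dyadic child of $P_n$. This immediately gives $x(P_n) = x(P_{n-1}) - 2^{n-1}l(B)\,u^{(n)}$ for a uniquely determined $u^{(n)} \in \{0,1\}^d$, thereby defining $u$. An easy induction together with the explicit formula in Definition \ref{definition of dyadic lattice} shows $P_n = B_n(B, u)$, and then (1) gives $\D_{-n} = \{B_n(B, u) + 2^nl(B)m : m \in \Z^d\}$. Both inclusions $\D \subseteq \D(B, u)$ and $\D(B, u) \subseteq \D$ now follow from nesting, since every cube of $\D$ descends from a cube in some $\D_{-n}$ and every descendant of such a cube stays in $\D$. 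I expect the main obstacle to be the nesting step: one must show that the existence of a single corner-child witness per generation propagates, via (1), to the statement that every $\D_{j-1}$-cube has all of its dyadic children in $\D_j$. After that is established, everything else is bookkeeping with the formula for $B_n(B, u)$.
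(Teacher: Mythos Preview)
Your proposal is correct and follows essentially the same approach as the paper. The paper also picks a base $B \in \D_0$, uses property (2) together with the translational structure of (1) to show that $\D_{k+1}$ consists exactly of the dyadic children of the cubes in $\D_k$ (your ``nesting step''), and then reads off $u^{(n)}$ from the unique parent of $B$ in $\D_{-n}$; the only organizational difference is that the paper carries this out generation by generation (first $\D_1, \D_2,\dots$, then $\D_{-1},\D_{-2},\dots$) rather than stating the nesting property as a single lemma up front.
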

\begin{proof}
    It is clear that, if \(\D\) is a dyadic lattice, then it satisfies these properties, so let's focus on the other direction. Suppose that \(\D\) is a collection of cubes satisfying properties 1 and 2. First note that if the representation in property 1, \(\D_k = \{Q + l(Q)m: m\in \Z^d\}\) works with \(Q\), then it must also work with any other cube in \(\D_k\). Now pick any cube \(B\in \D_0\), and call this the base. Then \(\D_0\) is exactly the generation of order zero with respect to \(B\). By the second property there is some cube \(Q\in \D_0\) such that the corner child \(x(Q)+(1/2)(Q-x(Q))\in \D_1\). Therefore, the cubes in \(\D_1\) are exactly of the form
    \[
    x(Q)+\frac{1}{2}(Q-x(Q)) + \frac{1}{2}l(Q)m,\ m\in \Z^d.
    \]
    Note also that we know there is some \(m^1\in \Z^d\) such that \(Q = B + l(B)m^1\). So we can write the cubes in \(\D_1\) as
    \[
    x(B) + \frac{1}{2}(B-x(B)) + \frac{1}{2}l(B)m,\ m\in \Z^d.
    \]
    Therefore, the cubes in \(\D_1\) are exactly the dyadic children of all the cubes in \(\D_0\). Therefore \(\D_1\) corresponds to the first dyadic generation with respect to \(B\). Of course, the same argument can be made for \(\D_2\) and so on, so \(\D_k\) for \(k\geq 0\) is formed exactly by the cubes in the \(k\)-th dyadic generation with respect to \(B\). 

    Now, by property 2, we know there is some \(Q\in \D_{-1}\) so that \(x(Q) + (1/2)(Q-x(Q))\in \D_0\). But we know the form of all cubes in \(\D_0\), which means that
    \[
    x(Q) + \frac{1}{2}(Q-x(Q)) = B + l(B)m,\text{ for some }m\in \Z^d.
    \]
    This equality holds if and only if the sidelengths and the corners match, so this is the same as 
    \[
    x(Q) = x(B) + l(B)m\text{ and }l(Q) = 2l(B).
    \]
    Now we want to see that there is a cube in \(\D_{-1}\) which is actually a parent of \(B\). Since all the cubes in \(\D_{-1}\) can be written as
    \[
    Q + l(Q)n,\ n\in \Z^d,
    \]
    this means that we want to find \(n\in \Z^d\) and \(u\in \{0, 1\}^d\) such that
    \[
    Q + l(Q)n = 2B - x(B) - l(B)u.
    \]
    Note that both cubes have the same sidelength, so this equality holds if and only if the corners match, i.e.
    \[
    x(Q) + 2l(B)n = x(B) - l(B)u.
    \]
    This is the same as
    \[
    x(B) + l(B)m + 2l(B)n = x(B) - l(B)u,
    \]
    and therefore it suffices to take \(n\in \Z^d\) and \(u\in \{0,1\}^d\) so that \(u = -(m + 2n)\). Of course, this is always possible because we can choose \(u\in \{0, 1\}^d\) such that \(m+u \in (2\Z)^d\). This shows the existence of a \(u^{(1)}\in \{0, 1\}^d\) such that
    \[
    B_1 = 2B-x(B)-l(B)u^{(1)}
    \]
    is a parent of \(B\) in \(\D_{-1}\). Thus, the cubes in \(\D_{-1}\) form exactly a generation of order \(-1\) with respect to \(B\) and the choice \(u^{(1)}\). Again, the same argument can be carried through for \(\D_{-2}\), and so on. We then obtain a sequence of choices \(u \in (\{0, 1\}^d)^{\N_1}\) such that \(\D = \D(B, u)\), which proves the result.
\end{proof}
One of the most common dyadic lattices is the standard dyadic lattice\index{standard dyadic lattice} defined by
\[
\D^\text{st} = \D([0,1[^d, u=(\vec{0},\vec{0},\dots))
\]
In other words, it is the dyadic lattice obtained starting from the base \(B = [0,1[^d\) and taking the sequence of ancestors to be \(B_1 = [0, 2[^d, B_2 = [0, 2^2[^d,\dots\), which gives
\[
\D^\text{st} = \{2^{-k}([0, 1[^d + m): m\in \Z^d, k\in \Z\}.
\]
The standard dyadic lattice is well-suited for most purposes. However, 
it does have a small technical problem which we will want to avoid at times. 
The problem is that there are cubes that are not contained in any dyadic cube from the standard dyadic lattice, 
which is the case for example for \(Q = [-1/2,1/2[^d\). This occurs because the standard dyadic lattice has more than one quadrant. 
To properly define this concept we first introduce the idea of a dyadic tower, which will be useful in several contexts. 
Since any given generation of dyadic cubes, \(\D_k\), forms a partition of \(\R^d\), this means that, 
given a point \(x\in \R^d\) there is a unique cube \(Q_k(x) \in \D_k\) that contains \(x\). 
The dyadic tower\index{dyadic tower over a point} over \(x\) is simply the set of all such cubes, \(T_x = \{Q_k(x): k\in \Z\}\). 
We call the set 
\[
H_x = \bigcup_{k\in \Z} Q_k(x)
\]
the quadrant\index{quadrant of a dyadic lattice} that contains \(x\). It is simple to see that if \(H_x \cap H_y \neq \varnothing\), then \(H_x = H_y\). So the set of quadrants forms again a partition of \(\R^d\). The inability to find a cube in the lattice that contains a given cube comes from the existence of multiple quadrants.
\begin{prop}\label{one quadrant lattices}
A dyadic lattice \(\D(B, u)\) has exactly one quadrant if and only if, for all compact sets \(K\) we can find some cube \(Q\in \D\) such that \(K\subseteq Q\).
\end{prop}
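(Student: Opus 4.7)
My plan is to prove the two implications separately, using the structure of the dyadic tower over a point and the explicit form of the ancestor sequence $\{B_n\}_{n \geq 0}$ of the base cube.

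For the direction $(\Leftarrow)$, I would pick two arbitrary points $x, y \in \R^d$ and apply the hypothesis to the compact set $\{x, y\}$ to obtain a cube $Q \in \D_k$ with $\{x, y\} \subseteq Q$. The disjointness property implies that the dyadic tower $T_x$ is totally ordered by inclusion, with $Q_j(x) \supseteq Q_{j'}(x)$ whenever $j \leq j'$, so $Q_j(x) \supseteq Q_k(x) = Q \ni y$ for all $j \leq k$. Hence $y \in H_x$, and $H_x \cap H_y \neq \varnothing$ forces $H_x = H_y$. Since $x, y$ were arbitrary, all quadrants coincide.

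For the converse $(\Rightarrow)$, the idea is to identify the quadrant $H_{x(B)}$ explicitly. By Definition \ref{definition of dyadic lattice}, the unique cube of generation $-n$ containing $x(B)$ is the $n$-th ancestor $B_n$; in positive generations the cube of the tower over $x(B)$ is a corner descendant of $B$, already contained in $B = B_0$. Therefore $H_{x(B)} = \bigcup_{n \geq 0} B_n$, and the one-quadrant hypothesis yields $\bigcup_n B_n = \R^d$. The only remaining technical step, and the mild obstacle I expect, is to upgrade this pointwise coverage to uniform coverage of a compact set. Given compact $K \subseteq [-M, M]^d$, the nesting $B_n \subseteq B_{n+1}$ makes the left corner $x(B_n)_i$ non-increasing and the right corner $x(B_n)_i + 2^n l(B)$ non-decreasing in each coordinate $i$. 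Applying $\bigcup_n B_n = \R^d$ to points whose $i$-th coordinate lies outside $[-M, M]$ shows that eventually every left corner drops below $-M$ and every right corner exceeds $M$; taking the maximum of the resulting finitely many indices produces a single $n$ with $B_n \supseteq [-M, M]^d \supseteq K$.
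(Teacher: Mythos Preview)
Your proof is correct, and its overall architecture matches the paper's: both directions hinge on identifying the quadrant through the corner $x(B)$ with the union $\bigcup_{n\geq 0} B_n$ of the ancestor chain. The tactical choices differ, though. For $(\Leftarrow)$, the paper fixes a single point $x$, applies the hypothesis to the closed balls $\overline{B(x,N)}$, and extracts from the resulting cubes a cofinal sequence in the tower $T_x$ whose union is $\R^d$; your two-point argument is more economical, needing only finite sets and the observation that any common cube forces the two quadrants to intersect. For $(\Rightarrow)$, the paper dispatches the compactness step by covering $K$ with the open sets $\operatorname{Int}(B_n)$ and invoking the finite-subcover definition directly, whereas you unpack the same conclusion by hand via the coordinatewise monotonicity of the corners $x(B_n)$ and $x(B_n)+2^n l(B)$. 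Both routes work; the paper's compactness argument is shorter once one checks that the interiors still cover $\R^d$, while your corner argument is more self-contained and avoids that small verification.
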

\begin{proof}
First, suppose that \(\D\) has one quadrant, and let \(x\in B\). Since \(\D\) has one quadrant we know that \(H_x = \R^d\), but \(x\in B\) so 
\[
\R^d = H_x = \bigcup_{k\in \Z}Q_k(x) = \bigcup_{n\geq 1} B_n.
\]
Now, given a compact set \(K\), the sets \(\{\text{Int}(B_n)\}_n\) form an open cover of \(K\), and therefore it has a finite subcover. But the cubes \(B_n\) form an increasing sequence, \(B_1\subseteq B_2\subseteq \dots\), which implies the existence of some \(n_0\) such that \(K\subseteq B_{n_0}\). 

For the other direction assume that for all \(K\), there exists \(Q\in \D\) such that \(K\subseteq Q\).
Fix \(x\in \R^d\), and for each \(N\in \N_1\) let \(K_N\) be the closed ball with center in \(x\) and radius \(N\). By assumption, there is some \(Q_N\in \D\) so that \(K_N \subseteq Q_N\). Now, all the cubes \(Q_N\) contain \(x\), and therefore they are nested. Without loss of generality, we may assume that \(Q_1\subseteq Q_2\subseteq \dots\). But then \(\{Q_N\}\) is a sequence of ancestors of \(Q_1\) with sidelength increasing without bound. Thus,
\[
H_x = \bigcup_{k\in \Z}Q_k(x) = \bigcup_{N\in \N_1}Q_N = \R^d.
\]
This shows that \(\D\) has exactly one quadrant. 
\end{proof}
It is not hard to construct dyadic lattices that have exactly one quadrant, since this condition is equivalent to having
\[
\bigcup_{n\geq 1}B_n = \R^d.
\]
For example, if we choose \(B = [0,1[^d\) and \(u\) such that \(u^{(1)} = (0,\dots, 0)\), \(u^{(2)} = (1,\dots, 1)\), \(u^{(3)} = (0,\dots,0)\), etc, then the dyadic lattice \(\D(B, u)\) has exactly one quadrant. It is also interesting to note that, if a dyadic lattice \(\D\) has only one quadrant, then it satisfies the following property: Any two cubes \(Q_1, Q_2\in \D\) have a common ancestor, i.e. there is some \(P\in \D\) so that \(Q_1, Q_2\in \Des(P)\). This is a simple consequence of Proposition \ref{one quadrant lattices} by taking \(K = \overline{Q_1\cup Q_2}\).

Now that we have seen the definition of dyadic lattices and their basic properties let us return to the matter at hand. We start by defining the dyadic fractional maximal operator.\index{dyadic fractional maximal operator} Given a dyadic lattice \(\D\) and \(f\in L^1_\text{loc}(\R^d)\) we define
\[
M_\alpha ^\D f(x) = \sup_{Q\in \D}\chi_Q(x) |Q|^\frac{\alpha}{d}\dashint_Q|f|.
\]
Now we can finally run the argument without any issues. In fact, we can do a bit better and let the maximal operator be defined with respect to a different measure. 
Suppose \(w\) is a locally integrable function such that \(0 < w(x) < \infty\) for a.e. \(x\in \R^d\). 
We call such a function a \textit{weight}\index{weight}.\footnote{For 
a brief overview on important properties of weights see Appendix \ref{appendix: weighted spaces}.} 
We then define
\[
M_{\alpha, w}^\D f(x) = \sup_{Q\in \D}\chi_Q(x) w(Q)^{\frac{\alpha}{d}-1}\int_Q|f(y)|w(y) dy.
\]
Now we can run the same argument as before to obtain the following result.
\begin{prop}\label{weak type inequality for the fractional maximal operator}
    Let \(\D\) be a dyadic lattice and let \(w\) be a weight such that \(w(H) = +\infty\) for every quadrant \(H\) of \(\D\). Let \(1\leq p\leq q\leq \infty\), \(\alpha \in [0, d]\) be such that
    \[
    \frac{1}{q}+\frac{\alpha}{d} = \frac{1}{p}.
    \]
    Then,
    \[
    \|M_{\alpha,w}^\D f\|_{L^{q, \infty}(\R^d, w)}\leq \|f\|_{L^p(\R^d, w)},\ \forall f\in L^p(\R^d,w).
    \]
\end{prop}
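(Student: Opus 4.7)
The plan is to carry out rigorously the heuristic argument sketched in the paragraphs immediately preceding the proposition, exploiting the disjointness property to extract a disjoint cover of the level set $E_\lambda = \{M_{\alpha,w}^\D f > \lambda\}$. First I would make the standard reductions: assume $f \geq 0$, assume $\|f\|_{L^p(\R^d,w)} < \infty$ (otherwise there is nothing to prove), and dispose of the degenerate endpoints. The only way to have $q = \infty$ compatible with $1\leq p\leq q\leq \infty$ and $\frac{1}{q}+\frac{\alpha}{d}=\frac{1}{p}$ is either $(p,\alpha)=(1,d)$ or $(p,\alpha)=(\infty,0)$; in both cases the claim is immediate from the definitions. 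So I may assume $q < \infty$, and correspondingly $\alpha < d$ or at least $1/q = 1/p - \alpha/d > 0$.

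The core step is to select, for each $x \in E_\lambda$, a \emph{maximal} cube of $\D$ containing $x$ in the family
\[
\mathcal{F}_\lambda = \Bigl\{ Q \in \D : w(Q)^{\frac{\alpha}{d}-1}\!\int_Q f\,w > \lambda \Bigr\}.
\]
Since cubes of $\D$ containing a fixed $x$ form a totally ordered chain (the dyadic tower $T_x = \{Q_k(x)\}_{k\in\Z}$), maximality is well posed provided this chain meets $\mathcal{F}_\lambda$ in a bounded-above subset. This is where the hypothesis $w(H)=+\infty$ on every quadrant enters crucially: as $k\to -\infty$, the cubes $Q_k(x)$ exhaust the quadrant $H_x$, so $w(Q_k(x)) \to +\infty$. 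Combining with Hölder's inequality applied to $fw^{1/p}\cdot w^{1/p'}$ gives
\[
w(Q)^{\frac{\alpha}{d}-1}\!\int_Q f\,w \;\leq\; w(Q)^{\frac{\alpha}{d}-1+\frac{1}{p'}}\,\|f\|_{L^p(w)} \;=\; w(Q)^{-1/q}\,\|f\|_{L^p(w)},
\]
where the final equality is exactly the scaling relation $\frac{1}{q}+\frac{\alpha}{d}=\frac{1}{p}$. As $w(Q)\to\infty$ along the tower this tends to $0$, so the tower leaves $\mathcal{F}_\lambda$ for $|k|$ large, and a maximal cube $Q_x \in \mathcal{F}_\lambda$ containing $x$ exists.

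Next I would assemble $\{Q_x : x \in E_\lambda\}$ into a disjoint countable family $\{Q_j\}_j$. Two distinct maximal cubes of $\mathcal{F}_\lambda$ must be disjoint: by the disjointness property of the dyadic lattice they are either disjoint or one is contained in the other, and the latter contradicts maximality. Moreover $E_\lambda = \bigcup_j Q_j$, since every point of $E_\lambda$ lies in some element of $\mathcal{F}_\lambda$ and hence in its maximal cube, while conversely any cube of $\mathcal{F}_\lambda$ is contained in $E_\lambda$ pointwise.

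Finally I would close with the calculation already previewed in the paper. Using the defining inequality of $\mathcal{F}_\lambda$ and Hölder once more,
\[
\lambda^q\, w(E_\lambda) \;=\; \sum_j \lambda^q w(Q_j) \;\leq\; \sum_j w(Q_j)\Bigl(w(Q_j)^{\frac{\alpha}{d}-1}\!\int_{Q_j} f\,w\Bigr)^{\!q} \;\leq\; \sum_j w(Q_j)^{1+q(\frac{\alpha}{d}-1)+\frac{q}{p'}} \Bigl(\int_{Q_j} f^p w\Bigr)^{\!q/p}.
\]
The exponent of $w(Q_j)$ collapses to $0$ thanks to $\frac{1}{q}+\frac{\alpha}{d}=\frac{1}{p}$. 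The hypothesis $p \leq q$ gives $q/p \geq 1$, so the elementary bound $\sum_j a_j^{q/p} \leq (\sum_j a_j)^{q/p}$ for nonnegative $a_j$ and disjointness of the $Q_j$ yield
\[
\lambda^q\, w(E_\lambda) \;\leq\; \Bigl(\sum_j \int_{Q_j} f^p w\Bigr)^{\!q/p} \;\leq\; \|f\|_{L^p(\R^d,w)}^q,
\]
which is the desired weak-type bound. The only genuinely delicate point is the existence of maximal cubes in step two; everything else is bookkeeping around the scaling relation.
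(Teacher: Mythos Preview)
Your approach is essentially identical to the paper's: extract maximal cubes from the level-set family using the quadrant hypothesis plus H\"older to force the tower averages to $0$, then run the same chain of inequalities with the scaling relation and $p\leq q$.

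There is one slip in your treatment of the endpoint $q=\infty$. The relation $\frac{1}{q}+\frac{\alpha}{d}=\frac{1}{p}$ with $q=\infty$ gives $\alpha = d/p$, which is compatible with \emph{any} $p\in[1,\infty]$, not only $(p,\alpha)\in\{(1,d),(\infty,0)\}$. The paper covers this full range directly: for general $p$ one applies H\"older with the splitting $fw = (fw^{1/p})\cdot w^{1/p'}$ to obtain
\[
w(Q)^{\frac{\alpha}{d}-1}\int_Q f\,w \;\leq\; w(Q)^{\frac{\alpha}{d}-1}\,\|f\|_{L^p(w)}\,w(Q)^{1/p'} \;=\; \|f\|_{L^p(w)},
\]
since $\frac{\alpha}{d}-1+\frac{1}{p'}=0$. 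This gives $\|M_{\alpha,w}^\D f\|_{L^\infty}\leq \|f\|_{L^p(w)}$ for all such $p$. Once you patch this, your argument matches the paper's proof line by line.
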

\begin{proof}
    Let \(f\in L^p(w)\) and without loss of generality assume that \(f\geq 0\). First, note that if \(q = \infty\) then \(1/p = \alpha/d\), and \(1/p' = 1-\alpha/d\). So, if we simply use Hölder's inequality,
    \[
    \begin{split}
    w(Q)^{\frac{\alpha}{d}-1}\int_Qf(y)w(y)dy &= w(Q)^{\frac{\alpha}{d}-1}\int_Qf(y)w(y)^\frac{1}{p}w(y)^\frac{1}{p'}dy\\
    &\leq w(Q)^{-1/p'} \|fw^{1/p}\|_{L^p}w(Q)^{1/p'}\\
    &\leq \|f\|_{L^p(w)}.
    \end{split}
    \]
    This shows that
    \[
    \|M_{\alpha, w}^\D f\|_{L^\infty}\leq \|f\|_{L^p(w)}.
    \]
    So we may now assume that \(q<\infty\), which also implies that \(\alpha < d\) and \(p<\infty\). Given \(\lambda > 0\) put \(E_\lambda = \{x\in \R^d: M_{\alpha, w}^\D f(x) > \lambda\}\). Our goal is to show that 
    \[
    \lambda^q w(E_\lambda)\leq \|f\|_{L^p(w)}^q.
    \]
    Consider the family
    \[
    \mcal{F} = \{Q\in \D: w(Q)^{\frac{\alpha}{d}-1}\int_Q f w > \lambda\}.
    \]
    It is straightforward to see that
    \[
    E_\lambda = \bigcup_{Q\in \mcal{F}} Q.
    \]
    Now pick \(Q\in \mcal{F}\) and consider its sequence of ancestors \(A_1(Q), A_2(Q)\), etc. Since \(w(\cup_k A_k)=+\infty\) we see that
    \[
    w(A_k(Q))^{\frac{\alpha}{d}-1}\int_{A_k(Q)}f w \leq w(A_k(Q))^{-1/q}\|f\|_{L^p(w)}\xrightarrow[k\rightarrow +\infty]{} 0.
    \]
    This means that there exists a unique \(n\in \N_0\)\footnote{For \(n=0\) we put \(A_0(Q) = Q\).} such that
    \[
    w(A_n(Q))^{\frac{\alpha}{d}-1}\int_{A_n(Q)}f w>\lambda \text{ but }w(A_k(Q))^{\frac{\alpha}{d}-1}\int_{A_k(Q)}f w \leq \lambda, \forall k> n.
    \]
    Denote \(A_n(Q)\) by \(Q^\text{max}\) and consider the family of maximal cubes
    \[
    \mcal{F}^\text{max} = \{Q^*: Q^* = Q^\text{max} \text{ for some }Q\in \mcal{F}\}.
    \]
    These cubes are maximal in the sense that
    \[
    P \in \D \text{ and }Q^*\subsetneq P \implies w(P)^{\frac{\alpha}{d}-1}\int_{P}fw \leq \lambda.
    \]
    Now note that the maximal cubes still cover our set,
    \[
    E_\lambda = \bigcup_{Q\in \mcal{F}^\text{max}}Q,
    \]
    moreover, the cubes in \(\mcal{F}^\text{max}\) are pairwise disjoint because we are dealing with dyadic cubes. This means that we have a countable collection of pairwise disjoint cubes, \(\{Q_j\}_j\), such that 
    \[
    E_\lambda = \cup_j Q_j\text{ and }w(Q_j)^{\frac{\alpha}{d}-1}\int_{Q_j}f w > \lambda. 
    \]
    Therefore we have that
    \[
    \begin{split}
      \lambda^q w(E_\lambda) &= \sum_j \lambda^q w(Q_j) \\
      &\leq \sum_j w(Q_j)^{q\frac{\alpha}{d}-q+1}\left(\int_{Q_j}f w\right)^q \\
      &\leq \sum_j w(Q_j)^{q\frac{\alpha}{d}-q+1+\frac{q}{p'}}\left(\int_{Q_j}f^p w\right)^{q/p}\\
      &= \sum_j\left(\int_{Q_j}f^p w\right)^{q/p}.
    \end{split}
    \]
    At this point, it becomes important to use the assumption that \(p\leq q\). We obtain,
    \[
    \lambda^q w(E_\lambda) \leq \left(\sum_j \int_{Q_j}f^p w\right)^{q/p} \leq \|f\|_{L^p(w)}^q.
    \]
    
\end{proof}
\begin{remark}
    In this proposition, it was important to assume that \(p\leq q\) and \(1/q + \alpha/d = 1/p\). These conditions imply some 
    restrictions on what values \(p, q\) and \(\alpha\) can take. For example, the condition \(\alpha \in [0, d]\) is a consequence of 
    the other assumptions. In fact, 
    \[
    0 \leq \frac{1}{p}-\frac{1}{q} = \frac{\alpha}{d}\leq \frac{1}{q} + \frac{\alpha}{d} = \frac{1}{p} \leq 1,
    \]
    so \(\alpha \in [0, d]\). Moreover, 
    \[
    \frac{1}{p} = \frac{1}{q} + \frac{\alpha}{d} \geq \frac{\alpha}{d}, \text{ and }\frac{1}{q} + \frac{\alpha}{d} = \frac{1}{p}\leq 1,
    \]
    so \(p \leq d/\alpha\) and \(d/(d-\alpha) \leq q\). One way to think about this is that for each choice of \(\alpha, p\) 
    satisfying \(0\leq \alpha \leq d\) and \(1\leq p \leq d/\alpha\) we define
    \[
    q = \frac{pd}{d-\alpha p},
    \]
    which satisfies \(d/(d-\alpha) \leq q \leq \infty\).    
\end{remark}

As a consequence of the construction of cubes in the proof of Proposition \ref{weak type inequality for the fractional maximal operator}, 
we obtain the widely used Calderón-Zygmund decomposition.
\begin{prop}[Calderón-Zygmund decomposition]\label{CZ decomposition}\index{Calderón-Zygmund decomposition}
    Let \(f\in L^1(\R^d)\), \(\lambda > 0\), and let \(\D\) be a dyadic lattice. Then, there is a collection of disjoint dyadic cubes \(\{Q_j\}_j\subseteq \D\) such that:
    \begin{itemize}
        \item \(|f(x)|\leq \lambda\) for a.e. \(x \in \R^d \setminus \cup_j Q_j\);
        \item \(\lambda < \dashint_{Q_j}|f| \leq 2^d\lambda\);
        \item \(|\cup_j Q_j|\leq \lambda^{-1} \|f\|_{L^1}\).
    \end{itemize}
\end{prop}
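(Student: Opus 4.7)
The plan is to recycle the construction of maximal dyadic cubes from the proof of Proposition \ref{weak type inequality for the fractional maximal operator}, specialized to the case \(\alpha = 0\), \(p = q = 1\), and \(w \equiv 1\). Note that every quadrant of \(\D\) has infinite Lebesgue measure, since its nested ancestor cubes have sidelengths diverging to infinity, so the hypothesis of that proposition is satisfied. Concretely, I would consider the family
\[
\mcal{F} = \left\{Q\in\D : \dashint_Q |f| > \lambda\right\}.
\]

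For each \(Q\in\mcal{F}\), the averages \(\dashint_{A_k(Q)}|f|\) over its ancestors tend to zero as \(k\to+\infty\), because \(\|f\|_{L^1}<\infty\) and \(|A_k(Q)|\to\infty\). Hence there is a largest ancestor \(Q^{\max}\) of \(Q\) whose average still exceeds \(\lambda\). Setting \(\mcal{F}^{\max}=\{Q^{\max}:Q\in\mcal{F}\}\) and enumerating its elements as \(\{Q_j\}_j\), the disjointness property of the dyadic lattice combined with maximality forces the cubes \(\{Q_j\}_j\) to be pairwise disjoint.

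Verifying the last two properties is routine. The lower bound \(\lambda<\dashint_{Q_j}|f|\) is just the defining condition of \(\mcal{F}\). For the upper bound, maximality guarantees that the unique parent \(A_1(Q_j)\) satisfies \(\dashint_{A_1(Q_j)}|f|\leq\lambda\), so
\[
\dashint_{Q_j}|f| \leq \frac{|A_1(Q_j)|}{|Q_j|}\dashint_{A_1(Q_j)}|f| = 2^d\dashint_{A_1(Q_j)}|f| \leq 2^d\lambda.
\]
The third bound follows immediately from \(|Q_j|<\lambda^{-1}\int_{Q_j}|f|\) and disjointness.

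The main obstacle, and really the only non-mechanical step, is the first property: \(|f(x)|\leq\lambda\) for almost every \(x\in\R^d\setminus\cup_j Q_j\). If \(x\) lies outside this union, then \(x\) belongs to no cube of \(\mcal{F}\), so every cube \(Q_k(x)\) of the dyadic tower over \(x\) satisfies \(\dashint_{Q_k(x)}|f|\leq\lambda\). To conclude, I would invoke the dyadic Lebesgue differentiation statement \(\dashint_{Q_k(x)}|f|\to|f(x)|\) almost everywhere as \(k\to+\infty\), which follows by the standard density argument from the weak type \((1,1)\) estimate for \(M^\D\) already supplied by Proposition \ref{weak type inequality for the fractional maximal operator} in the case \(\alpha=0\), \(p=q=1\).
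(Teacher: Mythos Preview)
Your proof is correct and follows essentially the same approach as the paper: both recycle the maximal-cube construction from Proposition~\ref{weak type inequality for the fractional maximal operator}, use the parent cube \(A_1(Q_j)\) to obtain the upper bound \(2^d\lambda\), and invoke the (dyadic) Lebesgue differentiation theorem for the first property.
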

\begin{proof}
    From the proof of Proposition \ref{weak type inequality for the fractional maximal operator}, we know that there are disjoint dyadic cubes \(\{Q_j\}_j\) such that 
    \[
    E_\lambda = \{x\in \R^d: M^\D f(x) > \lambda\} = \bigcup_j Q_j \text{ and }\dashint_{Q_j}|f| > \lambda,
    \]
    where \(M^\D\) is the dyadic Hardy-Littlewood operator, \(M_0^\D\). From the weak type estimate we conclude immediately the third property. Moreover, by the Lebesgue differentiation theorem, since for \(x \notin \cup_j Q_j\) we have \(M^\D f(x) \leq \lambda\), as we let the cubes approach \(x\), it follows that \(|f(x)|\leq \lambda\) for a.e. \(x\notin \cup_j Q_j\), which is the first property. It remains to show that \(\dashint_{Q_j}|f| \leq 2^d\lambda\). This is done by recalling that the cubes \(\{Q_j\}\) are maximal, i.e.
    \[
    \dashint_{A_1(Q_j)}|f|\leq \lambda.
    \]
    Therefore,
    \[
    \dashint_{Q_j}|f|\leq 2^d \dashint_{A_1(Q_j)}|f|\leq 2^d \lambda. 
    \]
\end{proof}
Note also that a simple modification of the argument provides us with a local version of this result.
\begin{prop}[Local Calderón-Zygmund decomposition]\label{local CZ decomposition}\index{local Calderón-Zygmund decomposition}
    Let \(Q\) be an arbitrary cube, \(\lambda> 0\), and suppose that \(f\in L^1(Q)\) is such that 
    \[
    \dashint_Q|f| \leq \lambda.
    \]
    Then, there is a disjoint collection of cubes \(\{Q_j\}_j \subseteq \emph{Des}(Q)\) such that:
    \begin{itemize}
        \item \(|f(x)|\leq \lambda\) for a.e. \(x \in Q \setminus \cup_j Q_j\);
        \item \(\lambda < \dashint_{Q_j}|f| \leq 2^d\lambda\);
        \item \(|\cup_j Q_j|\leq \lambda^{-1} \|f\|_{L^1(Q)}\).
    \end{itemize}
\end{prop}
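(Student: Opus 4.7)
The plan is to mimic the proof of the global Calderón-Zygmund decomposition (Proposition 1.3), but restrict the stopping-time argument to the sublattice $\operatorname{Des}(Q)$, using the hypothesis $\dashint_Q |f| \leq \lambda$ to guarantee that the selection procedure terminates at descendants of $Q$ and never needs to escape upward past $Q$.

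First I would introduce the family
\[
\mathcal{F} = \Bigl\{ P \in \operatorname{Des}(Q) : \dashint_P |f| > \lambda \Bigr\}.
\]
Note that $Q \notin \mathcal{F}$ by hypothesis, so every $P \in \mathcal{F}$ is a strict descendant of $Q$. For each such $P$, the chain of ancestors inside $\operatorname{Des}(Q)$ from $P$ up to $Q$ is finite (sidelengths double at each step until they reach $l(Q)$), so there is a largest $A_k(P)$ still belonging to $\mathcal{F}$; call it $P^{\max}$. By construction $P^{\max}$ is a strict descendant of $Q$, and its dyadic parent $A_1(P^{\max})$ lies in $\operatorname{Des}(Q)$ but is not in $\mathcal{F}$, so
\[
\dashint_{A_1(P^{\max})} |f| \leq \lambda.
\]
Let $\{Q_j\}_j$ enumerate the resulting collection $\mathcal{F}^{\max} = \{P^{\max} : P \in \mathcal{F}\}$. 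By the disjointness property of $\mathcal{D}$ combined with maximality, the $Q_j$ are pairwise disjoint.

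Next I would verify the three properties. The lower bound $\dashint_{Q_j}|f| > \lambda$ is immediate from $Q_j \in \mathcal{F}$. For the upper bound, using that $A_1(Q_j) \in \operatorname{Des}(Q) \setminus \mathcal{F}$,
\[
\dashint_{Q_j} |f| \leq \frac{|A_1(Q_j)|}{|Q_j|} \dashint_{A_1(Q_j)} |f| \leq 2^d \lambda.
\]
The size estimate follows by summing the lower bound:
\[
\Bigl| \bigcup_j Q_j \Bigr| = \sum_j |Q_j| < \sum_j \lambda^{-1} \int_{Q_j} |f| \leq \lambda^{-1} \|f\|_{L^1(Q)}.
\]

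The final property, that $|f(x)| \leq \lambda$ for a.e.\ $x \in Q \setminus \cup_j Q_j$, is the main technical point and where the stopping-time definition really pays off. For $x \in Q \setminus \cup_j Q_j$, every $P \in \operatorname{Des}(Q)$ containing $x$ must satisfy $\dashint_P |f| \leq \lambda$, for otherwise $x$ would lie in the maximal such $P$, contradicting $x \notin \cup_j Q_j$. Taking $P$ to be the cubes in the dyadic tower over $x$ restricted to $\operatorname{Des}(Q)$, whose diameters shrink to zero, the Lebesgue differentiation theorem yields $|f(x)| \leq \lambda$ at every Lebesgue point $x$ outside $\cup_j Q_j$, which is a.e. such point. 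I do not expect any serious obstacle beyond being careful that the parent of each $Q_j$ remains inside $\operatorname{Des}(Q)$, which is exactly what the hypothesis $\dashint_Q |f| \leq \lambda$ ensures.
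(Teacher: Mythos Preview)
Your proposal is correct and is precisely the ``simple modification of the argument'' the paper alludes to: the paper does not spell out a separate proof for this proposition but refers back to the global Calder\'on--Zygmund decomposition, and you have filled in exactly that modification, using the hypothesis $\dashint_Q |f| \le \lambda$ in place of the decay of averages at large scales to guarantee that every cube in $\mathcal{F}$ has a maximal ancestor still inside $\operatorname{Des}(Q)$.
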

\begin{remark}
    Note that in the local version the extra assumption that the average of \(|f|\) over \(Q\) is small needs to be added to guarantee the existence of the maximal cubes.
\end{remark}
Proposition \ref{weak type inequality for the fractional maximal operator} then naturally implies the corresponding strong type estimate by the off-diagonal Marcinkiewicz interpolation theorem (see Corollary 1.4.24 from \cite{Cgrafakos}).
\begin{prop}\label{strong type inequality for the fractional maximal operator}
    Let \(\D\) be a dyadic lattice and let \(w\) be a weight such that \(w(H) = +\infty\) for every quadrant \(H\) of \(\D\). Let \(1< p\leq q\leq \infty\), \(\alpha \in [0, d[\) be such that
    \[
    \frac{1}{q}+\frac{\alpha}{d} = \frac{1}{p}.
    \]
    Then,
    \[
    \|M_{\alpha, w}^\D f\|_{L^{q}(\R^d,w)}\lesssim_{\alpha, d, p, q} \|f\|_{L^p(\R^d,w)},\ \forall f\in L^p(\R^d, w).
    \]
\end{prop}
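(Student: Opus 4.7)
The plan is to reduce the strong-type inequality to interpolation between two weak-type inequalities supplied by Proposition \ref{weak type inequality for the fractional maximal operator}, invoking the off-diagonal Marcinkiewicz interpolation theorem exactly as cited from \cite{Cgrafakos}. The operator $M^\D_{\alpha,w}$ is sublinear and the measure $w\,dx$ is $\sigma$-finite, so the hypotheses of that theorem are met once one has two weak-type bounds bracketing the target exponent.

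First I would dispatch the endpoint case $q=\infty$. The relation $1/q+\alpha/d=1/p$ forces $p=d/\alpha$, and the computation with Hölder's inequality carried out at the start of the proof of Proposition \ref{weak type inequality for the fractional maximal operator} already yields
\[
\|M^\D_{\alpha,w}f\|_{L^\infty}\leq \|f\|_{L^p(w)},
\]
so the claim holds trivially in that case. I would then assume $q<\infty$, which by the scaling relation is equivalent to $1<p<d/\alpha$.

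For this range I would pick two auxiliary exponent pairs on the curve $1/q_i+\alpha/d=1/p_i$ straddling $(p,q)$. A convenient choice is the left endpoint $p_0=1$, $q_0=d/(d-\alpha)$, and a right endpoint $p_1\in(p,d/\alpha]$ with $q_1=p_1 d/(d-\alpha p_1)$ (taking $p_1=d/\alpha$, $q_1=\infty$ is permissible). Since the map $t\mapsto 1/t-\alpha/d$ is affine under the change of variables $s=1/t$, one checks directly that $(1/p,1/q)$ lies in the open segment joining $(1/p_0,1/q_0)$ and $(1/p_1,1/q_1)$, so there is $\theta\in(0,1)$ with $1/p=(1-\theta)/p_0+\theta/p_1$ and $1/q=(1-\theta)/q_0+\theta/q_1$. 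Proposition \ref{weak type inequality for the fractional maximal operator} supplies the weak-type $(p_i,q_i)$ estimates at both endpoints with constant $1$, and since $p\leq q$, Corollary 1.4.24 in \cite{Cgrafakos} yields the strong-type $(p,q)$ bound with an implicit constant depending only on $p_0,p_1,q_0,q_1$, hence only on $\alpha,d,p,q$.

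There is no substantive obstacle here; the one thing that needs care is purely bookkeeping, namely checking that the weak Lorentz norms appearing in the off-diagonal Marcinkiewicz statement coincide with the ones we use relative to the measure $w\,dx$. This is immediate since the Marcinkiewicz theorem in \cite{Cgrafakos} is stated in the generality of $\sigma$-finite measure spaces, which applies to $(\R^d, w\,dx)$ directly.
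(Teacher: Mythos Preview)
Your proposal is correct and is exactly the approach the paper takes: the paper simply states that the strong-type estimate follows from Proposition \ref{weak type inequality for the fractional maximal operator} by the off-diagonal Marcinkiewicz interpolation theorem (Corollary 1.4.24 in \cite{Cgrafakos}), and you have filled in the routine details of choosing the interpolation endpoints and verifying the hypotheses.
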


These results will be useful later on, but of course, we have still not shown the estimates work for the actual fractional maximal operator where we take the supremum over all cubes, not just dyadic cubes. In the next section, we investigate how to compare averages over dyadic cubes to averages over an arbitrary cube.

\subsection{The three lattice theorem}\label{sec: the three lattice theorem}

In order to use the results of the previous section to conclude boundedness properties for the fractional maximal operator, we need to be able to bound averages over an arbitrary cube by averages over dyadic cubes. More precisely we pose the following question: is it the case that, given a cube \(Q\), there exists some dyadic cube \(P\in \D\) so that \(Q\subseteq P\) and
\[
w(Q)^{\frac{\alpha}{d}-1}\int_Q fw \leq C w(P)^{\frac{\alpha}{d}-1}\int_{P}fw,
\]
where the constant \(C\) is independent of the  cubes?\footnote{Again we assume here that \(f\geq 0\).} To simplify the problem we begin by tackling the case \(w = 1\). In this case, it would be enough if we could show the existence of some constant \(C\) such that, given any cube \(Q\) we can find \(P\in \D\) with \(Q\subseteq P\) and \(|P|\leq C|Q|\). If this were possible it would follow that
\[
\frac{1}{|Q|^{1-\frac{\alpha}{d}}}\int_Q f \leq \left(\frac{|P|}{|Q|}\right)^{1-\frac{\alpha}{d}}\frac{1}{|P|^{1-\frac{\alpha}{d}}}\int_Pf \leq C^{1-\frac{\alpha}{d}}\frac{1}{|P|^{1-\frac{\alpha}{d}}}\int_P f,
\]
and this would in turn imply that
\[
M_\alpha f(x) \lesssim_{d,\alpha} M_\alpha^\D f(x).
\]
However, this won't be possible in general. First off, if \(\D\) has more than one quadrant, then it follows from Proposition \ref{one quadrant lattices} that there is some compact set \(K\) that is not contained in any cube from the lattice. But then, if \(Q\) contains \(K\), then it too cannot be contained in any cube from the lattice. Of course this particular obstruction can be easily overcome by insisting that \(\D\) should only have one quadrant. In this case, then given any cube \(Q\) we can always find some \(P\in \D\) that contains \(Q\). Moreover, we will even have a minimal cube \(P\) that satisfies that property in the following sense: given a cube \(Q\) there exists \(P\in \D\) so that \(Q\subseteq P\) and
\[
P_1\in \D\text{ and }Q\subseteq P_1 \implies P\subseteq P_1.
\]
Therefore the minimal cube \(P\) minimizes the ratio \(|P|/|Q|\) and is therefore the natural choice. However, even in this case, it is not true that the constant \(C\) exists. For example, consider the cubes \(Q, P\) in Figure \ref{fig: minimalP}.
\begin{figure}[ht]
    \centering
    \includegraphics[width=0.25\textwidth]{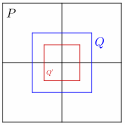}
    \caption{\(P\) is the minimal cube in \(\D\) containing \(Q\) or \(Q'\).}
    \label{fig: minimalP}
\end{figure}
Clearly, \(P\) is the minimal cube in \(\D\) that contains \(Q\). However, the same is true for any other cube \(Q'\) which has the same center as \(Q\) but a smaller sidelength. But then
\[
\frac{|P|}{|Q'|}\xrightarrow[|Q'|\rightarrow 0]{} +\infty
\]
showing that the constant \(C\) does not exist. 

This seems to be problematic for our goal of controlling \(M_\alpha\) pointwise by \(M_\alpha^\D\). Note that strictly speaking we don't actually need this pointwise control if our objective is simply to obtain \(L^p\) bounds for \(M_\alpha\). However, it will be very useful further ahead to have such pointwise control. The idea here is that, although we cannot control \(M_\alpha\) pointwise by the dyadic maximal operator, we can nevertheless control it by using several dyadic operators. The trick is to use three-fold dilations of dyadic cubes in the following way. Fix a dyadic lattice \(\D\) and let \(Q\) be some arbitrary cube. We can find a cube in \(\D\) that approximates \(Q\). Indeed let \(k\in \Z\) be such that 
\[
2^{-k}l(B) \leq l(Q) < 2^{-k + 1}l(B),
\]
and let \(P\) be the cube in \(\D_k\) that contains \(x(Q)\). Now we set 
\[
\tilde{P} = x(P) + 3(P - x(P))
\]
This cube is a three-fold expansion of \(P\) from its corner (see Figure \ref{fig: threefold expansion}).
\begin{figure}[ht]
    \centering
    \includegraphics[width=0.25\textwidth]{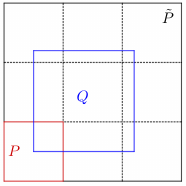}
    \caption{The threefold expansion of \(P\), \(\tilde{P}\), contains \(Q\).}
    \label{fig: threefold expansion}
\end{figure}
Since \(x(Q)\in P\) this means that \(x(P)_j\leq x(Q)_j < x(P)_j + l(P)\), for all \(j\), where \(x(Q)_j\) are the coordinates of \(x(Q)\). If we take a point \(y\in Q\), then
\[
x(P)_j \leq x(Q)_j \leq y_j < x(Q)_j + l(Q) < x(P)_j + l(P) + 2\times 2^{-k}l(B) = x(P)_j + 3 l(P),
\]
and so we see that \(Q\subseteq \tilde{P}\). Moreover, 
\[
\frac{|\tilde{P}|}{|Q|} = \frac{3^d |P|}{|Q|} = 3^d \left(\frac{2^{-k}l(B)}{l(Q)}\right)^d \leq 3^d.
\]
This way we see that \(\tilde{P}\) has the desired properties. The problem of course is that \(\tilde{P}\) is not in \(\D\). However, if the set of cubes
\[
\tilde{\D} = \{\tilde{P}: P\in \D\}
\]
were a dyadic lattice this would solve the problem. The issue is that this will not be a dyadic lattice because these cubes don't satisfy the disjointness property. Although \(\tilde{\D}\) is not a dyadic lattice it is in fact a union of \(3^d\) dyadic lattices.
\begin{thm}[The three lattice theorem; Theorem 3.1 from \cite{intuitive_dyadic_calculus}]\label{three lattice theorem}\index{the three lattice theorem}
Let \(\D\) be a dyadic lattice. Then there are \(3^d\) dyadic lattices \(\D^1, \dots, \D^{3^d}\) such that 
\[
\tilde{\D} = \bigcup_{j=1}^{3^d} \D^j.
\]
Moreover, for any \(Q\in \D\) and any \(j\in \{1,\dots, 3^d\}\) there is a unique cube \(P\in \D^j\) such that \(Q\subseteq P\) and \(l(P) = 3 l(Q)\).
\end{thm}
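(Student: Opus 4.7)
The plan is to construct the $3^d$ sublattices explicitly using Proposition \ref{characterization of dyadic lattices}. The starting observation is that within any fixed generation $\D_k$, every cube has the form $Q_{k,m} = y_k + s_k m + [0, s_k)^d$, where $s_k = 2^{-k}l(B)$ and $y_k$ is a reference corner determined by $\D$. Its threefold expansion is $\tilde{Q}_{k,m} = y_k + s_k m + [0, 3s_k)^d$, a cube of sidelength $3 s_k$. Two such expansions satisfy the disjointness property at level $k$ if and only if $m_1 \equiv m_2 \pmod 3$ componentwise; hence for each residue class $a \in \{0,1,2\}^d$, the family $\{\tilde{Q}_{k, 3n + a}: n \in \Z^d\}$ partitions $\R^d$ into congruent cubes of sidelength $3s_k$.

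For each $j \in \{0,1,2\}^d$ I would define $\D^j = \bigcup_{k \in \Z} \D^j_k$, where $\D^j_k$ is the partition corresponding to some residue class $a_k(j) \in \{0,1,2\}^d$. These classes cannot be chosen independently across levels: in order for $\D^j_{k+1}$ to refine $\D^j_k$, the corner-children of the reference cube at level $k$ must land at reference positions at level $k+1$. A direct calculation with $y_k$, $y_{k+1}$ and the dyadic child formula yields an explicit recursion of the form $a_{k+1}(j) \equiv -a_k(j) + c_k \pmod 3$, where $c_k = 0$ for $k \geq 0$ and $c_k$ involves a coordinate of the ancestor choice sequence $u$ for $k < 0$. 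Initializing $a_0(j) = j$ and running the recursion in both directions gives a well-defined sequence, and since each step is a bijection of $\{0,1,2\}^d$, the map $j \mapsto a_k(j)$ is itself a bijection at every level. Consequently each expansion $\tilde{Q}_{k,m}$ lies in exactly one $\D^j$, giving $\tilde{\D} = \bigcup_{j} \D^j$.

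It remains to verify that each $\D^j$ is a dyadic lattice by checking the two conditions of Proposition \ref{characterization of dyadic lattices}. Property 1 is immediate, since the cubes of $\D^j_k$ are exactly the $3 s_k \Z^d$-translates of a single cube. Property 2 is built into the construction, as the recursion on $a_k(j)$ was chosen precisely to ensure that the corner-child of the reference cube in $\D^j_k$ lies in $\D^j_{k+1}$. For the \emph{moreover} statement, given $Q = Q_{k,m}$ and $j$, any $P \in \D^j$ with $l(P) = 3 l(Q)$ must lie in $\D^j_k$; among such cubes, exactly one index $3n + a_k(j)$ satisfies $m - (3n + a_k(j)) \in \{0,1,2\}^d$ componentwise, producing the desired (and unique) $P$ containing $Q$. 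I expect the main technical obstacle to be the careful bookkeeping of the reference corners $y_k$ across $k = 0$, where the recursion for $a_k(j)$ picks up its inhomogeneous term $c_k$ from $u$; beyond that, everything reduces to modular arithmetic on $\Z^d$.
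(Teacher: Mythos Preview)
Your approach is correct and differs from the paper's in organization. The paper introduces a single equivalence relation on the set of all corners $C = \{x(Q): Q \in \D\}$, declaring $x' \sim x''$ whenever $2^l(x' - x'') \in (3l(B)\Z)^d$ for some $l \in \Z$; it then shows by a pigeonhole argument in $\Z^d/(3\Z)^d$ that there are exactly $3^d$ classes, sets $\D^j = \{\tilde Q : x(Q) \in C_j\}$, and verifies the two axioms of Proposition~\ref{characterization of dyadic lattices} for each class. Your route is level-by-level: you split each generation $\tilde\D_k$ by the residue of $m$ modulo~$3$ and stitch consecutive levels together with the recursion $a_{k+1}(j) \equiv -a_k(j) + c_k$. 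The paper's device is cleaner in that one global relation handles all scales at once, so the ancestor sequence $u$ never enters the \emph{definition} of the $\D^j$ (it only reappears when checking the axioms). Your construction is more explicit and in effect produces from the outset the ``one-third trick'' shift formula $\D^j_k = \{P + (-1)^k \tfrac{\varepsilon^j}{3}l(P): P \in \D^1_k\}$ that the paper derives only after completing its proof; the sign alternation $a_{k+1} \equiv -a_k$ is precisely the $(-1)^k$ there. The bookkeeping you anticipate is indeed mild: one computes $c_k = 0$ for $k \ge 0$ and $c_k \equiv -u^{(-k)} \pmod 3$ for $k \le -1$.
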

\begin{proof}
    Let \(B\) be the base of \(\D\). We put \(\tilde{B}\) as the base \(B^1\) of \(\D^1\). Then, the cubes in \(\D\) that give rise to \(\D^1_0\) by tripling their size are 
    \[
    B + 3l(B)m,\ m\in \Z^d.
    \]
    In other words, the cubes in \(\D\) that give rise to \(\D^1_0\) are precisely the cubes in \(\D_0\) whose corners \(x\) differ from \(x(B)\) by an element of \((3l(B)\Z)^d\). Now, similarly, the cubes in \(\D\) that give rise to \(\D^1_1\) are
    \[
    x(B) + \frac{1}{2}(B-x(B)) + \frac{3}{2}l(B)m,\ m\in \Z^d.
    \]
    Again, the cubes here have corners that differ from each other by an element of \((3l(B)\Z/2)^d\) (see Figure \ref{fig: generators of first generations}). 
    \begin{figure}[ht]
        \centering
        \includegraphics[width=0.25\textwidth]{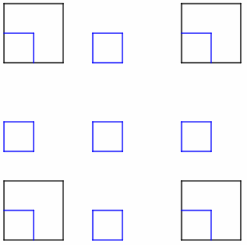}
        \caption{The black cubes give rise to \(\D^1_0\) and the blue cubes to \(\D^1_1\).}
        \label{fig: generators of first generations}
    \end{figure}
    Also, note that if \(x'\) is a corner of a cube \(B + 3l(B)m'\) and \(x''\) is a corner of a cube \(x(B) + (1/2)(B-x(B)) + 3l(B)m''/2\), then
    \begin{equation}\label{difference of corners}
    x' - x'' = 3l(B)m' - \frac{3}{2}l(B)m''.
    \end{equation}
    In particular, \(2(x'-x'') \in (3l(B)\Z)^d\). This suggests an idea: we should organize the different lattices in \(\tilde{\D}\) by a relationship of the corners. In other words, the corners of cubes in \(\D\) that give cubes in the same lattice in \(\tilde{\D}\) should be related in a way motivated by \eqref{difference of corners}. More precisely, we introduce the following equivalence relation in the set of corners \(C = \{x(Q): Q\in \D\}\). Given \(x', x''\in C\),
    \[
    x' \sim x'' \iff \exists l \in \Z: 2^l(x'- x'')\in (3l(B)\Z)^d.
    \]
    The first thing we want to establish about this equivalence relation is that there are exactly \(3^d\) equivalence classes. It is easy to see there are at least \(3^d\) equivalence classes, since all the corners \(x(B) + l(B)u, u\in\{0, 1, 2\}^d\), lie in different classes. Now take any \(3^d + 1\) corners in \(C\), \(x^j, j=1,\dots, 3^d+1\). Any corner in \(C\) has the form \(x(B) + 2^{-k}l(B)m, m\in \Z^d, k\in \Z\), therefore for each \(j\) there are \(k_j \in \Z, m^j\in \Z^d\) so that \(x^j = x(B) + 2^{-k_j}l(B)m^j\). Now define
    \[
    y^j = 2^{\max_l k_l}l(B)^{-1}(x^j - x^1) = 2^{(\max_l k_l) - k_j}m^j - 2^{(\max_l k_l) - k_1}m^1 \in \Z^d.
    \]
    This gives \(3^d\) vectors \(y^2, \dots, y^{3^d+1}\) all in \(\Z^d\). Since there are \(3^d\) classes in \(\Z^d\) modulo \((3\Z)^d\), it follows that there is some \(j\) such that \(x^j \sim x^1\). This shows that there are exactly \(3^d\) classes in \(C/\sim\). Let's call these classes \(C_1, \dots, C_{3^d}\). Then we can define the families 
    \[
    \D^j = \{\tilde{Q}: x(Q) \in C_j\}.
    \]
    Since \(\{C_j\}_j\) is a partition of the corners in \(\tilde{\D}\), we see that \(\{\D^j\}_j\) forms a partition of \(\tilde{\D}\). Now we just have to prove that each \(\D^j\) is indeed a dyadic lattice. To do this we have to check the two properties in Proposition \ref{characterization of dyadic lattices}. Let's start with the first property. We can split
    \[
    \D^j = \bigcup_{k\in \Z} \D^j_k,
    \]
    where 
    \[
    \D_k^j = \{\tilde{Q}: Q\in \D_k\text{ and }x(Q) \in C_j\}.
    \]
    Fix \(\tilde{Q}\in \D_k^j\), and let \(\tilde{Q}_1\) be any other cube in \(\D_k^j\). Then, we know that \(Q, Q_1\in \D_k\) and \(x(Q)\sim x(Q_1)\). This means that 
    \[
    x(Q)-x(Q_1) \in (2^{-k}l(B)\Z)^d\text{ and }2^l(x(Q) - x(Q_1))\in (3l(B)\Z)^d,
    \]
    for some \(l\in \Z\). Therefore, there are \(n', n''\in \Z^d\) such that 
    \[
    2^k(x(Q)- x(Q_1)) = l(B)n'\text{ and }2^l(x(Q)-x(Q_1)) = 3l(B)n'',
    \]
    which implies that \(n' = 3\times 2^{k-l}n''\). But then we must have \(n'\in (3\Z)^d\). So, we can write \(2^k(x(Q)-x(Q_1)) = 3l(B)m,\ m\in \Z^d\). Now, note that \(|\tilde{Q}| = |\tilde{Q}_1|\) and 
    \[
    x(\tilde{Q}) - x(\tilde{Q}_1) = \frac{3l(B)m}{2^k} = \frac{3m}{2^k}(3^{-1}\times 2^kl(\tilde{Q})) = l(\tilde{Q})m.
    \]
    This shows that \(\tilde{Q}_1 \in \{\tilde{Q} + l(\tilde{Q})m: m\in \Z^d\}\). On the other hand, any cube from \(\{\tilde{Q} + l(\tilde{Q})m: m\in \Z^d\}\) must be in \(\D^j_k\). Indeed, suppose \(P = \tilde{Q} + l(\tilde{Q})m\), and let \(Q_1\) be such that \(\tilde{Q}_1 = P\). Then, this means that
    \[
    \tilde{Q} + l(\tilde{Q})m = x(Q_1) + 3(Q_1 - x(Q_1)).
    \]
    This is the same as saying that
    \[
    x(Q_1) = x(Q) + 3l(Q)m\text{ and }l(Q_1) = l(Q).
    \]
    In particular, since \(Q\in \D_k\) it follows that \(Q_1\in \D_k\). Moreover, since \(x(Q_1) - x(Q) \in (3l(B)\Z)^d\) this shows that \(x(Q)\sim x(Q_1)\). Thus, \(P\in \D_k^j\). This way, we have shown that
    \[
    \D_k^j = \{\tilde{Q} + l(\tilde{Q})m: m\in \Z^d\},
    \]
    which is property 1 from Proposition \ref{characterization of dyadic lattices}. Now let's check property 2. Let \(\tilde{Q} \in \D_k^j\) and let \(P = x(\tilde{Q}) + (1/2)(\tilde{Q}-x(\tilde{Q}))\) be the corner child of \(\tilde{Q}\). Let \(Q_1\) be such that \(P = \tilde{Q}_1\). Then, 
    \[
    x(Q) + \frac{1}{2}(\tilde{Q} - x(Q)) = x(Q_1) + 3(Q_1 - x(Q_1)),
    \]
    which simplifies to
    \[
    x(Q) = x(Q_1) \text{ and }l(Q) = 2l(Q_1).
    \]
    In other words, \(Q_1\) is the corner child of \(Q\). In particular \(Q_1 \in \D_{k+1}\), and since \(x(Q_1) = x(Q)\), then \(x(Q_1) \in C_j\). Thus, \(P \in \D_{k+1}^j\), which shows property 2.

    Finally, we check that the last statement of the theorem is true. Let \(Q\in \D\) and \(j\in \{1, \dots, 3^d\}\). 
    Now let \(P\) be the unique cube in \(\D^j_k\) such that \(x(Q)\in P\), where \(k\) is such that \(Q\in \D_k\). 
    Then, \(P = \tilde{Q}_1\) for some \(Q_1\in \D_k\). 
    So, we have that \(l(P) = 3l(Q_1) = 3l(Q)\). Now note that no other cube from \(\D^j\) can satisfy the desired properties. 
    This is because the condition \(l(P)=3l(Q)\) fixes the generation \(\D^j_k\), and no other cube in this generation contains \(x(Q)\), 
    so no other cube from this generation can contain \(Q\). This way, it remains only to show that \(Q\subseteq P\). 
    First note that, since \(Q, Q_1\) are in the same generation, then there is some \(m\in \Z^d\) such that \(x(Q) = x(Q_1) + l(Q)m\). 
    Now, given that \(x(Q)\in P\), this means that
    \[
    x(Q_1)_j\leq x(Q)_j < x(Q_1)_j + 3l(Q), \forall j.
    \]
    This holds if and only if \(0\leq m_j < 3\), for all \(j\), which implies that \(0\leq m_j\leq 2\), for all \(j\). Thus, 
    \[
    x(Q_1)_j\leq x(Q)_j\leq x(Q_1)_j+2l(Q).
    \]
    Now let \(y\in Q\). Then, \(x(Q)_j\leq y_j < x(Q)_j + l(Q)\), for all \(j\). But then,
    \[
    x(Q_1)_j\leq x(Q)_j\leq y_j < x(Q)_j + l(Q) \leq x(Q_1)_j + 3l(Q),
    \]
    which shows that \(y\in P\).
\end{proof}
\begin{remark}\label{one quadrant implies one third shifts have one quadrant}
    Note that if \(\D\) has only one quadrant, then \(\D^1,\dots, \D^{3^d}\) have also only one quadrant. Indeed, if \(x\in \R^d\) and \(\{Q_k(x)\}\) is the dyadic tower over \(x\) from \(\D\), then for each \(k, j\) there is a unique cube \(P^j_k\in \D^j\) such that \(Q_k(x)\subseteq P_k^j\) and \(l(P_k^j) = 3l(Q_k(x))\). But then
    \[
    \R^d = \bigcup_k Q_k(x) = \bigcup_k P_k^j,
    \]
    and so \(\D^j\) has only one quadrant.
\end{remark}
There is a simple way to relate the dyadic lattices \(\D^1, \dots, \D^{3^d}\). Indeed, let \(\varepsilon^1,\dots, \varepsilon^{3^d}\) be an enumeration of \(\{-1, 0 , 1\}^d\) where \(\varepsilon^1 = (0, \dots, 0)\). Now put
\[
C_j = \{x\in C: x \sim x(B) + \varepsilon^j l(B)\}.
\]
Then it follows that 
\begin{equation}\label{one third trick}
\D^j_k = \{ P + (-1)^k\frac{\varepsilon^j}{3}l(P): P\in \D^1_k\}.
\end{equation}
For example, to see that the left hand side contains the right hand side, let \(Q\) be a cube such that \(\tilde{Q} = P + (-1)^k \varepsilon^j l(P)/3\), for some \(P\in \D^1_k\). Since \(P\in \D_k^1\) we know that \(P = \tilde{Q}_1\) for some \(Q_1\in \D_k\) with \(x(Q_1)\sim x(B)\). Now, note that
\[
x(Q) = x(P) + (-1)^k \varepsilon^j \frac{l(P)}{3} = x(Q_1) + (-1)^k\varepsilon^j l(Q_1),
\]
and since \(Q_1\in \D_k\), this means that \(Q\) is also in \(\D_k\). Therefore to show that \(\tilde{Q}\in \D_k^j\) it remains to show only that \(x(Q) \sim x(B) + \varepsilon^j l(B)\). We know that \(x(Q_1) \sim x(B)\) and so there is some \(l\in \Z\) such that
\[
2^l(x(P) - x(B)) = 3l(B)m,\ m\in \Z^d.
\]
But then, 
\[
x(Q) = x(B) + 3\times 2^{-l}l(B)m + (-1)^k\varepsilon^j l(Q).
\]
So, for \(a\in \N_1\),
\[
2^a(x(Q) - x(B) - \varepsilon^j l(B)) = 2^a l(B)(3\times 2^{-l}m + (-1)^k \varepsilon^j 2^{-k} - \varepsilon^j).
\]
It is not difficult to check that, if \(a\) is large enough, then \(3\times 2^{a-l}m +( (-1)^k2^{a-k}-2^a)\varepsilon^j\) is in \((3\Z)^d\), as required. One can then verify the other inclusion in a very similar way, thus showing that \eqref{one third trick} holds. This equality says that the lattices \(\D^j\) are obtained from \(\D^1\) by shifts of one-third of their lengths in a direction carefully chosen based on \(j\) and the generation. This is why this construction is sometimes called the one-third trick\index{one-third trick}.

Now using the three lattice theorem we can show that any cube is well approximated by some dyadic cube coming from one of a finite number of dyadic families.
\begin{cor}\label{corollary of the 3 lattice theorem}
There are \(3^d\) dyadic lattices, \(\D^1, \dots, \D^{3^d}\), such that, given any cube \(Q\) there is some \(j(Q)\in \{1, \dots, 3^d\}\) and a cube \(P\in \D^{j(Q)}\) such that \(Q\subseteq P\) and \(|P|\leq 3^d |Q|\).
\end{cor}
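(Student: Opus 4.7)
The plan is to directly combine the three lattice theorem with the concrete construction of a threefold expansion that was already carried out in the paragraph preceding Theorem \ref{three lattice theorem}. No new geometry is needed; the work is in bookkeeping.

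First, I fix any convenient dyadic lattice, say the standard lattice $\D = \D^{\text{st}}$, whose base is $B = [0,1[^d$. Applying the three lattice theorem to this $\D$ produces the $3^d$ dyadic lattices $\D^1, \dots, \D^{3^d}$ whose union equals $\tilde{\D} = \{\tilde{P} : P \in \D\}$. These will serve as the lattices claimed by the corollary, independently of $Q$.

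Next, given an arbitrary cube $Q$, I reproduce the construction from the discussion before Theorem \ref{three lattice theorem}: choose $k \in \Z$ with $2^{-k}l(B) \leq l(Q) < 2^{-k+1}l(B)$, and let $P_0$ be the unique cube of $\D_k$ containing the corner $x(Q)$. That discussion already verifies two facts, which I would simply quote: the threefold expansion $\tilde{P_0} = x(P_0) + 3(P_0 - x(P_0))$ contains $Q$, and $|\tilde{P_0}|/|Q| = 3^d (l(P_0)/l(Q))^d \leq 3^d$.

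Finally, since $\tilde{P_0} \in \tilde{\D} = \bigcup_{j=1}^{3^d} \D^j$, there exists at least one index $j(Q) \in \{1, \dots, 3^d\}$ with $\tilde{P_0} \in \D^{j(Q)}$; setting $P := \tilde{P_0}$ gives a cube in $\D^{j(Q)}$ with $Q \subseteq P$ and $|P| \leq 3^d |Q|$, as required. There is no genuine obstacle: the entire content of the corollary is packaged inside the three lattice theorem and the explicit threefold-expansion estimate derived before it, so the proof amounts to citing those two results and observing that $\tilde{P_0}$ must land in some $\D^{j(Q)}$.
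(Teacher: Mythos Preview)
Your proposal is correct and follows essentially the same approach as the paper's own proof: fix a dyadic lattice, invoke the three lattice theorem to obtain \(\D^1,\dots,\D^{3^d}\), use the threefold-expansion construction preceding Theorem~\ref{three lattice theorem} to produce \(\tilde{P_0}\supseteq Q\) with \(|\tilde{P_0}|\leq 3^d|Q|\), and note that \(\tilde{P_0}\in\tilde{\D}\) must lie in some \(\D^{j(Q)}\). The only cosmetic difference is that you specify the standard lattice while the paper allows any dyadic lattice \(\D\), which is immaterial.
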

\begin{proof}
    Fix any dyadic lattice \(\D\), and let \(\D^1,\dots, \D^{3^d}\) be the dyadic lattices from Theorem \ref{three lattice theorem}. Now let \(Q\) be any cube, and as before,\footnote{Recall that this is the construction explained in Figure \ref{fig: threefold expansion}.} let \(Q_1\) be the cube from \(\D_k\) that contains \(x(Q)\) where
    \[
    2^{-k}l(B)\leq l(Q) < 2^{-k+1}l(B).
    \]
    We have already checked above that \(Q\subseteq \tilde{Q}_1\) and \(|\tilde{Q}_1|/|Q| \leq 3^d\). Now note that \(P = \tilde{Q}_1 \in \tilde{\D}\) and therefore there is some \(j(Q) \in \{1, \dots, 3^d\}\) such that \(P\in \D^{j(Q)}\), as required.
\end{proof}
\begin{remark}
In fact it is possible to get by using just \(d + 1\) dyadic lattices (see \cite{note_dyadic_coverings}). However, for our purposes it won't be necessary to obtain any such sharpening of this result.
\end{remark}

Now that we can approximate an arbitrary cube we can finally control the fractional maximal operator by the dyadic maximal operator. Consider \(x\in \R^d\) and let \(Q\) be any cube containing \(x\). Then we know that we have some \(j(Q)\) so that \(Q\subseteq P\) for some \(P\in \D^{j(Q)}\) with \(|P|\leq 3^d |Q|\). So,
\[
|Q|^{\frac{\alpha}{d}-1}\int_Q |f| \leq 3^{d-\alpha} |P|^{\frac{\alpha}{d}-1}\int_P|f| \leq 3^{d-\alpha} M_\alpha^{\D^{j(Q)}}f(x) \leq 3^{d-\alpha}\sum_{j=1}^{3^d}M_\alpha ^{\D^j}f(x).
\]
Taking the supremum in \(Q\) we get
\begin{equation}\label{eq: control by dyadic max opers}
    M_\alpha f(x) \leq 3^{d-\alpha}\sum_{j=1}^{3^d}M_\alpha^{\D^j}f(x).
\end{equation}
As a consequence of this pointwise estimate and Propositions \ref{weak type inequality for the fractional maximal operator} and \ref{strong type inequality for the fractional maximal operator} we immediately get the following result.
\begin{prop}\label{strong type estimate for fractional max operator with lebesgue}
    Let \(1< p\leq  q\leq \infty\) and \(0\leq \alpha < d\) be such that
    \[
    \frac{1}{q} + \frac{\alpha}{d} = \frac{1}{p}.
    \]
    Then, 
    \[
    \|M_\alpha f\|_{L^q(\R^d)}\lesssim_{\alpha,d,p,q} \|f\|_{L^p(\R^d)},\ \forall f\in L^p(\R^d).
    \]
    Moreover, we have the weak type endpoint estimate,
    \[
    \|M_\alpha f\|_{L^{\frac{d}{d-\alpha}, \infty}(\R^d)}\lesssim_{d, \alpha} \|f\|_{L^1(\R^d)},\ \forall f\in L^1(\R^d).\footnote{
        Here we can include \(\alpha = d\), seeing as this would simply be the \(L^1\) to \(L^\infty\) estimate 
        for the fractional maximal operator.}
    \]    
\end{prop}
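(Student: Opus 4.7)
The plan is to combine the pointwise control \eqref{eq: control by dyadic max opers} of $M_\alpha$ by the $3^d$ dyadic fractional maximal operators $M_\alpha^{\D^j}$ with the dyadic estimates already established in Propositions \ref{weak type inequality for the fractional maximal operator} and \ref{strong type inequality for the fractional maximal operator}. Before applying those dyadic results I would verify the hypothesis on the weight: here we only need the unweighted case $w \equiv 1$, and for any dyadic lattice $\D^j$ each quadrant $H$ is a nested union of cubes whose sidelengths tend to $+\infty$, so $w(H) = |H| = +\infty$ holds trivially.

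For the strong-type bound, I would take the $L^q(\R^d)$ norm on both sides of \eqref{eq: control by dyadic max opers}, apply the triangle inequality, and then invoke Proposition \ref{strong type inequality for the fractional maximal operator} (which applies because $p > 1$, $p \leq q$, and $1/q + \alpha/d = 1/p$) once for each $j$:
\[
\|M_\alpha f\|_{L^q(\R^d)} \leq 3^{d-\alpha}\sum_{j=1}^{3^d}\|M_\alpha^{\D^j} f\|_{L^q(\R^d)} \lesssim_{d,\alpha,p,q}\|f\|_{L^p(\R^d)}.
\]

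For the weak-type endpoint I would apply the same argument but use Proposition \ref{weak type inequality for the fractional maximal operator} with $p = 1$ and $q = d/(d-\alpha)$; the conditions $\alpha \in [0, d]$, $1 \leq q$, and $1/q + \alpha/d = 1$ are all immediate from the assumption $0 \leq \alpha < d$ (and in fact $\alpha = d$ would only give the trivial $L^1 \to L^\infty$ bound, so including it requires no extra work). The only mildly subtle point is that we are summing $3^d$ functions in $L^{q,\infty}$, which is merely a quasi-Banach space; however, a standard distribution-function splitting gives the quasi-triangle inequality
\[
\Big\|\sum_{j=1}^{N}g_j\Big\|_{L^{q,\infty}} \lesssim_{N,q} \sum_{j=1}^{N}\|g_j\|_{L^{q,\infty}},
\]
which is enough since $N = 3^d$ is a fixed dimensional constant.

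Neither step is really an obstacle: the whole content of the proposition has been front-loaded into the three-lattice theorem, the pointwise bound \eqref{eq: control by dyadic max opers}, and the dyadic weak/strong estimates. The only thing deserving a sentence of care is the finite-sum quasi-triangle inequality for $L^{q,\infty}$ in the weak-type endpoint; every other manipulation is a direct citation.
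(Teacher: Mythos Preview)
Your proposal is correct and follows exactly the approach the paper takes: it states the result as an immediate consequence of the pointwise bound \eqref{eq: control by dyadic max opers} together with Propositions \ref{weak type inequality for the fractional maximal operator} and \ref{strong type inequality for the fractional maximal operator}, without writing out a formal proof. Your added remarks (checking that quadrants have infinite Lebesgue measure, and invoking the quasi-triangle inequality for $L^{q,\infty}$ in the endpoint case) are the only details one might want to spell out, and you have handled them correctly.
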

We might also be interested in obtaining this for different measures, 
but we need to be careful because Corollary \ref{corollary of the 3 lattice theorem} works specifically for the Lebesgue measure. 
However, if our weighted measure can be compared with the Lebesgue measure the argument still carries through. 
To achieve this more general result we use the notion of \(A_\infty\) weights. See Appendix \ref{appendix: weighted spaces} for the definition 
of the \(A_\infty\) class of weights and for some of its most important properties. For our immediate purposes, it suffices to 
recall that if \(w\in A_\infty\), then there are \(C_1, C_2, \delta_1, \delta_2 > 0\) such that, for all cubes \(Q\) 
and for all positive measure subsets \(A\subseteq Q\) we have
\[
C_1 \left(\frac{|Q|}{|A|}\right)^{\delta_1}\leq \frac{w(Q)}{w(A)}\leq C_2\left(\frac{|Q|}{|A|}\right)^{\delta_2}. 
\]
\begin{remark}\label{remark: A_infty weights give infinite measure to quadrants}
    Note that if \(\D\) is a dyadic lattice and \(w\in A_\infty\), then \(w(H) = +\infty\) for any quandrant \(H\) of \(\D\). Indeed, let \(x\in \R^d\) and consider the dyadic tower over \(x\), \(\{Q_k(x)\}_k\). We have that
    \[
    w(Q_k(x))\geq C_1 \left(\frac{|Q_k(x)|}{|Q_0(x)|}\right)^{\delta_1} w(Q_0(x)) = C_1 2^{-k\delta_1} w(Q_0(x)) \xrightarrow[k \rightarrow -\infty]{} +\infty,
    \]
    and therefore \(w(H_x) = +\infty\).
\end{remark}
Using \(A_\infty\) weights we can give a more general version of 
Proposition \ref{strong type estimate for fractional max operator with lebesgue} where the 
maximal operator is defined with respect to a weighted measure\index{weighted fractional maximal operator}
\[
M_{\alpha, w}f(x) = \sup_{Q \ni x} w(Q)^{\frac{\alpha}{d}-1}\int_Q |f|w.
\]
\begin{prop}\label{strong type estimate for fractional max operator with a weight}
    Let \(w\in A_\infty\), \(1< p\leq q\leq \infty\) and \(\alpha \in [0, d[\) be such that
    \[
    \frac{1}{q}+\frac{\alpha}{d} = \frac{1}{p}.
    \]
    Then, 
    \[
    \|M_{\alpha, w}f\|_{L^q(\R^d, w)} \lesssim \|f\|_{L^p(\R^d, w)},\ \forall f\in L^p(\R^d, w).
    \]
    Also,
    \[
    \|M_{\alpha, w}f\|_{L^{\frac{d}{d-\alpha}, \infty}(\R^d, w)}\lesssim_{d, \alpha, w}\|f\|_{L^1(\R^d, w)},\ \forall f\in L^1(\R^d, w).
    \footnote{As before, here we can also include \(\alpha = d\).}
    \]
\end{prop}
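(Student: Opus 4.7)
The plan is to repeat the argument that worked for Proposition \ref{strong type estimate for fractional max operator with lebesgue}, except that now the comparison between $w(P)$ and $w(Q)$ when $Q \subseteq P$ has to be effected via the $A_\infty$ property rather than automatically via equality of measures. So the first step is to fix a dyadic lattice $\D$ and apply Corollary \ref{corollary of the 3 lattice theorem} to produce $3^d$ dyadic lattices $\D^1,\dots,\D^{3^d}$ with the property that every cube $Q$ is contained in some $P \in \D^{j(Q)}$ with $|P|\leq 3^d|Q|$.

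Next I would establish the pointwise control
\[
M_{\alpha,w} f(x) \lesssim_{d,\alpha,w} \sum_{j=1}^{3^d} M_{\alpha,w}^{\D^j} f(x).
\]
Given any cube $Q \ni x$, pick the corresponding $P \in \D^{j(Q)}$. Since $w \in A_\infty$ and $Q\subseteq P$ with $|P|/|Q|\leq 3^d$, the $A_\infty$ comparison gives $w(P)\leq C w(Q)$ with $C = C_2\, 3^{d\delta_2}$ depending only on $w$ and $d$. Because $\alpha/d - 1 \leq 0$, this yields $w(Q)^{\alpha/d-1}\leq C^{1-\alpha/d} w(P)^{\alpha/d-1}$, and combined with the trivial bound $\int_Q |f| w \leq \int_P |f| w$ we obtain
\[
w(Q)^{\alpha/d-1}\int_Q |f| w \leq C^{1-\alpha/d} w(P)^{\alpha/d-1}\int_P |f| w \leq C^{1-\alpha/d} M_{\alpha,w}^{\D^{j(Q)}} f(x).
\]
Taking the supremum over $Q \ni x$ and summing over the $3^d$ possible values of $j(Q)$ gives the pointwise inequality.

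Finally, I would conclude by applying Propositions \ref{weak type inequality for the fractional maximal operator} and \ref{strong type inequality for the fractional maximal operator} to each dyadic operator $M_{\alpha,w}^{\D^j}$. To invoke those propositions I must check their hypothesis that $w$ assigns infinite measure to every quadrant of each $\D^j$; but this is exactly the content of Remark \ref{remark: A_infty weights give infinite measure to quadrants}, which uses only the $A_\infty$ lower estimate on the sequence of ancestors in a dyadic tower. Summing the resulting $L^q(w)$ or $L^{d/(d-\alpha),\infty}(w)$ bounds over the $3^d$ lattices completes the proof.

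The main obstacle is minor but easy to get wrong: one has to track the sign of $\alpha/d - 1$ carefully when converting the $A_\infty$ comparison of $w(P)$ and $w(Q)$ into a comparison of weighted averages, since the exponent is non-positive and reverses the inequality. Everything else is a direct transposition of the Lebesgue-measure argument, with the three lattice theorem doing the geometric work and the $A_\infty$ doubling-type estimate replacing the trivial $|P|\leq 3^d|Q|$ bound.
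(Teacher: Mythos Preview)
Your proposal is correct and matches the paper's proof essentially step for step: use Corollary \ref{corollary of the 3 lattice theorem} to approximate an arbitrary cube by a dyadic one, convert the Lebesgue-measure comparison $|P|\leq 3^d|Q|$ into a $w$-measure comparison via the $A_\infty$ upper bound $w(P)\leq C_2(|P|/|Q|)^{\delta_2}w(Q)$, deduce the pointwise domination of $M_{\alpha,w}$ by $\sum_j M_{\alpha,w}^{\D^j}$, and then invoke Propositions \ref{weak type inequality for the fractional maximal operator} and \ref{strong type inequality for the fractional maximal operator} together with Remark \ref{remark: A_infty weights give infinite measure to quadrants}. The paper's constant is written as $C_2^{1-\alpha/d}3^{\delta_2(d-\alpha)}$, which is exactly your $C^{1-\alpha/d}$ with $C=C_2\,3^{d\delta_2}$.
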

\begin{proof}
    Fix \(x\in \R^d\) and let \(Q\) be a cube containing \(x\). From Corollary \ref{corollary of the 3 lattice theorem}, we know that we can find \(j(Q)\) and \(P\in \D^{j(Q)}\) so that \(Q\subseteq P\) and \(|P|\leq 3^d|Q|\). Therefore, using the assumption that \(w\in A_\infty\) we get
    \[
    \frac{1}{w(Q)^{1- \frac{\alpha}{d}}}\int_Q|f|w \leq C_2^{1-\frac{\alpha}{d}} 3^{\delta_2(d-\alpha)} \sum_j M_{\alpha, w}^{\D^j}f(x).
    \]
    Therefore, from Proposition \ref{strong type inequality for the fractional maximal operator},
    \[
    \|M_{\alpha, w}f\|_{L^q(w)}\lesssim_{\alpha, d, p, q, w}\|f\|_{L^p(w)},
    \]
    where we used Remark \ref{remark: A_infty weights give infinite measure to quadrants} to justify that \(w\) satisfies the assumptions of Proposition \ref{strong type inequality for the fractional maximal operator}. The endpoint inequality also follows from the comparison with \(M^{\D^j}_{\alpha, w}\).
\end{proof}

\clearpage
\section{Sparse Domination}\label{sec: sparse domination}

In the previous section, we proved boundedness results for the fractional maximal operator, which served as a good way of introducing the basic ideas of dyadic Harmonic Analysis. By restricting cubes to be dyadic cubes we obtained a certain extra structure that proved very useful in carrying out the proofs. This structure involved the compatibility between the partial order given by inclusion and the disjointness of the cubes. In turn this gave us a notion of maximal cubes which proved very useful. Moreover, while this dyadic structure is a simplification of the overall structure of the continuum, it was still enough to recover the non-dyadic results. In that step, it was essential that the dyadic structure was able to approximate every location in space, as well as every scale. 

In this section, we will introduce the ideas of sparse domination, which is the main objective of this expository paper. 
To do this we begin by motivating the techniques using the problem of obtaining two-weight estimates for the fractional maximal operator. 
In other words, we wish to find conditions on two weights \(v, w\) such that
\begin{equation}\label{two-weight estimate fractional max oper}
    \|M_\alpha f\|_{L^q(w)}\lesssim \|f\|_{L^p(v)},\ \forall f\in L^p(v).
\end{equation}
Note that this is a very different setup from what we saw in the previous section. 
Before, we considered weighted \(L^p\) spaces but where the maximal operator was defined in terms of the same measure. 
As we saw, this case is very similar to the case when we simply deal with the Lebesgue measure both in the definition of the maximal operator and in the norms. 
However, a much more interesting case arises when we choose different measures for the maximal operator and the norms. 
Here we consider the maximal operator with respect to the Lebesgue measure, but we change the measure of the underlying space. 
One possibility is to choose the same measure for both the source and target spaces, 
which has a simpler theory because one can interpolate between weak type estimates using the reverse Hölder inequality. 
See for instance the proof of Theorem 3 in \cite{MuckenhouptWheeden}. 
However, here we are interested in the case when the measures of the source and target space can differ. 
Although there are complete characterizations for the two-weight inequality for the fractional maximal operator 
(see \cite{sawyermaximal} and Theorem 1 in \cite{Wheeden}), for more general operators the two-weight case is usually much more subtle. 
Moreover, it is uncommon to find complete characterizations of the weights that satisfy two-weight norm inequalities. 
Usually, the results focus on obtaining sufficient conditions for such inequalities to hold, but there are many approaches to proving sufficient conditions. 
One approach is to use testing conditions, as in \cite{Sawyer2}, where such conditions are used to find estimates for fractional integrals. 
Here we will follow a different approach and use so-called `bump' conditions, which are well suited for dyadic methods.

Let us begin by assuming that \eqref{two-weight estimate fractional max oper} holds with implicit constant \(C\), so that we may find some necessary conditions on the weights. Let \(Q\) be any cube, let \(\varepsilon > 0\) and put \(f = (v+\varepsilon)^t\chi_Q\), where \(t<0\) is to be chosen later. Note that, since \(v\) is a weight, then \(f\) is locally integrable and \(f\in L^p(v)\), \(1\leq p \leq \infty\). Also, \(\int_Q f \in ]0, \infty[\). Now let \(\lambda < |Q|^{\alpha / d -1}\int_Q (v+\varepsilon)^t\). Then, \(Q\subseteq \{x: M_\alpha f(x) > \lambda\}\). If \(q<\infty\) then this implies that
\[
\begin{split}
\lambda^q w(Q) &\leq \lambda^q w(\{x: M_\alpha f(x)>\lambda \}) \\
&\leq  \int M_\alpha f(x)^q w \\
&\leq C^q\left( \int_Q (v+\varepsilon)^{tp}v\right)^\frac{q}{p}\\
&\leq C^q \left( \int_Q (v+\varepsilon)^{tp+1}\right)^\frac{q}{p}.
\end{split}
\]
Since this holds for every \(\lambda\) under \(|Q|^{\alpha/d-1}\int f\), then if we let \(\lambda\) approach this value we see that
\[
|Q|^{\frac{\alpha}{d}-1}\int_Q (v+\varepsilon)^t w(Q)^{1/q}\leq C \left(\int_Q (v+\varepsilon)^{tp+1}\right)^{1/p}.
\]
At this point it makes sense to choose \(t\) so that \(tp+1 = t\), which gives us \(t = -p'/p\). For this to work, we need to assume here that \(p > 1\). With this choice for \(t\) we obtain
\[
|Q|^{\frac{\alpha}{d}-1}\left(\int_Q (v+\varepsilon)^{-\frac{p'}{p}}\right)^\frac{1}{p'}\left(\int_Q w\right)^\frac{1}{q} \leq C.
\]
If we now let \(\varepsilon \rightarrow 0^+\), then by the monotone convergence theorem we get
\[
|Q|^{\frac{\alpha}{d}-1}\left(\int_Q v^{-\frac{p'}{p}}\right)^\frac{1}{p'}\left(\int_Q w\right)^\frac{1}{q} \leq C,
\]
for every cube. This leads us to the definition of an important class of weights.
\begin{defn}\label{defn of Apq alpha weights}
    Let \(1<p,q<\infty\) and \(\alpha \in \R\). We say that \((v, w)\in A_{p, q}^\alpha\)\index{\(A_{p,q}^\alpha\) condition} if and only if 
    \[
    [v, w]_{A_{p, q}^\alpha} := \sup_Q |Q|^{\frac{\alpha}{d}-1}\left(\int_Q v^{-\frac{p'}{p}}\right)^\frac{1}{p'}\left(\int_Q w\right)^\frac{1}{q} < \infty.
    \]
\end{defn}
\begin{remark}
    This class of weights was considered explicitly in \cite{sawyer}, and it is an extension of the Muckenhoupt classes \(A_p\) and \(A_{p, q}\). 
    For the definition of the \(A_p\) weights and for some of their most important properties see Appendix \ref{appendix: weighted spaces}. It is interesting 
    to see how the classes \(A_p, A_{p, q}\) and \(A_{p, q}^\alpha\) are related. 
    We say that a pair of weights \((v, w)\) satisfies the \(A_{p, q}\) condition\index{\(A_{p, q}\) weight} if 
    \[ 
        [v, w]_{A_{p, q}} = \sup_Q \left(\dashint_Q v^{-p'/p}\right)^{1/p'} \left(\dashint_Q w\right)^{1/q} < \infty. 
    \] 
    In particular, 
    \[ 
        [w]_{A_p} = [w, w]_{A_{p, p}}^p, 
    \] 
    and thus 
    \[ 
        w \in A_p \iff (w, w) \in A_{p, p}. 
    \] 
    Also, 
    \[ 
        (v, w) \in A_{p, q} \iff (v, w)\in A_{p, q}^\alpha \text{ where }\alpha = d\left(\frac{1}{p} - \frac{1}{q}\right). 
    \]
\end{remark}
With these weights, we can easily prove the weak type \((p, q)\) inequality by following an argument which is very similar to the proof of Proposition \ref{weak type inequality for the fractional maximal operator}.
\begin{prop}\label{two weight weak type estimate for frac max oper}
    Let \(1 < p\leq q < \infty\) and \(\alpha \in [0, d[\). Suppose that \(v, w\) are weights such that \((v, w)\in A_{p, q}^\alpha\). Then,
    \[
    \|M_\alpha f\|_{L^{q, \infty}(w)} \lesssim_{\alpha, d, q}[v,w]_{A_{p,q}^\alpha} \|f\|_{L^p(v)},\ \forall f\in L^p(v).
    \]
\end{prop}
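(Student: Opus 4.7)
My plan is to follow very closely the argument for Proposition~\ref{weak type inequality for the fractional maximal operator}, where the $A_{p,q}^\alpha$ condition will play the role that the conjugate-H\"older computation played before. First I would reduce to a dyadic statement: by the pointwise inequality \eqref{eq: control by dyadic max opers} furnished by the three-lattice theorem,
\[
M_\alpha f(x) \leq 3^{d-\alpha}\sum_{j=1}^{3^d} M_\alpha^{\D^j} f(x),
\]
so using the quasi-triangle inequality for $L^{q,\infty}(w)$ it suffices to prove, for an arbitrary dyadic lattice $\D$,
\[
\|M_\alpha^\D f\|_{L^{q,\infty}(w)} \lesssim [v,w]_{A_{p,q}^\alpha}\|f\|_{L^p(v)}.
\]

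For the dyadic estimate I would assume $f\geq 0$ and put $E_\lambda = \{M_\alpha^\D f > \lambda\}$, together with the family $\mathcal{F} = \{Q \in \D : |Q|^{\alpha/d - 1}\int_Q f > \lambda\}$, so that $E_\lambda = \bigcup_{Q \in \mathcal{F}} Q$. H\"older's inequality gives
\[
\int_Q f \leq \left(\int_Q f^p v\right)^{1/p}\left(\int_Q v^{-p'/p}\right)^{1/p'},
\]
and then the $A_{p,q}^\alpha$ condition yields the a priori bound
\[
|Q|^{\alpha/d - 1}\int_Q f \leq [v,w]_{A_{p,q}^\alpha}\, w(Q)^{-1/q}\|f\|_{L^p(v)},
\]
so $w(Q)\leq \bigl([v,w]_{A_{p,q}^\alpha}\|f\|_{L^p(v)}/\lambda\bigr)^q$ uniformly for $Q \in \mathcal{F}$. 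This lets me extract a pairwise disjoint family of maximal cubes $\{Q_j\}\subseteq\mathcal{F}$ covering $E_\lambda$, exactly as in the proof of Proposition~\ref{weak type inequality for the fractional maximal operator}.

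The endgame is then algebraic. Raising the $A_{p,q}^\alpha$ definition to the $q$-th power gives
\[
w(Q_j)\,|Q_j|^{(\alpha/d - 1)q}\left(\int_{Q_j} v^{-p'/p}\right)^{q/p'} \leq [v,w]_{A_{p,q}^\alpha}^q,
\]
and combining with the pointwise lower bound $\lambda^q < \bigl(|Q_j|^{\alpha/d - 1}\int_{Q_j} f\bigr)^q$ and H\"older yields
\[
\lambda^q w(Q_j) \leq [v,w]_{A_{p,q}^\alpha}^q \left(\int_{Q_j} f^p v\right)^{q/p}.
\]
Summing over $j$ and using $p\leq q$ (so that the embedding $\ell^{p/q}\hookrightarrow \ell^1$ lets me pull the $q/p$ exponent outside the sum) together with the pairwise disjointness of $\{Q_j\}$ gives $\lambda^q w(E_\lambda) \leq [v,w]_{A_{p,q}^\alpha}^q \|f\|_{L^p(v)}^q$, which is the claimed weak-type bound.

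The main technical obstacle I anticipate is the existence of the maximal cubes in $\mathcal{F}$: unlike in Proposition~\ref{weak type inequality for the fractional maximal operator}, the present statement does not assume that $w(H) = +\infty$ for every quadrant $H$ of $\D$. However, the a priori bound on $w(Q)$ for $Q \in \mathcal{F}$ already guarantees that any infinite ascending chain within $\mathcal{F}$ would live in a quadrant $H$ with $w(H) \leq \bigl([v,w]_{A_{p,q}^\alpha}\|f\|_{L^p(v)}/\lambda\bigr)^q$; since a dyadic lattice has only finitely many quadrants, their contribution to $w(E_\lambda)$ is already of the correct order $\lambda^{-q}\|f\|_{L^p(v)}^q$, so one may restrict the stopping argument to quadrants where maximal cubes genuinely exist without losing anything.
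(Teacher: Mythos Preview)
Your argument is correct and the algebraic core (H\"older followed by the $A_{p,q}^\alpha$ bound, then summing with $p\leq q$) coincides with the paper's. The genuine difference is in how you deal with the possible non-existence of maximal cubes. The paper sidesteps the issue by truncating: it replaces $f$ by $f_N = f\chi_{B(0,N)}$, so that $f_N\in L^1$ with compact support and hence $|Q|^{\alpha/d-1}\int_Q|f_N|\to 0$ as $|Q|\to\infty$; this forces maximal cubes to exist for $E_{\lambda,N}$, and one then passes to the limit in $N$ via monotone convergence, checking that $\bigcup_N E_{\lambda,N}=E_\lambda$.

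Your route is more direct: you observe that the a priori bound $w(Q)\leq \bigl([v,w]_{A_{p,q}^\alpha}\|f\|_{L^p(v)}/\lambda\bigr)^q$ for $Q\in\mathcal{F}$ forces any quadrant supporting an unbounded chain in $\mathcal{F}$ to have $w$-measure already of the right order, and since a dyadic lattice has at most $2^d$ quadrants these contribute only an admissible constant. This avoids the truncation and the limit argument entirely, at the cost of invoking the (standard, though not stated explicitly in the paper) fact that the number of quadrants is finite. Both approaches are clean; yours is arguably more self-contained once the quadrant count is granted, while the paper's truncation is the more portable device and reappears later in the text (e.g.\ in the proof of Proposition~\ref{prop: two weight estimate for dyadic frac max op}). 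One small omission: you should record, as the paper does, that $(v,w)\in A_{p,q}^\alpha$ forces $v^{-p'/p}\in L^1_{\text{loc}}$, so your H\"older step also shows $f\in L^1_{\text{loc}}$ and $M_\alpha f$ is well defined.
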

\begin{proof}
    First we argue that, under the assumptions, if \(f\in L^p(v)\) then \(f\) is locally integrable. Indeed, if \(Q\) is a cube, then
    \[
    \begin{split}
    \int_Q |f| &= \int_Q |f| v^\frac{1}{p} v^{-\frac{1}{p}} \\
    &\leq \left(\int_Q |f|^p v\right)^{1/p}\left(\int_Q v^{-\frac{p'}{p}}\right)^{1/p'} \\
    &\leq \|f\|_{L^p(v)}[v, w]_{A_{p, q}^\alpha} w(Q)^{-1/q}|Q|^{1-\alpha/d} < \infty. 
    \end{split}
    \]
    This way, for any \(f\in L^p(v)\) the operator \(M_\alpha\) is well-defined on \(f\). Now note that in virtue of \eqref{eq: control by dyadic max opers} we know that
    \[
    \|M_\alpha f\|_{L^{q,\infty}(w)} \leq 3^{d-\alpha} \|\sum_{j=1}^{3^d}M_\alpha ^{\D^j}f\|_{L^{q,\infty}(w)} \lesssim_{\alpha, d, q}\sum_{j=1}^{3^d}\|M_\alpha ^{\D^j}f\|_{L^{q,\infty}(w)}.
    \]
    So, it is enough to prove that
    \[
    \|M_\alpha^\D f\|_{L^{q,\infty}(w)}\leq [v,w]_{A_{p,q}^\alpha} \|f\|_{L^p(v)},
    \]
    for any given dyadic lattice \(\D\). This way, let \(\D\) be a dyadic lattice and put \(E_\lambda = \{x\in \R^d: M_\alpha^\D f(x) > \lambda\}\). As before, we want to argue that there are disjoint dyadic cubes \(\{Q_j\}_j\) from \(\D\) such that
    \[
    E_\lambda = \bigcup_j Q_j \text{ and }\lambda < |Q_j|^{\frac{\alpha}{d}-1}\int_{Q_j}|f|.
    \]
    However, there is a small technical obstruction here. The problem is that to ensure the existence of the maximal 
    dyadic cubes we need to know that \(|Q|^{\alpha/d - 1}\int_Q |f| \rightarrow 0\) as \(|Q|\rightarrow +\infty\). 
    But this is not necessarily the case since we only know that \(f\in L^p(v)\).\footnote{If \(w\in A_\infty\), then it would follow that 
    \(w(Q)^{-1/q}\rightarrow 0\) as \(|Q|\rightarrow +\infty\), which is enough to ensure that \(|Q|^{\alpha/d - 1}\int_Q |f| \rightarrow 0\). 
    However, we only assume that \((v, w)\in A_{p, q}^\alpha\) which does not imply that \(w\in A_\infty\).}
    To overcome this issue we argue first with \(f_N = f \chi_{B(0, N)}\). 
    For these functions we can indeed say that \(|Q|^{\alpha/d-1}\int_Q |f_N| \rightarrow 0\) 
    as \(|Q|\rightarrow \infty\) because \(\alpha < d\) and \(f\in L^1_\text{loc}\). 
    Set \(E_{\lambda, N} = \{x: M_\alpha^\D f_N (x) > \lambda\}\). Now we can indeed ensure the existence of 
    disjoint dyadic cubes \(\{Q_j\}_j\) from \(\D\) such that
    \[
    E_{\lambda, N} = \bigcup_j Q_j \text{ and }\lambda < |Q_j|^{\frac{\alpha}{d}-1}\int_{Q_j}|f_N|.
    \]
    This way we have that
    \[
    \begin{split}
    \lambda^q w(E_{\lambda,N}) &= \sum_j \lambda^q w(Q_j) \leq \sum_j |Q_j|^{q(\alpha/d-1)}\left(\int_{Q_j}|f_N|\right)^q w(Q_j) \\
    &\leq \sum_j w(Q_j)|Q_j|^{q(\alpha/d-1)}\left(\int_{Q_j}|f|v^\frac{1}{p}v^{-\frac{1}{p}}\right)^q \\
    &\leq \sum_j w(Q_j)|Q_j|^{q(\alpha/d-1)}\left(\int_{Q_j}|f|^pv\right)^\frac{q}{p}\left(\int_{Q_j}v^{-\frac{p'}{p}}\right)^\frac{q}{p'}\\
    &\leq [v,w]_{A_{p, q}^\alpha}^q \left(\sum_j \int_{Q_j}|f|^pv\right)^\frac{q}{p} \leq [v, w]^q_{A_{p,q}^\alpha} \|f\|_{L^p(v)}^q,
    \end{split}
    \]
    where we used the fact that \(p\leq q\). To finish the proof we just have to justify that \(w(E_{\lambda,N}) \rightarrow_N w(E_\lambda)\). First note that \((E_{\lambda,N})_N\) is an increasing family of sets, and so we know that \(\lim_N w(E_{\lambda,N}) = w(\cup_N E_{\lambda,N})\), so it is enough to show that 
    \[
    E_\lambda = \bigcup_N E_{\lambda, N}.
    \]
    If \(x\in E_{\lambda, N}\) then \(M_\alpha ^\D f(x) \geq M_\alpha^\D f_N (x) > \lambda\) and so \(x \in E_\lambda\). For the converse inclusion suppose that \(x\in E_\lambda\). Then, \(M_\alpha^\D f(x) > \lambda\) which means that there is some \(Q\in\D\) such that 
    \[
    |Q|^{\alpha/d-1}\int_Q|f| > \lambda.
    \]
    But, by the monotone convergence theorem, we know that
    \[
    |Q|^{\alpha/d-1}\int_Q |f_N| \xrightarrow[N\rightarrow +\infty]{} |Q|^{\alpha/d-1}\int_Q|f|.
    \]
    Therefore there is some \(N\) large enough so that 
    \[
    |Q|^{\alpha/d-1}\int_Q|f_N| > \lambda,
    \]
    which implies that \(x\in E_{\lambda,N}\).
\end{proof}
\begin{remark}
    This proposition should be a generalization of the Lebesgue measure case of Proposition \ref{weak type inequality for the fractional maximal operator}, and in fact, if we choose \(v=w=1\) then the \(A_{p,q}^\alpha\) condition gives
    \[
    \begin{split}
    \sup_Q |Q|^{\alpha/d-1}\left(\int_Q 1\right)^{1/p'}\left(\int_Q 1\right)^{1/q} <\infty &\iff \sup_Q |Q|^{\alpha/d-1+1/p'+1/q}<\infty \\
    &\iff \frac{1}{q}+\frac{\alpha}{d} = \frac{1}{p}.
    \end{split}
    \]
    This way we see that the \(A_{p,q}^\alpha\) condition incorporates the `Sobolev' relation \(q^{-1}+\alpha/d = p^{-1}\) implicitly. In the weighted case this relation does not need to be satisfied as long as the weights compensate for that imbalance in the way that the \(A_{p,q}^\alpha\) condition demands.
\end{remark}
\begin{remark}\label{remark: Apq alpha implies sub-sobolev relation}
    One might wonder where the assumption \(\alpha \geq 0\) was used in the proof of this result. This assumption is due to the condition \(p\leq q\). In fact, for two weights to satisfy the \(A_{p,q}^\alpha\) condition we must have
    \[
    \sup_Q |Q|^{\alpha/d - 1/p + 1/q}\left(\dashint_Q v^{-\frac{p'}{p}}\right)^{1/p'}\left(\dashint_Q w\right)^{1/q}<\infty.
    \]
    So, if we let the cubes shrink to a point, then by the Lebesgue differentiation theorem we must have
    \[
    \frac{\alpha}{d} + \frac{1}{q} \geq \frac{1}{p}
    \]
    for the supremum to be finite. This condition in conjunction with \(p\leq q\) implies \(\alpha \geq 0\).
\end{remark}

In Section \ref{sec: the basic definitions} we used the off-diagonal Marcinkiewicz 
interpolation theorem and the weak type estimates to deduce the strong 
type estimates. However, in the weighted case this approach is problematic. 
The issue is that, in order to prove that
\[
\|M_\alpha f\|_{L^q(w)}\lesssim \|f\|_{L^p(v)}
\]
we should start with weights in \(A_{p, q}^\alpha\). But to apply the interpolation 
argument we need to have \(p_0, p_1, q_0, q_1\) such that
\[
\|M_\alpha f\|_{L^{q_j, \infty}(w)}\lesssim \|f\|_{L^{p_j}(v)}, j=0, 1
\]
for \(p_0, p_1 \approx p\) and \(q_0, q_1 \approx q\). However, these estimates 
mean that our weights should satisfy \(A_{p_j, q_j}^\alpha\) conditions, which is 
not the assumption on the weights that we start with. In the one-weight case and 
if \(q^{-1}+\alpha/d = p^{-1}\) this is not really a problem. In fact, if 
\(v = w^{p/q}\), then the \(A_{p,q}^\alpha\) condition means that \(w\in A_r\) 
where \(r = 1+q/p'\). But we know that the Muckenhoupt weights satisfy the following 
property:
\[
A_p = \bigcup_{1<q<p} A_q.
\]
Therefore, if \(w\in A_r\), then we can say that \(w\in A_{r-\varepsilon}\cap 
A_{r+\varepsilon}\) for \(\varepsilon>0\) small enough. One then has two weak type 
estimates which imply the strong type estimate by interpolation. For the same 
argument to work in the two-weight case we would need to have some property of the 
form
\[
(v, w) \in A_{p,q}^\alpha \implies (v, w)\in A_{p_j, q_j}^\alpha,
\]
in such a way so that the points \((1/p, 1/q), (1/p_0, 1/q_0), (1/p_1, 1/q_1)\) are 
collinear. If \((v, w)\in A_{p, q}, v^{-p'/p}, w\in A_\infty\), then one can show that 
indeed such collinear points exist. However, when considering the \(A_{p, q}^\alpha\) 
condition, not only would \(p\) and \(q\) change, but also \(\alpha\). This means that 
the operator would change between the two weak type inequalities. Perhaps it would be 
possible to argue using a mixture of the off-diagonal Marcinkiewicz interpolation theorem 
and Stein's interpolation of an analytic family of operators. Although this question is 
interesting we pursue here a different line of reasoning, 
because we can solve the problem using the method of domination by sparse operators.

\subsection{Controlling the fractional maximal operator}\label{section: controlling the fractional maximal operator}

In this section, our goal is to obtain two weight estimates for the fractional maximal operator. That is, we want to find conditions on \(v, w\) such that
\[
\|M_\alpha f\|_{L^q(w)}\lesssim \|f\|_{L^p(v)}. 
\]
Again we can assume without loss of generality that \(f\geq 0\). The basic idea starts with the observation that, if \(M_\alpha f(x)<\infty\), then there is some cube \(Q_x\) such that
\[
M_\alpha f(x) \approx |Q_x|^\frac{\alpha}{d}\dashint_{Q_x} f.
\]
Moreover, it is natural to expect that the same approximation works in some region around \(x\). This might inspire the idea of trying to find some family of cubes \(\S\) so that we have the estimate
\[
M_\alpha f(x) \lesssim \sum_{Q\in \S} \1_Q(x)|Q|^\frac{\alpha}{d}\dashint_Q f.
\]
The fact that we are replacing a supremum with a superposition is helpful for the purpose of obtaining \(L^p\) bounds, especially if we argue by duality. Of course, this estimate can always be achieved if we choose the family \(\S\) to be very large. As a trivial example, the estimate holds if \(\S\) is the set of all cubes. However, since our goal is to obtain bounds on the \(L^q(w)\) norm of this expression, it is useful to have the smallest family we can get away with on the right hand side. As a simple example, suppose that the family were disjoint. That is, let \(\S\) be a disjoint family of cubes and put
\[
\A_\S^\alpha f(x) = \sum_{Q\in \S} \1_Q(x) |Q|^\frac{\alpha}{d}\dashint_Q|f|.
\]
If we assume that \(q^{-1} + \alpha/d = p^{-1}\) and \(p\leq q\), then
\[
\begin{split}
  \int \A_\S^\alpha f g dx &= \sum_{Q\in \S} |Q|^\frac{\alpha}{d}\dashint_Q f \int_Q g \\
  &\leq \sum_{Q\in \S} |Q|^{\frac{1}{p}+\frac{1}{q'}}\left(\dashint_Q f^p\right)^{1/p}\left(\dashint_Q g^{q'}\right)^{1/q'}\\
  &= \sum_{Q\in \S} \left(\int_Q f^p\right)^{1/p}\left(\int_Q g^{q'}\right)^{1/q'}\\
  &\leq \left(\sum_{Q\in \S}\int_Q f^p\right)^{1/p}\left(\sum_{Q\in \S}\left(\int_Q g^{q'}\right)^{p'/q'}\right)^{1/p'}\\
  &\leq \|f\|_{L^p}\|g\|_{L^{q'}},
\end{split}
\]
where in the last step we used the fact that \(p'/q'\geq 1\). This estimate would imply an \(L^p\rightarrow L^q\) bound for \(\A^\alpha_\S\), which in turn would imply the same estimate for \(M_\alpha\). In general, we won't be able to find a family of cubes which are pairwise disjoint, but we are still going to be able to find a sufficiently `small' family so that a variation of the argument given above still works. 

To do this we start by assuming we have a locally integrable function \(f\geq 0\) which has compact support, sitting inside a sufficiently large cube, say \(Q_0\). We assume also that \(f\) is not identically zero so that \(\int_{Q_0}f > 0\). Now we consider the sets
\[
G(Q_0) = \{x \in Q_0: M_\alpha f(x) \leq C |Q_0|^\frac{\alpha}{d}\dashint_{Q_0}f \}\text{ and }B(Q_0) = Q_0\setminus G(Q_0),
\]
where \(C>0\) is a sufficiently large constant to be chosen later. In the `good' part of the cube, \(G(Q_0)\), we already have the desired estimate if only we add the cube \(Q_0\) to the family \(\S\). The set \(B(Q_0)\) is then the `bad' part of the cube, where we don't have the desired control. If the bad part had zero measure then we would have nothing to worry about. The issue is when the measure of the bad part is large. However, note that we can use the weak type estimate for \(M_\alpha\) (a particular case of Proposition \ref{strong type estimate for fractional max operator with lebesgue}) in order to show that the bad part has small measure. In fact, if we assume that \(0\leq \alpha < d\) and writing \(q = d/(d-\alpha)\),
\[
\begin{split}
|B(Q_0)| &= |\{x\in Q_0: M_\alpha f(x)> C |Q_0|^{\frac{\alpha}{d}-1}\int_{Q_0}f\}|\leq \frac{\|M_\alpha\|^q_{L^1\rightarrow L^{q,\infty}}}{C^q |Q_0|^{q(\alpha/d-1)}(\int_{Q_0}f)^q} \|f\|_{L^1}^q \\
&= \frac{\|M_\alpha\|^q_{L^1\rightarrow L^{q,\infty}}}{C^q} |Q_0|.
\end{split}
\]
If we choose \(C = 2^{1/q} \|M_\alpha \|\), then we get
\[
|B(Q_0)|\leq \frac{1}{2}|Q_0|,
\]
so the bad part occupies a small region of the cube. The main idea of the method of sparse domination is to iterate this procedure. That is, we now want to find what is the good part and the bad part of \(B(Q_0)\), and we want the new bad part to occupy only a fraction of the measure of \(B(Q_0)\). If we can iterate this procedure then, in the limit, the bad part will have measure zero and outside of that region we will have our desired estimate. 

To obtain the good and bad parts of \(B(Q_0)\) it would be useful to write this as a union of cubes, and then in each cube we use the same 
splitting as before. Of course, to write \(B(Q_0)\) as a union of cubes we can use the ideas of the localized Calderón-Zygmund decomposition. 
In particular, since we are dealing with the fractional maximal operator here, we can simply change to the dyadic maximal operator. 
Since we can control \(M_\alpha\) by \(3^d\) dyadic maximal operators, it is enough to control \(M^\D_\alpha\) for an arbitrary dyadic lattice 
\(\D\). Given that we start the argument by assuming we have a cube that contains the support of the function we have to assume that any such
\(\D\) has only one quadrant. Then, given \(f\in L^1_\text{loc}(\R^d)\) with compact support we can indeed find some \(Q_0\in \D\) such that 
\(\supp(f)\subseteq Q_0\). We split \(Q_0\) as before
\[
Q_0 = G(Q_0) \cup B(Q_0)
\]
and we choose \(C = 2^{1-\alpha/d}\), so that \(|B(Q_0)|\leq |Q_0|/2\). Now we want to write
\[
B(Q_0) = \bigcup_{Q\in \mcal{F}}Q,
\]
where \(\mcal{F}\) are the dyadic cubes \(Q\in \D\) such that 
\[
|Q|^{\frac{\alpha}{d}-1}\int_Q f > C |Q_0|^{\frac{\alpha}{d}-1}\int_{Q_0}f.
\]
To see that this works we argue as before, the only point which is different is that we have to justify why the cubes in the family \(\mcal{F}\) need to be inside of \(Q_0\). First note that, since \(C>1\) then \(Q_0 \notin \mcal{F}\). Now, if \(P\) is an ancestor of \(Q_0\), then since \(\supp(f)\subseteq Q_0\), 
\[
|P|^{\frac{\alpha}{d}-1}\int_P f \leq |Q_0|^{\frac{\alpha}{d}-1}\int_{Q_0}f < C|Q_0|^{\frac{\alpha}{d}-1}\int_{Q_0}f,
\]
so \(P\notin \mcal{F}\). Also, if \(P\) does not intersect \(Q_0\), then \(\int_P f = 0\) and so \(P\notin \mcal{F}\). Thus, if \(P\in \mcal{F}\), then \(P\subsetneq Q_0\). Next we can throw away all cubes in \(\mcal{F}\) that are included in another cube from the family, thus obtaining the family of maximal cubes, \(\mcal{F}^\text{max}\). The maximal cubes exist because \(f\in L^1\). We have a countable number of maximal cubes so we can parameterize them as \(\{Q_{j,1}\}_j\). This way, we have disjoint dyadic cubes inside \(Q_0\) such that
\[
B(Q_0) = \bigcup_j Q_{j, 1}\text{ and }|Q_{j,1}|^{\frac{\alpha}{d}-1}\int_{Q_{j,1}}f > C|Q_0|^{\frac{\alpha}{d}-1}\int_{Q_0}f.
\]
Also, the maximality of these cubes implies that
\[
P \in \D\ \land\ Q_{j, 1}\subsetneq P \implies |P|^{\frac{\alpha}{d}-1}\int_P f \leq C |Q_0|^{\frac{\alpha}{d}-1}\int_{Q_0}f.
\]
Now we can define the good and the bad part of each of these cubes,
\[
G(Q_{j,1}) = \{x\in Q_{j,1}: M_\alpha^\D f(x) \leq C |Q_{j,1}|^{\frac{\alpha}{d}-1}\int_{Q_{j,1}}f \},\ B(Q_{j,1}) = Q_{j,1}\setminus G(Q_{j,1}).
\]
These then allow us to define the good and the bad part of \(B(Q_0)\),
\[
G_1 = \bigcup_j G(Q_{j,1})\text{ and }B_1 = \bigcup_j B(Q_{j,1}).
\]
This gives us a partition of \(Q_0\) as \(B_1 \cup G_1 \cup G_0\), where \(G_0 = G(Q_0)\). Moreover, if \(x\in G_0 \cup G_1\), then
\[
M_\alpha^\D f(x) \leq C \chi_{Q_0}(x) |Q_0|^{\frac{\alpha}{d}-1}\int_{Q_0}f + C\sum_j \chi_{Q_{j,1}}(x) |Q_{j,1}|^{\frac{\alpha}{d}-1} \int_{Q_{j,1}} f,
\]
which gives the desired bound as long as we add the cubes \(\{Q_{j,1}\}_j\) to \(\S\). We can write this more compactly if we set \(Q_{j, 0} = Q_0\). Then we have
\[
M_\alpha ^\D f(x) \leq C \sum_{k=0}^1 \sum_j \chi_{Q_{j, k}}(x) |Q_{j, k}|^{\frac{\alpha}{d}-1}\int_{Q_{j,k}}f,\ \forall x \in Q_0 \setminus B_1.
\]
At this point we want the measure of \(B_1\) to be small when compared to the measure of \(B(Q_0)\). Here we need to be a bit careful because if we simply use the weak type estimate as before then we will get
\[
|B(Q_{j,1})| = |\{x\in Q_{j, 1}: M_\alpha^\D f(x) > C |Q_{j,1}|^{\frac{\alpha}{d}-1}\int_{Q_{j,1}}f\}| \leq \frac{|Q_{j,1}|}{C^q(\int_{Q_{j,1}}f)^q} \left(\int_{Q_0}f\right)^q.
\]
The issue here is that we want the integral on the right-hand side to be over the set \(Q_{j,1}\) instead of \(Q_0\). Fortunately, we can use the fact that the cubes \(Q_{j,1}\) are maximal to solve this problem, because we can show that \(M_\alpha^\D f(x) = M_\alpha^\D (f\chi_{Q_{j,1}})(x)\), for all \(x\in Q_{j,1}\). Indeed, take \(x\in Q_{j,1}\) and let \(Q\in \D\) be a dyadic cube that contains \(x\). Then, either \(Q_{j,1}\subsetneq Q\) or \(Q\subseteq Q_{j,1}\). In the first case \(Q\) is an ancestor of \(Q_{j,1}\) and therefore by maximality we know that
\[
|Q|^{\frac{\alpha}{d}-1}\int_Q f \leq C |Q_0|^{\frac{\alpha}{d}-1}\int_{Q_0}f < |Q_{j,1}|^{\frac{\alpha}{d}-1}\int_{Q_{j,1}}f \leq M_\alpha^\D (f\chi_{Q_{j,1}})(x).
\]
In the other case \(Q \subseteq Q_{j,1}\) so we can simply say that
\[
|Q|^{\frac{\alpha}{d}-1}\int_Q f = |Q|^{\frac{\alpha}{d}-1}\int_Q f\chi_{Q_{j,1}} \leq M_\alpha^\D (f\chi_{Q_{j,1}})(x).
\]
Therefore, we conclude that for all dyadic cubes \(Q\) containing \(x\),
\[
|Q|^{\frac{\alpha}{d}-1}\int_Q f \leq M_\alpha^\D (f\chi_{Q_{j,1}})(x),
\]
and so \(M_\alpha ^\D f(x) \leq M_\alpha^\D (f\chi_{Q_{j,1}})(x)\). Since the other inequality is trivial this is in fact an equality. This way we can estimate \(B(Q_{j,1})\) adequately. We have that
\[
\begin{split}
|B(Q_{j,1})| &= |\{x\in Q_{j,1}: M_\alpha^\D (f \chi_{Q_{j,1}})(x) > C |Q_{j,1}|^{\frac{\alpha}{d}-1}\int_{Q_{j,1}} f\}| \\
&\leq \frac{|Q_{j,1}|}{C^q(\int_{Q_{j,1}}f)^q} \|f\chi_{Q_{j,1}}\|_{L^1}^q \\
&\leq \frac{1}{2}|Q_{j,1}|.
\end{split}
\]
Therefore, 
\[
|B_1| = \sum_j |B(Q_{j,1})| \leq \sum_j \frac{1}{2} |Q_{j,1}| = \frac{1}{2} |B(Q_0)| \leq \frac{1}{2^2}|Q_0|.
\]
We can now iterate this argument (see Figure \ref{fig: sparse cubes}). 
\begin{figure}[ht]
    \centering
    \includegraphics[width=0.25\textwidth]{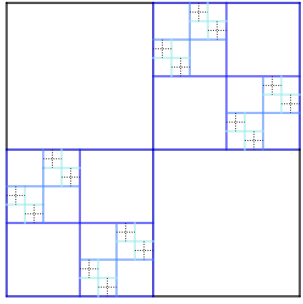}
    \caption{The cube \(Q_0\) with an example of the first iterations of the sparse domination argument. The bad part of \(Q_0\) consists of the top right and bottom left squares. In each of those, the bad part consists of the top left and bottom right squares and so on.}
    \label{fig: sparse cubes}
\end{figure}
We decompose each \(B(Q_{j,1})\) as a union of maximal dyadic cubes
\[
B(Q_{j,1}) = \bigcup_l Q_{j, 1}^l, \text{ where }|Q_{j,1}^l|^{\frac{\alpha}{d}-1}\int_{Q_{j,1}^l}f > C |Q_{j,1}|^{\frac{\alpha}{d}-1}\int_{Q_{j,1}}f.
\]
Then, \(Q_0 = B_2 \cup G_2 \cup G_1\cup G_0\) where 
\[
B_2 = \bigcup_{j, l}B(Q_{j,1}^l) \text{ and } G_2 = \bigcup_{j, l} G(Q_{j,1}^l).
\]
We then have that
\[
M_\alpha^\D f(x) \leq C \sum_{k=0}^1 \sum_j \chi_{Q_{j,k}}(x) |Q_{j,k}|^{\frac{\alpha}{d}-1}\int_{Q_{j,k}}f + C \sum_{j, l}\chi_{Q_{j,1}^l}(x) |Q_{j,1}^l|^{\frac{\alpha}{d}-1}\int_{Q_{j,1}^l}f,
\]
for all \(x \in Q_0 \setminus B_2\). Moreover, as before we can use the maximality of the cubes \(\{Q_{j,1}^l\}\) to argue that \(M_\alpha^\D f(x) = M_\alpha^\D (f\chi_{Q_{j,1}^l})(x)\), for all \(x\in Q_{j,1}^l\), and this in turn implies, by the same weak type estimate argument, that \(|B(Q_{j,1}^l)|\leq |Q_{j,1}^l|/2\). Combining this with the fact that \(|B_1| \leq 2^{-2}|Q_0|\) we see that
\[
|B_2| = \sum_{j,l}|B(Q_{j,1}^l)| \leq \frac{1}{2}\sum_{j,l} |Q_{j,1}^l| = \frac{1}{2}\sum_j |B(Q_{j,1})| = \frac{1}{2} |B_1|\leq \frac{1}{2^3}|Q_0|.
\]
This way, if we reorganize the cubes \(\{Q_{j,1}^l\}_{j,l}\) as \(\{Q_{j, 2}\}_j\) then we see that
\[
Q_0 = B_2 \cup G_2 \cup G_1 \cup G_0,
\]
where 
\[
M_\alpha^\D f(x) \leq C \sum_{k=0}^2 \sum_j \chi_{Q_{j,k}}(x) |Q_{j,k}|^{\frac{\alpha}{d}-1}\int_{Q_{j,k}}f,\ \forall x \in Q_0\setminus B_2,\text{ and }|B_2|\leq \frac{1}{2^3}|Q_0|.
\]
If we carry on in this way, then after \(N+1\) steps we have a decomposition \(Q_0 = B_N \cup G_N \cup \dots \cup G_0\) such that
\[
M_\alpha^\D f(x) \leq C \sum_{k=0}^N \sum_j \chi_{Q_{j,k}}(x) |Q_{j,k}|^{\frac{\alpha}{d}-1}\int_{Q_{j,k}}f,\ \forall x \in Q_0\setminus B_N,\text{ and }|B_N|\leq \frac{1}{2^{N+1}}|Q_0|.
\]
Therefore if we set \(B_\infty = \cap_{N\geq 1} B_N\), then \(|B_\infty| = 0\) and 
\[
M_\alpha^\D f(x) \leq C \sum_{k, j}\chi_{Q_{j,k}}(x) |Q_{j,k}|^{\frac{\alpha}{d}}\int_{Q_{j,k}}f,\ \forall x\in Q_0\setminus B_\infty.
\]
Thus, we have obtained the desired estimate if we add all the cubes \(\{Q_{j,k}\}_{j,k}\) to the family \(\S\). At least, when \(x\in Q_0\). If we want this to hold for almost every \(x\in \R^d\) we have to think a little bit about what happens when \(x\in \R^d\setminus Q_0\). Take any \(x\in \R^d\setminus Q_0\). Since \(\supp(f)\subseteq Q_0\) then we know that any cube \(Q\) in \(\D\) that contains \(x\) and is such that \(\int_Q f > 0\) must be an ancestor of \(Q_0\). Also, if we consider the sequence of ancestors, \(A_k(Q_0)\), then
\[
|A_k(Q_0)|^{\frac{\alpha}{d}-1}\int_{A_k(Q_0)}f = |A_k(Q_0)|^{\frac{\alpha}{d}-1}\int_{Q_0}f \searrow_k 0,
\]
and so
\[
M_\alpha f(x) = |A_{k^*}(Q_0)|^{\frac{\alpha}{d}-1}\int_{A_{k^*}(Q_0)}f,
\]
where \(A_{k^*}(Q_0)\) is the first ancestor of \(Q_0\) that contains \(x\). This way, if we add the cubes \(\{A_k(Q_0)\}_{k\geq 1}\) to the family \(\S\) we see that we have the estimate
\[
M_\alpha^\D f(x) \leq 2^{1-\frac{\alpha}{d}}\sum_{Q\in \S}\chi_Q(x) |Q|^{\frac{\alpha}{d}-1}\int_Q f,\ \text{for a.e. }x\in \R^d.
\]
At this point, we should think about how `large' this family \(\S\) that we have obtained is. Recall that for a family of disjoint cubes we could carry through with the argument, but in this case the cubes in \(\S\) are not disjoint. Despite this, they satisfy the next best thing. Note that although the cubes \(\{Q_{j, k}\}\) are not disjoint, they have large parts which are pairwise disjoint. Indeed, if we consider \(G(Q_{j,k})\), then we see that
\[
|G(Q_{j,k})|\geq \frac{1}{2}|Q_{j,k}|\text{ and the sets in }\{G(Q_{j,k})\}_{j,k}\text{ are pairwise disjoint}.
\]
Moreover, for each ancestor \(A_k(Q_0)\) we can set \(G(A_k(Q_0)) = A_k(Q_0) \setminus A_{k-1}(Q_0)\). This way we see that each \(Q\in \S\) has a measurable subset \(G(Q)\subseteq Q\) such that \(|G(Q)|\geq |Q|/2\) and the sets \(\{G(Q)\}_{Q\in \S}\) are pairwise disjoint. This is what we call a \textit{sparse}\index{sparse family} family of cubes.
\begin{defn}\label{defn: sparse family}
    Let \(0<\eta<1\). We say that a collection of measurable subsets of \(\R^d\), \(\S\), is \(\eta\)-sparse if for each set \(Q\in \S\) there is some measurable subset \(G(Q)\subseteq Q\) such that \(|G(Q)|\geq \eta |Q|\) and the sets \(\{G(Q)\}_{Q\in \S}\) are pairwise disjoint.
\end{defn}
Given a sparse family we can associate to it what we call a sparse operator.
\begin{defn}\label{defn: sparse operator}
    Given an \(\eta\)-sparse family of sets \(\mcal{S}\), we define the associated 
    \textit{sparse operator}\index{sparse operator} as
    \[ 
        \mcal{A}_\S^\alpha f(x) = \sum_{Q\in \S}\chi_Q(x) |Q|^\frac{\alpha}{d}
        \dashint_Q|f|.
    \] 
\end{defn}
We have now proved the following result.
\begin{prop}\label{prop: sparse domination for dyadic M_alpha}
    Let \(\D\) be a dyadic lattice with one quadrant, let \(f\in L^1_\text{loc}(\R^d)\) have compact support and let \(\alpha \in [0,d[\). Then, there is a \(1/2\)-sparse family of cubes \(\S = \S(f)\subseteq \D\) such that
    \[
    M_\alpha^\D f(x) \leq 2^{1-\frac{\alpha}{d}} \A_\S^\alpha f(x) \text{ for a.e. }x\in \R^d,
    \]
\end{prop}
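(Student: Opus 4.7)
The strategy is to formalize the iterative construction that was laid out in the discussion preceding the statement. First I would use the hypothesis that $\D$ has one quadrant together with Proposition \ref{one quadrant lattices} to pick some $Q_0 \in \D$ containing $\supp(f)$, reducing the task to producing the pointwise estimate on $Q_0$ and then handling $\R^d \setminus Q_0$ separately using the ancestors $\{A_k(Q_0)\}_{k \geq 1}$ (which form a nested tower on which the averages of $f$ decay to zero, so that $M_\alpha^\D f(x)$ for $x$ outside $Q_0$ is achieved on a single ancestor and thus controlled by adding these ancestors to $\S$).

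Inside $Q_0$, setting $C = 2^{1-\alpha/d}$, I define the good set $G(Q_0) = \{x \in Q_0 : M_\alpha^\D f(x) \leq C |Q_0|^{\alpha/d-1}\int_{Q_0} f\}$ and its complement $B(Q_0) \subseteq Q_0$, and then apply the weak type $(1, d/(d-\alpha))$ bound from Proposition \ref{strong type estimate for fractional max operator with lebesgue} to obtain $|B(Q_0)| \leq |Q_0|/2$. The central move is then the local Calderón-Zygmund stopping-time argument: since $B(Q_0)$ is a level set for $M_\alpha^\D f$ with threshold strictly larger than the average over $Q_0$, it can be written as a disjoint union of maximal dyadic cubes $\{Q_{j,1}\} \subseteq \Des(Q_0)$ on which $|Q_{j,1}|^{\alpha/d-1}\int_{Q_{j,1}} f > C |Q_0|^{\alpha/d-1}\int_{Q_0} f$, with the maximality meaning no strict ancestor inside $Q_0$ has this property.

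The key technical point, which is the main obstacle, is showing that the iteration truly reduces the bad measure by a factor of $1/2$ at each step. Naively applying the weak-type estimate to $M_\alpha^\D f$ on $Q_{j,1}$ produces $\|f\|_{L^1(Q_0)}^q$ on the right-hand side rather than $\|f\|_{L^1(Q_{j,1})}^q$, which is too big. The fix is the localization identity $M_\alpha^\D f(x) = M_\alpha^\D(f \chi_{Q_{j,1}})(x)$ for all $x \in Q_{j,1}$, which I prove by splitting cubes $Q \in \D$ containing $x$ into those contained in $Q_{j,1}$ (trivial) and strict ancestors of $Q_{j,1}$ (whose averages are controlled by maximality and are beaten by the average over $Q_{j,1}$ itself). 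With this identity in hand, the weak-type bound yields $|B(Q_{j,1})| \leq |Q_{j,1}|/2$, and summing over $j$ gives $|B_1| \leq |B(Q_0)|/2 \leq |Q_0|/4$.

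Iterating this argument produces, at step $N$, a family of cubes $\{Q_{j,k}\}_{0 \leq k \leq N,\, j}$ together with a bad set $B_N$ satisfying $|B_N| \leq 2^{-(N+1)}|Q_0|$ and the pointwise bound $M_\alpha^\D f(x) \leq C \sum_{k=0}^{N}\sum_j \chi_{Q_{j,k}}(x) |Q_{j,k}|^{\alpha/d-1} \int_{Q_{j,k}} f$ for $x \in Q_0 \setminus B_N$. Taking $N \to \infty$ the set $B_\infty = \bigcap_N B_N$ has measure zero, which yields the desired pointwise sparse estimate on $Q_0$ (and combined with the ancestor cubes, on all of $\R^d$). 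Finally, sparseness of $\S$ is immediate: for each selected cube $Q_{j,k}$ the associated good set $G(Q_{j,k}) = Q_{j,k} \setminus B(Q_{j,k})$ satisfies $|G(Q_{j,k})| \geq |Q_{j,k}|/2$, and these good sets are pairwise disjoint because the children $Q_{j,k+1}$ exhaust precisely $B(Q_{j,k})$ and are themselves chosen disjointly; the ancestor cubes can be handled by assigning $G(A_k(Q_0)) = A_k(Q_0) \setminus A_{k-1}(Q_0)$, whose measure is $(1 - 2^{-d})|A_k(Q_0)| \geq |A_k(Q_0)|/2$.
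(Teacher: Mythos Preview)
Your proposal is correct and follows essentially the same approach as the paper: the proof in the paper is precisely the iterative Calder\'on--Zygmund stopping-time construction laid out in the discussion preceding the proposition, with the same choice $C = 2^{1-\alpha/d}$, the same localization identity $M_\alpha^\D f = M_\alpha^\D(f\chi_{Q_{j,1}})$ on $Q_{j,1}$ via maximality, the same treatment of $\R^d\setminus Q_0$ using the ancestor tower, and the same assignment of good sets (including $G(A_k(Q_0)) = A_k(Q_0)\setminus A_{k-1}(Q_0)$).
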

\begin{remark}\label{remark: sparse estimate improvement for fractional maximal operator}
    The proof actually yields a stronger result. In fact, we have proved that 
    \[ 
        M_\alpha^\D f(x) \lesssim \sum_{Q\in \S}\chi_{G(Q)}(x) |Q|^\frac{\alpha}{d}\dashint_Q|f|.
    \] 
    The improvement of going from \(\chi_Q(x)\) to \(\chi_{G(Q)}(x)\) is relevant. However, as we will see later, 
    this improvement does not work in general for other operators. For this reason we have chosen to give here 
    a unified approach without keeping track of this improvement for \(M_\alpha\). For more information about this 
    improved result see \cite{Uribe} where this proposition is shown by a slightly different method.
\end{remark}
Since our goal is to obtain 
two-weight norm estimates for \(M_\alpha\), we need to obtain weighted inequalities 
for sparse operators.

Let \(\S\) be an \(\eta\)-sparse family of cubes, not necessarily dyadic. We have seen above how to obtain an \(L^p \rightarrow L^q\) bound for \(\A_\S^\alpha\) under the assumption that \(\S\) consists of disjoint cubes. The idea now is to use the sparseness property to carry through with the argument even though the cubes from \(\S\) are not necessarily disjoint. To start with we use a duality argument again, so we assume we have \(f, g\geq 0\) and we write
\[
\begin{split}
\int_{\R^d}\A_\S^\alpha f(x) g(x) dx &= \sum_{Q\in \S} |Q|^{\frac{\alpha}{d}}\dashint_Q f \int_Q g \\
&= \sum_{Q\in \S}|Q| |Q|^\frac{\alpha}{d}\dashint_Q f \dashint_Q g \\
&\leq \frac{1}{\eta}\sum_{Q\in \S}|G(Q)||Q|^\frac{\alpha}{d}\dashint_Q f \dashint_Q g.
\end{split}
\]
Now note that if \(x\in Q\), then 
\[
|Q|^\frac{\alpha}{d} \dashint_Q f \leq M_\alpha f(x)
\]
and likewise for \(g\). In particular, the same holds for \(x\in G(Q)\subseteq Q\), so
\[
\int \A_\S^\alpha f g \leq \frac{1}{\eta}\sum_{Q\in \S}\int_{G(Q)}\left(|Q|^\frac{\alpha}{d} \dashint_Q f\right)\left(\dashint_Q g\right)dx\leq \frac{1}{\eta}\sum_{Q\in \S}\int_{G(Q)}M_\alpha f(x) Mg(x)dx.
\]
At this point we simply use the disjointness of the good parts of the cubes to argue that
\[
\int \A_\S^\alpha f g \leq \frac{1}{\eta} \int_{\R^d}M_\alpha f Mg \leq \frac{1}{\eta} \|M_\alpha f \|_{L^q}\|Mg\|_{L^{q'}}.
\]
If we now use Proposition \ref{strong type estimate for fractional max operator with lebesgue}, together with the assumption that \(1/q + \alpha/d = 1/p\) then we see that
\[
\int \A_\S^\alpha f g \lesssim_{\alpha, d, p, q} \frac{1}{\eta} \|f\|_{L^p}\|g\|_{L^{q'}}.
\]
Since this holds for all \(g\in L^{q'}\) it follows by duality that 
\[
\|\A_\S^\alpha f\|_{L^q(\R^d)}\lesssim_{\alpha,d,p,q}\frac{1}{\eta} \|f\|_{L^p(\R^d)}.
\]
This way we see that, even though our cubes are not pairwise disjoint, the sparseness condition still allows us to argue easily that the operator is bounded from \(L^p\) to \(L^q\). Our goal now is to modify this argument to allow for weighted inequalities, instead of using just the Lebesgue measure. 

First we can write
\[
\|\A_\S^\alpha f \|_{L^q(w)} = \| w^{1/q}\A_\S^\alpha  f \|_{L^q},
\]
so it suffices to prove that
\[
\left| \int_{\R^d} \A_\S^\alpha f(x)w(x)^\frac{1}{q} g(x) dx \right| \lesssim \|f\|_{L^p(v)}\|g\|_{L^{q'}},\ \forall g\in L^{q'}.
\]
Again, without loss of generality, we can assume that \(f, g\geq 0\). Now, the left-hand side is
\[
\int_{\R^d} \sum_{Q\in \S}\1_Q(x)g(x) w(x)^\frac{1}{q} |Q|^{\frac{\alpha}{d}-1}\int_Q f dx = \sum_{Q\in \S}|Q|^{\frac{\alpha}{d}-1}\int_Q f \int_Q gw^\frac{1}{q},
\]
where we switched the integral and the sum by the monotone convergence theorem. If we follow the previous argument directly we would get
\[
\begin{split}
\sum_{Q\in \S} |Q| |Q|^{\frac{\alpha}{d}-1}\int_Q f \dashint_Q g w^\frac{1}{q} &\leq \frac{1}{\eta} \sum_{Q\in \S} \int_{G(Q)} \left(|Q|^{\frac{\alpha}{d}-1}\int_Q f\right) \left(\dashint_Q gw^\frac{1}{q}\right) dx \\
&\leq \frac{1}{\eta} \int_{\R^d} M_\alpha f(x) M(gw^\frac{1}{q})(x)dx\\
&\lesssim \|M_\alpha f\|_{L^q}\|M(gw^\frac{1}{q})\|_{L^{q'}}.
\end{split}
\]
This is not ideal for many reasons. First of all, we do not necessarily have the relation \(1/q + \alpha/d = 1/p\), so we cannot bound the fractional maximal operator. Also, even if we could, we would not get the weighted norm we want. Finally, we can bound the Hardy-Littlewood operator by \(\|gw^{1/q}\|_{L^{q'}}\) which is also not what we want. This way, we will have to significantly modify the argument to solve these problems. The way to achieve this is to follow the argument in Theorem 4.5 from \cite{Uribe} with suitable modifications for the present case, which is slightly more general. The key idea to making this work is to consider weighted averages, with respect to carefully chosen weights.

\begin{thm}\label{thm: weighted estimate for sparse operators}
    Let \(\S\) be an \(\eta\)-sparse family of cubes. Suppose that \(1<p\leq q<\infty\) and \(\alpha\geq 0\). Suppose also that we have two weights \(v, w\) such that \((v,w)\in A_{p,q}^\alpha\) and \(v^{-p'/p}, w\in A_\infty\). Then,
    \[
    \|\A_\S^\alpha f\|_{L^q(w)}\lesssim \|f\|_{L^p(v)},\ \forall f\in L^p(v).
    \]
\end{thm}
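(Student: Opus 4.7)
The plan is to use duality combined with the ``dual'' weight $\sigma := v^{1-p'} = v^{-p'/p}$, which is the natural companion to $v$ in the $A_{p,q}^\alpha$ condition. By the isometry $L^q(w) \ni h \mapsto hw^{1/q} \in L^q$ and standard duality, it suffices to estimate
\[
    \int_{\R^d} \A_\S^\alpha f(x)\, g(x)\, w(x)^{1/q}\, dx = \sum_{Q\in \S} |Q|^{\alpha/d - 1} \int_Q f \int_Q gw^{1/q}
\]
by $C\|f\|_{L^p(v)}\|g\|_{L^{q'}}$ for nonnegative $f, g$; convergence issues I would handle by first restricting to a finite subfamily of $\S$ and passing to the limit.

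I would then rewrite the Lebesgue averages as weighted ones,
\[
    \int_Q f = \langle f\sigma^{-1}\rangle_Q^\sigma\, \sigma(Q), \qquad \int_Q gw^{1/q} = \langle gw^{1/q-1}\rangle_Q^w\, w(Q),
\]
where $\langle h\rangle_Q^\mu := \mu(Q)^{-1}\int_Q h\, d\mu$. The $A_{p,q}^\alpha$ condition reads $|Q|^{\alpha/d-1}\sigma(Q)w(Q) \leq [v,w]_{A_{p,q}^\alpha}\, \sigma(Q)^{1/p} w(Q)^{1/q'}$, while the assumption $\sigma, w \in A_\infty$ upgrades $\eta$-sparseness with respect to Lebesgue measure to sparseness with respect to each of $\sigma$ and $w$, yielding $\sigma(Q) \lesssim \sigma(G(Q))$ and $w(Q) \lesssim w(G(Q))$.

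Applying Hölder with exponents $(p, p')$ then gives
\[
    \sum_Q \langle f\sigma^{-1}\rangle_Q^\sigma \langle gw^{1/q-1}\rangle_Q^w \sigma(G(Q))^{1/p} w(G(Q))^{1/q'} \leq X^{1/p} Y^{1/p'},
\]
with $X = \sum_Q (\langle f\sigma^{-1}\rangle_Q^\sigma)^p \sigma(G(Q))$ and $Y = \sum_Q (\langle gw^{1/q-1}\rangle_Q^w)^{p'} w(G(Q))^{p'/q'}$. For $X$, disjointness of the $G(Q)$ and the pointwise bound $\langle f\sigma^{-1}\rangle_Q^\sigma \leq M_\S^\sigma(f\sigma^{-1})(x)$ on $G(Q)$ give $X \leq \|M_\S^\sigma(f\sigma^{-1})\|_{L^p(\sigma)}^p$. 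Since $\sigma \in A_\infty$ is doubling, the weighted sparse maximal operator $M_\S^\sigma$ is bounded on $L^p(\sigma)$, and so $X \lesssim \|f\sigma^{-1}\|_{L^p(\sigma)}^p = \|f\|_{L^p(v)}^p$, using $\sigma^{1-p} = v$.

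The decisive off-diagonal step is the estimate for $Y$: the exponent $p'/q' \geq 1$ (equivalent to $p \leq q$) rules out a naive second Hölder. Here I would invoke the elementary inequality $\sum_k a_k^r \leq (\sum_k a_k)^r$, valid for $a_k \geq 0$ and $r \geq 1$, applied with $r = p'/q'$ and $a_Q = (\langle gw^{1/q-1}\rangle_Q^w)^{q'} w(G(Q))$, to obtain
\[
    Y \leq \left( \sum_Q (\langle gw^{1/q-1}\rangle_Q^w)^{q'} w(G(Q)) \right)^{p'/q'} \lesssim \|gw^{1/q-1}\|_{L^{q'}(w)}^{p'} = \|g\|_{L^{q'}}^{p'},
\]
using the $L^{q'}(w)$ boundedness of $M_\S^w$ and the identity $(1/q-1)q' + 1 = 0$. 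Combining $X^{1/p}$ and $Y^{1/p'}$ delivers the claim. The principal obstacle is exactly this absorption of the mismatch $p'/q' \geq 1$; the supporting technical ingredients are the weighted Hardy--Littlewood bounds for $M_\S^\sigma$ and $M_\S^w$ (which follow from $A_\infty$ implying doubling) and a preliminary finite truncation to justify the rearrangements of sums.
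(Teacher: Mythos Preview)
Your argument is correct and follows the same overall duality strategy as the paper: rewrite the Lebesgue integrals as $\sigma$- and $w$-averages, use the $A_{p,q}^\alpha$ condition to extract the factor $\sigma(Q)^{1/p}w(Q)^{1/q'}$, use $A_\infty$ to replace $\sigma(Q),w(Q)$ by $\sigma(G(Q)),w(G(Q))$, and finish with weighted maximal bounds over the disjoint sets $G(Q)$.

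The organizational difference is in how the off-diagonal mismatch $p\leq q$ is absorbed. The paper introduces an intermediate exponent $q_1\in[p,q]$, sets $\alpha_1=d(1/p-1/q_1)$, $\alpha_2=d(1/q_1-1/q)$, and uses the \emph{fractional} weighted maximal bounds $M_{\alpha_1,\sigma}:L^p(\sigma)\to L^{q_1}(\sigma)$ and $M_{\alpha_2,w}:L^{q'}(w)\to L^{q_1'}(w)$ (its Proposition~\ref{strong type estimate for fractional max operator with a weight}), so that H\"older for the series is with the matched pair $(q_1,q_1')$. You instead take H\"older with $(p,p')$ and handle the resulting exponent $p'/q'\geq 1$ on the $Y$-sum directly via $\sum a_Q^{r}\leq(\sum a_Q)^{r}$; this lets you get away with only the \emph{non-fractional} bounds $M_\sigma:L^p(\sigma)\to L^p(\sigma)$ and $M_w:L^{q'}(w)\to L^{q'}(w)$. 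Your route is therefore slightly more elementary (it never needs the weighted fractional maximal theory), while the paper's formulation with the free parameter $q_1$ is more symmetric. Underneath, the two are equivalent: choosing $q_1=p$ in the paper's proof reduces to your argument, with the $\ell^{p'/q'}$ embedding hidden inside the proof of the off-diagonal maximal bound (compare the final step of Proposition~\ref{weak type inequality for the fractional maximal operator}).
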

\begin{proof}
    Let \(f, g\geq 0\) and put \(\sigma = v^{-p'/p}\). Take any \(q_1 \in [p, q]\) and define
    \[
    \begin{dcases}
    \alpha_1 = d\left(\frac{1}{p}-\frac{1}{q_1}\right), \\ 
    \alpha_2 = d\left(\frac{1}{q_1}-\frac{1}{q}\right).
    \end{dcases}
    \]
    Note that, with these definitions, we have that
    \[
    \frac{1}{q_1} + \frac{\alpha_1}{d} = \frac{1}{p},\ \frac{1}{q_1'}+\frac{\alpha_2}{d}=\frac{1}{q'}\text{ and }\alpha_1, \alpha_2\in[0,d[.
    \]
    Now, as we've seen above, to prove the theorem it suffices to show that
    \[
    \left| \int_{\R^d} \A_\S^\alpha f(x)w(x)^\frac{1}{q} g(x) dx \right| \lesssim \|f\|_{L^p(v)}\|g\|_{L^{q'}},
    \]
    and the left hand side is 
    \begin{equation}\label{eq: sum over cubes}
        \sum_{Q\in \S} |Q|^{\frac{\alpha}{d}-1}\int_Q f \int_Q gw^\frac{1}{q}.
    \end{equation}
    Now note that if \(x\in Q\), then
    \[
    M_{\alpha_1,\sigma} (f\sigma^{-1})(x) \geq \frac{1}{\sigma(Q)^{1-\frac{\alpha_1}{d}}} \int_Q f
    \]
    and 
    \[
    M_{\alpha_2,w}(gw^{-\frac{1}{q'}})(x) \geq \frac{1}{w(Q)^{1-\frac{\alpha_2}{d}}}\int_Q gw^\frac{1}{q}.
    \]
    Therefore, we have that
    \[
    \int_Q f \leq \frac{\sigma(Q)^{1-\frac{\alpha_1}{d}}}{\sigma(G(Q))^\frac{1}{q_1}}\left(\int_{G(Q)} M_{\alpha_1,\sigma}(f\sigma^{-1})(x)^{q_1} \sigma(x)dx\right)^{1/q_1}
    \]
    and
    \[
    \int_Q gw^\frac{1}{q} \leq \frac{w(Q)^{1-\frac{\alpha_2}{d}}}{w(G(Q))^\frac{1}{q_1'}} \left(\int_{G(Q)}M_{\alpha_2,w}(gw^{-\frac{1}{q'}})(x)^{q_1'}w(x)dx\right)^{1/q_1'}.
    \]
    In turn, this means that \eqref{eq: sum over cubes} can be controlled by
    \[
     \sum_{Q\in \S}\frac{|Q|^{\frac{\alpha}{d}-1}\sigma(Q)^{1-\frac{\alpha_1}{d}}w(Q)^{1-\frac{\alpha_2}{d}}}{\sigma(G(Q))^\frac{1}{q_1}w(G(Q))^\frac{1}{q_1'}}\left(\int_{G(Q)} M_{\alpha_1,\sigma}(f\sigma^{-1})^{q_1} \sigma\right)^{\frac{1}{q_1}}\left(\int_{G(Q)}M_{\alpha_2,w}(gw^{-\frac{1}{q'}})^{q_1'}w\right)^{\frac{1}{q_1'}}.
    \]
    From the assumptions we know that both \(\sigma\) and \(w\) are in \(A_\infty\), which means that there are \(\delta_1, \delta_2>0\) such that
    \[
    \frac{\sigma(Q)}{\sigma(G(Q))}\lesssim \left(\frac{|Q|}{|G(Q)|}\right)^{\delta_1}\lesssim \frac{1}{\eta^{\delta_1}}\text{ and }\frac{w(Q)}{w(G(Q))}\lesssim \left(\frac{|Q|}{|G(Q)|}\right)^{\delta_2}\lesssim \frac{1}{\eta^{\delta_2}}.
    \]
    Therefore,
    \[
    \begin{split}
    |Q|^{\frac{\alpha}{d}-1}\frac{\sigma(Q)^{1-\frac{\alpha_1}{d}}}{\sigma(G(Q))^\frac{1}{q_1}} \frac{w(Q)^{1-\frac{\alpha_2}{d}}}{w(G(Q))^\frac{1}{q_1'}}&\lesssim |Q|^{\frac{\alpha}{d}-1}\frac{\sigma(Q)^{1-\frac{\alpha_1}{d}}}{\sigma(Q)^\frac{1}{q_1}} \frac{w(Q)^{1-\frac{\alpha_2}{d}}}{w(Q)^\frac{1}{q_1'}}\\
    &\lesssim |Q|^{\frac{\alpha}{d}-1}\sigma(Q)^{1/p'}w(Q)^{1/q}\\
    &\lesssim [v, w]_{A_{p,q}^\alpha}.
    \end{split}
    \]
    If we now use Hölder's inequality for series and the fact that the sets \(G(Q)\) are pairwise disjoint we obtain
    \[
    \begin{split}
        \int \A_\S^\alpha f g w^\frac{1}{q}&\lesssim \sum_{Q\in \S}\left(\int_{G(Q)} M_{\alpha_1,\sigma}(f\sigma^{-1})^{q_1} \sigma\right)^{1/q_1}\left(\int_{G(Q)}M_{\alpha_2,w}(gw^{-\frac{1}{q'}})^{q_1'}w\right)^{1/q_1'}\\
        &\lesssim \left(\sum_{Q\in \S}\int_{G(Q)} M_{\alpha_1,\sigma}(f\sigma^{-1})^{q_1} \sigma\right)^{1/q_1}\left(\sum_{Q\in \S}\int_{G(Q)}M_{\alpha_2,w}(gw^{-\frac{1}{q'}})^{q_1'}w\right)^{1/q_1'}\\
        &\lesssim \|M_{\alpha_1,\sigma}(f\sigma^{-1})\|_{L^{q_1}(\sigma)}\|M_{\alpha_2, w}(gw^{-\frac{1}{q'}})\|_{L^{q_1'}(w)}\\
        &\lesssim \|f\sigma^{-1}\|_{L^p(\sigma)} \|gw^{-\frac{1}{q'}}\|_{L^{q'}(w)}\\
        &\lesssim \|f\|_{L^p(v)}\|g\|_{L^{q'}},
    \end{split}
    \]
    where we have applied Proposition \ref{strong type estimate for fractional max operator with a weight}, whose hypothesis are satisfied here because \(\sigma, w\in A_\infty\).
\end{proof}

\begin{remark}
    Looking at the proof of this theorem, one might think that the \(A_\infty\) condition is overkill. After all, we only use 
    the estimate 
    \[ 
        \frac{w(Q)}{w(A)}\lesssim \left(\frac{|Q|}{|A|}\right)^\delta,
    \] 
    and the fact that quadrants have infinite \(w\) measure. It may seem like the first property could be replaced by 
    the apparently weaker condition that the weights satisfy 
    \[ 
        \frac{w(Q)}{w(A)}\leq f\left(\frac{|Q|}{|A|}\right),
    \] 
    for some positive and increasing function \(f\). As it turns out, this condition implies that \(w\in A_\infty\), as 
    we show in appendix \ref{appendix: weighted spaces}, Proposition \ref{prop: order of growth of Ainfty}.
\end{remark}

\begin{remark}
    In Definition \ref{defn: sparse family} we considered sparse operators associated
    to families of generic sets. However, from the above proof we see that
    to actually obtain useful estimates we also need to be able to control the 
    maximal operator associated to the family of sets we use in our sparse operator. 
    This way, it was essential for us to use cubes in the construction of our sparse
    families. Other families of sets can be problematic because we lose control of the 
    associated maximal operator, even for sets which are very similar to cubes. For example,
    had we considered generic products of intervals (i.e. `rectangles'), then we would 
    have run into issues trying to obtain the boundedness of the strong maximal operator 
    with respect to a measure \(\mu = w(x)dx\). Still, the strong maximal operator is bounded on \(L^p\) 
    when \(p > 1\). An even more challenging situation would arise if we consider rectangles with arbitrary 
    orientations. In that case, the operator is not even bounded on \(L^p\) (see Exercise 2.1.9 in \cite{Cgrafakos}).
    In general, we need to be careful when considering families of sets which have uncontrolled excentricity and many 
    normal directions. Although it is interesting to give examples of sparse families which do not come from cubes, it is 
    important to emphasize that in this text all the fundamental results regarding sparse domination work in the 
    specific case of sparse families of cubes.
\end{remark}

Combining this theorem with Proposition \ref{prop: sparse domination for dyadic M_alpha}, we obtain the following result.
\begin{prop}\label{prop: two weight estimate for dyadic frac max op}
    Let \(\D\) be a dyadic lattice with one quadrant, let \(1<p\leq q<\infty\) and \(\alpha\in [0, d[\). Suppose that \(v, w\) are weights satisfying \((v,w)\in A_{p, q}^\alpha\) and \(v^{-p'/p},w\in A_\infty\). Then,
    \[
    \|M_\alpha^\D f\|_{L^q(w)}\lesssim \|f\|_{L^p(v)},\ \forall f\in L^p(v).
    \]
\end{prop}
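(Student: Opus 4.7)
The plan is to reduce the general case to the compactly supported case handled by Proposition \ref{prop: sparse domination for dyadic M_alpha}, and then invoke Theorem \ref{thm: weighted estimate for sparse operators} to get the weighted bound. The main technical issue is that the sparse domination result requires $f$ to have compact support, so we will need a truncation argument followed by a monotone convergence step.

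First, I would observe that we may assume $f \geq 0$ without loss of generality. As in the proof of Proposition \ref{two weight weak type estimate for frac max oper}, the $A_{p,q}^\alpha$ condition combined with Hölder's inequality shows that any $f \in L^p(v)$ is locally integrable. Set $f_N = f \chi_{B(0,N)}$. Then each $f_N$ is non-negative, compactly supported, and belongs to $L^1_{\mathrm{loc}}(\R^d)$, so Proposition \ref{prop: sparse domination for dyadic M_alpha} supplies a $1/2$-sparse family $\S_N \subseteq \D$ (depending on $N$) such that
\[
M_\alpha^\D f_N(x) \leq 2^{1-\alpha/d} \A_{\S_N}^\alpha f_N(x) \quad \text{for a.e. } x \in \R^d.
\]

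Next, since $(v,w) \in A_{p,q}^\alpha$ and $v^{-p'/p}, w \in A_\infty$, Theorem \ref{thm: weighted estimate for sparse operators} applied to the sparse family $\S_N$ (with sparseness constant $\eta = 1/2$, which affects only the implicit constant) gives
\[
\|M_\alpha^\D f_N\|_{L^q(w)} \leq 2^{1-\alpha/d}\|\A_{\S_N}^\alpha f_N\|_{L^q(w)} \lesssim \|f_N\|_{L^p(v)} \leq \|f\|_{L^p(v)},
\]
where the implicit constant does not depend on $N$ (it depends only on $\alpha, d, p, q$, on the $A_{p,q}^\alpha$ characteristic of $(v,w)$, and on the $A_\infty$ characteristics of $v^{-p'/p}$ and $w$).

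Finally, to pass to the limit, I note that for each dyadic cube $Q \in \D$, monotone convergence gives $|Q|^{\alpha/d-1}\int_Q f_N \nearrow |Q|^{\alpha/d-1}\int_Q f$. Interchanging the two suprema (over $N$ and over $Q \ni x$ in $\D$) yields $M_\alpha^\D f_N(x) \nearrow M_\alpha^\D f(x)$ for every $x$. A second application of the monotone convergence theorem, now in $L^q(w)$, gives $\|M_\alpha^\D f_N\|_{L^q(w)} \nearrow \|M_\alpha^\D f\|_{L^q(w)}$, and the uniform bound above transfers to the limit. The hardest step is really just bookkeeping — ensuring that the implicit constant in Theorem \ref{thm: weighted estimate for sparse operators} is uniform in $N$ since the sparse family $\S_N$ varies, but this is automatic because the estimate depends only on the sparseness constant and the weight characteristics, not on the particular sparse family.
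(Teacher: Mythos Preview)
Your proposal is correct and follows essentially the same approach as the paper: truncate to $f_N = f\chi_{B(0,N)}$, apply the sparse domination Proposition~\ref{prop: sparse domination for dyadic M_alpha} and then Theorem~\ref{thm: weighted estimate for sparse operators}, and pass to the limit. The only cosmetic difference is that the paper uses Fatou's lemma at the end (after proving $M_\alpha^\D f_N(x)\to M_\alpha^\D f(x)$ directly), whereas you exploit monotonicity to use the monotone convergence theorem instead; both are valid.
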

\begin{proof}
    Let \(f\in L^p(v)\). For each \(N\) set \(f_N = f \chi_{B(0,N)}\). Then \(f_N\in L^1_\text{loc}\) with compact support, 
    and therefore we can apply Proposition \ref{prop: sparse domination for dyadic M_alpha} to conclude that 
    there is a \(1/2\)-sparse family of cubes \(\S_N = \S_N(f_N)\subseteq \D\) such that
    \[
        M_\alpha^\D(f_N)(x) \leq 2^{1-\frac{\alpha}{d}} \A_{\S_N}^\alpha f_N(x) \text{ a.e. }x\in \R^d.
    \]
    But then, from Theorem \ref{thm: weighted estimate for sparse operators},
    \[
    \|M_\alpha^\D f_N\|_{L^q(w)}\lesssim \|f_N\|_{L^p(v)}\lesssim \|f\|_{L^p(v)}.
    \]
    To finish the proof it is enough to show that \(M_\alpha^\D f_N (x) \rightarrow_N M_\alpha^\D f(x)\), because if this is the case we have, by Fatou's lemma,
    \[
    \|M_\alpha^\D f\|_{L^q(w)} = \|\liminf_N M_\alpha^\D f_N\|_{L^q(w)}\leq \liminf_N \|M_\alpha^\D f_N\|_{L^q(w)}\lesssim \|f\|_{L^p(v)}.
    \]
    First suppose that \(M_\alpha^\D f(x)<\infty\). Given \(\delta>0\), there is some cube \(Q\in \D\) such that \(x\in Q\) and
    \[
    M_\alpha^\D f(x) - \frac{\delta}{2} < |Q|^{\frac{\alpha}{d}-1}\int_Q |f|.
    \]
    By the monotone convergence theorem we know that there is some \(N_0\) such that for \(N\geq N_0\),
    \[
    |Q|^{\frac{\alpha}{d}-1}\int_Q|f| - \frac{\delta}{2} < |Q|^{\frac{\alpha}{d}-1}\int_Q |f_N|.
    \]
    Therefore,
    \[
    M_\alpha^\D f(x) -  \delta < |Q|^{\frac{\alpha}{d}-1}\int_Q|f| - \frac{\delta}{2} < |Q|^{\frac{\alpha}{d}-1}\int_Q |f_N| \leq M_\alpha^\D f_N(x) \leq M_\alpha^\D f(x),\ \forall N\geq N_0.
    \]
    This shows that \(M_\alpha^\D f_N(x) \rightarrow_N M_\alpha^\D f(x)\). If \(M_\alpha ^\D f(x) = +\infty\) the argument is analogous.
\end{proof} 

Finally, if we approximate the fractional maximal operator by its dyadic version using \eqref{eq: control by dyadic max opers}, with 
dyadic lattices with one quadrant, then we arrive at the following result.
\begin{prop}\label{prop: two weight estimate for frac max op}
    Let \(1<p\leq q < \infty\) and \(\alpha\in [0, d[\). Suppose that \(v, w\) are weights such that \((v, w)\in A_{p, q}^\alpha\) and \(v^{-p'/p}, w \in A_\infty\). Then,
    \[
    \|M_\alpha f\|_{L^q(w)}\lesssim \|f\|_{L^p(v)},\ \forall f\in L^p(v).
    \]
\end{prop}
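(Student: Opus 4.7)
The plan is to reduce the continuous estimate to the dyadic one via the three lattice theorem, using dyadic lattices that all have a single quadrant. Start by fixing a dyadic lattice $\D$ with only one quadrant; such lattices exist by the discussion immediately after Proposition \ref{one quadrant lattices} (take for example $B = [0,1[^d$ with alternating ancestor choices). Apply Theorem \ref{three lattice theorem} to obtain the $3^d$ shifted dyadic lattices $\D^1, \dots, \D^{3^d}$. By Remark \ref{one quadrant implies one third shifts have one quadrant}, each $\D^j$ also has exactly one quadrant, which is the crucial hypothesis needed to invoke the dyadic estimate.

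Next, given $f \in L^p(v)$, I would verify that $f$ is locally integrable — this was already established in the proof of Proposition \ref{two weight weak type estimate for frac max oper} using the $A_{p,q}^\alpha$ condition — so that $M_\alpha f$ and the dyadic variants $M_\alpha^{\D^j} f$ are all well-defined pointwise. The pointwise inequality \eqref{eq: control by dyadic max opers} then yields
\[
M_\alpha f(x) \leq 3^{d-\alpha} \sum_{j=1}^{3^d} M_\alpha^{\D^j} f(x), \qquad \text{a.e. } x \in \R^d.
\]

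Taking the $L^q(w)$ norm of both sides, applying the triangle inequality in $L^q(w)$, and then applying Proposition \ref{prop: two weight estimate for dyadic frac max op} to each term $\|M_\alpha^{\D^j} f\|_{L^q(w)}$ (whose hypotheses — $1 < p \leq q < \infty$, $\alpha \in [0, d[$, $(v, w) \in A_{p,q}^\alpha$, and $v^{-p'/p}, w \in A_\infty$ — are exactly those assumed here, and where the one-quadrant requirement on each $\D^j$ is met) gives
\[
\|M_\alpha f\|_{L^q(w)} \leq 3^{d-\alpha} \sum_{j=1}^{3^d} \|M_\alpha^{\D^j} f\|_{L^q(w)} \lesssim \|f\|_{L^p(v)},
\]
where the implicit constant depends only on $d, \alpha, p, q$, and the relevant weight constants.

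There is no real obstacle here: all the substantive work — the sparse domination for the dyadic fractional maximal operator, the weighted estimate for sparse operators, and the one-third shift construction — has already been carried out. The only point that deserves care is ensuring that the three lattice theorem is applied to a base lattice $\D$ with a single quadrant, since Proposition \ref{prop: two weight estimate for dyadic frac max op} explicitly requires this, and Remark \ref{one quadrant implies one third shifts have one quadrant} guarantees the property is inherited by all the shifted lattices $\D^j$.
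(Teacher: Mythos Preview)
Your proof is correct and follows exactly the approach the paper intends: reduce to the dyadic case via the pointwise bound \eqref{eq: control by dyadic max opers} built from the three lattice theorem, ensure the lattices have one quadrant (via Remark \ref{one quadrant implies one third shifts have one quadrant}), and then apply Proposition \ref{prop: two weight estimate for dyadic frac max op} to each of the $3^d$ terms. The paper itself dispatches this in a single sentence, and your write-up simply spells out the details (local integrability of $f$, the one-quadrant check) more carefully.
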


\subsection{Examples of weights}\label{sec: examples of weights}

Proposition \ref{prop: two weight estimate for frac max op} gives very general conditions under which a two-weight estimate for the fractional maximal operator holds. 
These conditions, namely that \((v, w)\in A_{p, q}^\alpha\) and \(v^{-p'/p}, w\in A_\infty\), 
come from the comparison of the fractional maximal operator with the sparse forms \(\A_\S^\alpha\). 
Thus, the essential result for obtaining these two-weight inequalities is Theorem \ref{thm: weighted estimate for sparse operators}. 
Of course, we should consider what kinds of weights satisfy the \(A_{p, q}^\alpha\) and \(A_\infty\) conditions. 
One might imagine that such conditions are in general difficult to verify, but for many weights which are relevant in practice, 
we can actually fully characterize when they satisfy these types of conditions. There are three families of weights which we will focus on throughout the text. 
These are the power law\index{power law weights} weights \(w(x) = |x|^\gamma\), the inhomogeneous weights\index{inhomogeneous weights} \(w(x) = \langle x \rangle^\gamma = (1 + |x|^2)^{\gamma/2}\), 
and the monomial weights\index{monomial weights} \(w(x) = |x_1|^{\alpha_1}|x_2|^{\alpha_2}\dots |x_d|^{\alpha_d}\). 
The main goal of this subsection is to characterize when a weight from one of these three families satisfies the \(A_{p,q}^\alpha\) condition.

\subsubsection{Power law weights}

We first look at weights of the form \(w(x) = |x|^\gamma\). Our goal is to describe the set of parameters \((\beta, \gamma)\) such that \((|x|^\beta, |x|^\gamma)\in A_{p, q}^\alpha\). To do this, start by fixing \(p, q, \alpha\) such that \(1 < p, q < \infty\) and \(\alpha\in \R\). We want to find all pairs \((\beta, \gamma)\) such that
\[
\sup_Q |Q|^{\frac{\alpha}{d}-1}\left(\int_Q |x|^{-\beta \frac{p'}{p}}\right)^{1/p'}\left(\int_Q |x|^\gamma\right)^{1/q} < \infty.
\]
To simplify the notation we put \(a = -\beta p'/p\) and \(b = \gamma\). This way, we want to find all \((a, b)\) such that 
\[
\sup_Q |Q|^{\frac{\alpha}{d}-1}\left(\int_Q|x|^a\right)^{1/p'}\left(\int_Q|x|^b\right)^{1/q}<\infty.
\]
First and foremost the functions need to be locally integrable so we need \(a>-d\) and \(b>-d\). Clearly, the pairs that satisfy this condition are the same as the pairs that satisfy the corresponding condition for balls. This way, we consider balls instead of cubes to take advantage of the radial symmetry of the power law weights. Here we use an idea as in \cite{Cgrafakos}, which is to split between balls of type I\index{type I and type II balls} and balls of type II. A ball \(B=B(x_0,r)\) is said to be of type I if \(2r<|x_0|\) and of type II if \(2r\geq |x_0|\). For balls of type I the radius is small so we can compare all points to the center point, whereas balls of type II can be compared to balls centered at the origin. Suppose \(B=B(x_0,r)\) is a ball of type I. Then, if \(x\in B\), we have that
\[
\frac{1}{2}|x_0|\leq |x-x_0|+|x|-\frac{1}{2}|x_0|\leq |x|\leq |x-x_0|+|x_0|\leq \frac{3}{2}|x_0|,
\]
so \(|x|\sim |x_0|\), for all \(x\in B\). This means that
\[
|B|^{\frac{\alpha}{d}-1}\left(\int_B|x|^a\right)^{1/p'}\left(\int_B|x|^b\right)^{1/q}\sim |B|^{\frac{\alpha}{d}-1+\frac{1}{p'}+\frac{1}{q}}|x_0|^{\frac{a}{p'}+\frac{b}{q}}\sim r^{\alpha-\frac{d}{p}+\frac{d}{q}}|x_0|^{\frac{a}{p'}+\frac{b}{q}}.
\]
If we fix \(x_0\neq 0\) and let \(r\rightarrow 0^+\) we see that the condition 
\[
\alpha-\frac{d}{p}+\frac{d}{q}\geq0
\]
is necessary.\footnote{In fact, we know from Remark \ref{remark: Apq alpha implies sub-sobolev relation} that this condition had to be necessary.}
Also, if we pick \(r=|x_0|/4, x_0\neq 0\), we see that for this to be bounded we need the condition 
\begin{equation}\label{eq: first dimensional balance for power-laws}
\alpha-\frac{d}{p}+\frac{d}{q}+\frac{a}{p'}+\frac{b}{q}=0.
\end{equation}
If both of these conditions are satisfied then the supremum over type I balls is finite. Indeed, from \(\alpha - d/p + d/q \geq 0\) and \(r<|x_0|/2\) we see that
\[
r^{\alpha - \frac{d}{p}+\frac{d}{q}}|x_0|^{\frac{a}{p'}+\frac{b}{q}}\lesssim |x_0|^{\alpha - \frac{d}{p}+\frac{d}{q} + \frac{a}{p'}+\frac{b}{q}} \lesssim 1,
\]
where in the last step we used \eqref{eq: first dimensional balance for power-laws}. If instead we consider a type II ball \(B=B(x_0,r)\), then for any \(x\in B\) we see that \(|x|\leq |x-x_0| + |x_0| < 3r\). Therefore, \(B\subseteq B(0,3r)\). This then leads us to consider balls that are centered at the origin. We have that
\[
\begin{split}
|B(0,R)|^{\frac{\alpha}{d}-1}\left(\int_{B(0,R)}|x|^a\right)^\frac{1}{p'}\left(\int_{B(0,R)}|x|^b\right)^\frac{1}{q}&\sim R^{\alpha-d}\left(\int_0^R\rho^{a+d-1}d\rho\right)^\frac{1}{p'}\left(\int_0^R\rho^{b+d-1}d\rho\right)^\frac{1}{q}\\
&\sim R^{\alpha-d+\frac{a+d}{p'}+\frac{b+d}{q}}.
\end{split}
\]
So, this quantity remains bounded for all \(R>0\) if and only if condition \eqref{eq: first dimensional balance for power-laws} is satisfied. These considerations lead us to the following lemma. 
\begin{lemma}\label{lemma: Apq alpha condition for power-laws}
Suppose \(1<p,q<\infty\) and \(\alpha\in \R\) is such that \(\alpha \geq d/p-d/q\). Then,
\[
(|x|^\beta, |x|^\gamma)\in A_{p,q}^\alpha \iff \alpha-\frac{d}{p}+\frac{d}{q}=\frac{\beta}{p}-\frac{\gamma}{q},\ \beta<d(p-1)\text{ and }\gamma>-d.
\]
\end{lemma}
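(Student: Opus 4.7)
The plan is to reduce the characterization of the $A_{p,q}^\alpha$ condition for power-law weights to balls instead of cubes (since they are comparable up to a dimensional constant), and then to exploit the radial symmetry by splitting into the two regimes of balls already introduced in the preceding discussion. With the substitution $a = -\beta p'/p$ and $b = \gamma$, the $A_{p,q}^\alpha$ supremum becomes $\sup_B |B|^{\alpha/d - 1}(\int_B |x|^a)^{1/p'}(\int_B |x|^b)^{1/q}$, so the first order of business is to show that any choice of $(\beta,\gamma)$ satisfying the condition in the lemma forces $a, b > -d$ (equivalently $\beta < d(p-1)$ and $\gamma > -d$), which is immediate from $a = -\beta p'/p$ and the substitution.

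Next, I would run the dichotomy already sketched in the excerpt. For type I balls $B(x_0,r)$ with $2r < |x_0|$, the estimate $|x| \sim |x_0|$ throughout $B$ reduces the supremum to $r^{\alpha - d/p + d/q}|x_0|^{a/p' + b/q}$; letting $r \to 0^+$ while $x_0$ is fixed forces $\alpha - d/p + d/q \geq 0$ (already assumed), and choosing $r = |x_0|/4$ and letting $|x_0| \to \infty$ or $|x_0| \to 0$ forces the scaling identity $\alpha - d/p + d/q + a/p' + b/q = 0$, which translates back to $\alpha - d/p + d/q = \beta/p - \gamma/q$. Conversely, under these two conditions and using $r < |x_0|/2$, the type I bound follows directly. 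For type II balls $B(x_0,r)$ with $2r \geq |x_0|$, I would use the inclusion $B(x_0,r) \subseteq B(0,3r)$ to dominate the expression by the one for the ball centered at $0$, compute the radial integrals in polar coordinates to get a quantity proportional to $R^{\alpha - d + (a+d)/p' + (b+d)/q}$, and observe that this is bounded uniformly in $R$ if and only if the same scaling identity holds. The local integrability conditions $a > -d$ and $b > -d$ are precisely what's needed for the polar integrals to converge.

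Combining the two regimes, the scaling identity together with $a, b > -d$ are simultaneously necessary and sufficient for finiteness of the supremum over balls, which then gives the same characterization for cubes. Finally, translating $a, b$ back into $\beta, \gamma$ produces the three conditions in the lemma. The main obstacle, if there is one, is simply bookkeeping, in particular ensuring that in the type I case both directions of the bound are genuinely equivalent to the scaling identity (as opposed to just one inequality); this is handled by checking the supremum along the specific families $r = |x_0|/4$ with $|x_0| \to 0^+$ and $|x_0| \to \infty$, which together pin down the exponent equality exactly.
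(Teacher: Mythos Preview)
Your proposal is correct and follows the paper's argument essentially verbatim: the paper carries out exactly this type I/type II dichotomy over balls with the same substitution $a=-\beta p'/p$, $b=\gamma$, the same local integrability constraints, the same type I reduction to $r^{\alpha-d/p+d/q}|x_0|^{a/p'+b/q}$, and the same type II polar-coordinate computation yielding $R^{\alpha-d+(a+d)/p'+(b+d)/q}$. Your remark about testing both $|x_0|\to 0^+$ and $|x_0|\to\infty$ along $r=|x_0|/4$ to pin down the equality (rather than an inequality) is slightly more explicit than the paper, but the content is identical.
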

Recalling that the \(A_{p,q}\) condition is just the \(A_{p,q}^\alpha\) condition with \(\alpha=d/p-d/q\), it follows that
\[
(|x|^\beta, |x|^\gamma)\in A_{p,q} \iff \frac{\beta}{p}=\frac{\gamma}{q},\ \beta<d(p-1)\text{ and }\gamma>-d.
\]
In particular,
\[
|x|^\gamma\in A_p\iff (|x|^\gamma,|x|^\gamma)\in A_{p,p}\iff -d<\gamma<d(p-1).
\]
As an important consequence of this characterization, we see that any locally integrable power law weight \(w(x) = |x|^\gamma\), with \(\gamma > -d\), will belong to some \(A_p\) class, for instance \(w \in A_{\gamma/d + 2}\). In turn, this implies that any such weight always belongs to \(A_\infty\). The definition of \(A_\infty\) weights is in general difficult to verify, 
and one might have presumed it would be difficult to describe which power law weights are in \(A_\infty\). 
However, not only is it possible to fully characterize which power law weights satisfy the \(A_\infty\) condition, as it turns out, the condition is always true for power law weights.

\subsubsection{Inhomogeneous weights}

The next interesting example consists of inhomogeneous weights \(w(x) = \langle x \rangle^\gamma\). Again we wish to see when such weights satisfy the conditions of Theorem \ref{thm: weighted estimate for sparse operators}. 

Homogeneous weights \(|x|^\gamma\) are only locally integrable when \(\gamma > -d\), 
but the same is not true for inhomogeneous weights, which are always locally 
integrable. This makes the characterization of inhomogeneous weights more 
cumbersome. However, despite them always being locally integrable, the weights \(w(x) = \langle x\rangle^\gamma\) are in \(A_\infty\) only if \(\gamma > -d\). This way, we will characterize the pairs \((\beta, \gamma)\) such that \((\langle x\rangle^\beta, \langle x \rangle^\gamma)\in A_{p, q}^\alpha\) under the assumption that \(a = -\beta p'/p, b = \gamma > -d\). We note that we could do a full characterization without this constraint, but for our purposes this result will be enough and its proof is simpler. 

We start by checking that indeed \(\langle x \rangle^\gamma \in A_\infty\) if and only if \(\gamma > -d\). To do this we first look at the \(A_p\) condition
\[
[\langle x\rangle^\gamma]_{A_p} = \sup_Q \left(\dashint_Q \langle x \rangle ^{-\gamma \frac{p'}{p}}\right)^{p-1}\dashint_Q \langle x \rangle^\gamma.
\]

\begin{lemma}\label{lemma: Ap characterization for non-hom weights}
    Let \(1 < p < \infty\) and consider the weight \(w(x) = \langle x \rangle^\gamma\). Then,
    \[
    w\in A_p \iff -d < \gamma < d(p-1).
    \]
\end{lemma}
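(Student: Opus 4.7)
The plan is to reduce the question to the pure power law weight $|x|^\gamma$, whose $A_p$ membership was characterized in Lemma \ref{lemma: Apq alpha condition for power-laws}. The fundamental comparison is $\langle x\rangle \sim \max(1, |x|)$: the inhomogeneous weight agrees up to constants with $|x|^\gamma$ on $\{|x|\geq 1\}$ and is comparable to $1$ on $\{|x|\leq 1\}$.

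For sufficiency I will split cubes $Q$ into three regimes. If $Q \cap B(0, 2) = \varnothing$, then $\langle x\rangle \sim |x|$ on $Q$ and the $A_p$ quantity for $\langle x\rangle^\gamma$ is controlled by $[|x|^\gamma]_{A_p}$, which is finite by the power law characterization. If $Q \subseteq B(0, 4)$, the weight is $\sim 1$ on $Q$ and the $A_p$ quantity is trivially bounded. The remaining cubes necessarily satisfy $l(Q) \gtrsim 1$ and $Q \subseteq B(0, C l(Q))$ for a dimensional constant $C$. For these cubes I will compute directly in polar coordinates that for any exponent $a > -d$ and $l(Q) \gtrsim 1$, one has $\dashint_Q \langle x\rangle^a \lesssim l(Q)^a$, the dominant contribution coming from the annulus $\{1\leq |x|\leq Cl(Q)\}$. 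Applying this with $a = \gamma$ (using $\gamma > -d$) and with $a = -\gamma p'/p$ (using $\gamma < d(p-1)$), the $A_p$ product telescopes: the exponent of $l(Q)$ is $-\gamma\,(p'/p)(p-1) + \gamma = 0$, since $(p'/p)(p-1) = 1$.

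For necessity I will test the $A_p$ condition on the cubes $Q_l$ centered at the origin as $l \to \infty$. A direct polar-coordinate calculation yields
$$\dashint_{Q_l}\langle x\rangle^\beta \sim \begin{cases} l^\beta & \beta > -d,\\ l^{-d}\log l & \beta = -d,\\ l^{-d} & \beta < -d.\end{cases}$$
Plugging $\beta = \gamma$ and $\beta = -\gamma p'/p$ into the $A_p$ quotient and examining each of the four cases $\gamma > d(p-1)$, $\gamma = d(p-1)$, $\gamma < -d$, $\gamma = -d$ shows divergence: a pure power of $l$ in the strict cases, and a surviving factor of $(\log l)^{p-1}$ or $\log l$ at the endpoints.

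The main technical point is the third regime of sufficiency, where we need to verify that the $|x|\leq 1$ contribution to the radial integrals is absorbed by the large-$|x|$ power under the lower bound $l(Q) \gtrsim 1$. This is where the strict constraints $\gamma > -d$ and $\gamma < d(p-1)$ appear as the sharp conditions for the integrals $\int_1^{Cl}\rho^{\beta+d-1}d\rho$ to grow polynomially and dominate the $|x|\leq 1$ remainder. The endpoint cases of necessity require the slightly more delicate logarithmic asymptotic, but follow the same template as the strict inequality cases.
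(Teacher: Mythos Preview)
Your proof is correct, but organized differently from the paper's.

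For sufficiency, the paper works directly with balls and the type~I/type~II dichotomy (\(2r<|x_0|\) versus \(2r\geq |x_0|\)), computing the asymptotics of \(\Phi(r,a)=\int_{B(0,r)}\langle x\rangle^a\,dx\) from scratch in both regimes. Your three-regime split is genuinely different: in regime~1 you recycle Lemma~\ref{lemma: Apq alpha condition for power-laws} via the comparison \(\langle x\rangle\sim |x|\) on \(\{|x|\geq 2\}\), so the only nontrivial computation left is regime~3, the large cubes touching the origin. This is a real economy --- you avoid redoing the ``far from origin'' estimate that the power-law lemma already contains. The paper's approach is more self-contained but repeats that work.

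For necessity, the paper tests balls centered at the origin only to rule out \(\gamma\leq -d\), and then obtains \(\gamma< d(p-1)\) by the duality \([\langle x\rangle^\gamma]_{A_p}=[\langle x\rangle^{-\gamma p'/p}]_{A_{p'}}^{p-1}\). You instead check all four bad cases \(\gamma>d(p-1)\), \(\gamma=d(p-1)\), \(\gamma<-d\), \(\gamma=-d\) directly via the three-line asymptotic for \(\dashint_{Q_l}\langle x\rangle^\beta\). Both routes are short; the duality trick halves the casework, while your direct computation makes the logarithmic endpoint divergence \((\log l)^{p-1}\) and \(\log l\) explicit.
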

\begin{proof}
As before, since our weight is radial it is more convenient to work with balls instead of cubes. We start by looking at balls centered at the origin \(B = B(0, r)\). Define
\[
\Phi(r, a) = \int_{B(0, r)}\langle x\rangle^a dx =d|B_1| \int_0^r\rho^{d-1}(1 + \rho^2)^{a/2}d\rho. 
\]
With this notation,
\[
\left(\dashint_B \langle x \rangle ^{-\gamma \frac{p'}{p}}\right)^{p-1}\dashint_B \langle x \rangle^\gamma \sim r^{-dp} \Phi\left(r, -\gamma \frac{p'}{p}\right)^{p-1}\Phi(r, \gamma).
\]
It is clear that
\[
\lim_{r\rightarrow 0}\frac{\Phi(r,a)}{r^d}=|B_1|.
\]
Therefore,
\[
r^{-dp} \Phi\left(r, -\gamma \frac{p'}{p}\right)^{p-1}\Phi(r, \gamma) = \left(\frac{\Phi(r, -\gamma p'/p)}{r^d}\right)^{p-1}\frac{\Phi(r, \gamma)}{r^d}
\]
has a finite limit as \(r\rightarrow 0^+\). Now we look at what happens as \(r\rightarrow +\infty\). If \(a \neq -d\) and \(r>1\), we have that 
\[
\begin{split}
\Phi(r,a)&=d|B_1|\int_0^1\rho^{d-1}(1+\rho^2)^{a/2}d\rho+d|B_1|\int_1^r\rho^{d-1}(1+\rho^2)^{a/2}d\rho \\
&\sim_ad|B_1|\int_0^1\rho^{d-1}d\rho+d|B_1|\int_1^r\rho^{d-1+a}d\rho\\
&\sim_a|B_1|+d|B_1|\frac{r^{a+d}-1}{a+d}\sim_{a,d}\frac{a}{a+d}+\frac{d}{a+d}r^{a+d}.
\end{split}
\]
On the other hand, if \(a = -d\), then
\[
\begin{split}
\Phi(r, -d) &= d|B_1|\int_0^1 \rho^{d-1}(1+\rho^2)^{-d/2}d\rho  + d|B_1|\int_1^r\rho^{d-1}(1+\rho^2)^{-d/2}d\rho \\
&\sim_d d|B_1|\int_0^1\rho^{d-1}d\rho + d|B_1|\int_1^r\rho^{-1}d\rho \\
&\sim_d \frac{1}{d} + \log(r).
\end{split}
\]
Now let's assume that \(\gamma < -d\). For ease of notation set \(a = -\gamma p'/p\). In this case, \(a > dp'/p > -d\), so 
\[
\begin{split}
\left(\frac{\Phi(r, -\gamma p'/p)}{r^d}\right)^{p-1}\frac{\Phi(r, \gamma)}{r^d} &\sim \left(\frac{a}{a+d}r^{-d} + \frac{d}{a+d}r^a\right)^{p-1}\left(\frac{\gamma}{\gamma+d}r^{-d} + \frac{d}{\gamma + d}r^\gamma\right) \\
&\sim \frac{\gamma}{\gamma + d}\left(\frac{a}{a+d}r^{-d-d\frac{p'}{p}}+\frac{d}{a+d}r^{a-d\frac{p'}{p}}\right)^{p-1} \\
&+ \frac{d}{\gamma + d}\left(\frac{a}{a+d}r^{-d+\gamma \frac{p'}{p}} + \frac{d}{a+d}r^{a+\gamma \frac{p'}{p}}\right)^{p-1}.
\end{split}
\]
Looking at this expression we see that it blows up as \(r\rightarrow +\infty\) because \(a-dp'/p = -\gamma p'/p - d p'/p = -(\gamma + d)p'/p > 0\). Therefore when \(\gamma < -d\), \([\langle x \rangle^\gamma]_{A_p} = +\infty\). 

Now let's look at the case \(\gamma = -d\). In this case, we have that
\[
\begin{split}
\left(\frac{\Phi(r, d p'/p)}{r^d}\right)^{p-1}\frac{\Phi(r, -d)}{r^d} &\sim \left(\frac{a}{a+d}r^{-d} + \frac{d}{a+d}r^a\right)^{p-1}\left(\frac{1}{d} + \log(r)\right).
\end{split}
\]
Therefore, this expression also blows up as \(r\rightarrow +\infty\), showing that \(\gamma>-d\) is a necessary condition for the weight \(\langle x \rangle^\gamma\) to be in \(A_p\). This also implies that we must have \(a > -d\). Indeed, it is straightforward to check that
\[
[\langle x \rangle^\gamma]_{A_p} = [\langle x \rangle^a]_{A_{p'}}^{p-1}.
\]
This way,
\[
\langle x\rangle^\gamma \in A_p \implies \langle x \rangle^a \in A_{p'} \implies a > -d.
\]
Therefore, the condition
\[
-d < \gamma < d(p-1)
\]
is necessary for \(\langle x \rangle^\gamma \in A_p\). In fact, this condition is also sufficient. To check this we assume that \(-d<\gamma < d(p-1)\) for now on. If we have a type I ball \(B = B(x_0, r)\), then
\[
\left(\dashint_B \langle x \rangle^a\right)^{p-1}\dashint_B \langle x \rangle^\gamma \sim \langle x_0\rangle^{a(p-1)}\langle x_0\rangle^\gamma \sim 1.
\]
For a type II ball \(B = B(x_0, r)\) we have that \(B\subseteq B(0, 3r)\) so it remains only to show that 
\[
\sup_{r>0}\left(\frac{\Phi(r, -\gamma p'/p)}{r^d}\right)^{p-1}\frac{\Phi(r, \gamma)}{r^d}<\infty.
\]
Since \(\gamma \neq -d\) and \(a \neq -d\), then we have the estimate from before
\[
\begin{split}
    \left(\frac{\Phi(r, -\gamma p'/p)}{r^d}\right)^{p-1}\frac{\Phi(r, \gamma)}{r^d} &\sim \frac{\gamma}{\gamma + d}\left(\frac{a}{a+d}r^{-d-d\frac{p'}{p}}+\frac{d}{a+d}r^{a-d\frac{p'}{p}}\right)^{p-1} \\
&+ \frac{d}{\gamma + d}\left(\frac{a}{a+d}r^{-d+\gamma \frac{p'}{p}} + \frac{d}{a+d}\right)^{p-1}.
\end{split}
\]
But now this expression remains bounded as \(r \rightarrow +\infty\), because \(a - d p'/p < 0\) and \(-d + \gamma p'/p < 0\). 
Since this is bounded both as \(r\rightarrow 0^+\) and \(r\rightarrow +\infty\) and the expression is continuous in \(r\), it follows that the supremum is finite. 
\end{proof}

As a consequence of this lemma we see that any weight \(\langle x \rangle ^\gamma\) with \(\gamma > -d\) will be an \(A_\infty\) weight. But this is also a necessary condition. Indeed, if \(\langle x \rangle^\gamma \in A_\infty\), then there is some \(p\) such that \(\langle x \rangle^\gamma \in A_p\), which in turn implies that \(\gamma > -d\). Therefore, we have that
\[
\langle x \rangle^\gamma \in A_\infty \iff \gamma > -d.
\]
It is curious to note that in this case there is no failure of local integrability when \(\gamma \leq -d\), which is in contrast to the case of power law weights. However, it makes sense that we need to assume \(\gamma\) is not too small for the weight to be in \(A_\infty\). Indeed, recall from Remark \ref{remark: A_infty weights give infinite measure to quadrants} that any \(A_\infty\) weight must give infinite measure to a quadrant of a dyadic lattice. In particular, they must give infinite measure to \(\R^d\). This is the case for \(w(x) = \langle x \rangle^\gamma\) only if \(\gamma \geq -d\). 

If we are interested in finding the pairs \((\beta, \gamma)\) such that \((\langle x \rangle^\beta, \langle x \rangle^\gamma)\in A_{p, q}^\alpha\) and \(\langle x \rangle^{-\beta p'/p}, \langle x\rangle^\gamma \in A_\infty\), we already know that the conditions
\[
\beta < d(p-1)\text{ and }\gamma > -d
\]
must hold. For this reason we restrict our attention to this range of exponents.

Let us now check when the \(A_{p,q}^\alpha\) condition holds. Let \(1<p,q<\infty\), \(\alpha\in \R\) and set \(a = -\beta p'/p, b = \gamma\). We will assume throughout that \(a, b > -d\) and we are interested in knowing for which pairs \((a, b)\) the condition
\[
\sup_{B}|B|^{\frac{\alpha}{d}-1}\left(\int_B\langle x\rangle^a\right)^{1/p'}\left(\int_B\langle x\rangle^b\right)^{1/q}<\infty
\]
is true. Suppose \(B=B(x_0,r)\) is a type I ball, i.e. \(2r<|x_0|\). Then,
\[
|B|^{\frac{\alpha}{d}-1}\left(\int_B\langle x\rangle^a\right)^{1/p'}\left(\int_B\langle x\rangle^b\right)^{1/q}\sim r^{\alpha-\frac{d}{p}+\frac{d}{q}}\langle x_0\rangle^{\frac{a}{p'}+\frac{b}{q}}.
\]
By fixing \(x_0\) and letting \(r\rightarrow 0\) we see that the condition \(\alpha-d/p+d/q\geq 0\) is necessary. Also, choosing \(r=\langle x_0\rangle /4, |x_0|>1\), and letting \(|x_0|\rightarrow +\infty\) we see that the condition 
\[
\alpha-\frac{d}{p}+\frac{d}{q}+\frac{a}{p'}+\frac{b}{q}\leq 0
\]
is also necessary. These two conditions combined are sufficient to control all type I balls and since they are necessary we assume they hold from now on. Since type II balls can be compared to balls centered at the origin we now consider a ball \(B=B(0,r)\). We use the same notation as before and we write
\[
\Phi(r,a)=\int_{B(0,r)}\langle x\rangle^adx=d|B_1|\int_0^r\rho^{d-1}(1+\rho^2)^{a/2}d\rho.
\]
From 
\[
|B(0,r)|^{\frac{\alpha}{d}-1}\Phi(r,a)^{1/p'}\Phi(r,b)^{1/q}\sim r^{\alpha-\frac{d}{p}+\frac{d}{q}}\left(\frac{\Phi(r,a)}{r^d}\right)^{1/p'}\left(\frac{\Phi(r,b)}{r^d}\right)^{1/q}
\]
we see that this has a finite limit as \(r\rightarrow 0\) if and only if \(\alpha-d/p+d/q\geq 0\). Now, since \(a>-d\), then for \(r>1\) we have that
\[
\begin{split}
\Phi(r,a)&\sim_{a,d}\frac{a}{a+d}+\frac{d}{a+d}r^{a+d},
\end{split}
\]
as before. Using this we get for \(r>1\),
\[
\begin{split}
r^{\alpha-d}\Phi(r,a)^{1/p'}\Phi(r,b)^{1/q}&\sim r^{\alpha-d}\left( \frac{a}{a+d}+\frac{d}{a+d}r^{a+d}\right)^{1/p'}\left(\frac{b}{b+d}+\frac{d}{b+d}r^{b+d}\right)^{1/q}\\
&\sim \left[ \frac{a}{a+d}\left( \frac{b}{b+d}r^{q(\alpha-d)}+\frac{d}{b+d}r^{q(\alpha-d)+b+d} \right)^{\frac{p'}{q}}+\right.\\
&\left.\frac{d}{a+d}\left( \frac{b}{b+d}r^{q(\alpha-d)+\frac{q}{p'}(a+d)}+\frac{d}{b+d}r^{q(\alpha-d)+b+d+\frac{q}{p'}(a+d)} \right)^{\frac{p'}{q}}  \right]^{\frac{1}{p'}}
\end{split}
\]
which remains bounded as \(r\rightarrow +\infty\) if and only if \(\alpha\leq d,\ q(\alpha-d)+b+d\leq 0\) and \(q(\alpha-d)+q(a+d)/p'\leq 0\). Therefore we have shown the following lemma.

\begin{lemma}\label{lemma: Apq alpha characterization of non-hom weights}
Suppose \(1<p,q<\infty\), \(\beta<d(p-1),\ \gamma>-d\) and \(\alpha \geq d/p - d/q\).
Then, the pair \((\langle x\rangle^\beta, \langle x\rangle^\gamma)\) satisfies the \(A_{p,q}^\alpha\) condition if and only if
\[
\alpha-\frac{d}{p}+\frac{d}{q}\leq \frac{\beta}{p}-\frac{\gamma}{q},\ \alpha\leq d,\ \frac{\gamma}{q}\leq \frac{d}{q'}-\alpha,\ \frac{\beta}{p}\geq \alpha-\frac{d}{p}.
\]
\end{lemma}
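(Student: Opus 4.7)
My plan is to follow the outline already laid out in the computations preceding the lemma and to fill in the necessity and sufficiency arguments symmetrically. I would begin by replacing cubes with balls via comparability (which changes the supremum only by constants depending on $d$), and substitute $a = -\beta p'/p$ and $b = \gamma$; under the standing hypotheses $\beta < d(p-1)$ and $\gamma > -d$ this means $a, b > -d$. The balls are then split into type I ($2r < |x_0|$) and type II ($2r \geq |x_0|$) balls, as in the analysis of power-law weights.

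For type I balls $B(x_0, r)$, the bound $\langle x\rangle \sim \langle x_0\rangle$ on $B$ gives explicitly
\[
|B|^{\alpha/d - 1}\left(\int_B \langle x\rangle^a\right)^{1/p'}\left(\int_B \langle x\rangle^b\right)^{1/q} \sim r^{\alpha - d/p + d/q}\langle x_0\rangle^{a/p' + b/q}.
\]
Letting $r \to 0$ with $x_0$ fixed recovers the assumption $\alpha \geq d/p - d/q$, while choosing $r = \langle x_0\rangle/4$ and letting $|x_0| \to \infty$ forces the single inequality $\alpha - d + (a+d)/p' + (b+d)/q \leq 0$. Together these two inequalities control all type I balls uniformly.

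For type II balls I use $B(x_0, r) \subseteq B(0, 3r)$ to reduce matters to balls centered at the origin, where the quantity of interest is
\[
F(r) := r^{\alpha - d}\Phi(r, a)^{1/p'}\Phi(r, b)^{1/q}, \qquad \Phi(r, s) = \int_{B(0,r)}\langle x\rangle^s\, dx.
\]
Splitting the radial integral at $\rho = 1$ and using $(1+\rho^2)^{s/2} \sim \rho^s$ for $\rho \geq 1$ gives $\Phi(r, s) \sim r^d$ as $r \to 0^+$ and $\Phi(r, s) \sim c_1 + c_2 r^{s+d}$ for $r > 1$, whenever $s > -d$. The $r \to 0^+$ regime again yields $\alpha \geq d/p - d/q$. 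For $r \to \infty$, raising $F(r)$ to appropriate powers and expanding produces four monomials in $r$ whose exponents are $\alpha - d$, $\alpha - d + (b+d)/q$, $\alpha - d + (a+d)/p'$, and $\alpha - d + (a+d)/p' + (b+d)/q$; boundedness of $F$ as $r \to \infty$ is equivalent to each of these being nonpositive, which upon reinserting $a = -\beta p'/p$, $b = \gamma$ becomes exactly the four inequalities in the lemma.

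The main technical obstacle is the $r \to \infty$ expansion in the range $-d < s < 0$, where the coefficient $s/(s+d)$ is negative and one cannot cleanly split $(C_1 + C_2 r^{s+d})^{1/p'}$ into a sum of nonnegative pieces. I would circumvent this by noting that $\Phi(r, s)$ is itself a positive continuous function two-sidedly comparable to $r^d + r^{s+d}$, so the same four asymptotic exponents control $F(r)$ up to multiplicative constants; this gives the equivalence of the four stated inequalities with the large-$r$ boundedness of $F$. Continuity of $F$ on $(0, \infty)$ then promotes the asymptotic bounds at $0$ and $\infty$ to a uniform bound, and combined with the type I analysis this yields both the necessity and sufficiency of the four conditions.
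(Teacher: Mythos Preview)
Your approach is essentially the paper's: pass to balls, split into type I and type II, reduce type II to centered balls via $B(x_0,r)\subseteq B(0,3r)$, and analyze the radial integral $\Phi(r,s)$. The type I analysis and the small-$r$ behaviour of $F$ match the paper exactly.

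There is one genuine slip in your workaround. The two-sided comparability $\Phi(r,s)\sim r^d+r^{s+d}$ is false precisely in the range $-d<s<0$ you are trying to cover: for large $r$ one has $\Phi(r,s)\sim r^{s+d}$ while $r^d+r^{s+d}\sim r^d$, so the lower bound $\Phi(r,s)\gtrsim r^d+r^{s+d}$ fails (and it fails for small $r$ as well, by the symmetric computation). Concretely, with $d=2$ and $s=-1$ one has $\Phi(r,-1)=2\pi(\sqrt{1+r^2}-1)\sim 2\pi r$ for large $r$, not $\sim r^2$.

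This does not damage the conclusion. The one-sided bound $\Phi(r,s)\lesssim r^d+r^{s+d}$ \emph{is} valid for all $r>0$ and all $s>-d$, and that is all you need for the sufficiency direction. For necessity the correct large-$r$ asymptotic $\Phi(r,s)\sim r^{s+d}$ gives $F(r)\sim r^{E_4}$ with $E_4=\alpha-d+(a+d)/p'+(b+d)/q$, so boundedness at infinity yields only $E_4\le 0$ directly; but under the standing hypotheses $a,b>-d$ the remaining exponents satisfy
\[
E_1=E_4-\tfrac{a+d}{p'}-\tfrac{b+d}{q},\qquad E_2=E_4-\tfrac{a+d}{p'},\qquad E_3=E_4-\tfrac{b+d}{q},
\]
each of which is $E_4$ minus a strictly positive quantity, so $E_1,E_2,E_3\le 0$ follow automatically. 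The paper's own expansion at this step is similarly informal and leans on the same implication: it has already assumed $E_4\le 0$ from the type I analysis before reading off the other three conditions from the centered-ball computation.
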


As expected we obtain a larger range of parameters for the inhomogeneous weights when compared to the power law weights. Indeed, in the power law case, the conditions were
\[
\alpha \geq \frac{d}{p} - \frac{d}{q}, \alpha - \frac{d}{p}+\frac{d}{q} = \frac{\beta}{p}-\frac{\gamma}{q}, \beta < d(p-1), \gamma> -d.
\]
Under these conditions, we have that
\[
\alpha = \frac{d}{p}-\frac{d}{q} + \frac{\beta}{d} - \frac{\gamma}{q} < \frac{d}{p}-\frac{d}{q} + \frac{d}{p'} + \frac{d}{q} = d.
\]
Moreover,
\[
\frac{\gamma}{q} = \frac{\beta}{p} - \alpha + \frac{d}{p}-\frac{d}{q} < d - \frac{d}{q} - \alpha = \frac{d}{q'}-\alpha,
\]
and
\[
\frac{\beta}{p} = \frac{\gamma}{q} + \alpha - \frac{d}{p} + \frac{d}{q} > -\frac{d}{q} + \alpha - \frac{d}{p}+\frac{d}{q} = \alpha - \frac{d}{p}.
\]
This shows that, in fact, for the inhomogeneous weights the range of parameters contains the range for the power law weights. This makes sense since the weights \(w(x) = \langle x \rangle^\gamma\) are better behaved near the origin.

\subsubsection{Monomial weights}

The third example we look at are monomial weights \(w(x) = |x_1|^{\gamma_1}\dots |x_d|^{\gamma_d}\). This example is interesting because the weights are no longer radial. Despite not being radial, they are still easy to handle because they are tensor products of radial functions in one variable. This makes it very easy to determine when such weights are in \(A_p\). Indeed, if we write \(Q = I_1\times \dots \times I_d\) for a general cube, where \(I_j = [y_j, y_j + l[\), then we have that
\[
\left(\dashint_Q w^{-\frac{p'}{p}}\right)^{p-1}\dashint_Q w = \prod_{j=1}^d\left(\dashint_{I_j}|x_j|^{-\gamma_j \frac{p'}{p}}dx_j\right)^{p-1}\dashint_{I_j}|x_j|^{\gamma_j}dx_j.
\]
In particular, we see that the conditions 
\[
-1 < \gamma_j < p-1,\ j=1,\dots, d
\]
are necessary to ensure local integrability. But, these conditions will also be sufficient because, if they hold, then
\[
\prod_{j=1}^d\left(\dashint_{I_j}|x_j|^{-\gamma_j \frac{p'}{p}}dx_j\right)^{p-1}\dashint_{I_j}|x_j|^{\gamma_j}dx_j \leq [|x_1|^{\gamma_1}]_{A_p}\dots [|x_d|^{\gamma_d}]_{A_p} 
\]
and the right-hand side is finite. Thus,
\[
|x_1|^{\gamma_1}\dots |x_d|^{\gamma_d} \in A_p \iff -1 < \gamma_j < p-1, \forall j.
\]
In particular, we see that 
\[
|x_1|^{\gamma_1}\dots |x_d|^{\gamma_d}\in A_\infty \iff \gamma_j > -1, \forall j.
\]
More generally, we want to know which tuples \((\beta_1,\dots, \beta_d), 
(\gamma_1,\dots, \gamma_d)\) satisfy 
\[
(|x_1|^{\beta_1}\dots |x_d|^{\beta_d}, |x_1|^{\gamma_1}\dots |x_d|^{\gamma_d})\in A_{p, q}^\alpha.
\]
From the tensor product structure, we can immediately find sufficient conditions. Indeed, we have that
\[
\begin{split}
    |Q|^{\frac{\alpha}{d}-1}&\left(\int_Q(|x_1|^{\beta_1}\dots |x_d|^{\beta_d})^{-\frac{p'}{p}}\right)^{1/p'}\left(\int_Q|x_1|^{\gamma_1}\dots |x_d|^{\gamma_d}\right)^{1/q} \\
    &= \prod_{j=1}^d |I_j|^{\frac{\alpha}{d}-1}\left(\int_{I_j}|x_j|^{-\beta_j\frac{p'}{p}}\right)^{1/p'}\left(\int_{I_j}|x_j|^{\gamma_j}\right)^{1/q} \\
    &\leq [|x_1|^{\beta_1}, |x_1|^{\gamma_1}]_{A_{p, q}^\frac{\alpha}{d}}\dots [|x_d|^{\beta_d}, |x_d|^{\gamma_d}]_{A_{p,q}^\frac{\alpha}{d}}.
\end{split}
\]
And therefore, if the right-hand side is finite, then this pair of monomial weights is in \(A_{p, q}^\alpha\). We already know that the right-hand side is finite precisely when
\begin{equation}\label{eq: tensor product conditions}
\frac{\alpha}{d} \geq \frac{1}{p} - \frac{1}{q},\ \frac{\alpha}{d}- \frac{1}{p}+\frac{1}{q} = \frac{\beta_j}{p} - \frac{\gamma_j}{q},\ \beta_j < p-1\text{ and }\gamma_j > -d,\ \forall j.
\end{equation}
However, the conditions 
\[ 
    \frac{\alpha}{d} - \frac{1}{p} + \frac{1}{q} = \frac{\beta_j}{d} - \frac{\gamma_j}{d}
\] 
won't be necessary in general, and so this does not 
give the best range of parameters. The reason for this is that in the 
\(A_{p, q}^\alpha\) condition we are taking a supremum over cubes, not rectangles. 
So, we don't have complete freedom in choosing the intervals that make up the cube, 
independently of one another. We may choose their locations independently, 
but all the lengths are the same. This way, characterizing this condition will 
require a bit more work, and we do that in the following lemma.

\begin{lemma}\label{lemma: Apq alpha condition for monomial weights}
    Let \(1<p, q<\infty\) and \(\alpha \in \R\). We have
    \[
    (|x_1|^{\beta_1}\dots |x_d|^{\beta_d}, |x_1|^{\gamma_1}\dots |x_d|^{\gamma_d}) \in A_{p, q}^\alpha
    \]
    if and only if the following conditions hold:
    \begin{enumerate}
        \item \( \beta_j < p-1, \gamma_j > -1\), for all \(j\); \label{item: local integrability}
        \item \(\frac{\gamma_j}{q}\leq \frac{\beta_j}{p}\); \label{item: exponent relation}
        \item We have that
        \[
        \alpha + \frac{d}{q} - \frac{d}{p} + \sum_{j=1}^d \frac{\gamma_j}{q}-\frac{\beta_j}{p} = 0. \label{item: dimensional balance}
        \]
    \end{enumerate}
\end{lemma}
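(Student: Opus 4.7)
The plan is to exploit the tensor product structure of both weights. For a cube $Q = I_1 \times \cdots \times I_d$ with common side-length $\ell$, the quantity whose supremum defines $[v,w]_{A_{p,q}^\alpha}$ factors as $\prod_j F_j$, where
\[
F_j = \ell^{\alpha/d - 1}\left(\int_{I_j}|x_j|^{-\beta_j p'/p}dx_j\right)^{1/p'}\left(\int_{I_j}|x_j|^{\gamma_j}dx_j\right)^{1/q}.
\]
Each $F_j$ depends only on $\ell$ and on the position $y_j$ of $I_j$; the coordinates are coupled only through the shared $\ell$. All the work therefore reduces to understanding the one-dimensional factor $F_j$ as a function of $(\ell, y_j)$.

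For necessity, condition 1 is immediate: a cube meeting any coordinate hyperplane $\{x_j = 0\}$ forces local integrability of both $v^{-p'/p}$ and $w$, giving $\beta_j < p-1$ and $\gamma_j > -1$. For condition 3, I would test the supremum on cubes $Q_\ell = [0,\ell[^d$ as $\ell \to 0^+$ and $\ell \to +\infty$; a direct calculation yields $\prod_j F_j \sim \ell^{\alpha - d/p + d/q + \sum_j(\gamma_j/q - \beta_j/p)}$, and boundedness at both ends forces the exponent to vanish. For condition 2, I would fix $\ell = 1$, set $y_j = 0$ for $j \neq k$, and let $y_k = R \to +\infty$: the factors for $j \neq k$ remain uniformly bounded (they are integrals over fixed intervals), while on $I_k$ we have $|x_k| \sim R$, so $F_k \sim R^{\gamma_k/q - \beta_k/p}$; boundedness as $R \to +\infty$ forces $\gamma_k/q - \beta_k/p \leq 0$.

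For sufficiency, the key observation is that, under condition 1, for any exponent $a > -1$,
\[
\int_{I_j}|x|^a\,dx \;\sim_a\; R_j^a\,\ell, \qquad R_j := \max(\ell,\operatorname{dist}(0, I_j)),
\]
which one checks by splitting into the cases $\operatorname{dist}(0,I_j) \geq \ell$ (where $|x| \sim R_j$ throughout $I_j$) and $\operatorname{dist}(0,I_j) < \ell$ (where the integral is comparable to $\ell^{a+1} = R_j^{a+1}$). Setting $\mu_j := \gamma_j/q - \beta_j/p$, this gives $F_j \sim \ell^{\alpha/d - 1/p + 1/q}\,R_j^{\mu_j}$. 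Since $R_j \geq \ell$ and condition 2 yields $\mu_j \leq 0$, we have $R_j^{\mu_j} \leq \ell^{\mu_j}$, and so $\prod_j F_j \lesssim \ell^{\alpha - d/p + d/q + \sum_j \mu_j} = 1$ by condition 3, uniformly in $Q$.

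The main obstacle will be the necessity of condition 2. It requires constructing cubes that simultaneously isolate the $k$-th coordinate in a large type I regime while keeping the other factors from producing a compensating smallness; this is possible precisely because the corner positions $y_j$ can be chosen independently once $\ell$ is fixed. Once this is in place, the sufficiency direction is essentially a careful accounting, in which condition 3 cancels the $\ell$-exponent and condition 2 turns the trivial inequality $R_j \geq \ell$ into the required upper estimate on $R_j^{\mu_j}$.
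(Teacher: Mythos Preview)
Your proposal is correct and follows essentially the same route as the paper: necessity is checked on the same test cubes (coordinate hyperplane for condition~1, cubes $[0,\ell[^d$ for condition~3, and cubes with one coordinate pushed to infinity for condition~2), and sufficiency rests on the same mechanism of using $\mu_j\le 0$ together with condition~3 to collapse the product. Your sufficiency argument is in fact a bit more streamlined than the paper's: where the paper explicitly partitions $\{1,\dots,d\}$ into type~I and type~II coordinates and bounds each subset separately, your introduction of $R_j=\max(\ell,\operatorname{dist}(0,I_j))$ handles both cases in one stroke via $R_j\ge \ell$, yielding the same final bound $\prod_j(R_j/\ell)^{\mu_j}\le 1$ without the case enumeration.
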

\begin{proof}
    We begin by introducing some notation. We write a given cube as \(Q = I_1\times \dots \times I_d\), where \(I_j = [y_j, y_j + l[, l> 0\). Moreover, we put \(a_j = -\beta_j p'/p\), \(b_j = \gamma_j\) and we write \(c_j\) for the center of interval \(I_j\), that is \(c_j = y_j + l/2\). Then,
    \[
    \sup_Q |Q|^{\frac{\alpha}{d}-1}\left(\int_Q (|x_1|^{\beta_1}\dots |x_d|^{\beta_d})^{-\frac{p'}{p}}\right)^{1/p'}\left(\int_Q |x_1|^{\gamma_1}\dots |x_d|^{\gamma_d}\right)^{1/q} 
    \]
    simplifies to
    \[
    \sup_{\substack{y \in \R^d \\ l > 0}} \prod_{j=1}^d |I_j|^{\frac{\alpha}{d}-1}\left(\int_{I_j}|x_j|^{a_j}\right)^{1/p'}\left(\int_{I_j}|x_j|^{b_j}\right)^{1/q} = : X.
    \]
    First, we begin by checking that the conditions given are actually necessary. The condition in item \ref{item: local integrability} is clearly necessary to ensure local integrability as it corresponds to \(a_j, b_j > -1\), for all \(j\). Now we see that condition \ref{item: exponent relation} is necessary. Given \(k \in \{1, \dots, d\}\) we define \(I_j = [-1/2, 1/2[\) for all \(j\neq k\), and \(I_k = [y_k, y_k + 1[\) where \(y_k > 1\). This way, \(I_k\) is a type I interval. So, we have that
    \[
    \prod_{j=1}^d |I_j|^{\frac{\alpha}{d}-1}\left(\int_{I_j}|x_j|^{a_j}\right)^{1/p'}\left(\int_{I_j}|x_j|^{b_j}\right)^{1/q} \sim \left(\int_{I_k}|x_k|^{a_k}\right)^{1/p'}\left(\int_{I_k}|x_k|^{b_k}\right)^{1/q} \sim c_k^{\frac{a_k}{p'} + \frac{b_k}{q}}.
    \]
    If we now let \(y_k \rightarrow +\infty\), then \(c_k \rightarrow +\infty\) and therefore we see that the condition
    \[
    \frac{a_k}{p'} + \frac{b_k}{q}\leq 0
    \]
    is necessary. Now let's check that condition number \ref{item: dimensional balance} is also necessary. We can do this if we choose \(I_j = [-l/2, l/2[\) for all \(j\). In this case, we get
    \[
    \begin{split}
    \prod_{j=1}^d |I_j|^{\frac{\alpha}{d}-1}\left(\int_{I_j}|x_j|^{a_j}\right)^{1/p'}\left(\int_{I_j}|x_j|^{b_j}\right)^{1/q} &\sim \prod_{j=1}^d l^{\frac{\alpha}{d}-1}l^\frac{a_j+1}{p'} l^\frac{b_j + 1}{q} \\
    &\sim l^{\alpha - d + \frac{d}{p'} + \frac{d}{q} + \sum_{j=1}^d\frac{a_j}{p'}+\frac{b_j}{q}}.
    \end{split}
    \]
    Since \(l\) can be any positive number, we see that we must have
    \[
    \alpha - \frac{d}{p} + \frac{d}{q} + \sum_{j=1}^d\frac{a_j}{p'}+\frac{b_j}{q} = 0.
    \]
    Now that we have checked that the conditions are all necessary, let's actually prove that they are also sufficient. 
    
    Given a cube \(Q\) and a set \(A\subseteq \{1,\dots, d\}\) we write \(Q\in I_A\) 
    if \(Q = I_1\times \dots \times I_d\) and for all \(j\), \(I_j \text{ is type I }\iff j \in A\). Now note that
    \[
    X= \sup_{A \subseteq \{1,\dots, d\}} \sup_{\substack{Q \in I_A}}\prod_{j=1}^d |I_j|^{\frac{\alpha}{d}-1}\left(\int_{I_j}|x_j|^{a_j}\right)^{1/p'}\left(\int_{I_j}|x_j|^{b_j}\right)^{1/q}.
    \]
    To prove sufficiency we will show that the right hand side is finite. Fix some \(A\subseteq \{1, \dots, d\}\) and consider cubes \(Q\in I_A\). Suppose we write \(A = \{k_1, \dots, k_m\}\), with \(m\leq d\). Then, the intervals \(I_{k_1}, \dots, I_{k_m}\) are type I and the others are type II. As before, the type II intervals are contained in an interval centered at the origin, so we have that
    \[
    \begin{split}
        &\prod_{j=1}^d |I_j|^{\frac{\alpha}{d}-1}\left(\int_{I_j}|x_j|^{a_j}\right)^{1/p'}\left(\int_{I_j}|x_j|^{b_j}\right)^{1/q} \\
        &\lesssim \prod_{j\in A}l^{\frac{\alpha}{d}-\frac{1}{p} + \frac{1}{q}}|c_j|^{\frac{a_j}{p'} + \frac{b_j}{q}}  \prod_{j\notin A} l^{\frac{\alpha}{d}-\frac{1}{p}+\frac{1}{q} + \frac{a_j}{p'} + \frac{b_j}{q}} \\ 
        &\lesssim l^{m(\frac{\alpha}{d} - \frac{1}{p} + \frac{1}{q})} l^{(d-m)(\frac{\alpha}{d} - \frac{1}{p} + \frac{1}{q})} l^{\sum_{j\notin A}\frac{a_j}{p'}+\frac{b_j}{q}} \prod_{j\in A}|c_j|^{\frac{a_j}{p'} + \frac{b_j}{q}} \\
        &\lesssim l^{\alpha - \frac{d}{p} + \frac{d}{q}}  l^{\sum_{j\notin A}\frac{a_j}{p'}+\frac{b_j}{q}} \prod_{j\in A}|c_j|^{\frac{a_j}{p'} + \frac{b_j}{q}}.
    \end{split}
    \]
    Now, from condition \ref{item: dimensional balance} we know that
    \[
    \alpha - \frac{d}{p} + \frac{d}{q} + \sum_{j\notin A}\frac{a_j}{p'} + \frac{b_j}{q} = - \sum_{j\in A}\frac{a_j}{p'} + \frac{b_j}{q}.
    \]
    This way, we obtain
    \[
    \prod_{j=1}^d |I_j|^{\frac{\alpha}{d}-1}\left(\int_{I_j}|x_j|^{a_j}\right)^{1/p'}\left(\int_{I_j}|x_j|^{b_j}\right)^{1/q} \lesssim \prod_{j\in A} \left(\frac{|c_j|}{l}\right)^{\frac{a_j}{p'} + \frac{b_j}{q}}.
    \]
    If we now recall that the intervals \(I_j\) with \(j\in A\) are type I, then we have that \(l < |c_j|\). This fact, together with condition \ref{item: exponent relation} implies that 
    \[
    \prod_{j\in A} \left(\frac{|c_j|}{l}\right)^{\frac{a_j}{p'} + \frac{b_j}{q}} \leq 1.
    \]
    Thus, 
    \[
    \sup_{Q\in I_A}\prod_{j=1}^d |I_j|^{\frac{\alpha}{d}-1}\left(\int_{I_j}|x_j|^{a_j}\right)^{1/p'}\left(\int_{I_j}|x_j|^{b_j}\right)^{1/q} \lesssim_{A, p, q, \alpha, a, b} 1.
    \]
    In particular, this implies that \(X < \infty\). 
    
\end{proof}
Note that the conditions \ref{eq: tensor product conditions} satisfy this lemma. In fact, from 
\[
\frac{\alpha}{d} \geq \frac{1}{p}-\frac{1}{q}
\]
we get
\[
\frac{\beta_j}{p}-\frac{\gamma_j}{q} = \frac{\alpha}{d}-\frac{1}{p}+\frac{1}{q} \geq 0.
\]
And also, if we add up the condition
\[
\frac{\alpha}{d}-\frac{1}{p}+\frac{1}{q} +\frac{\gamma_j}{q}- \frac{\beta_j}{p} = 0
\] 
over all \(j\), then we get
\[
\alpha - \frac{d}{p}+\frac{d}{q} + \sum_j \frac{\gamma_j}{q}- \frac{\beta_j}{p} = 0.
\]
So as expected, the conditions of the lemma are a strict improvement over the conditions we get simply from the tensor product structure. Another important thing to note here is that from the conditions of the lemma we have
\[
\alpha - \frac{d}{p} + \frac{d}{q} = \sum_j \frac{\beta_j}{p} - \frac{\gamma_j}{q} \geq 0.
\]
It is important that this condition should hold because we already know from Remark \ref{remark: Apq alpha implies sub-sobolev relation} that this condition must hold in general, for any pair of weights in \(A_{p, q}^\alpha\).

Although we have chosen to focus on these three families of weights, there are many other examples of weights that satisfy Muckenhoupt-type conditions. Other examples could be tensor products of the families we have considered, i.e. 
\[
w(x) = |(x_1,\dots, x_k)|^{\gamma_1} \langle (x_{k+1},\dots, x_d)\rangle^{\gamma_2}.
\]
We could also have considered the pairing of different types of weights. That is, similiar arguments allow us to characterize when \((|x|^\beta, \langle x \rangle^\gamma)\in A_{p, q}^\alpha\) and so on.

\subsection{Sparse collections and Carleson families}

We have seen in the previous sections the usefulness of sparse families, for they can give us two weight norm inequalities as 
long as we can control our operators by sparse operators and we have good boundedness properties for the associated maximal operators. 
In later sections, we will see many more examples of operators that we can dominate by sparse operators, 
but in this section, we will look at some examples of sparse sets to get some intuition for how they behave.

The simplest example of a sparse collection is of course a collection \(\S\) of disjoint sets. In that case, we simply take \(G(Q) = Q\), for all \(Q\in \S\). Obviously, this is not the most interesting case, and in practice, the sparse sets we will work with consist of non-disjoint sets. The next simplest example of a sparse set, which is better suited for developing some intuition about these collections, is a dyadic tower. Suppose \(\D\) is a dyadic lattice, \(x\in \R^d\) and define \(\S = \{Q_k(x)\}_{k\in \Z}\). For a given \(k\) we put \(G(Q_k(x)) = Q_k(x) \setminus Q_{k+1}(x)\). Then,
\[
|G(Q)|\geq \left(1-\frac{1}{2^d}\right) |Q|
\]
and clearly the sets \(G(Q)\) are pairwise disjoint for \(Q\in \S\) (see Figure \ref{fig: dyadic_tower}). 
\begin{figure}[ht]
    \centering
    \includegraphics[width=0.25\textwidth]{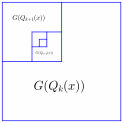}
    \caption{The cube \(Q_k(x)\) and its descendants in the dyadic tower. The good part of \(Q_k(x)\), \(G(Q_k(x))\), is the region that excludes the top left dyadic child of \(Q_k(x)\), and so on.}
    \label{fig: dyadic_tower}
\end{figure}
Another interesting example is the Sierpinski triangle, or more generally, constructions following some kind of fractal nature. 
In the case of the Sierpinski triangle, if we add to \(\S\) all the blue triangles from all the iterations in Figure 
\ref{fig: Sierpinski triangle}, then this forms a \(1/4\)-sparse set, where the good part of a triangle is the white subregion of 
the next iteration. 
\begin{figure}[ht]
    \centering
    \includegraphics[width=0.75\textwidth]{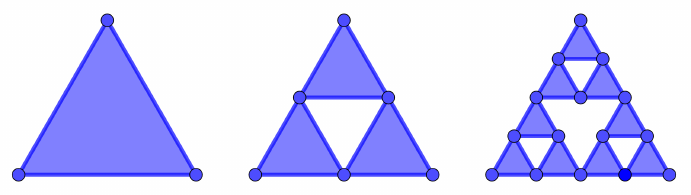}
    \caption{Several iterations of the construction of the Sierpinski triangle.}
    \label{fig: Sierpinski triangle}
\end{figure}
In general, it is not so simple to check if a given family of sets is sparse. Also, there are simple questions about sparse sets that are not at all obvious with the definition that we have given. For example is it the case that, given two sparse families \(\S_1, \S_2\subseteq \D\) of dyadic cubes, the union \(\S_1\cup\S_2\) is sparse? Fortunately, there is an equivalent condition to sparseness which is often much simpler to handle.
\begin{defn}\label{defn: carleson family}
    We say that a family of cubes \(\S\subseteq \D\) is \(\Lambda\)-Carleson\index{Carleson families} if there exists some \(\Lambda > 1\) such that
    \[
    \sum_{\substack{P\subseteq Q \\ P\in \S}} |P|\leq \Lambda |Q|, 
    \]
    for all \(Q\in \D\).
\end{defn}
It is easy to see that an \(\eta\)-sparse family of dyadic cubes must be \(\eta^{-1}\)-Carleson. Indeed, sparseness implies that
\[
\sum_{\substack{P\subseteq Q\\ P\in \S}}|P|\leq \frac{1}{\eta} \sum_{\substack{P\subseteq Q\\ P\in \S}}|G(P)|\leq \frac{1}{\eta} |Q|.
\]
However, it is not as simple to see that Carleson families are also sparse, nevertheless, this is in fact true.
\begin{prop}[Lemma 6.3 from \cite{intuitive_dyadic_calculus}]\label{prop: carleson implies sparse}
    Let \(0 < \eta< 1\), and \(\S\subseteq \D\), where \(\D\) is a given dyadic lattice. Then, \(\S\) is \(\eta\)-sparse if and only if it is \(1/\eta\)-Carleson.
\end{prop}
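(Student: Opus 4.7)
The forward direction is immediate. If $\S$ is $\eta$-sparse, then for any $Q\in\D$ the disjointness of the sets $\{G(P)\}_{P\in\S}$ gives
\[
\sum_{\substack{P\in\S\\ P\subseteq Q}} |P| \;\leq\; \frac{1}{\eta}\sum_{\substack{P\in\S\\ P\subseteq Q}} |G(P)| \;=\; \frac{1}{\eta}\Big|\bigsqcup_{\substack{P\in\S\\ P\subseteq Q}} G(P)\Big| \;\leq\; \frac{|Q|}{\eta},
\]
which is precisely the $1/\eta$-Carleson condition.

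For the converse, suppose $\S$ is $\Lambda$-Carleson and set $\eta:=1/\Lambda$. The goal is to produce pairwise disjoint measurable subsets $G(Q)\subseteq Q$ with $|G(Q)|\geq\eta|Q|$ for every $Q\in\S$. The most natural candidate, $G(Q):=Q\setminus\bigcup\{P\in\S:P\subsetneq Q,\ P\text{ maximal in }\S\}$, is in general too small: if $\S$ consists of every dyadic subinterval of $[0,1]$ up to depth $n$, then $\S$ is $(n+1)$-Carleson but this $G([0,1])$ is empty. Some redistribution of measure across generations is therefore unavoidable. The plan is to phrase the construction as a measure-theoretic marriage problem in which each $Q\in\S$ demands $\eta|Q|$ units of Lebesgue mass drawn from $Q$; a disjoint assignment exists provided Hall's condition
\[
\sum_{Q\in\S'}\eta|Q| \;\leq\; \Big|\bigcup_{Q\in\S'} Q\Big| \qquad (\star)
\]
holds for every finite subfamily $\S'\subseteq\S$. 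The Carleson hypothesis yields $(\star)$: letting $\max(\S')$ denote the inclusion-maximal elements of $\S'$---a pairwise disjoint family of dyadic cubes whose union equals $\bigcup_{Q\in\S'}Q$---and partitioning $\S'$ according to the unique element of $\max(\S')$ containing each $Q\in\S'$, one has
\[
\sum_{Q\in\S'} |Q| \;=\; \sum_{P\in\max(\S')} \sum_{\substack{Q\in\S'\\ Q\subseteq P}} |Q| \;\leq\; \sum_{P\in\max(\S')}\Lambda|P| \;=\; \Lambda\Big|\bigcup_{Q\in\S'}Q\Big|,
\]
where the middle inequality applies the Carleson hypothesis to each $P\in\max(\S')\subseteq\D$.

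The main obstacle will be executing the marriage argument when $\S$ is infinite, since the classical Hall theorem is finite and the underlying measure space is continuous. I would handle this by exhaustion: set $\S^{(N)}:=\{Q\in\S : 2^{-N}\leq l(Q)\leq 2^N,\ Q\subseteq [-2^N,2^N]^d\}$, which is a finite subfamily inheriting $(\star)$. For each $N$ a finite measure-theoretic marriage theorem (provable by induction on $|\S^{(N)}|$, with $(\star)$ supplying the inductive step through a standard deficiency argument) yields disjoint $G_N(Q)\subseteq Q$ with $|G_N(Q)|=\eta|Q|$ for every $Q\in\S^{(N)}$. A diagonal weak-$\ast$ subsequential limit of the indicators $\chi_{G_N(Q)}\in L^\infty$, followed by a level-set thresholding to recover honest sets, then produces a measurable assignment $\{G(Q)\}_{Q\in\S}$ satisfying the required disjointness and lower measure bound.
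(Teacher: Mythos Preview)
Your forward direction and the verification of Hall's condition $(\star)$ are clean and correct. The finite-stage marriage argument is also fine in outline: for a finite $\S^{(N)}$ one can simply build the sets from the finest generation upward, choosing $G(Q)\subseteq Q\setminus\bigcup_{P\in\S^{(N)},\,P\subsetneq Q}G(P)$ of measure $\eta|Q|$, which works precisely because $|Q|-\eta\sum_{P\subsetneq Q}|P|\geq|Q|-\eta(\Lambda-1)|Q|=\eta|Q|$.

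The gap is in the passage to the limit. A diagonal weak-$\ast$ limit of $\chi_{G_N(Q)}$ yields functions $g_Q$ with $0\leq g_Q\leq 1$, $\operatorname{supp}g_Q\subseteq Q$, $\int g_Q=\eta|Q|$, and $\sum_{Q\in\S}g_Q\leq 1$ a.e., but ``level-set thresholding'' does not turn such data into pairwise disjoint \emph{sets} $G(Q)\subseteq Q$ of the required measure. If $g_Q\equiv\eta$ on $Q$, no superlevel set of $g_Q$ has the right size, and there is no threshold that enforces disjointness across different $Q$'s. What you actually need here is a purification step (a Lyapunov/Dvoretzky--Wald--Wolfowitz type argument, or an auxiliary randomisation on $\R^d\times[0,1]$ followed by a cube-preserving measure isomorphism back to $\R^d$), and that is a genuine piece of work you have not supplied.

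The paper avoids this compactness issue altogether by making the finite-stage construction \emph{monotone}. For each depth $K$ it initialises $G_K(Q)$, for $Q$ in generation $K$, as the corner subcube of measure $|Q|/\Lambda$, and then fills in earlier generations by the same upward induction you sketched, but always using sets of the form $(x(Q)+t^*(Q-x(Q)))\setminus\bigcup_{P\subsetneq Q}G_K(P)$. The corner-cube choice forces $E_K(Q):=\bigcup_{P\subseteq Q,\,P\in\S}G_K(P)$ to be increasing in $K$, so one can simply set $E(Q)=\bigcup_K E_K(Q)$ and $G(Q)=E(Q)\setminus\bigcup_{P\subsetneq Q}E(P)$; the required measure bound then follows by continuity of measure along the increasing union. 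This monotonicity is the substitute for your compactness step, and it is what makes the infinite case go through without any soft analysis.
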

\begin{proof}
    We have already shown one direction, so we focus now on the other direction. For this purpose suppose that \(\S\) is \(\Lambda\)-Carleson, where \(\Lambda = \eta^{-1}\). Our goal is to show that this implies that \(\S\) is \(\Lambda^{-1}\)-sparse. The challenge here is that the trivial idea doesn't work, in fact, if we define for any given \(Q\in \S\),
    \[
    G(Q) = Q \setminus \bigcup_{\substack{P\subsetneq Q\\ P\in \S}} P,
    \]
    it is true that the sets \(G(Q)\) are disjoint. However, we would then estimate
    \[
    |G(Q)| \geq |Q| - \sum_{\substack{P\subsetneq Q \\ P\in \S}} |P| \geq |Q| - (\Lambda - 1)|Q| = (2-\Lambda)|Q|,
    \]
    which is never above \(|Q|/\Lambda\). The problem here is that we removed more than we needed in the definition of the good parts. We really only need to define a good part contained in 
    \[
    Q\setminus \bigcup_{\substack{P\subsetneq Q\\ P\in \S}} G(P),
    \]
    because then \(G(Q)\) is disjoint from the good parts of the descendants of \(Q\) and, assuming that \(|G(P)| = |P|/\Lambda\), 
    it follows that 
    \[
    \left|Q\setminus \bigcup_{\substack{P\subsetneq Q\\ P\in \S}} G(P)\right| \geq |Q| - \sum_{\substack{P\subsetneq Q\\ P\in \S}} |G(P)| = |Q|-\frac{1}{\Lambda}\sum_{\substack{P\subsetneq Q\\ P\in \S}} |P|\geq |Q| - \frac{\Lambda - 1}{\Lambda}|Q| = \frac{|Q|}{\Lambda}.
    \]
    Therefore we can pick \(G(Q)\) in \(Q\setminus \cup_{P\subsetneq P, P\in \S}G(P)\) such that \(|G(Q)| = |Q|/\Lambda\). 
    But of course, we can only do this if we have already defined the good parts of the descendants of \(Q\) in \(\S\). 
    Thus we see that to argue this way we always need to `push' the initialization of the good parts further and further down the generations. 
    With some care, this can be done by making certain choices at the level of a fixed generation, repeating the construction for later generations and taking the process to the limit.

    For each \(K\geq 0\) we will define good parts of the cubes of the family with respect to the construction that initializes the cubes in generation \(\D_K\). The main idea for this construction will be to use good parts which are built out of the corners of the cubes. Fix \(K\geq 0\) and for each \(Q\in \S \cap \D_K\) put
    \[
    G_K(Q) = x(Q) + \frac{1}{\Lambda^{1/d}}(Q-x(Q)).
    \]
    In other words, \(G_K(Q)\) is the cube with the same corner as \(Q\) but with measure \(|G_K(Q)| = |Q|/\Lambda\). Now take \(Q\in \S \cap \D_{K-1}\). As before we want \(G_K(Q)\) to be contained in
    \[
    Q \setminus \bigcup_{\substack{ P\subsetneq Q\\ P\in \S }} G_K(P)
    \]
    and such that \(|G_K(Q)| = |Q|/\Lambda\). Like in generation \(K\) we build this set out of cubes with the same corner as \(Q\). 
    Note that the function
    \[
    t \mapsto \left|[x(Q)+t(Q-x(Q))] \setminus \bigcup_{\substack{ P\subsetneq Q\\ P\in \S }} G_K(P) \right|
    \]
    is continuous for \(t\in [0, 1]\) and goes from \(0\) when \(t=0\) to 
    \[
    \left|Q \setminus \bigcup_{\substack{ P\subsetneq Q\\ P\in \S }} G_K(P) \right| \geq |Q| - \sum_{\substack{P\subsetneq Q\\ P\in \S}}|G_K(P)| = |Q| - \frac{1}{\Lambda}\sum_{\substack{P\subsetneq Q\\ P\in \S}}|P|\geq |Q| - \frac{\Lambda - 1}{\Lambda}|Q| = \frac{|Q|}{\Lambda},
    \]
    when \(t=1\). Therefore, there is some \(t^*\in [0, 1]\) such that 
    \[
    \left|(x(Q)+t^*(Q-x(Q))) \setminus \bigcup_{\substack{ P\subsetneq Q\\ P\in \S }} G_K(P) \right| = \frac{|Q|}{\Lambda}.
    \]
    We now define (see Figure \ref{fig: good part construction})
    \[
    G_K(Q) =  (x(Q)+t^*(Q-x(Q))) \setminus \bigcup_{\substack{ P\subsetneq Q\\ P\in \S }} G_K(P).
    \]
    \begin{figure}[ht]
        \centering
        \includegraphics[width = 0.3\textwidth]{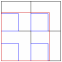}
        \caption{The blue squares represent the initialization in generation \(K\). The set \(G_K(Q)\) is then the red square minus the blue squares.}
        \label{fig: good part construction}
    \end{figure}
    This way the good parts of cubes in \(\D_{K-1}\) do not intersect the good parts of cubes in \(\D_K\) and these good parts have the required measure. Now we simply continue this way back in `time' and we define \(G_K(Q)\) for all \(Q\in \S \cap \D_k,\ k< K\). More precisely, suppose we have already defined \(G_K(Q)\) for all \(Q\in \S \cap (\cup_{m+1\leq k\leq K}\D_k)\), for some \(m\leq K-1\). Then, given \(Q\in \S\cap \D_m\), we define 
    \[
    G_K(Q) = (x(Q) + t^*(Q-x(Q))) \setminus \bigcup_{\substack{P\subsetneq Q \\ P\in \S}} G_K(P),
    \]
    where \(t^*\in [0, 1]\) is such that \(|G_K(Q)| = |Q|/\Lambda\). This way we have constructed good parts to every cube in \(\S \cap (\cup_{k\leq K}\D_k)\) with respect to generation \(K\). We note that we set \(G_K(Q) = \varnothing\) if \(Q \in \cup_{k>K}\D_k\).
    
    The idea now is to let \(K\rightarrow +\infty\). To do this carefully we first show that for a given cube \(Q\in \S \cap (\cup_{k\leq K}\D_k)\) we have that 
    \begin{equation}\label{eq: chained good parts}
         \bigcup_{\substack{P\subseteq Q \\ P\in \S}}G_K(P) \subseteq \bigcup_{\substack{P\subseteq Q \\ P\in \S}}G_{K+1}(P).
    \end{equation}
    For convenience, we write 
    \[
    E_K(Q) = \bigcup_{\substack{P\subseteq Q \\ P\in \S}}G_K(P). 
    \]
    To start with assume that \(Q\in \S \cap \D_K\). Then, we know that 
    \[
    G_{K+1}(Q) = (x(Q) + t^*(Q-x(Q))) \setminus \bigcup_{\substack{P\subsetneq Q\\ P\in \S}}G_{K+1}(P),
    \]
    where \(t^*\) is chosen so that \(|G_{K+1}(Q)| = |Q|/\Lambda\). Note that, if \(t^* < \Lambda^{-1/d}\), then
    \[
    |G_{K+1}(Q)| \leq (t^*)^d |Q| < \frac{1}{\Lambda}|Q|,
    \]
    which is absurd. This shows that \(t^* \geq \Lambda^{-1/d}\). From this we deduce that
    \[
    \begin{split}
    E_K(Q) &= G_K(Q) = x(Q) + \Lambda^{-1/d}(Q-x(Q)) \subseteq x(Q) + t^*(Q-x(Q)) \\
    &= [x(Q) + t^*(Q-x(Q))] \setminus \bigcup_{\substack{P\subsetneq Q\\ P\in \S}}G_{K+1}(P) \cup \bigcup_{\substack{P\subsetneq Q \\ P\in \S}} G_{K+1}(P)\\
    &= G_{K+1}(Q) \cup \bigcup_{\substack{P\subsetneq Q \\ P\in \S}} G_{K+1}(P) = E_{K+1}(Q).
    \end{split}
    \]
    Now suppose, by induction that the claim holds for all cubes \(Q \in \cup_{m+1\leq k \leq K} \S \cap \D_k\) and let's prove it holds for \(\S \cap \D_m\). Take \(Q\in \S \cap \D_m\), then there are \(t, t'\in [0, 1]\) such that
    \[
    E_K(Q) = \bigcup_{\substack{P\subseteq Q \\ P\in \S}} G_K(P) = G_K(Q) \cup \bigcup_{\substack{P\subsetneq Q \\ P\in \S}}G_K(P) = x(Q) + t(Q-x(Q)),
    \]
    and the same works also for \(E_{K+1}(Q)\), i.e. 
    \[
    E_{K+1}(Q) = x(Q) + t'(Q-x(Q)).
    \]
    Moreover, by definition,
    \[
    \left|[x(Q) + t(Q-x(Q))] \setminus \bigcup_{\substack{P\subsetneq Q \\ P\in \S}}G_K(P)\right| = \left| [x(Q) + t'(Q-x(Q))] \setminus \bigcup_{\substack{P\subsetneq Q \\ P\in \S}}G_{K+1}(P)\right|.
    \]
    To prove that \(E_K(Q) \subseteq E_{K+1}(Q)\) it is enough to show that \(t\leq t'\), and this follows if we show that
    \[
    \bigcup_{\substack{P\subsetneq Q \\ P\in \S}}G_K(P) \subseteq \bigcup_{\substack{P\subsetneq Q \\ P\in \S}}G_{K+1}(P).
    \]
    Take an element \(x\) of the left hand side. Then, \(x\) is in some \(G_K(P)\) for some \(P\subsetneq Q, P\in \S\), and \(P\in \cup_{m+1\leq k \leq K}\S \cap \D_k\). Therefore, we can apply the induction hypothesis to say that \(x\in E_K(P)\subseteq E_{K+1}(P)\) and thus \(x\in E_{K+1}(P)\). In turn, this implies that \(x\in G_{K+1}(P')\) for some \(P'\subseteq P, P'\in \S\), which shows that \(x\) is an element of the right hand side. This proves that indeed,
    \[
    E_K(Q) \subseteq E_{K+1}(Q).
    \]
    Now, given a cube \(Q\in \S \cap \D_k\) we define
    \[
    E(Q) = \bigcup_{K\geq k} E_K(Q)\subseteq Q.
    \]
    Once these sets are defined we can finally say what the good parts are. For a given \(Q\in \S\) we put
    \[
    G(Q) = E(Q) \setminus \bigcup_{\substack{P\subsetneq Q \\ P \in \S}} E(P).
    \]
    Then, the sets \(G(Q), Q\in \S\), are pairwise disjoint. To see that they have the required measure, we begin by noting that 
    \[
    G(Q) = \left[\bigcup_K E_K(Q)\right]\setminus \left[ \bigcup_K \bigcup_{\substack{P\subsetneq Q \\ P\in \S}} E_K(P)\right] 
    \]
    is the intersection of the decreasing (in \(K\)) family
    \[
    \left[ \bigcup_L E_L(Q)\right]\setminus \left[ \bigcup_{\substack{P\subsetneq Q \\ P\in \S}} E_K(P)\right]. 
    \]
    This means that
    \[
    |G(Q)| = \lim_K \left| \bigcup_L E_L(Q) \setminus \bigcup_{\substack{P\subsetneq Q \\ P\in \S}} E_K(P)\right| \geq \lim_K \left| E_K(Q) \setminus \bigcup_{\substack{P\subsetneq Q \\ P\in \S}} E_K(P)\right| = \lim_K |G_K(Q)| = \frac{|Q|}{\Lambda}.
    \]
    Thus, we have shown that \(\S\) is \(\Lambda^{-1}\)-sparse.
\end{proof}

Using this characterization of sparse sets it becomes very simple to see that the union of sparse sets is sparse. Indeed, suppose that \(\S_1, \S_2\subseteq \D\) are sparse sets with constant respectively \(\eta_1\) and \(\eta_2\). Then, they are Carleson families with constants \(\eta_1^{-1}, \eta_2^{-1}\). Therefore, given a cube \(Q\in \D\),
\[
\sum_{\substack{P\subseteq Q \\ P\in \S_1 \cup \S_2}}|P|\leq \sum_{\substack{P\subseteq Q \\ P\in \S_1}}|P| + \sum_{\substack{P\subseteq Q \\ P\in \S_2}}|P|\leq (\eta_1^{-1} + \eta_2^{-1})|Q|,
\]
which in turn implies that \(\S_1\cup\S_2\) is sparse with constant
\[
(\eta_1^{-1} + \eta_2^{-1})^{-1} = \frac{\eta_1\eta_2}{\eta_1+\eta_2}.
\]
Moreover, we can also use the equivalence with Carleson families to find simple criteria for a collection to be sparse. Suppose that \(\S\subseteq \D\). For a given \(Q\in \D\) let
\[
N_k(Q) = \#\{P\in \Des_k(Q): P\in \S\}.
\]
In other words, \(N_k(Q)\) measures the number of descendants of order \(k\) of \(Q\) which are in the collection \(\S\). Then, we see that 
\[
\sum_{\substack{P\subseteq Q \\ P\in \S}}|P| = \sum_{k\geq 0}\sum_{\substack{P\in \Des_k(Q) \\ P\in \S}}|P| = \sum_{k\geq 0}\sum_{\substack{P\in \Des_k(Q) \\ P\in \S}} 2^{-kd}|Q| = |Q|\sum_{k\geq 0}\frac{N_k(Q)}{2^{kd}}.
\]
So, it follows that \(\S\) is sparse if and only if 
\begin{equation}\label{eq: condition for sparseness}
\sup_{Q\in \D} \sum_{k\geq 0}\frac{N_k(Q)}{2^{kd}} < \infty.
\end{equation}
This condition gives a simple criterion for constructing sparse sets. 
Since for any \(Q\in \D\), \(\#\Des_k(Q) = 2^{kd}\), we see that to construct a sparse set we simply have to choose a small enough fraction of the cubes in each generation so that the sum of these fractions converges. Furthermore, if we consider the one-dimensional case the situation becomes even simpler. 

Suppose that \(\D\) is a dyadic lattice in \(\R\) and consider a fixed dyadic interval \(I\). We are interested in characterizing what collections \(\S\subseteq \Des(I)\) are sparse. First, we label each interval which is a descendant of \(I\) in a particular way. We start by labeling \(I\) as \(I_1\). Then, the left child of \(I\) is labeled as \(I_2\) and the right child of \(I\) is labeled as \(I_3\). Continuing in this way, the intervals in \(\Des_k(I)\) are labeled as
\[
I_{2^k}, I_{2^k + 1},\dots, I_{2^{k+1}-1},
\]
from left to right (see Figure \ref{fig: dyadic labeling}). 
\begin{figure}[ht]
    \centering
    \includegraphics[width=0.3\textwidth]{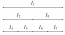}
    \caption{The labeling of the descendants of \(I\) up to generation 2.}
    \label{fig: dyadic labeling}
\end{figure}
This labeling has a few interesting properties. If \(I_n\) is in \(\Des_k(I)\), then \(2^k\leq n \leq 2^{k+1}-1\) and by definition 
\[
I_n = [x(I) + (n-2^k)\frac{l(I)}{2^k}, x(I) + (n-2^k + 1)\frac{l(I)}{2^k}[.
\]
But then, \(2^{k+1}\leq 2n \leq 2^{k+2}-2\) so \(I_{2n}\in \Des_{k+1}(I)\) and 
\[
\begin{split}
I_{2n} &= [x(I) + (2n - 2^{k+1})\frac{l(I)}{2^{k+1}}, x(I) + (2n-2^{k+1}+1)\frac{l(I)}{2^{k+1}}[ \\
&= [x(I)+(n-2^k)\frac{l(I)}{2^k}, x(I) + (n-2^k+\frac{1}{2})\frac{l(I)}{2^k}[\\
&=[x(I_n), x(I_n) + \frac{l(I_n)}{2}[,
\end{split}
\]
which is exactly the left dyadic child of \(I_n\). In other words, this shows that the dyadic children of \(I_n\) are \(I_{2n}\) and \(I_{2n+1}\), from left to right. Now note that the binary representation of the number \(2n\) is exactly the binary representation of \(n\) with a zero added to the right. And, the binary representation of \(2n+1\) is the binary representation of \(n\) with a 1 added to the right. This has the implication of giving us a simple way of knowing the ancestors of a given interval \(I_n\) if we just think of adding a \(0\) to go to the left in the next generation and adding a 1 to go to the right in the next generation. More precisely, we consider the binary representation of \(n\), say \((b_kb_{k-1}\dots b_2 b_1b_0)_2\), and this gives us the sequence of ancestors, where the parent is the interval associated with the number \((b_k b_{k-1}\dots b_2b_1)_2\), and then the parent of the parent is the interval associated to the number \((b_k b_{k-1}\dots b_2)_{2}\), and so on. Another way to think about this is to see the binary representation of a number \(n\) as giving us the `path' we need to take from \(I_1\) to \(I_n\). For example, the number \(21\) has the binary expansion \(10101\), and this means that we start at \(I_1\), the next digit is a \(0\), so we take the left child of \(I_1\), which is \(I_2\). The next digit is 1, so we take the right child of \(I_2\), which is \(I_5\). The next digit is a 0, so we take the left child of \(I_5\) which is \(I_{10}\), and finally, the last digit is a 1, so we go with the right child of \(I_{10}\) and we arrive at \(I_{21}\). This way of understanding the dyadic structure inside \(I\) is interesting because we can associate to each collection of descendants \(\S\), a certain set of integers \(A\subseteq \N_1\) such that
\[
\S = \{I_n: n\in A\}.
\]
From \eqref{eq: condition for sparseness} we know that the set \(\S\) is sparse if and only if 
\[
\sup_{Q\in \D} \sum_{k\geq 0}\frac{N_k(Q)}{2^k}<\infty. 
\]
First note that, since \(\S\subseteq \Des(I)\) we can reduce the supremum from \(Q\in \D\) to \(Q\in \Des(I)\). Now, if we take some \(n\in \N_1\), then
\[
\sum_{k\geq 0} \frac{N_k(I_n)}{2^k} = \sum_{k\geq 0}\frac{\#\{m\in A: 2^kn\leq m < 2^k(n+1)\}}{2^k}
\]
because the descendants of order \(k\) of \(I_n\) are labeled
\[
I_{2^kn}, I_{2^kn + 1},\dots, I_{2^kn + 2^k-1}.
\]
In particular, the set of integers that correspond to the descendants of \(I_n\) is exactly
\[
    \begin{split}
        D(n) &= \{n, 2n, 2n+1, 4n, 4n+1, 4n+2, 4n+3,\dots\} \\
        &= \bigcup_{k\geq 0}\{2^kn, 2^kn+1, \dots, 2^kn + 2^k-1\}.
    \end{split}
\]
Note that the sets in this union are pairwise disjoint, so if \(m\in D(n)\) there is 
a unique \(l\geq 0\) such that \(2^ln\leq m < 2^l(n+1)\).
Continuing this way, we see that
\[
\begin{split}
    \sum_{k\geq 0}\frac{N_k(I_n)}{2^k} &= \sum_{k\geq 0}\frac{1}{2^k}\sum_{m\in A} 
    \1_{2^kn\leq m<2^k(n+1)} = \sum_{m\in A}\sum_{k\geq 0}\frac{1}{2^k}\1_{2^kn\leq 
    m<2^k(n+1)}.
\end{split}
\]
Let's look carefully at the sum in \(k\). If \(m\notin D(n)\), then \(\1_{2^kn\leq m <
2^k(n+1)} = 0\), for all \(k\), and so the inner sum will be zero. If \(m\in D(n)\), then 
there is a unique \(l\geq 0\) such that \(2^ln\leq m < 2^l(n+1)\). Therefore,
\[ 
    \sum_{k\geq 0}\frac{1}{2^k}\1_{2^kn\leq m < 2^k(n+1)} = \frac{1}{2^l} \sim 
    \frac{n}{m}.
\] 
This shows that,
\[ 
    \sum_{k\geq 0}\frac{N_k(I_n)}{2^k} \sim 
    \sum_{m\in A}\frac{n}{m}\1_{m\in D(n)} = n\sum_{m\in A\cap D(n)}\frac{1}{m}.
\] 
Thus, \(\S\) is sparse if and only if 
\begin{equation}\label{eq: sparse set of integers}
\sup_{n\geq 1}\left(n\sum_{m\in A\cap D(n)}\frac{1}{m}\right)<\infty. 
\end{equation}
This gives a completely number-theoretic characterization of what subcollections of \(\Des(I)\) are sparse. In other words, we can define a dyadically sparse set of positive integers\index{dyadically sparse set of integers} \(A\subseteq \N_1\) to be a set such that \eqref{eq: sparse set of integers} holds. One then has a one-to-one correspondence between the concept of dyadically sparse sets of integers and sparse subcollections of \(\Des(I)\).

\subsection{A simple example: the Hardy operator}\label{sec: sparse domination for Hardy}

Above, we were able to control the fractional maximal operator with a sparse operator. 
To do this we argued using an iterative 
scheme, where we identified parts of space where we could control the maximal 
operator and we showed that the remaining parts 
had smaller and smaller measure. 
However, in some cases, it is possible to obtain sparse domination by much simpler methods. In this section, we give an elementary argument 
to show that we can control both the Hardy operator and its transpose by sparse operators.

Given \(f\in L^1_\text{loc}(\R)\) we define the Hardy operator\index{Hardy operator} by
\[
\mcal{H} f(x) = \frac{1}{x}\int_0^x f(t)dt.
\]
The Hardy operator is bounded on \(L^p(\R), 1<p<\infty\), a fact which is also known as Hardy's inequality. However, we will 
show here a simple argument to control \(\mcal{H}\) by a sparse operator, which directly implies the Hardy inequality by Theorem 
\ref{thm: weighted estimate for sparse operators}, and not only that but also implies two-weight norm inequalities for \(\mcal{H}\).

Define \(\S = \{[0, 2^{k}[: k\in \Z\} \cup \{[-2^{k}, 0[: k\in \Z\}\). Note that the family \(\S\) is a sparse collection of intervals. Indeed, if we set \(G([0, 2^k[) = [2^{k-1}, 2^k[\) and \(G([-2^k, 0[) = [-2^k, -2^{k-1}[\), then for any \(I\in \S\) we have that \(|G(I)|\geq |I|/2\) and the sets \(G(I)\) are pairwise disjoint. Now let \(x>0\) and \(k\in \Z\) such that \(2^{k-1}\leq x < 2^k\). Then, we have that
\[
|\mcal{H}f(x)| \leq  \frac{1}{x}\int_0^x |f| \leq 2 \dashint_{[0, 2^k[}|f| \leq 2\sum_{I\in \S}\chi_I(x) \dashint_I|f|.
\]
We can prove this estimate also for \(x<0\) in an analogous way. Therefore we can dominate the Hardy operator with a sparse operator. As an immediate consequence, we obtain the following proposition.
\begin{prop}\label{prop: two weight estimate for hardy op}
    Suppose \(1<p\leq q <\infty\), \((v,w)\in A_{p,q}^0\) and \(v^{-p'/p},w\in A_\infty\), then
    \[
    \|\mcal{H}f\|_{L^q(\R, w)}\lesssim \|f\|_{L^p(\R, v)},\ \forall f \in L^p(\R, v).
    \]
\end{prop}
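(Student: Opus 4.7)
The plan is to assemble the result directly from the two ingredients developed just before the proposition: the pointwise sparse domination for $\mathcal{H}$ and the weighted norm inequality for sparse operators (Theorem~\ref{thm: weighted estimate for sparse operators}). No iteration or fresh decomposition is needed.

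First, I would restate the pointwise bound established above, namely
\[
|\mathcal{H}f(x)| \leq 2\sum_{I\in\S}\chi_I(x)\dashint_I|f|,
\]
where $\S=\{[0,2^k[:k\in\Z\}\cup\{[-2^k,0[:k\in\Z\}$ is $1/2$-sparse. In the notation of Definition~\ref{defn: sparse operator}, this is precisely $|\mathcal{H}f(x)|\leq 2\mathcal{A}_\S^0 f(x)$, since $\alpha=0$ makes the factor $|I|^{\alpha/d}$ equal to $1$. Here $\S$ is a $1/2$-sparse family of one-dimensional cubes (intervals), so it satisfies the hypothesis of Theorem~\ref{thm: weighted estimate for sparse operators}.

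Next, I would apply Theorem~\ref{thm: weighted estimate for sparse operators} in dimension $d=1$ with exponents $p,q$ satisfying $1<p\leq q<\infty$ and $\alpha=0$. The hypotheses $(v,w)\in A_{p,q}^0$ and $v^{-p'/p},\,w\in A_\infty$ are exactly what is assumed in the statement of the present proposition, so the theorem yields
\[
\|\mathcal{A}_\S^0 f\|_{L^q(\R,w)}\lesssim \|f\|_{L^p(\R,v)},\qquad \forall f\in L^p(\R,v).
\]

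Finally, I would take the $L^q(w)$-norm of both sides of the pointwise sparse domination to combine the two estimates:
\[
\|\mathcal{H}f\|_{L^q(\R,w)}\leq 2\|\mathcal{A}_\S^0 f\|_{L^q(\R,w)}\lesssim \|f\|_{L^p(\R,v)}.
\]
There is no real obstacle here; the only points to verify are that $\alpha=0$ is within the permissible range of Theorem~\ref{thm: weighted estimate for sparse operators} (it is, since the theorem allows $\alpha\geq 0$) and that intervals qualify as cubes in the $d=1$ case, which they do by our standing convention. The proposition therefore follows as an immediate corollary of the two results just proved.
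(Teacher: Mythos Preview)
Your proposal is correct and matches the paper's approach exactly: the paper states the proposition ``as an immediate consequence'' of the pointwise sparse domination just established, and you have simply written out that immediate consequence by invoking Theorem~\ref{thm: weighted estimate for sparse operators} with $\alpha=0$.
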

\begin{remark}
    Several authors, including Tomaselli, Talenti, Artola and Muckenhoupt, have investigated two-weight norm inequalities for the Hardy operator. Moreover,
    the pairs of weights that satisfy such an estimate have been completely characterized. See for instance \cite{MuckenhouptHardy} and references therein.
\end{remark}
As a particular case, we recover the Hardy inequality when \(v = w = 1\). Note that if \((v, w) \in A_{p, q}^0\), 
then by remark \ref{remark: Apq alpha implies sub-sobolev relation},
\[ 
    0 \geq \frac{1}{p} - \frac{1}{q},
\] 
which implies that \(p \geq q\). Seeing as we need \(p\leq q\), it follows that Proposition \ref{prop: two weight estimate for hardy op} actually only holds 
in the case \(p = q\). 
Now, the same can be done with the transpose of the Hardy operator. We have that
\[
\begin{split}
\int_\R \mcal{H}f(x) g(x)dx &= \int_0^{+\infty} \mcal{H}f(x) g(x)dx + \int_{-\infty}^0 \mcal{H}f(x) g(x) dx \\
&= \int_0^{+\infty} \frac{g(x)}{x}\int_0^x f(t)dt dx - \int_{-\infty}^0\frac{g(x)}{x}\int_x^0 f(t)dt dx\\
&= \int_0^{+\infty}f(t) \int_t^{+\infty}\frac{g(x)}{x}dx dt - \int_{-\infty}^0 f(t) \int_{-\infty}^t \frac{g(x)}{x}dx dt.
\end{split}
\]
Therefore, the transpose of the Hardy operator is 
\[
\mcal{H}^t f(x) = 
\begin{dcases}
    \int_x^{+\infty}\frac{f(t)}{t}dt,& \text{ if }x > 0,\\
    -\int_{-\infty}^x \frac{f(t)}{t}dt,& \text{ if }x<0.
\end{dcases}
\]
\begin{lemma}\label{lemma: sparse domination for the transpose hardy op}
    Let \(\S = \{[0, 2^{k}[: k\in \Z\} \cup \{[-2^{k}, 0[: k\in \Z\}\), then
    \[
    |\mcal{H}^tf(x)|\leq 2 \sum_{I\in \S}\chi_I(x) \dashint_I|f|.
    \]
\end{lemma}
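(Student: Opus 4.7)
The plan is to prove the inequality by direct computation, mimicking the simple dyadic shell decomposition that was used for $\mcal{H}$ itself. I would first treat the case $x > 0$ and then note that the case $x < 0$ is handled by an entirely analogous argument with the intervals $[-2^k, 0[$.

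For $x > 0$, fix the unique $k \in \Z$ with $2^{k-1} \leq x < 2^k$, and write
\[
|\mcal{H}^t f(x)| \leq \int_x^{+\infty}\frac{|f(t)|}{t}dt = \int_x^{2^k}\frac{|f(t)|}{t}dt + \sum_{j\geq k}\int_{2^j}^{2^{j+1}}\frac{|f(t)|}{t}dt.
\]
On each dyadic annulus $[2^j, 2^{j+1}[$ we have $1/t \leq 2^{-j}$, and similarly on $[x, 2^k[ \subseteq [2^{k-1}, 2^k[$ we have $1/t \leq 2^{-(k-1)}$. After enlarging $[x, 2^k[$ to the full shell $[2^{k-1}, 2^k[$, this gives
\[
\int_x^{+\infty}\frac{|f(t)|}{t}dt \leq \sum_{j\geq k-1} 2^{-j}\int_{2^j}^{2^{j+1}}|f(t)|dt.
\]

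Next I would pass from these shell averages to averages over the intervals in $\S$. Since $[2^j, 2^{j+1}[ \subseteq [0, 2^{j+1}[$, we have
\[
2^{-j}\int_{2^j}^{2^{j+1}}|f| \leq 2^{-j}\int_0^{2^{j+1}}|f| = 2 \dashint_{[0, 2^{j+1}[} |f|,
\]
which after reindexing $n = j+1$ yields
\[
|\mcal{H}^t f(x)| \leq 2\sum_{n\geq k}\dashint_{[0,2^n[}|f|.
\]
For every $n \geq k$ we have $x < 2^k \leq 2^n$, so $x \in [0, 2^n[$, i.e. $\chi_{[0,2^n[}(x) = 1$. The sum on the right is therefore majorized by $2\sum_{I\in \S}\chi_I(x)\dashint_I|f|$, which is the desired bound.

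The argument for $x < 0$ is completely analogous: with $k$ chosen so that $-2^k \leq x < -2^{k-1}$, one splits $]-\infty, x]$ into shells $[-2^{j+1}, -2^j[$ and runs the same estimate, with $[-2^n, 0[$ playing the role of $[0, 2^n[$. There is no real obstacle here; the only small subtlety is the treatment of the ``truncated'' outermost shell $[x, 2^k[$, which one resolves just by enlarging it to the adjacent full dyadic interval at the price of a factor that gets absorbed into the constant $2$.
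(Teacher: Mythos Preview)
Your proof is correct and follows essentially the same dyadic shell decomposition as the paper, just with a slightly different indexing convention (your $k$ is the paper's $k_0+1$) and a more explicit treatment of the truncated first shell, which the paper absorbs directly by using $\int_x^\infty \leq \int_{2^{k_0}}^\infty$. The two arguments are otherwise identical.
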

\begin{proof}
    The cases \(x>0\) and \(x<0\) are very similar, so we focus only on the case \(x>0\). Take \(k_0\in \Z\) such that \(2^{k_0}\leq x < 2^{k_0+1}\). Then, we have that
    \[
    |\mcal{H}^tf(x)|\leq \int_x^\infty \frac{|f(t)|}{t}dt \leq \sum_{k\geq k_0}\int_{2^k}^{2^{k+1}}\frac{|f(t)|}{t}dt \leq \sum_{k\geq k_0}\frac{1}{2^k}\int_0^{2^{k+1}}|f| \leq 2 \sum_{I\in \S}\chi_I(x) \dashint_I|f|.
    \]
\end{proof}

We may also conclude from this lemma that Proposition \ref{prop: two weight estimate for hardy op} also works for \(\mcal{H}^t\).

\subsection{General pointwise sparse domination}\label{sec: general sparse domination}

In this section, we want to revisit the idea of sparse domination and apply it more 
generally to operators other than the fractional maximal operator. 
The goal here is to prove the Lerner-Ombrosi theorem (Theorem 1.1 from 
\cite{LernerOmbrosi}). To do this we must consider what parts of the 
argument given in Section \ref{section: controlling the fractional maximal operator} 
generalize easily and what parts have to be changed. In general, 
the argument has two distinct parts: a local part, where we obtain the desired 
control inside some cube \(Q_0\); and a global part, where we argue that for 
\(x\in \R^d\setminus Q_0\) we still have control. The global part was simple 
because we were dealing with the fractional maximal operator and in fact the value at 
\(x\in \R^d\setminus Q_0\) was already an average over an ancestor of \(Q_0\), 
so we could simply add it to the family. This argument clearly depends on the 
specific nature of the fractional maximal operator and will have to be modified in 
the present case. Fortunately this can be easily done. 
The main idea is that we can actually run the same local argument over any cube 
that contains the support of \(f\). By then choosing the cubes over which we 
run the local argument carefully we can combine all those local sparse families 
into a single global sparse family. Before going through all the details in the 
next lemma, it is convenient to introduce a notation for a dilation of a cube. 
Given a cube \(Q\) we write \(D_\lambda Q\) for the cube which has the 
same center as \(Q\) but with \(l(D_\lambda Q) = \lambda l(Q)\).
\begin{lemma}[Lemma 2.1 from \cite{LernerOmbrosi}]\label{lemma: local to global sparse bound}
    Let \(\alpha \in \R\), \(1\leq s < \infty\), let \(T\) be some operator and let 
    \(f\in L^1(\R^d)\) be a function with compact support. Suppose that, for any 
    cube \(Q\), there is some \(1/2\)-sparse family \(\mcal{F}_Q = \mcal{F}_Q(f)\) of subcubes\footnote{These don't have to be dyadic descendants of \(Q\).} of \(Q\) such that
    \[
    |T(f\chi_{Q^*})(x)|\leq C \sum_{R\in \mcal{F}_Q}\chi_R(x) |R^*|^\frac{\alpha}{d}\left(\dashint_{R^*}|f|^s\right)^{1/s}\text{ for a.e. }x\in Q,
    \]
    where \(Q^* = D_\lambda Q, R^* = D_\lambda R\) and \(\lambda \geq 3\). Then, there is some \(1/(2\lambda^d)\)-sparse 
    family of cubes \(\S = \S(f)\) such that
    \[
    |Tf(x)|\leq C\sum_{Q\in \S}\chi_Q(x) |Q|^\frac{\alpha}{d}\left(\dashint_Q |f|^s\right)^{1/s}\text{ for a.e. }x\in \R^d.
    \]
\end{lemma}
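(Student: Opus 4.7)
The idea is to cover $\R^d$ by a disjoint family of cubes, each of which is large enough that its $\lambda$-dilate contains $\supp f$; on each such cube the hypothesis upgrades from a bound on $T(f\chi_{Q^*})$ to a bound on $Tf$ itself, and the local sparse families then splice into a global one.

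First I would fix, using compactness of $\supp f$, a cube $Q_0$ with $\supp f \subseteq Q_0$, and construct the partition as follows. For $j \geq 1$, the concentric dilate $3^j Q_0$ is tiled by $3^d$ congruent subcubes of sidelength $3^{j-1} l(Q_0)$, one of which is the central cube $3^{j-1} Q_0$; let $\mathcal{P}_j$ denote the remaining $3^d - 1$ subcubes, which partition the annulus $3^j Q_0 \setminus 3^{j-1} Q_0$. Set $\mathcal{P}_0 = \{Q_0\}$ and $\mathcal{P} = \bigcup_{j \geq 0} \mathcal{P}_j$. Then the cubes in $\mathcal{P}$ have pairwise disjoint interiors and cover $\R^d$.

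The step I expect to be the most delicate is the geometric claim that $D_\lambda Q \supseteq \supp f$ for every $Q \in \mathcal{P}$. For $Q = Q_0$ this is trivial. For $Q \in \mathcal{P}_j$ with $j \geq 1$, I would check that already $D_3 Q \supseteq 3^{j-1} Q_0$ by inspecting the worst-case "corner" configuration: if $3^j Q_0 = [0,L]^d$ and $Q = [0,L/3]^d$, then $D_3 Q = [-L/3, 2L/3]^d \supseteq [L/3, 2L/3]^d = 3^{j-1} Q_0$, and the other positions of $Q$ inside $3^j Q_0 \setminus 3^{j-1} Q_0$ only make the containment easier. Since $\lambda \geq 3$, this forces $D_\lambda Q \supseteq D_3 Q \supseteq Q_0 \supseteq \supp f$, so $f\chi_{Q^*} = f$ and the hypothesis yields
\[
|Tf(x)| \leq C \sum_{R \in \mathcal{F}_Q} \chi_R(x)\, |R^*|^{\alpha/d} \left(\dashint_{R^*} |f|^s \right)^{1/s} \quad \text{for a.e. } x \in Q.
\]

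Now I would set $\mathcal{S} = \{R^* : Q \in \mathcal{P},\, R \in \mathcal{F}_{Q}\}$. Summing the above display over $Q \in \mathcal{P}$ and using $\chi_R \leq \chi_{R^*}$ together with the fact that a.e. $x \in \R^d$ lies in a unique $Q \in \mathcal{P}$ produces the required global pointwise bound. To verify that $\mathcal{S}$ is $1/(2\lambda^d)$-sparse, for each $R^* \in \mathcal{S}$ coming from $R \in \mathcal{F}_Q$ I would declare $G(R^*)$ to be the good part $G_{\mathcal{F}_Q}(R) \subseteq R$ witnessing the $1/2$-sparseness of $\mathcal{F}_Q$. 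Then $|G(R^*)| \geq |R|/2 = |R^*|/(2\lambda^d)$, and the disjointness of $\{G(R^*)\}_{R^* \in \mathcal{S}}$ splits into two cases: two good sets arising from the same $Q$ are disjoint by sparseness of $\mathcal{F}_Q$, while two good sets arising from different $Q_1, Q_2 \in \mathcal{P}$ are disjoint because they lie inside the disjoint parent cubes $Q_1$ and $Q_2$. The only real content is the geometric verification above; the rest is bookkeeping.
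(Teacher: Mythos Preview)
Your proposal is correct and follows essentially the same approach as the paper: both tile $\R^d$ by the concentric ``tripling'' partition $\{Q_0\} \cup \bigcup_{j\geq 1}\mathcal{P}_j$, verify that $D_3 Q \supseteq Q_0 \supseteq \supp f$ for every cube in the partition (the paper merely asserts this, whereas you check the corner case explicitly), apply the local hypothesis on each piece, and then set $\mathcal{S} = \{R^* : R \in \bigcup_Q \mathcal{F}_Q\}$ with good sets inherited from the $\mathcal{F}_Q$'s. The disjointness argument you sketch (within a single $Q$ by sparseness of $\mathcal{F}_Q$, across different $Q$'s by disjointness of the partition) is exactly the paper's reasoning.
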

\begin{proof}
    Fix \(f\in L^1(\R^d)\) with compact support. Let \(Q_0\) be any cube which contains the support of \(f\). Now, the cube \(D_3Q_0\) will consist of \(3^d\) copies of \(Q_0\). We label those cubes that cover \(D_3 Q_0\setminus Q_0\) as \(Q_1, Q_2,\dots, Q_{3^d-1}\). Note that for any of these cubes, \(Q_0\subseteq D_3 Q_j\). We then continue in this way, that is, we label the cubes that cover \(D_{3^2}Q_0\setminus D_3Q_0\) and are congruent to \(D_3 Q_0\) as \(Q_{3^d},\dots, Q_{2\cdot 3^d-1}\), and again note that the three-fold expansions of these cubes contain \(Q_0\). We thus obtain with this process a sequence of disjoint cubes \(\{Q_j\}_j\) such that
    \[
    \bigcup_{j\geq 0} Q_j = \R^d\text{ and }\supp(f)\subseteq D_3Q_j,\ \forall j.
    \]
    Now we apply the assumptions to each of these cubes. So, for each \(j\) there is a \(1/2\)-sparse family \(\mcal{F}_j\) of subcubes of \(Q_j\) such that
    \[
    |T(f\chi_{Q_j^*})(x)|\leq C\sum_{R\in \mcal{F}_j}\chi_R(x)|R^*|^{\frac{\alpha}{d}} \left(\dashint_{R^*}|f|^s\right)^{1/s}\text{ for a.e. }x\in Q_j.
    \]
    Note that, since \(\supp(f)\subseteq D_3 Q_j\) and \(\lambda \geq 3\), then \(f\chi_{Q_j^*} = f\). Now put \(\mcal{F} = \cup_j \mcal{F}_j\). Since all the cubes \(Q_j\) are pairwise disjoint, then it follows that \(\mcal{F}\) is still a \(1/2\)-sparse family of cubes and we have that
    \[
    |Tf(x)|\leq C \sum_{R\in \mcal{F}}\chi_R(x)  |R^*|^{\frac{\alpha}{d}}\left(\dashint_{R^*}|f|^s\right)^{1/s} \text{ for a.e. }x\in \R^d.
    \]
    Finally we define \(\S = \{R^*: R\in \mcal{F}\}\). For each \(Q\in \S\), if \(Q = R^*\), we define \(G(Q) = G(R)\). Then the sets \(\{G(Q)\}\) are pairwise disjoint and 
    \[
    |G(Q)| = |G(R)| \geq \frac{1}{2}|R| = \frac{1}{2\lambda^d}|Q|,
    \]
    which shows that \(\S\) is a \(1/(2\lambda^d)\)-sparse family of cubes. Moreover we clearly have the desired estimate
    \[
    |Tf(x)|\leq C \sum_{Q\in \S}\chi_Q(x) |Q|^{\frac{\alpha}{d}}\left(\dashint_{Q}|f|^s\right)^{1/s}\text{ for a.e. }x\in \R^d.
    \]
\end{proof}

Now that we have seen how to go from local domination to global sparse domination we can focus on proving local control. 
The basic idea is the same as the argument in Section \ref{section: controlling the fractional maximal operator}. 
In particular, we want to decompose the cube we start with into a good part and a bad part such that we have the desired 
control in the good part and the bad part has small measure. 
To argue that the bad part has a small measure we used the weak type estimate for the fractional maximal operator, 
so we will want our current operator to satisfy such an estimate. So as to avoid making the result too specific we 
will assume that we are working with a sublinear operator \(T\) which is weak type \((p, q)\), for some \(p,q\in [1, \infty[\). 
Now, in Section \ref{section: controlling the fractional maximal operator}, it was crucial that \(M^\D_\alpha f(x) = M^\D_\alpha(f\chi_Q)(x)\) for maximal cubes, 
so that the weak type estimate gave us the desired control on the measure of the bad part. 
In the present case, we don't have enough information about \(T\) to determine the validity of such a property so we 
instead include the characteristic function in the definition of the bad part. More precisely, given a cube \(Q\), we define
\[
B(Q) = \left\{x\in Q: |T(f\chi_{Q^*})(x)|> C |Q^*|^\frac{\alpha}{d}\left(\dashint_{Q^*} |f|^s\right)^{1/s}\right\}. 
\]
Let's check that this has small measure for \(C\) large enough. We have that
\[
\begin{split}
|B(Q)| &\leq \|T\|_{L^{p}\rightarrow L^{q, \infty}}^q C^{-q} |Q^*|^{\frac{q}{s}-q\frac{\alpha}{d}}\left(\int_{Q^*}|f|^s\right)^{-q/s}\|f\chi_{Q^*}\|_{L^p}^q\\
& = \|T\|_{L^{p}\rightarrow L^{q,\infty}}^q C^{-q} |Q^*|^{\frac{q}{s}-q\frac{\alpha}{d}}\left(\int_{Q^*}|f|^s\right)^{-q/s}\left(\int_{Q^*}|f|^p\right)^{q/p}\\
&\leq \|T\|_{L^{p}\rightarrow L^{q, \infty}}^q C^{-q} |Q^*|^{\frac{q}{s}-q\frac{\alpha}{d}}\left(\int_{Q^*}|f|^s\right)^{-q/s}\left(\int_{Q^*}|f|^s\right)^{q/s}|Q^*|^{(1-p/s)q/p}\\
&\leq \|T\|_{L^{p}\rightarrow L^{q, \infty}}^q C^{-q} |Q^*|^{q(1/p-\alpha/d)},
\end{split}
\]
where we used Hölder's inequality and because of this we have to assume that \(p\leq s\). For the argument to work we need the exponent of \(|Q^*|\) to be 1. This is so that we can properly add the measures coming from diferent cubes in the next iterations. For this reason we assume that 
\[
\frac{1}{q}+\frac{\alpha}{d} = \frac{1}{p}.
\]
Outside of \(B(Q)\) we have the estimate
\[
|T(f\chi_{Q^*})(x)|\leq C |Q^*|^\frac{\alpha}{d}\left(\dashint_{Q^*}|f|^s\right)^{1/s},
\]
so we simply add the cube \(Q\) to the family \(\mcal{F}_Q\) and the desired estimate holds in \(G(Q)\). Now we want to iterate the argument, and for this the first step is to write \(B(Q)\) as a union of cubes. However, this is not as straightforward as before since for the fractional maximal operator it followed almost immediately from the definition of \(B(Q)\). The idea here is to use a local Calderón-Zygmund decomposition of the function \(\chi_{B(Q)}\) in order to approximate a decomposition of \(B(Q)\) into disjoint cubes. So, applying Proposition \ref{local CZ decomposition} to the function \(\chi_{B(Q)}\) with height 
\[
\lambda'\geq \dashint_Q\chi_{B(Q)} = \frac{|B(Q)|}{|Q|},
\]
we obtain disjoint cubes \(\{Q_j\}\subseteq \Des(Q)\) such that:
\begin{itemize}
    \item \(\chi_{B(Q)}(x)\leq \lambda'\) for a.e. \(x\in Q\setminus \cup_j Q_j\);
    \item \(\lambda'<\dashint_{Q_j}\chi_{B(Q)} \leq 2^d \lambda'\);
    \item \(|\cup_j Q_j|\leq (\lambda')^{-1}\|\chi_{B(Q)}\|_{L^1(Q)} = |B(Q)|/\lambda'\).
\end{itemize}
For the first property to be non-trivial we need to choose \(\lambda'<1\). Of course, such a choice is 
possible because if \(C\) is large enough, then \(|B(Q)|<|Q|\). In that case, the first property tells 
us that \(|B(Q)\setminus \Omega(Q)|=0\) where \(\Omega(Q) = \cup_j Q_j\). In other words, this means 
that \(\Omega(Q)\) `almost' contains \(B(Q)\), in the sense that, the set of points that are in \(B(Q)\) 
but not in \(\Omega(Q)\) has measure zero. Since we expect the measure of \(\Omega(Q)\) to be larger than 
the measure of \(B(Q)\), we need to be careful that the measure of \(\Omega(Q)\)
is not too big, because \(\Omega(Q)\) represents in some sense the actual bad 
part of the cube. From the third property we get \(|\Omega(Q)|\leq (\lambda')^{-1}|B(Q)|\), so as long 
as \(|B(Q)|\) has small measure, \(\Omega(Q)\) will have small measure too. Finally, the second property 
gives us some control of the bad part of each new cube. Indeed, \(\lambda'|Q_j|<|B(Q)\cap Q_j|\leq 2^d \lambda'|Q_j|\). 
For reasons that we will see later, we will want to have \(|B(Q)\cap Q_j|\) be a sufficiently small fraction of \(|Q_j|\). 
For example, it will be enough to have \(|B(Q)\cap Q_j|\leq |Q_j|/2\) and so we choose \(\lambda' = 2^{-d-1}\). 
Moreover, it will also be convenient to have \(|\Omega(Q)|\leq |Q|/2\) and so we choose \(C\) large enough so 
that \(|B(Q)|\leq |Q|/2^{d+2}\).

Now, almost every point \(x\in Q\setminus \Omega(Q)\) is outside \(B(Q)\) and therefore we have
\[
|T(f\chi_{Q^*})(x)|\leq C |Q^*|^\frac{\alpha}{d}\left(\dashint_{Q^*}|f|^s\right)^{1/s}\text{ for a.e. }x\in Q\setminus \Omega(Q).
\]
This means that we can write
\[
|T(f\chi_{Q^*})(x)| = |T(f\chi_{Q^*})(x)|\chi_{Q\setminus \Omega(Q)}(x) + \sum_j |T(f\chi_{Q^*})(x)|\chi_{Q_j}(x),
\]
and we already have the desired estimate in the first term. The idea now is to iterate the argument, however, this presents a technical problem. Note that in the sum above we are applying our operator to \(f\chi_{Q^*}\) whereas what we actually need for the iteration argument to work is to apply it to \(f\chi_{Q_j^*}\). Here, we use the sublinearity of \(T\) to write
\[
|T(f\chi_{Q^*})(x)|\chi_{Q_j}(x) \leq |T(f\chi_{Q^*\setminus Q_j^*})(x)|\chi_{Q_j}(x) + |T(f\chi_{Q_j^*})(x)|\chi_{Q_j}(x).
\]
The last term is what we need for the next iteration of the argument, which means that we need some way of controlling the first term. To do this we will have to add an assumption about our operator \(T\). The idea is to introduce a maximal operator that controls the oscillations of the truncations of \(T\). More precisely, we define
\[
M_{T,\lambda}f(x) = \sup_{Q\ni x}\underset{x', x''\in Q}{\text{ess sup }}|T(f\chi_{\R^d\setminus Q^*})(x')-T(f\chi_{\R^d\setminus Q^*})(x'')|,
\]
where \(Q^* = D_\lambda Q\). It turns out that we can carry through with the argument if we also assume a weak type estimate for \(M_{T,\lambda}\).

\begin{thm}[Lerner-Ombrosi\index{Lerner-Ombrosi theorem}]\label{thm: lerner-ombrosi}
Let \(1\leq p_0, p_1, q_0, q_1<\infty\) and \(s\geq \max\{p_0, p_1\}\) be such that
\[
\frac{1}{q_0}+\frac{\alpha}{d} = \frac{1}{p_0}\text{ and }\frac{1}{q_1}+\frac{\alpha}{d} = \frac{1}{p_1},
\]
where \(0\leq \alpha < d/s\). Suppose that \(T\) is a sublinear operator that satisfies a weak type \((p_0, q_0)\) estimate and suppose also that the associated operator \(M_{T,\lambda}\) satisfies a weak type \((p_1, q_1)\) estimate, where \(\lambda \geq 3\). Then, for each \(f\in L^s(\R^d)\) with compact support, there is a \(1/(2\lambda^d)\)-sparse family of cubes \(\S = \S(f)\) such that
\[
|Tf(x)|\lesssim (\|T\|_{L^{p_0}\rightarrow L^{q_0,\infty}} + \|M_{T,\lambda}\|_{L^{p_1}\rightarrow L^{q_1,\infty}})\sum_{Q\in \S}\chi_Q(x) |Q|^\frac{\alpha}{d}\left(\dashint_Q |f|^s\right)^{1/s}\text{ for a.e. }x\in \R^d,
\]
where the implicit constant depends on \(d, \lambda, \alpha, s, q_0\) and \(q_1\).
\end{thm}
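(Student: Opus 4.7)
The plan is to follow the two-step strategy already laid out in the text. First, by Lemma \ref{lemma: local to global sparse bound}, it suffices to produce for every cube $Q$ a $1/2$-sparse family $\mathcal{F}_Q$ of subcubes of $Q$ satisfying the local pointwise bound
\[
|T(f\chi_{Q^*})(x)| \lesssim \sum_{R \in \mathcal{F}_Q} \chi_R(x)\, a_R \quad \text{for a.e. } x \in Q,
\]
where I abbreviate $a_R := |R^*|^{\alpha/d}\bigl(\dashint_{R^*}|f|^s\bigr)^{1/s}$. I would build $\mathcal{F}_Q$ by iterating a single decomposition step: isolate a ``good part'' of $Q$ on which the bound holds with the single term $a_Q$, and cover the complement by disjoint subcubes $\{Q_j\}$ on which the same problem recurs with $Q_j$ in place of $Q$, up to a controllable error.

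\textbf{Setup of the decomposition step.} I would enlarge the bad set from the text to $E(Q) = B(Q) \cup B_M(Q)$, where the new piece is $B_M(Q) = \{x \in Q : M_{T,\lambda}(f\chi_{Q^*})(x) > C_M a_Q\}$. The weak-type hypotheses on $T$ and on $M_{T,\lambda}$, together with Hölder's inequality (this is where the assumption $s \geq \max\{p_0,p_1\}$ is used), give $|B(Q)|, |B_M(Q)| \lesssim |Q^*|$; here the relations $1/q_i + \alpha/d = 1/p_i$ are precisely what forces the exponent of $|Q^*|$ to be $1$ in both estimates. Choosing $C_1, C_M$ large enough ensures $|E(Q)| \leq |Q|/2^{d+2}$, and then Proposition \ref{local CZ decomposition} applied to $\chi_{E(Q)}$ with height $\lambda' = 2^{-d-1}$ yields disjoint cubes $\{Q_j\} \subseteq \Des(Q)$ with $|E(Q) \setminus \bigcup_j Q_j| = 0$, $|E(Q) \cap Q_j| \leq |Q_j|/2$, and $|\bigcup_j Q_j| \leq |Q|/2$. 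For $x$ outside $\bigcup_j Q_j$, the bound $|T(f\chi_{Q^*})(x)| \leq C_1 a_Q$ then follows directly from the definition of $E(Q)$.

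\textbf{Key estimate and the main obstacle.} The crux, and where I expect the main difficulty to lie, is proving $|T(f\chi_{Q^*})(x)| \lesssim a_Q + a_{Q_j} + |T(f\chi_{Q_j^*})(x)|$ for a.e. $x \in Q_j$. Sublinearity gives $|T(f\chi_{Q^*})(x)| \leq |T(f\chi_{Q^* \setminus Q_j^*})(x)| + |T(f\chi_{Q_j^*})(x)|$, and the second piece is exactly what the iteration will consume. For the first, the definition of $M_{T,\lambda}$ applied with the cube $Q_j$ supplies
\[
|T(f\chi_{Q^* \setminus Q_j^*})(x) - T(f\chi_{Q^* \setminus Q_j^*})(x_0)| \leq M_{T,\lambda}(f\chi_{Q^*})(x_0) \quad \text{for a.e. } x, x_0 \in Q_j.
\]
Choosing $x_0 \in Q_j \setminus E(Q)$ (nonempty since $|E(Q) \cap Q_j| \leq |Q_j|/2$) controls both $M_{T,\lambda}(f\chi_{Q^*})(x_0)$ and $|T(f\chi_{Q^*})(x_0)|$ by constants times $a_Q$. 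However, sublinearity expanded at $x_0$ gives only $|T(f\chi_{Q^* \setminus Q_j^*})(x_0)| \leq |T(f\chi_{Q^*})(x_0)| + |T(f\chi_{Q_j^*})(x_0)|$, and the stubborn term $|T(f\chi_{Q_j^*})(x_0)|$ does not disappear because sublinearity lacks the cancellation available in the linear case. This is the heart of the obstacle. The remedy I propose is to introduce an auxiliary, $Q_j$-level bad set $B'(Q_j) = \{x \in Q_j : |T(f\chi_{Q_j^*})(x)| > C_3 a_{Q_j}\}$; the identical Hölder-and-scaling computation using weak type $(p_0, q_0)$ for $T$ gives $|B'(Q_j)| \lesssim |Q_j^*| = \lambda^d|Q_j|$, and for $C_3$ chosen large enough $|B'(Q_j)| \leq |Q_j|/4$. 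Then $Q_j \setminus (E(Q) \cup B'(Q_j))$ still has positive measure, and a point $x_0$ there simultaneously controls all three quantities $|T(f\chi_{Q^*})(x_0)|$, $M_{T,\lambda}(f\chi_{Q^*})(x_0)$, and $|T(f\chi_{Q_j^*})(x_0)|$, yielding the desired estimate.

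\textbf{Iteration and sparseness.} Iterating the decomposition step on each $Q_j$ generates successive generations of subcubes whose total measure contracts by at least a factor of $1/2$ at each level, so the process exhausts $Q$ up to a null set; telescoping produces $|T(f\chi_{Q^*})(x)| \lesssim \sum_{R \in \mathcal{F}_Q} \chi_R(x)\, a_R$ a.e. on $Q$, where $\mathcal{F}_Q$ is the family of all cubes produced. For each $R \in \mathcal{F}_Q$ the good part $G(R) := R \setminus \Omega(R)$ satisfies $|G(R)| \geq |R|/2$, and by construction these good parts are pairwise disjoint across $\mathcal{F}_Q$, so $\mathcal{F}_Q$ is $1/2$-sparse. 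Applying Lemma \ref{lemma: local to global sparse bound} then assembles a global $1/(2\lambda^d)$-sparse family with the claimed pointwise domination, completing the proof.
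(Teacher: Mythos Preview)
Your proposal is correct and follows essentially the same route as the paper: define a bad set via the weak-type assumptions, run a local Calder\'on--Zygmund decomposition on its indicator, pick a good reference point in each child cube to control the truncation error, and iterate. The only notable difference is that the paper also throws the set $\{x\in Q: M_\alpha^{(s)}(f\chi_{Q^*})(x) > A\,a_Q\}$ into the bad set; this allows the term $|T(f\chi_{Q_j^*})(x_0)|\le C_3\,a_{Q_j}$ to be absorbed back into $a_Q$ (since $a_{Q_j}\le M_\alpha^{(s)}(f\chi_{Q^*})(x_0)\le A\,a_Q$), whereas your version carries $a_Q+a_{Q_j}$ forward---harmless, as it at most doubles the constant after telescoping. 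One point you should tighten: because $M_{T,\lambda}$ is defined with an \emph{essential} supremum, the oscillation bound only holds for almost every pair $(x,x_0)\in Q_j^2$, so ``choosing $x_0$'' requires the short Fubini-type argument the paper gives to ensure that for a.e.\ $x$ an admissible $x_0$ actually exists.
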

\begin{proof}
    Let's now run through the argument with care. First we make a small change of notation. We've seen above that the `obvious' choice for the bad part of a cube is not really what we should think of as the bad part of a cube, but rather we should think of the region coming from the Calderón-Zygmund decomposition as the bad part, so we will change \(B\) with \(\Omega\) in the notation. Also, we will have to be more careful when defining \(\Omega\) because we want to control not only \(T\) but also \(M_{T,\lambda}\). And actually, as we will see, it is a good idea to control also the maximal function
    \[
    M_\alpha ^{(s)}f(x) = \sup_{Q\ni x}|Q|^\frac{\alpha}{d}\left(\dashint_Q |f|^s\right)^{1/s},
    \]
    which is just \(M_{\alpha s}(|f|^s)(x)^{1/s}\) and therefore satisfies a weak type \((s, t)\) estimate by Proposition \ref{strong type estimate for fractional max operator with lebesgue}, where \(t = sd/(d-\alpha s)\).\footnote{Note that \(sd/(d-\alpha s)\in [1,\infty[\) because of our assumption that \(0\leq \alpha < d/s\).} For simplicity we define also 
    \[
    \tilde{M}f(x) = \max\{ |Tf(x)|, M_{T,\lambda}f(x)\}.
    \]
    Now, given any cube \(Q\), we define 
    \[
    \Omega(Q) = \left\{x\in Q: \max\left\{\frac{M_\alpha^{(s)}(f\chi_{Q^*})(x)}{A}, \frac{\tilde{M}(f\chi_{Q^*})(x)}{B}\right\} > |Q^*|^\frac{\alpha}{d}\left(\dashint_{Q^*}|f|^s\right)^{1/s}\right\}.
    \]
    The first thing we should do is check that, if we choose \(A, B\) large enough, we can show that \(|\Omega(Q)|\leq 2^{-d-2}|Q|\). To do this of course we will use the weak type estimates for \(T, M_{T,\lambda}\) and \(M_\alpha^{(s)}\). First, we split this as 
    \[
    \begin{split}
        |\Omega(Q)|& \leq \left| \left\{x\in Q: M_\alpha^{(s)}(f\chi_{Q^*})(x)> A |Q^*|^\frac{\alpha}{d}\left(\dashint_{Q^*}|f|^s\right)^{1/s}\right\}\right|\\
        &+\left| \left\{x\in Q: |T(f\chi_{Q^*})(x)|> B |Q^*|^\frac{\alpha}{d}\left(\dashint_{Q^*}|f|^s\right)^{1/s}\right\}\right|\\
        &+ \left| \left\{x\in Q: M_{T,\lambda}(f\chi_{Q^*})(x)> B |Q^*|^\frac{\alpha}{d}\left(\dashint_{Q^*}|f|^s\right)^{1/s}\right\}\right|.
    \end{split}
    \]
    Call these terms \(I_1, I_2\) and \(I_3\), respectively. For the first term, we use the weak type estimate for \(M_\alpha^{(s)}\) to see that
    \[
    \begin{split}
        I_1 & \leq \|M_\alpha^{(s)}\|_{L^s\rightarrow L^{t, \infty}}^tA^{-t}|Q^*|^{-t\frac{\alpha}{d}} \left(\dashint_{Q^*}|f|^s\right)^{-t/s} \|f\chi_{Q^*}\|_{L^s}^t\\
        & = \|M_\alpha^{(s)}\|_{L^s\rightarrow L^{t, \infty}}^tA^{-t}\lambda^{d}|Q|\left(\int_{Q^*}|f|^s\right)^{-t/s}\left(\int_{Q^*}|f|^s\right)^{t/s}\\
        &= \|M_\alpha^{(s)}\|_{L^s\rightarrow L^{t, \infty}}^tA^{-t}\lambda^{d}|Q|,
    \end{split}
    \]
    where we assumed that \(\int_{Q^*}|f|^s>0\). Otherwise, it would follow that \(f = 0\) a.e. on \(Q\) and in that case there 
    would be nothing to prove. Next we estimate \(I_2\),
    \[
    \begin{split}
        I_2 &\leq \|T\|_{L^{p_0}\rightarrow L^{q_0,\infty}}^{q_0} B^{-q_0}|Q^*|^{-q_0\frac{\alpha}{d}}
        \left(\dashint_{Q^*}|f|^s\right)^{-q_0/s}\|f\chi_{Q^*}\|_{L^{p_0}}^{q_0} \\
        &= \|T\|_{L^{p_0}\rightarrow L^{q_0,\infty}}^{q_0} B^{-q_0}|Q^*|^{-q_0\frac{\alpha}{d}}
        \left(\dashint_{Q^*}|f|^s\right)^{-q_0/s}\left(\dashint_{Q^*}|f|^{p_0}\right)^{q_0/p_0}|Q^*|^{q_0/p_0} \\
        &\leq \|T\|_{L^{p_0}\rightarrow L^{q_0,\infty}}^{q_0} B^{-q_0}|Q^*|^{q_0(1/p_0 - \alpha/d)}
        \left(\dashint_{Q^*}|f|^s\right)^{-q_0/s} \left(\dashint_{Q^*}|f|^s\right)^{q_0/s} \\ 
        &= \|T\|_{L^{p_0}\rightarrow L^{q_0,\infty}}^{q_0} B^{-q_0}|Q^*| \\ 
        &= \|T\|_{L^{p_0}\rightarrow L^{q_0,\infty}}^{q_0} B^{-q_0}\lambda^d |Q|.
    \end{split}
    \]
    Finally, we do the same with \(I_3\) using the weak type \((p_1,q_1)\) estimate for \(M_{T,\lambda}\), and we get
    \[
    \begin{split}
        I_3 &\leq \|M_{T, \lambda}\|_{L^{p_1}\rightarrow L^{q_1,\infty}}^{q_1} B^{-q_1}\lambda^d |Q|.
    \end{split}
    \]
    Putting these three estimates together we see that
    \[
    |\Omega(Q)|\leq (\|M_\alpha^{(s)}\|_{L^s\rightarrow L^{t, \infty}}^tA^{-t}+\|T\|_{L^{p_0}\rightarrow L^{q_0,\infty}}^{q_0} B^{-q_0}+\|M_{T, \lambda}\|_{L^{p_1}\rightarrow L^{q_1,\infty}}^{q_1} B^{-q_1})\lambda^d|Q|.
    \]
    Now, if we choose \(A = (3\cdot 2^{d+2}\lambda^d)^{1/t}\|M_\alpha^{(s)}\|\) and \(B = (3\cdot 2^{d+2}\lambda^d)^{1/\min\{q_0,q_1\}}(\|T\|+\|M_{T,\lambda}\|)\), then it follows that
    \[
    |\Omega(Q)|\leq \frac{1}{2^{d+2}}|Q|,
    \]
    as we wanted. The next step is to perform the local Calderón-Zygmund decomposition of \(\chi_{\Omega}\) at the height \(\lambda' = 2^{-d-1}\). This is possible because 
    \[
    \lambda'\geq \dashint_Q \chi_{\Omega(Q)} = \frac{|\Omega(Q)|}{|Q|}.
    \]
    Therefore, we obtain a disjoint family of cubes \(\{Q_j\}_j\subseteq \Des(Q)\) such that
    \begin{itemize}
        \item \(|\Omega(Q)\setminus B(Q)| = 0\);
        \item \(2^{-d-1}|Q_j|< |\Omega(Q) \cap Q_j| \leq |Q_j|/2\);
        \item \( |B(Q)|\leq 2^{d+1}|\Omega(Q)|\),
    \end{itemize}
    where 
    \[
    B(Q) = \bigcup_j Q_j
    \]
    is the bad part of the cube. Note that from the third property and the estimate we've obtained already for the measure of \(\Omega(Q)\), it follows that
    \[
    |B(Q)|\leq \frac{1}{2} |Q|.
    \]
    Now, from the first property we have that
    \[
    |T(f\chi_{Q^*})(x)|\leq B |Q^*|^\frac{\alpha}{d}\left(\dashint_{Q^*}|f|^s\right)^{1/s}\text{ for a.e. }x\in Q\setminus B(Q).
    \]
    Therefore, for almost every \(x\in Q\) we can write
    \[
    |T(f\chi_{Q^*})(x)|\leq B\chi_{Q\setminus B(Q)}(x)|Q^*|^\frac{\alpha}{d}\left(\dashint_{Q^*}|f|^s\right)^{1/s} + \sum_j |T(f\chi_{Q^*})(x)|\chi_{Q_j}(x),
    \]
    which is starting to look like the local estimate we want. Now, we have to solve the problem with the last sum, where we need to compare \(|T(f\chi_{Q^*})(x)|\) with \(|T(f\chi_{Q_j^*})(x)|\). Using the sublinearity of \(T\) we write
    \[
    \sum_j |T(f\chi_{Q^*})(x)|\chi_{Q_j}(x) \leq \sum_j |T(f\chi_{Q^*\setminus Q_j^*})(x)|\chi_{Q_j}(x) + \sum_j |T(f\chi_{Q_j^*})(x)|\chi_{Q_j}(x).
    \]
    The last sum is what we need in order to iterate the argument, so we now focus on the first sum on the right hand side. The basic idea is that, since \(\Omega(Q)\) occupies a small region of \(Q_j\), then we can compare \(|T(f\chi_{Q^*\setminus Q_j^*})(x)|\) to the same function evaluated at a different point \(x'\) such that \(x'\notin \Omega(Q)\). In doing this, we can use the fact that \(x'\notin \Omega(Q)\) to get good bounds for the operator. The problem is that we are going to create an oscillation of the truncation of \(T\) and that is where the auxiliary maximal operator \(M_{T,\lambda}\) is going to be useful. We have that 
    \[
    \begin{split}
        |T(f\chi_{Q^*\setminus Q_j^*})(x)|&\leq |T(f\chi_{Q^*}\chi_{\R^d\setminus Q_j^*})(x)-T(f\chi_{Q^*}\chi_{\R^d\setminus Q_j^*})(x')| + |T(f\chi_{Q^*\setminus Q_j^*})(x')|\\
        &\leq |T(f\chi_{Q^*}\chi_{\R^d\setminus Q_j^*})(x)-T(f\chi_{Q^*}\chi_{\R^d\setminus Q_j^*})(x')| + |T(f\chi_{Q^*})(x')| \\
        &+ |T(f\chi_{Q_j^*})(x')|.
    \end{split}
    \]
    This way we see that for almost every pair \((x, x')\in Q_j^2\) we have that
    \[
    |T(f\chi_{Q^*\setminus Q_j^*})(x)|\leq M_{T,\lambda}(f\chi_{Q^*})(x') + |T(f\chi_{Q^*})(x')| + |T(f\chi_{Q_j^*})(x')|.
    \]
    In other words, the set 
    \[
    X = \{(y', y'')\in Q_j^2: |T(f\chi_{Q^*\setminus Q_j^*})(y')-T(f\chi_{Q^*\setminus Q_j^*})(y'')|> M_{T,\lambda}(f\chi_{Q^*})(y'')\}
    \]
    has zero measure in \(Q_j^2\). At this point what we want to do is show that for almost every \(x\in Q_j\) we can find some \(x'\in Q_j\) such that all of these three terms are controlled. For the first two terms to be controlled we simply need to pick \(x'\in Q_j\setminus \Omega(Q)\). For the last term we define the problematic set 
    \[
    Z = \left\{x\in Q_j: |T(f\chi_{Q_j^*})(x)|> B |Q_j^*|^\frac{\alpha}{d}\left(\dashint_{Q_j^*}|f|^s\right)^{1/s}\right\},
    \]
    and show that it has small measure. This can be done as before using the weak type estimate for \(T\). Indeed, we have that
    \[
    |Z|\leq \frac{1}{3}\frac{1}{2^{d+2}}|Q_j|\leq \frac{1}{24}|Q_j|.
    \]
    This way we see that
    \[
    \begin{split}
    |Q_j\setminus (\Omega(Q)\cup Z)| &= |Q_j| - |(Q_j\cap \Omega(Q))\cup Z|\geq |Q_j| - |Q_j\cap \Omega(Q)| - |Z| \\
    &\geq |Q_j| -\frac{1}{2}|Q_j| - \frac{1}{24}|Q_j| = \frac{11}{24}|Q_j|.
    \end{split}
    \]
    Now recall that our goal here is to show that the set 
    \[
    P = \{x\in Q_j: \exists x'\in Q_j\text{ with }(x, x')\notin X, x'\notin \Omega(Q), x'\notin Z\}
    \]
    has full measure in \(Q_j\). To do this we begin by noticing that 
    \[
    \chi_P(y')(1-\chi_X(y',y''))\chi_{Q_j\setminus (\Omega(Q)\cup Z)}(y'') = \chi_{Q_j}(y')(1-\chi_X(y',y''))\chi_{Q_j\setminus (\Omega(Q)\cup Z)}(y''),
    \]
    because if \(y'\in P\) both sides are obviously equal and if \(y'\in Q_j\setminus P\) this implies that \((y', y'')\in X\lor y''\in \Omega(Q) \lor y''\in Z\) and in any case both sides are equal to zero. Therefore, using the fact that \(\chi_X = 0\) almost everywhere in \(Q_j^2\), we have that
    \[
    \begin{split}
    |Q_j||Q_j\setminus (\Omega(Q)\cup Z)| &= \int_{Q_j} \int_{Q_j} \chi_{Q_j\setminus (\Omega\cup Z)}(y'')dy'' dy' \\
    &= \int_{Q_j^2}\chi_{Q_j}(y') (1-\chi_X(y',y''))\chi_{Q_j\setminus (\Omega\cup Z)}(y'')dy' dy'' \\
    &= \int_{Q_j^2}\chi_P(y')(1-\chi_X(y',y''))\chi_{Q_j\setminus (\Omega \cup Z)}(y'')dy'dy'' \\
    &= |P| |Q_j\setminus (\Omega(Q) \cup Z)|.
    \end{split}
    \]
    Given that we already know that \(|Q_j\setminus (\Omega(Q)\cup Z)|>0\), it follows that \(|P| = |Q_j|\) as we wanted to show. This means that for almost every \(x\in Q_j\) we can choose \(x'\in Q_j\) such that \((x, x')\notin X\) and \(x'\notin\Omega \cup Z\). But then this implies that
    \[
    \begin{split}
          |T(f\chi_{Q^*\setminus Q_j^*})(x)|&\leq  M_{T,\lambda}(f\chi_{Q^*})(x') + |T(f\chi_{Q^*})(x')| + |T(f\chi_{Q_j^*})(x')| \\
          &\leq B|Q^*|^\frac{\alpha}{d}\left(\dashint_{Q^*}|f|^s\right)^{1/s} + B|Q^*|^\frac{\alpha}{d}\left(\dashint_{Q^*}|f|^s\right)^{1/s} + B|Q_j^*|^\frac{\alpha}{d}\left(\dashint_{Q_j^*}|f|^s\right)^{1/s} \\
           &\leq 2B|Q^*|^\frac{\alpha}{d}\left(\dashint_{Q^*}|f|^s\right)^{1/s} + BM_\alpha^{(s)}(f\chi_{Q^*})(x')\\
           &\leq 2B|Q^*|^\frac{\alpha}{d}\left(\dashint_{Q^*}|f|^s\right)^{1/s} + AB|Q^*|^\frac{\alpha}{d}\left(\dashint_{Q^*}|f|^s\right)^{1/s} \\ 
           & = (2+A)B|Q^*|^\frac{\alpha}{d}\left(\dashint_{Q^*}|f|^s\right)^{1/s}.
    \end{split}
    \]
    Putting everything together we conclude that for almost every \(x\in Q\),
    \[
    |T(f\chi_{Q^*})(x)|\leq (2+ A)B\chi_Q(x)|Q^*|^\frac{\alpha}{d}\left(\dashint_{Q^*}|f|^s\right)^{1/s} + \sum_j |T(f\chi_{Q_j^*})(x)|\chi_{Q_j}(x).
    \]
    The idea now is that we add the cube \(Q\) to the family \(\mcal{F}_Q\) with the choice \(G(Q) = Q\setminus B(Q)\). The set \(G(Q)\) is then disjoint from all the cubes \(Q_j\) and \(|G(Q)|\geq |Q|/2\). Next we simply iterate the precedure. In each cube \(Q_j\) we define \(\Omega(Q_j)\) as before and we obtain a decomposition 
    \[
    Q_j = \bigcup_k Q_{j, k}
    \]
    as before. We then have the estimate
    \[
    |T(f\chi_{Q_j^*})(x)|\leq (2+A)B\chi_{Q_j}(x)|Q_j^*|^\frac{\alpha}{d}\left(\dashint_{Q_j^*}|f|^s\right)^{1/s} + \sum_k |T(f\chi_{Q_{j,k}^*})(x)|\chi_{Q_{j,k}}(x)
    \]
    for almost every \(x\in Q_j\). If we then add all the cubes \(\{Q_j\}\) to \(\mcal{F}_Q\), this leads to the estimate
    \[
    |T(f\chi_{Q^*})(x)|\leq (2+A)B \sum_{R\in \mcal{F}_Q}\chi_R(x) |R^*|^\frac{\alpha}{d}\left(\dashint_{R^*}|f|^s\right)^{1/s} + \sum_{j, k}|T(f\chi_{Q_{j,k}^*})(x)|\chi_{Q_{j,k}}(x)
    \]
    for almost every \(x\in Q\). Moreover, for each \(Q_j\) we define \(G(Q_j) = Q_j\setminus \cup_k Q_{j,k}\) and this way all the sets in \(\mcal{F}_Q\) are pairwise disjoint and \(|G(R)|\geq |R|/2\), for all \(R\in \mcal{F}_Q\). In other words, \(\mcal{F}_Q\) remains a \(1/2\)-sparse set. Denote \(B(Q)\) by \(B_1\) and put
    \[
    B_2 = \bigcup_{j,k} Q_{j, k} = \bigcup_j B(Q_j).
    \]
    We already know that \(|B_1|\leq |Q|/2\), but we also have
    \[
    |B_2| = \sum_j |B(Q_j)| \leq \sum_j \frac{1}{2}|Q_j| = \frac{1}{2}|B(Q)|\leq \frac{1}{2^2}|Q|.
    \]
    Therefore,
    \[
    |T(f\chi_{Q^*})(x)|\leq (2+A)B\sum_{R\in \mcal{F}_Q}\chi_R(x) |R^*|^\frac{\alpha}{d}\left(\dashint_{R^*}|f|^s\right)^{1/s}\text{ for a.e. }x\in Q\setminus B_2,
    \]
    and \(|B_2|\leq 2^{-2}|Q|\). If we continue with this process, then after \(N\) steps we will still have a \(1/2\)-sparse set \(\mcal{F}_Q\) with the estimate
    \[
    |T(f\chi_{Q^*})(x)|\leq (2+A)B\sum_{R\in \mcal{F}_Q}\chi_R(x) |R^*|^\frac{\alpha}{d}\left(\dashint_{R^*}|f|^s\right)^{1/s}\text{ for a.e. }x\in Q\setminus B_N,
    \]
    where \(|B_N|\leq 2^{-N}|Q|\). Defining \(B_\infty = \cap_N B_N\) we get that \(|B_\infty| = 0\) and 
    \[
    |T(f\chi_{Q^*})(x)|\leq (2+A)B\sum_{R\in \mcal{F}_Q}\chi_R(x) |R^*|^\frac{\alpha}{d}\left(\dashint_{R^*}|f|^s\right)^{1/s}\text{ for a.e. }x\in Q.
    \]
    This then shows that the assumptions of Lemma \ref{lemma: local to global sparse bound} hold, and therefore there is some \(1/(2\lambda^d)\)-sparse family of cubes \(\S = \S(f)\) such that
    \[
    |Tf(x)|\leq (2+A)B\sum_{Q\in \S}\chi_Q(x)|Q|^\frac{\alpha}{d}\left(\dashint_Q |f|^s\right)^{1/s}\text{ for a.e. }x\in \R^d,
    \]
    as we wanted to show.
\end{proof}

\begin{remark}
    This theorem is a refinement of the original work of Andrei Lerner (see \cite{lerner}). 
    Indeed, in \cite{lerner}, instead of using the auxiliary maximal operator \(M_{T,\lambda}\), 
    a stronger assumption was considered involving the use of the grand maximal truncated operator 
    \[
    \mcal{M}_T f(x) = \sup_{Q\ni x}\underset{x'\in Q}{\text{ess sup }}|T(f\chi_{\R^d\setminus Q^*})(x')|.
    \]
    We should note here that the pointwise sparse domination principle in \cite{lerner} is based on the work of several others. 
    In fact, Lerner himself calls it the Lacey-Hytönen-Roncal-Tapiola theorem. Later, in \cite{LernerOmbrosi}, 
    Lerner and Ombrosi improve the pointwise sparse domination principle in two ways. 
    The first improvement is the switch from using \(\mcal{M}_T\) to considering the oscillations of truncations with the 
    operator \(M_{T,\lambda}\). The second is a weakening of the weak type estimates used in the proof, 
    which are replaced by what the authors call a \(W_q\) condition. We are not that interested in this second improvement 
    since in the applications of this theorem that follow we will be able to deduce weak type estimates. However, the first 
    improvement is useful for our purposes as it simplifies many proofs. 
    This is the reason why we have given a version of the theorem that includes the first improvement but not the second. 
    We remark also that both in \cite{lerner} and in \cite{LernerOmbrosi}, only the case \(\alpha = 0\) is considered. 
    Here we have chosen to give a version of the theorem with this slight improvement so that we 
    unify the application of this theorem to the integral operators we consider in Section \ref{sec: applications of sparse domination}.
\end{remark}

\begin{remark}
    In Remark \ref{remark: sparse estimate improvement for fractional maximal operator} we have noted that the sparse estimate could 
    be improved for the fractional maximal operator. This improvement consisted in replacing \(\chi_Q(x)\) with \(\chi_{G(Q)}(x)\). 
    This was essentially due to the ability to say that for maximal cubes \(M_\alpha^\D f(x) = M_\alpha^\D(f\chi_{Q_{j,1}})(x)\). 
    However, in the present case, we got around this issue by having control on the maximal oscillations of \(T\). But this 
    introduces a contribution coming from \(B(Q)\) which has to be taken into account. That is, we argued that 
    \[ 
        \begin{split}
        |T(f\chi_{Q^*})(x)| &\leq B\chi_{Q\setminus B(Q)}(x)|Q^*|^\frac{\alpha}{d}\left(\dashint_{Q^*}|f|^s\right)^{1/s} + 
        \sum_j |T(f\chi_{Q^*\setminus Q_j^*})(x)|\chi_{Q_j}(x) \\
        &+ \sum_j |T(f\chi_{Q_j^*})(x)|\chi_{Q_j}(x) \\
        &\leq B\chi_{Q\setminus B(Q)}(x)|Q^*|^\frac{\alpha}{d}\left(\dashint_{Q^*}|f|^s\right)^{1/s} \\
        &+ \sum_j \chi_{Q_j}(x) (2+A)B|Q^*|^\frac{\alpha}{d}\left(\dashint_{Q^*}|f|^s\right)^{1/s} \\ 
        &+ \sum_j |T(f\chi_{Q_j^*})(x)|\chi_{Q_j}(x) \\
        &\leq (2 + A)B |Q^*|\left(\dashint_{Q^*}|f|^s\right)^{1/s} \chi_Q(x) + \sum_j |T(f\chi_{Q_j^*})(x)|\chi_{Q_j}(x).
        \end{split}
    \] 
    This way, we see that the contribution from \(\cup_j Q_j\) is added to the term from \(Q\setminus B(Q)\) to yield 
    a characteristic function over the entire cube, which prevents us from improving the estimate.
\end{remark}

Note that the cubes we have obtained in the sparse family \(\S\) are not cubes coming from a certain dyadic lattice, because of the threefold expansions that occur in Lemma \ref{lemma: local to global sparse bound}. However, by using the three lattice theorem we can always compare arbitrary cubes with dyadic cubes. Indeed, given a dyadic lattice \(\D\), we know from Theorem \ref{three lattice theorem} that there are dyadic lattices \(\D^1,\dots, \D^{3^d}\) such that, given a cube \(Q\in \S\),  there is some \(Q'\in \D^{j(Q)}\) with \(Q\subseteq Q'\) and \(|Q'|\leq 3^d |Q|\), and therefore
\[
\chi_Q(x)|Q|^\frac{\alpha}{d}\left(\dashint_Q |f|^s\right)^{1/s} \leq 3^{\frac{d}{s}-\alpha}\chi_{Q'}(x)|Q'|^\frac{\alpha}{d}\left(\dashint_{Q'}|f|^s\right)^{1/s}.
\]
So, if we set \(\S_j = \{Q': Q\in \S \text{ and }Q'\in \D^j\}\),\footnote{Note that these sets are also sparse with a possibly smaller constant.} we obtain the estimate
\[
|Tf(x)|\lesssim \sum_{j=1}^{3^d}\sum_{Q'\in \S_j}\chi_{Q'}(x)|Q'|^\frac{\alpha}{d}\left(\dashint_{Q'}|f|^s\right)^{1/s}.
\]
In other words, we can dominate \(T\) by a finite number of sparse operators whose associated collections of sparse cubes all come from a fixed dyadic lattice.

Before finishing this chapter we remark that the approach to sparse domination presented here, which is based on 
obtaining pointwise bounds for operators with controlled oscillations of their truncations, is not the only approach that has been developed. 
For example, we highlight the work of Bernicot, Frey and Petermichl (\cite{BernicotFreyPetermichl}), who developed a non-pointwise approach, 
focusing instead on controlling operators via bilinear sparse forms. They also consider a much more general framework, 
proving sparse domination for non-integral operators on metric spaces endowed with Borel measures satisfying some conditions, 
among which the condition that the measure satisfies a volume doubling property. 
We also mention the work of Conde-Alonso, Culiuc, Di Plinio and Ou, who obtained in \cite{conde-alonso} 
bounds for rough singular integrals in terms of bilinear sparse forms, by 
using a novel method which avoids the consideration of maximal truncations. Finally, we highlight also the work of Lacey, who used a modification of the 
bilinear sparse form approach to prove sparse control for the spherical maximal function (see \cite{lacey}).

\clearpage
\section{Applications of the Sparse Domination Theorem}\label{sec: applications of sparse domination}

In this section, we will show several consequences of Theorem \ref{thm: lerner-ombrosi}. In particular,
we show how we can apply the Lerner-Ombrosi theorem to deduce sparse bounds for several operators of interest, including the Riesz potential operator 
and Calderón-Zygmund operators. The results of this section can be found in several papers throughout the literature. See for instance
\cite{UribeMoen2}, \cite{lernerCZ}, \cite{laceyA2} and \cite{lerner_diplinio}.

\subsection{The Riesz potential operator}\label{sec: the riesz potential operator}

In some cases, working with fractional derivatives is much more convenient than working with classical derivatives. 
A nice way of defining fractional derivatives is by using the Fourier transform. Recall that 
\[
\widehat{\Delta f}(\xi) = -4\pi^2 |\xi|^2\hat{f}(\xi).
\]
This leads to a natural definition of a fractional Laplacian operator\index{fractional Laplacian} by setting
\[
(-\Delta)^{s/2} f = ((2\pi |\xi|)^s \hat{f})^\vee. 
\]
When \(s\geq 0\) we think of this as a fractional derivative of \(f\). However, this operator is well-defined even if \(s<0\). 
For example, if \(f\in \S(\R^d)\) and \(s>-d\), then \(|\xi|^s \hat{f}\in L^1\) and therefore \((-\Delta )^{s/2}f\) makes sense as the 
inverse Fourier transform of an \(L^1\) function, which we know will be continuous, bounded and 0 at infinity. When \(s<0\) we can think 
of this as a fractional integral instead of a fractional derivative, and in fact, when \(-d<s<0\), we can find an integral expression 
for the fractional Laplacian. If \(0<\alpha<d\), we can use the well-known\footnote{For a reference see \cite{Cgrafakos}, Section 2.4.3.} 
fact that 
\[
(|\cdot|^{-\alpha})^\wedge (\xi) = \pi^{\alpha- \frac{d}{2}}\frac{\Gamma \left(\frac{d-\alpha}{2}\right)}{\Gamma\left(\frac{\alpha}{2}\right)} |\xi|^{\alpha -d }
\]
to see that, if we write \(s = -\alpha\), we get 
\[
(-\Delta )^{s/2}f = ((2\pi |\cdot|)^s\hat{f})^\vee = (2\pi)^s (|\cdot|^{-\alpha})^\vee * f = 2^{-\alpha} \pi^{-\frac{d}{2}}\frac{\Gamma \left(\frac{d-\alpha}{2}\right)}{\Gamma\left(\frac{\alpha}{2}\right)} |\xi|^{\alpha -d } * f.
\]
This naturally leads to the consideration of the Riesz potential operator\index{Riesz potential operator}
\[
\I_\alpha f(x) = |\cdot|^{\alpha-d} * f(x) = \int_{\R^d} \frac{1}{|y|^{d-\alpha}}f(x-y)dy.
\]
The boundedness properties of this operator are then related to Sobolev inequalities, which are of fundamental importance in the study of partial differential equations.

A standard way of obtaining boundedness properties for the Riesz potential operator is to compare it to the Hardy-Littlewood maximal operator. 
Essentially one splits the integral as follows: 
\[
\I_\alpha f(x) = \int_{|y|\leq R(x)}|y|^{\alpha- d}f(x-y)dy + \int_{|y|>R(x)}|y|^{\alpha - d}f(x-y)dy.
\]
Then, if we see the first term as a convolution between \(f\) and a radially decreasing integrable function, if we use Hölder' inequality in the second term, and we then choose \(R(x)\) so as to minimize the right-hand side (one chooses \(R(x) = \|f\|_{L^p}^{p/d}Mf(x)^{-p/d}\)), we get
\[
\I_\alpha f(x) \lesssim_{d,\alpha, p} Mf(x)^{p/q}\|f\|_{L^p}^{1-p/q},
\]
where 
\[
\frac{1}{q}+\frac{\alpha}{d} = \frac{1}{p}.
\]
By Proposition \ref{strong type estimate for fractional max operator with lebesgue} it follows that
\begin{align*}
    \|\I_\alpha f\|_{L^q(\R^d)}&\lesssim_{d, \alpha,p}\|f\|_{L^p(\R^d)}\\
    \|\I_\alpha f\|_{L^{\frac{d}{d-\alpha},\infty}(\R^d)}&\lesssim_{d, \alpha} \|f\|_{L^1(\R^d)}.
\end{align*}
These bounds can then be thought of as fractional versions of the Sobolev inequalities, considering the Riesz potential operator 
as a kind of fractional integral. If we wish to obtain two weight norm inequalities for the Riesz potential operator one strategy is to use the methods of 
Section \ref{sec: sparse domination} and try to compare it to a sparse operator. To do this we will use Theorem \ref{thm: lerner-ombrosi}.

Since the Riesz potential operator satisfies a weak type estimate, then in order to apply Theorem \ref{thm: lerner-ombrosi} it remains to check that the auxiliary maximal operator
\[
M_{\I, \lambda} f(x) = \sup_{Q\ni x}\underset{x',x''\in Q}{\text{ess sup }}|\I_\alpha(f\chi_{\R^d\setminus Q^*})(x')-\I_\alpha(f\chi_{\R^d\setminus Q^*})(x'')|
\]
also satisfies a weak type estimate. To do this we prove a result which shows that, in some generality, 
whenever we have an operator associated to a sufficiently nice kernel, we can always find a weak type estimate for the auxiliary maximal operator.
\begin{prop}\label{prop: condition for weak type auxiliary maximal op}
    Let \(0\leq \alpha < d\). Suppose we have an operator \(T\) that has the following integral representation: given a cube 
    \(Q\) and some \(f\in L^1(\R^d)\),
    \[
    T(f\chi_{\R^d\setminus Q^*})(x) = \int_{\R^d\setminus Q^*} K(x,y)f(y)dy, \text{ for a.e. }x\in Q,
    \]
    for some kernel \(K\). Moreover, assume the this kernel satisfies the following regularity condition:
    \[
    |K(x',y)-K(x'',y)|\leq \omega\left(\frac{|x'-x''|}{|x'-y|}\right)\frac{1}{|x'-y|^{d-\alpha}} \text{ when } |x'-x''|<|x'-y|/2,
    \]
    where \(\omega: [0, 1]\rightarrow[0,\infty[\) is an increasing function such that \(\omega(0) = 0\) and
    \[
    [\omega]_\text{\emph{Dini}} = \int_0^1 \frac{\omega(t)}{t}dt < \infty. 
    \]
    Then, if we take \(\lambda > 4\sqrt{d}+1\), the auxiliary maximal operator \(M_{T,\lambda}\) satisfies 
    \[
    \|M_{T,\lambda}\|_{L^1\rightarrow L^{\frac{d}{d-\alpha},\infty}} \lesssim_{d,\alpha} [\omega]_\emph{Dini}.
    \]
\end{prop}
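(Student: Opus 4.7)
The plan is to show the pointwise bound $M_{T,\lambda}f(x)\lesssim [\omega]_{\mathrm{Dini}}\, M_\alpha f(x)$ for every $x\in\R^d$, and then conclude from the weak-type estimate for $M_\alpha$ given in Proposition \ref{strong type estimate for fractional max operator with lebesgue}. So I fix a cube $Q\ni x$, fix $x',x''\in Q$, and aim to bound $|T(f\chi_{\R^d\setminus Q^*})(x')-T(f\chi_{\R^d\setminus Q^*})(x'')|$ by $[\omega]_{\mathrm{Dini}}\, M_\alpha f(x)$ with a constant depending only on $d$ and $\lambda$. The first thing to verify is that the kernel regularity hypothesis is applicable throughout the domain of integration: for $x',x''\in Q$ we have $|x'-x''|\leq \sqrt{d}\, l(Q)$, while for $y\notin Q^*=D_\lambda Q$ we have $|x'-y|\geq (\lambda-1)l(Q)/2$. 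The constraint $\lambda>4\sqrt{d}+1$ is precisely what ensures $|x'-x''|<|x'-y|/2$.

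Next, setting $r_0=(\lambda-1)l(Q)/2$, I would split $\R^d\setminus Q^*$ into the dyadic annuli $A_k=\{y:2^k r_0\leq|x'-y|<2^{k+1}r_0\}$ for $k\geq 0$, and apply the kernel regularity to get
\[
|T(f\chi_{\R^d\setminus Q^*})(x')-T(f\chi_{\R^d\setminus Q^*})(x'')|\leq \sum_{k\geq 0}\omega\!\left(\frac{|x'-x''|}{2^k r_0}\right)\frac{1}{(2^k r_0)^{d-\alpha}}\int_{|x'-y|<2^{k+1}r_0}|f(y)|\,dy,
\]
where I used monotonicity of $\omega$ to pull out its value at the inner radius of each annulus. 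Since $x',x\in Q$, the ball $B(x',2^{k+1}r_0)$ is contained in a cube $Q_k\ni x$ of side length $\sim 2^{k+1}r_0$ (the constant depending only on $d$ and $\lambda$), so each term is bounded by $\omega(t_k)\,M_\alpha f(x)$ up to a dimensional constant, where $t_k:=|x'-x''|/(2^k r_0)$.

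It remains to control $\sum_{k\geq 0}\omega(t_k)$ by $[\omega]_{\mathrm{Dini}}$. Here $t_0\leq 2\sqrt{d}/(\lambda-1)<1/2$ by the choice of $\lambda$, and $t_{k+1}=t_k/2$. Since $\omega$ is increasing, for each $k\geq 0$,
\[
\omega(t_{k+1})\ln 2 \leq \int_{t_{k+1}}^{t_k}\frac{\omega(t)}{t}\,dt,
\]
and an analogous bound on $[t_0,2t_0]\subseteq[0,1]$ takes care of the $k=0$ term; summing telescopically gives $\sum_{k\geq 0}\omega(t_k)\leq (\ln 2)^{-1}[\omega]_{\mathrm{Dini}}$. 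Combining the estimates and taking the essential supremum over $x',x''\in Q$ and then the supremum over $Q\ni x$ yields $M_{T,\lambda}f(x)\lesssim_{d,\lambda}[\omega]_{\mathrm{Dini}}\,M_\alpha f(x)$, after which Proposition \ref{strong type estimate for fractional max operator with lebesgue} gives the claimed weak-type bound.

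The main technical obstacle is purely bookkeeping: organizing the dyadic scales so that the outer radius of the $k$-th annulus is absorbed into a cube containing $x$ (not $x'$), so that the resulting quantity is genuinely $M_\alpha f(x)$ and not an average centered at a moving base point. The rest of the argument is a routine application of the Dini hypothesis to a geometric sum, with the constraint $\lambda>4\sqrt{d}+1$ serving both to license the kernel regularity and to guarantee $t_0<1/2$ so that the comparison $\omega(t_k)\lesssim\int\omega(t)/t\,dt$ stays inside the domain of integration of $[\omega]_{\mathrm{Dini}}$.
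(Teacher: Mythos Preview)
Your proposal is correct and follows essentially the same approach as the paper: obtain the pointwise bound $M_{T,\lambda}f(x)\lesssim_{d,\lambda}[\omega]_{\mathrm{Dini}}\,M_\alpha f(x)$ via a dyadic decomposition outside $Q^*$, verify the kernel regularity hypothesis using $\lambda>4\sqrt d+1$, sum $\omega$ over geometric scales via the Dini condition, and finish with the weak-type estimate for $M_\alpha$. The only cosmetic difference is that the paper decomposes over the dilated cubes $D_{2^{k+1}}Q^*\setminus D_{2^k}Q^*$ (which contain $x$ automatically, sidestepping the ``bookkeeping obstacle'' you flag), whereas you use annuli centered at $x'$ and then pass to cubes containing $x$; both lead to the same estimate.
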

\begin{proof}
    Fix a point \(x\in \R^d\), a cube \(Q\) which contains \(x\) and some function \(f\in L^1(\R^d)\). For almost every 
    \(x', x'' \in Q\), we can write
    \[
    |T(f\chi_{\R^d\setminus Q^*})(x')-T(f\chi_{\R^d\setminus Q^*})(x'')|\leq \int_{\R^d\setminus Q^*}|K(x',y)-K(x'',y)||f(y)|dy.
    \]
    We now clearly want to use the second property of the kernel to control the right-hand side, which we call \(I\). Since \(\lambda > 4\sqrt{d}+1\) then we get
    \[
    |x'-x''|\leq \text{diam}(Q) = \sqrt{d}l(Q) < \frac{1}{2} \frac{\lambda-1}{2}l(Q) \leq \frac{1}{2}|x'-y|,
    \]
    and so we can use the second property to get
    \[
    I\leq \int_{\R^d\setminus Q^*}\omega\left(\frac{|x'-x''|}{|x'-y|}\right)\frac{|f(y)|}{|x'-y|^{d-\alpha}}dy.
    \]
    The idea now is to compare this to the fractional maximal operator. We have that
    \[
    \begin{split}
        I&\leq  \sum_{k\geq 0}\int_{D_{2^{k+1}}Q^*\setminus D_{2^k}Q^*}\omega\left(\frac{|x'-x''|}{|x'-y|}\right) \frac{|f(y)|}{|x'-y|^{d-\alpha}}dy \\
        &\leq \sum_{k\geq 0}\frac{|D_{2^{k+1}}Q^*|^{1-\frac{\alpha}{d}}}{(2^{k-1}\lambda -2^{-1})^{d-\alpha}l(Q)^{d-\alpha}}\omega\left(\frac{\text{diam}(Q)}{(2^{k-1}\lambda-2^{-1})l(Q)}\right) |D_{2^{k+1}}Q^*|^{\frac{\alpha}{d}-1}\int_{D_{2^{k+1}}Q^*}|f|\\
        &\leq \sum_{k\geq 0}\left(\frac{2^{k+2}\lambda}{2^k\lambda - 1}\right)^{d-\alpha} \omega(2^{-k-1})M_\alpha f(x) \\
        &\leq 5^{d-\alpha} M_\alpha f(x) \sum_{k\geq 0}\omega(2^{-k-1}).
    \end{split}
    \]
    Now, since \(\omega\) is increasing we can say that
    \[
    \begin{split}
    \sum_{k\geq 0}\omega(2^{-k-1}) &= \sum_{k\geq 0} \frac{\omega(2^{-k-1})}{2^{-k-1}} 2^{-k-1} = \sum_{k \geq 0}\int_{2^{-k-1}}^{2^{-k}}\frac{\omega(2^{-k-1})}{2^{-k-1}}dt \\
    &\leq \sum_{k\geq 0}\int_{2^{-k-1}}^{2^{-k}}\frac{\omega(t)}{2^{-k-1}}dt \leq 2\sum_{k\geq 0}\int_{2^{-k-1}}^{2^{-k}}\frac{\omega(t)}{t}dt = 2 [\omega]_\text{Dini}.
    \end{split}
    \]
    Therefore, we see that 
    \[
    M_{T,\lambda}f(x) \leq 2\times 5^{d-\alpha} [\omega]_\text{Dini}M_\alpha f(x).
    \]
    The result then follows from Proposition \ref{strong type estimate for fractional max operator with lebesgue}.
\end{proof}

To obtain sparse domination for the Riesz potential operator we simply have to show that its kernel satisfies the assumption of Proposition \ref{prop: condition for weak type auxiliary maximal op}. The kernel associated to the Riesz potential operator is 
\[
K(x,y) = \Phi(x-y),\ \text{where }\Phi(\eta) = |\eta|^{\alpha- d}.
\]
Suppose that \(|x'-x''|<|x'-y|/2\). Then, we have that
\[
\begin{split}
|K(x',y)-K(x'',y)| &= |\Phi(x'-y)-\Phi(x''-y)| \\
&\leq \int_0^1 |\nabla \Phi(x''-y+t(x'-x''))|dt |x'-x''| \\ 
&\leq (d-\alpha)\int_0^1 |x''-y+t(x'-x'')|^{\alpha-d-1}dt |x'-x''|.
\end{split}
\]
Now, note that
\[
|x'-y|\leq |x''-y+t(x'-x'')| + (1-t)|x'-x''|\leq |x''-y+t(x'-x'')| + \frac{|x'-y|}{2}.
\]
Therefore, we obtain
\[
|K(x',y)-K(x'',y)|\leq (d-\alpha) 2^{d+1-\alpha} \frac{|x'-x''|}{|x'-y|} \frac{1}{|x'-y|^{d-\alpha}}.
\]
This way, we see that \(\I_\alpha\) satisfies the assumptions of Proposition \ref{prop: condition for weak type auxiliary maximal op} with \(\omega(t) =(d-\alpha)2^{d+1-\alpha} t\). If we choose for example \(\lambda = 4\sqrt{d} +2\), then \(\lambda > 4\sqrt{d}+1\) and \(\lambda \geq 3\), therefore, putting together Proposition \ref{prop: condition for weak type auxiliary maximal op} and Theorem \ref{thm: lerner-ombrosi} (with \(s = 1\)), we conclude the following proposition.
\begin{prop}\label{prop: sparse domination for the Riesz potential op}
    Let \(0< \alpha < d\). Given \(f\in L^1(\R^d)\) with compact support, there is a \(1/(2(4\sqrt{d}+2)^d)\)-sparse family of cubes \(\S = \S(f)\) such that
    \[
    |\I_\alpha f(x)|\lesssim_{d,\alpha} \sum_{Q\in \S} \chi_Q(x) |Q|^\frac{\alpha}{d}\dashint_Q |f| \text{ for a.e. }x\in \R^d.
    \]
\end{prop}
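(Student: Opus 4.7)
The plan is to obtain the conclusion as a direct application of the Lerner–Ombrosi theorem (Theorem \ref{thm: lerner-ombrosi}) with the parameter choices $s = 1$, $p_0 = p_1 = 1$, $q_0 = q_1 = d/(d-\alpha)$, and $\lambda = 4\sqrt{d} + 2$. These satisfy $1/q_i + \alpha/d = 1/p_i$ and $0 \leq \alpha < d = d/s$, and $\lambda \geq 3$, so the hypotheses on the indices are met. What remains is to check the two weak-type conditions: one for $\I_\alpha$ itself and one for the associated oscillation operator $M_{\I, \lambda}$.

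First I would verify the weak type $(1, d/(d-\alpha))$ bound for $\I_\alpha$. This is essentially the classical estimate sketched in the paragraph preceding the proposition: splitting the defining convolution at a radius $R(x)$ yields the pointwise bound $\I_\alpha f(x) \lesssim Mf(x)^{p/q}\|f\|_{L^p}^{1-p/q}$, and invoking the weak type $(1,1)$ estimate for the Hardy–Littlewood maximal function (the case $\alpha = 0$ of Proposition \ref{strong type estimate for fractional max operator with lebesgue}) yields the desired weak type endpoint bound.

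Second, I would apply Proposition \ref{prop: condition for weak type auxiliary maximal op} to obtain the weak type $(1, d/(d-\alpha))$ bound for $M_{\I, \lambda}$. The Riesz potential is a convolution operator with kernel $K(x,y) = \Phi(x-y)$, where $\Phi(\eta) = |\eta|^{\alpha - d}$, so the kernel regularity condition is what needs to be verified. Whenever $|x' - x''| < |x' - y|/2$, a mean value argument along the segment from $x''$ to $x'$ gives
\[
|K(x',y) - K(x'',y)| \leq (d-\alpha)\int_0^1 |x'' - y + t(x' - x'')|^{\alpha - d - 1}\,dt\,|x' - x''|,
\]
and the triangle inequality combined with $|x' - x''| < |x' - y|/2$ forces $|x'' - y + t(x' - x'')| \geq |x'-y|/2$ for every $t \in [0,1]$. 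This yields the bound with modulus $\omega(t) = (d-\alpha)2^{d+1-\alpha}\,t$, which is clearly Dini (indeed $[\omega]_{\text{Dini}} < \infty$ since $\omega$ is linear), so Proposition \ref{prop: condition for weak type auxiliary maximal op} applies.

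With both weak type estimates in hand, Theorem \ref{thm: lerner-ombrosi} produces, for each compactly supported $f \in L^1(\R^d)$, a $1/(2\lambda^d) = 1/(2(4\sqrt{d}+2)^d)$-sparse family $\S = \S(f)$ such that
\[
|\I_\alpha f(x)| \lesssim_{d,\alpha} \sum_{Q \in \S}\chi_Q(x)\,|Q|^{\alpha/d}\dashint_Q |f|
\]
almost everywhere, which is exactly the claim. I do not anticipate a genuine obstacle here: the main work is the elementary gradient estimate for $|\eta|^{\alpha - d}$, which is already sketched just above the statement, and the rest is bookkeeping of parameters to match the hypotheses of Theorem \ref{thm: lerner-ombrosi}.
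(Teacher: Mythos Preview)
Your proposal is correct and matches the paper's approach essentially line for line: verify the kernel regularity estimate for $|x-y|^{\alpha-d}$ via the mean value theorem to obtain $\omega(t)=(d-\alpha)2^{d+1-\alpha}t$, invoke Proposition~\ref{prop: condition for weak type auxiliary maximal op} for the weak type bound on $M_{\I,\lambda}$, and then apply Theorem~\ref{thm: lerner-ombrosi} with $s=1$ and $\lambda=4\sqrt{d}+2$ together with the already-established weak type $(1,d/(d-\alpha))$ bound for $\I_\alpha$.
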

\begin{remark}
    This result was obtained by Cruz-Uribe and Moen in \cite{UribeMoen2} based on previous work by several authors. Their approach relies on a comparison between 
    the Riesz potential operator and a dyadic version, which exploits the simplicity of this operator's kernel. The proof of the proposition given here instead
    takes advantage of the more general properties of such kernels encapsulated in the off-diagonal version of the Lerner-Ombrosi theorem. 
    This is interesting as it shows how this sparse domination result fits in nicely with the general framework used 
    to obtain sparse bounds for Calderón-Zygmund operators.
\end{remark}

This proposition together with Theorem \ref{thm: weighted estimate for sparse operators} implies the following:

\begin{thm}\label{thm: two-weight norm ineq for the Riesz potential}
    Let \(1 < p \leq q < \infty\) and \(0 < \alpha < d\). Suppose that \(v, w\) are such that \((v, w)\in A_{p, q}^\alpha\) and \(v^{-\frac{p'}{p}}, w\in A_\infty\). Then, we have that
    \[
    \|\I_\alpha f\|_{L^q(w)}\lesssim \|f\|_{L^p(v)}.
    \]
\end{thm}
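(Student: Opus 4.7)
The plan is to reduce the theorem to the combination of two results established earlier in the paper: the pointwise sparse domination for the Riesz potential (Proposition \ref{prop: sparse domination for the Riesz potential op}) and the weighted estimate for sparse operators (Theorem \ref{thm: weighted estimate for sparse operators}). The hypotheses on $(v,w)$ in the statement, namely $(v,w)\in A_{p,q}^\alpha$ with $v^{-p'/p}, w \in A_\infty$, match exactly those required by Theorem \ref{thm: weighted estimate for sparse operators}, and the sparse form appearing in the pointwise bound is precisely $\A_\S^\alpha$. Thus, once the pointwise sparse bound is available for a given input, the $L^q(w)$ estimate is immediate.

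First I would reduce to the case $f \geq 0$, which is harmless since the kernel $|y|^{\alpha-d}$ is non-negative and therefore $|\I_\alpha f(x)| \leq \I_\alpha|f|(x)$. Next, since Proposition \ref{prop: sparse domination for the Riesz potential op} only applies to $f \in L^1(\R^d)$ with compact support, I need to approximate. A preliminary observation is that any $f \in L^p(v)$ is locally integrable: this is the Hölder-type computation appearing in the proof of Proposition \ref{two weight weak type estimate for frac max oper}, which uses only the $A_{p,q}^\alpha$ condition to show $\int_Q |f| \lesssim [v,w]_{A_{p,q}^\alpha} \|f\|_{L^p(v)} w(Q)^{-1/q} |Q|^{1-\alpha/d} < \infty$. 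Hence the truncations $f_N = f \chi_{B(0,N)}$ lie in $L^1(\R^d)$ with compact support, and to each of them I apply Proposition \ref{prop: sparse domination for the Riesz potential op} to obtain a sparse family $\S_N = \S_N(f_N)$ such that $|\I_\alpha f_N(x)| \lesssim \A_{\S_N}^\alpha f_N(x)$ for almost every $x$. Then Theorem \ref{thm: weighted estimate for sparse operators} yields $\|\I_\alpha f_N\|_{L^q(w)} \lesssim \|\A_{\S_N}^\alpha f_N\|_{L^q(w)} \lesssim \|f_N\|_{L^p(v)} \leq \|f\|_{L^p(v)}$, uniformly in $N$.

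Finally, I would pass to the limit. Since $f \geq 0$, monotone convergence gives $\I_\alpha f_N(x) \nearrow \I_\alpha f(x)$ for every $x$, and Fatou's lemma applied to the functions $(\I_\alpha f_N)^q w$ yields $\|\I_\alpha f\|_{L^q(w)} \leq \liminf_N \|\I_\alpha f_N\|_{L^q(w)} \lesssim \|f\|_{L^p(v)}$. The only real obstacle in the whole argument is this last approximation step: the sparse families $\S_N$ depend on $N$ and there is no natural limiting sparse family, but fortunately we do not need one since the $L^q(w)$ bound is uniform in $N$ and the kernel's positivity lets monotone convergence handle the limit. This mirrors the limiting argument already carried out in the proof of Proposition \ref{prop: two weight estimate for dyadic frac max op}.
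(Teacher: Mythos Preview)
Your proposal is correct and follows essentially the same approach as the paper: reduce to $f\ge 0$, truncate to apply Proposition \ref{prop: sparse domination for the Riesz potential op}, feed the resulting sparse bound into Theorem \ref{thm: weighted estimate for sparse operators}, and pass to the limit by monotone convergence. The only cosmetic difference is that the paper truncates with $f_N=\min\{f,N\}\chi_{B_N}$ (so $f_N\in L^1$ is immediate from boundedness), whereas you use $f_N=f\chi_{B(0,N)}$ and invoke the local integrability argument from Proposition \ref{two weight weak type estimate for frac max oper} to get $f_N\in L^1$; both work equally well.
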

\begin{proof}
    Since \(|\I_\alpha f(x)|\leq \I_\alpha(|f|)(x)\) we may assume, without loss of generality, that \(f\geq 0\). Now put \(f_N = \min\{f, N\} \chi_{B_N}\). Since \(f_N\in L^1\) with compact support, we can apply Proposition \ref{prop: sparse domination for the Riesz potential op} to show the existence of a sparse family of cubes \(\S_N = \S_N(f_N)\) such that
    \[
    \I_\alpha(f_N)(x) \lesssim_{d, \alpha}\A_{\S_N}^\alpha(f_N)(x)\text{ for a.e. }x.
    \]
    Therefore, from Theorem \ref{thm: weighted estimate for sparse operators}, we get
    \[
    \|\I_\alpha(f_N)\|_{L^q(w)}\lesssim_{d,\alpha} \|\A_{\S_N}^\alpha (f_N)\|_{L^q(w)}\lesssim \|f_N\|_{L^p(v)}\lesssim \|f\|_{L^p(v)}.
    \]
    It is important to note here that despite the fact that the sparse collections \(\S_N\) depend on \(f_N\), the implicit constants in the inequalities do not depend on \(f_N\), they only depend on the sparse factor \(\eta\) we get from Proposition \ref{prop: sparse domination for the Riesz potential op} which depends only on \(d\). Now the result follows from the monotone convergence theorem since 
    \[
    \I_\alpha f_N(x) = \int_{B(0, N)}\frac{\min\{f(y), N\}}{|x-y|^{d-\alpha}}dy \nearrow_N \int_{\R^d} \frac{f(y)}{|x-y|^{d-\alpha}}dy,
    \]
    where the limit here also holds because of the monotone convergence theorem.    
\end{proof}
\begin{remark}
    This theorem generalizes the Stein-Weiss inequality (\cite{SteinWeiss}): 
    \[ 
        \||x|^\gamma \I_\alpha f\|_{L^q}\lesssim \||x|^\beta f\|_{L^p},
    \] 
    where \(-d/q < \gamma \leq \beta < d/p'\) and \(1/q + \alpha/d +\gamma/d = 1/p + \beta/d\).
\end{remark}
\begin{remark}
    This theorem is well-known and in fact a proof can be found in \cite{sawyer}. 
    The proof by Sawyer and Wheeden also makes heavy use of dyadic methods and it contains many of the ideas that were used in 
    Sections \ref{sec: dyadic lattices} and \ref{sec: sparse domination}, like the notion of a \(\Lambda\)-Carleson family, except they were applied specifically to the Riesz potential operator. 
    This has some advantages and some disadvantages. On the one hand, their proof actually allows for an improvement of the 
    \(A_\infty\) condition. Indeed, they assume only that the weights \(v^{-p'/p}, w\) satisfy a reverse doubling condition.\index{reverse doubling condition} 
    That is, a weight \(w\) satisfies the reverse doubling condition if there are \(0<\delta, \varepsilon<1\) such that
    \[
    w(D_\delta Q) \leq \varepsilon w(Q)
    \]
    uniformly over all cubes. If a weight is in \(A_\infty\), then it must satisfy the reverse doubling condition, 
    but the other implication is not true.\footnote{See \cite{fefferman-muckenhoupt} for an example of a doubling 
    weight which is not \(A_\infty\), and note that doubling implies reverse doubling.} Therefore, 
    this is a strict improvement over Theorem \ref{thm: two-weight norm ineq for the Riesz potential}. 
    However, on the other hand, the sparse domination methods have some other advantages. First of all, it is a much more general method, 
    since it is applicable to a wide range of other operators as we will shortly see. But also, 
    they can be used to obtain precise information about how the implicit constants depend on the weights. 
    However, we choose to state this theorem here, without these quantitative improvements that come from using sparse 
    domination, because we are not too concerned with the implicit constants. 
    If the reader is interested in such results, a recent paper that proves two-weight norm inequalities for the 
    Riesz potential operator with an explicit dependence of the constants on the weights is \cite{uribe-moen}.
\end{remark}

\subsection{Calderón-Zygmund operators and their maximal counterparts}\label{sec: sparse bounds for CZ op}\label{sec: sparse domination for CZ op}

In this section we are interested in obtaining sparse domination for Calderón-Zygmund operators\index{\(\omega\)-Calderón-Zygmund operators}
and also for their maximal versions. 
We assume here that we are dealing with a linear operator \(T\) which is bounded in \(L^2(\R^d)\) and has an integral representation
\[
Tf(x) = \int K(x,y)f(y)dy,\ \forall f\in L^\infty(\R^d)\text{ with compact support and }x\notin \supp(f).
\]
Moreover, we make some standard assumptions about the kernel of \(T\), in line with \cite{lerner}. More precisely, 
we assume that we have the estimates
\begin{itemize}
    \item \(|K(x,y)|\leq C_K/|x-y|^d\), for all \(x\neq y\);
    \item \(|K(x,y)-K(x',y)|+|K(y,x)-K(y,x')|\leq \omega\left(\frac{|x-x'|}{|x-y|}\right) \frac{1}{|x-y|^d}\) for \(|x-y|> 2|x-x'|\),
\end{itemize}
where \(\omega: [0,1]\rightarrow[0,\infty[\) is a subadditive increasing function with \(\omega(0) = 0\) and \([\omega]_\text{Dini} < \infty\). 
Then, under these assumptions, it follows from the standard Calderón-Zygmund theory (see for instance Theorem 5.10 from \cite{Duo} and 
Exercise 4.2.4 from \cite{Mgrafakos}) that \(T\) can be uniquely extended to all \(L^1\) functions, satisfying a weak type \((1, 1)\) 
inequality with 
\[
\|T\|_{L^1 \rightarrow L^{1,\infty}}\lesssim_d C_K + [\omega]_\text{Dini} + \|T\|_{L^2\rightarrow L^2}.
\]
To see that \(T\) satisfies the assumptions of Proposition \ref{prop: condition for weak type auxiliary maximal op}, it remains only to 
check that 
\[ 
    T(f\chi_{\R^d\setminus Q^*})(x) = \int_{\R^d\setminus Q^*} K(x,y)f(y)dy\text{ for a.e. }x \in Q,
\] 
for any \(f\in L^1(\R^d)\). To this end, let \(Q\) be some fixed cube and consider \(f\in L^1(\R^d)\). Since we have an integral 
representation of \(T\) for \(L^\infty\) functions with compact support, let's approximate \(f\) by such functions. Take \((f_n)_n\) to 
be a sequence of \(C^\infty_c(\R^d)\) functions which converges to \(f\) in \(L^1(\R^d)\). Given that \(T\) is weak type \((1, 1)\) we know that 
\[ 
    \|T(f\chi_{\R^d\setminus Q^*} - f_n\chi_{\R^d\setminus Q^*})\|_{L^{1, \infty}}\lesssim \|f-f_n\|_{L^1}\xrightarrow[n\rightarrow +\infty]{} 0.
\] 
Therefore, there is some subsequence \((f_{n_k})_k\) such that
\[ 
    T(f_{n_k}\chi_{\R^d\setminus Q^*})(x) \xrightarrow[k\rightarrow +\infty]{} T(f\chi_{\R^d\setminus Q^*})(x)
\] 
for a.e. \(x\in \R^d\). On the other hand, if \(x\in Q\)
\[ 
    T(f_{n_k}\chi_{\R^d\setminus Q^*})(x) = \int_{\R^d\setminus Q^*} K(x,y)f_{n_k}(y)dy
\] 
and 
\[ 
    \begin{split}
        \left|\int_{\R^d\setminus Q^*} K(x,y)f_{n_k}(y)dy-\int_{\R^d\setminus Q^*} K(x,y)f(y)dy\right| &\leq C_K\int_{\R^d\setminus Q^*} 
        \frac{|f_{n_k}(y) - f(y)|}{|x-y|^d}dy \\
        &\lesssim_\lambda \frac{C_K}{|Q|}\|f_{n_k}-f\|_{L^1} \xrightarrow[k\rightarrow +\infty]{} 0.
    \end{split}
\]
Thus, 
\[ 
    T(f\chi_{\R^d\setminus Q^*})(x) = \int_{\R^d\setminus Q^*} K(x,y)f(y)dy,\text{ for a.e. }x \in Q.
\] 
This way, we see that \(T\) satisfies all the assumptions of Proposition \ref{prop: condition for weak type auxiliary maximal op} and therefore, 
if we apply Theorem \ref{thm: lerner-ombrosi} with \(s = 1\) and \(\lambda = 4\sqrt{d}+2\), we show that, for any \(f\in L^1(\R^d)\) with 
compact support, there is some \(1/(2\lambda^d)\)-sparse collection of cubes \(\S = \S(f)\) such that
\[
|Tf(x)|\lesssim_d (C_K + [\omega]_\text{Dini} + \|T\|_{L^2\rightarrow L^2})\sum_{Q\in \S} \chi_Q(x) \dashint_Q|f|\text{ for a.e. }x\in \R^d.
\]
This sparse domination estimate was obtained by Lacey in \cite{laceyA2} for Calderón-Zygmund operators with a Dini condition. 
Before that work, Lerner in \cite{lernerCZ} proved such a sparse bound under stronger assumptions on the kernel of the operator.
For ease of notation, we will write the right-hand side as \(\A_\S f(x)\) instead of \(\A_\S^0f(x)\). 
One can then apply Theorem \ref{thm: weighted estimate for sparse operators} to obtain two-weight norm estimates for Calderón-Zygmund operators.

Now that we have seen that we can control Calderón-Zygmund operators by sparse operators, we want to see that the same is true for maximal Calderón-Zygmund operators.\index{maximal Calderón-Zygmund operator} Given an operator \(T\) under the above conditions we define the maximal operator
\[
T^*f(x) = \sup_{\varepsilon > 0}\left| \int_{|x-y|>\varepsilon}K(x, y)f(y)dy\right|.
\]
Again, by standard Calderón-Zygmund theory, one can use Cotlar's inequality to show that
\[
\|T^*\|_{L^1\rightarrow L^{1,\infty}} \lesssim_d C_K + [\omega]_\text{Dini} + \|T\|_{L^2\rightarrow L^2}.
\]
Therefore we just have to show that \(M_{T^*,\lambda}\) is weak type \((1, 1)\) in order to obtain sparse domination. Let \(x\in \R^d\) and \(Q\) be a cube containing \(x\). For \(\varepsilon \leq (\lambda - 1)l(Q)/2\) we have that \(B_\varepsilon(x)\subseteq Q^*\) and therefore
\[
\int_{|x-y|>\varepsilon}K(x,y)f(y)\chi_{\R^d\setminus Q^*}(y)dy = \int_{\R^d\setminus Q^*}K(x,y)f(y)dy.
\]
This means that
\[
T^*(f\chi_{\R^d\setminus Q^*})(x) = \sup_{\varepsilon\geq \frac{\lambda -1 }{2}l(Q)}\left| \int_{|x-y|>\varepsilon}K(x,y)f(y)\chi_{\R^d\setminus Q^*}(y)dy\right|,
\]
and in particular it follows that \(T^*(f\chi_{\R^d\setminus Q^*})(x)<\infty\) if we assume that \(f\in L^1(\R^d)\). If we now take any two points \(x', x''\in Q\), then we can say that
\[
|T^*(f\chi_{\R^d\setminus Q^*})(x')-T^*(f\chi_{\R^d\setminus Q^*})(x'')|
\]
is bounded by
\[
\sup_{\varepsilon \geq \frac{\lambda - 1}{2}l(Q)}\left|\int_{\substack{|x'-y|>\varepsilon \\ y\in \R^d\setminus Q^*}}K(x',y)f(y)dy-\int_{\substack{|x''-y|>\varepsilon \\ y\in \R^d\setminus Q^*}}K(x'',y)f(y)dy\right|.
\]
In turn, we can control this by
\[
    \begin{split}
        &\int_{\R^d\setminus Q^*}|K(x',y)-K(x'',y)||f(y)|dy \\
        &+ \sup_{\varepsilon \geq \frac{\lambda-1}{2}l(Q)}\int_{\R^d\setminus Q^*}|K(x'',y)||f(y)|\left| \1_{|x'-y|> \varepsilon} - \1_{|x''-y|> \varepsilon}\right|dy.
    \end{split}
\]
Assuming that \(\lambda > 4\sqrt{d}+1\), we can follow through with the argument in the proof of Proposition \ref{prop: condition for weak type auxiliary maximal op} to obtain
\[
\int_{\R^d\setminus Q^*}|K(x',y)-K(x'',y)||f(y)|dy \lesssim_d [\omega]_\text{Dini} Mf(x),
\]
so we just have to concentrate on the second term, which we call \(I\). First note that the integral can be reduced to the set \(\{y: |x'-y|>\varepsilon, |x''-y|\leq \varepsilon\}\cup\{y: |x'-y|\leq \varepsilon, |x''-y|>\varepsilon\}\). If \(y\) is in this set, then we have that
\[
|x-y|\leq \varepsilon + \text{diam}(Q)\leq \varepsilon + \sqrt{d}l(Q)\leq \varepsilon + \frac{2\sqrt{d}}{\lambda-1}\varepsilon \leq \frac{3}{2}\varepsilon.
\]
Moreover, if \(y\) is in the second set we have \(|x''-y|>\varepsilon\), and if \(y\) is in the first set we get
\[
\varepsilon < |x'-y|\leq |x'-x''| + |x''-y|\leq \sqrt{d}l(Q) + |x''-y| \leq \sqrt{d}\frac{2\varepsilon}{\lambda-1} + |x''-y| \leq \frac{\varepsilon}{2}+|x''-y|,
\]
so in any case we have
\[
|x''-y|\geq \frac{\varepsilon}{2}.
\]
Using both these estimates we see that
\[
\begin{split}
I&\leq C_K\sup_{\varepsilon\geq \frac{\lambda-1}{2}l(Q)}\int_{\R^d\setminus Q^*}\frac{|f(y)|}{|x''-y|^d}\left| \1_{|x'-y|> \varepsilon} - \1_{|x''-y|> \varepsilon}\right|dy \\
& \leq C_K\sup_{\varepsilon\geq \frac{\lambda-1}{2}l(Q)}\frac{2^{d+1}}{\varepsilon^d}\int_{|x-y|\leq \frac{3}{2}\varepsilon}|f(y)|dy\\
&\lesssim_d C_KMf(x).
\end{split}
\]
Putting these together we obtain
\[
M_{T^*, \lambda}f(x) \lesssim_d (C_K+[\omega]_\text{Dini}) Mf(x),
\]
and in turn this implies that we have the sparse estimate
\[
    |T^*f(x)|\lesssim_d (C_K + [\omega]_\text{Dini} + \|T\|_{L^2\rightarrow L^2})\A_\S f(x)\text{ for a.e. }x\in \R^d.\footnote{This estimate was first obtained by Lacey in \cite{laceyA2}, based on the earlier work \cite{lernerCZ} of Lerner.}
\]

\subsection{Maximally modulated Calderón-Zygmund operators}\label{sec: sparse domination for maximally mod CZ op}

The results from the previous section can be further improved to the setting of maximally modulated Calderón-Zygmund operators. 
The idea for the definition of these operators comes from the relation between the Carleson operator and the Hilbert transform. 
The Carleson operator is usually defined as 
\[
\mcal{C}f(x) = \sup_{N\in \R}\left|\int_{-\infty}^N \hat{f}(\xi) e^{2\pi i \xi x}d\xi\right|.
\]
However, we can relate this to the Hilbert transform in the following way. From the identity
\[
\chi_{]-\infty, N[} = \frac{1}{2}-\frac{1}{2}\text{sgn}(\xi- N),
\]
which holds for all \(\xi \in \R\setminus \{N\}\), we get
\[
\begin{split}
\int_{-\infty}^N \hat{f}(\xi)e^{2\pi i \xi x}d\xi &= \frac{1}{2}f(x) -\frac{1}{2}\int_\R \text{sgn}(\xi-N)\hat{f}(\xi)e^{2\pi i \xi x}d\xi \\
&= \frac{1}{2}f(x) - \frac{1}{2}\int_\R \text{sgn}(\xi)(M_{-N}f)^\wedge(\xi)e^{2\pi i \xi x}d\xi e^{2\pi i Nx}\\
&= \frac{1}{2}f(x) -\frac{i}{2}e^{2\pi i Nx} H(M_{-N}f)(x),
\end{split}
\]
where \(M_\eta f(x) = e^{2\pi i \eta x}f(x)\) is the modulation operator. 
This way we see that the boundedness properties of the Carleson operator are the same as the boundedness properties of the operator
\[
Cf(x):= \sup_{N\in \R} |H(M_Nf)(x)|,
\]
which we also call Carleson's operator.
This form of Carleson's operator suggests a more general idea of considering maximally modulated 
Calderón-Zygmund operators\index{maximally modulated Calderón-Zygmund operators}. 
Following \cite{lerner_diplinio}, we consider a family \(\mcal{F} = \{\phi_\beta\}_{\beta \in B}\) of real-valued measurable functions, 
the modulations \(M_{\phi_\beta}f(x) = e^{2\pi i \phi_\beta (x)}f(x)\) and the operator
\[
T_\mcal{F} f(x) = \sup_{\beta \in B}|T(M_{\phi_\beta}f)(x)|,
\]
where \(T\) is a given Calderón-Zygmund operator with modulus of continuity \(\omega\). 
Furthermore, we assume that there exists some \(1 \leq s < \infty\) such that
\begin{equation}\label{eq: weak type bound}
     \|T_\mcal{F}f\|_{L^{s,\infty}(\R^d)}\lesssim_{T,d, s} \|f\|_{L^s(\R^d)}.
\end{equation}
Now, if we argue as before, we can show that the auxiliary maximal operator is controlled by the Hardy-Littlewood maximal operator. Indeed, let \(x \in \R^d\) and let \(Q\) be a cube containing \(x\). First, we can say that
\[
T_\mcal{F}(f\chi_{\R^d\setminus Q^*})(x) = \sup_{\beta \in B}\left|\int_{\R^d\setminus Q^*}K(x,y)e^{2\pi i \phi_\beta(y)}f(y)dy\right|\leq \int_{\R^d\setminus Q^*}|K(x,y)||f(y)|dy<\infty,
\]
if we assume that \(f\in L^1(\R^d)\). Therefore, given any \(x', x''\in Q\),
\[
\begin{split}
    |T_\mcal{F}(f\chi_{\R^d\setminus Q^*})(x')-T_\mcal{F}(f\chi_{\R^d\setminus Q^*})(x'')|&\leq \sup_{\beta \in B}\left|\int_{\R^d\setminus Q^*}(K(x',y)-K(x'',y))e^{2\pi i \phi_\beta(y)}f(y)dy\right| \\
    &\leq \int_{\R^d\setminus Q^*}|K(x',y)-K(x'',y)||f(y)|dy \\
    &\lesssim_d [\omega]_\text{Dini}Mf(x),
\end{split}
\]
assuming that \(\lambda > 4\sqrt{d}+1\). This implies that
\[
M_{T_\mcal{F},\lambda}f(x) \lesssim_d [\omega]_\text{Dini}Mf(x).
\]
This fact, together with assumption \eqref{eq: weak type bound} and Theorem \ref{thm: lerner-ombrosi} implies that, 
given a function \(f\in L^s(\R^d)\) with compact support, there exists a \(1/(2\lambda^d)\)-sparse collection of cubes \(\S = \S(f,s)\) such that
\[
T_\mcal{F}f(x) \lesssim \sum_{Q\in \S}\chi_Q(x) \left(\dashint_Q |f|^s\right)^{1/s}\text{ for a.e. }x\in \R^d.
\]
where the implicit constant depends on \(d, \lambda, s, C_K\) and \([\omega]_\text{Dini}\). This sparse bound was first obtained by 
Di Plinio and Lerner in \cite{lerner_diplinio}.

If we now go back to considering the Carleson operator \(C\), then from the Carleson-Hunt theorem we know that
\[
    \|Cf\|_{L^{s,\infty}(\R)}\lesssim_s \|f\|_{L^s(\R)}, \text{ for all }1<s<\infty,
\]
so we can say that
\[
Cf(x) \lesssim_{s} \sum_{I\in \S}\chi_I(x) \left(\dashint_I |f|^s\right)^{1/s}.
\]
It is important to note here that the implicit constant in this estimate blows up as \(s\rightarrow 1\), and so we cannot have sparse domination here with \(s = 1\). In fact, such a sparse bound with \(s = 1\) would imply \(L^1 \rightarrow L^{1,\infty}\) boundedness (see Appendix B from \cite{conde-alonso}), which we know cannot hold for the Carleson operator. The reason for this is that a weak type \((1, 1)\) bound for the Carleson operator would imply pointwise almost everywhere convergence for \(L^1\) functions, but we know this to be false from Kolmogorov's example of an \(L^1\) function whose Fourier series diverges almost everywhere. 

Finally, we want to apply the same ideas to the maximal version of these operators. That is, we define
\[
T^*_\mcal{F}f(x) = \sup_{\beta\in B}|T^*(M_{\phi_\beta}f)(x)| = \sup_{\beta\in B}\sup_{\varepsilon>0}\left|\int_{|x-y|>\varepsilon}K(x,y)e^{2\pi i \phi_\beta(y)}f(y)dy\right|.
\]
Again, the strategy here is the same. First we note that if \(f\in L^1\) and \(x\in Q\), then \(T^*_\mcal{F}(f\chi_{\R^d\setminus Q^*})(x)<\infty\) and therefore, if \(x', x''\in Q\), then 
\[
|T^*_\mcal{F}(f\chi_{\R^d\setminus Q^*})(x')-T^*_\mcal{F}(f\chi_{\R^d\setminus Q^*})(x'')|
\]
can be controlled by
\[
\sup_{\beta\in B}\sup_{\varepsilon>0}\left|\int_{\substack{|x'-y|>\varepsilon\\ y\in \R^d\setminus Q^*}}K(x',y)e^{2\pi i \phi_\beta(y)}f(y)dy-\int_{\substack{|x''-y|>\varepsilon\\ y\in \R^d\setminus Q^*}}K(x'',y)e^{2\pi i \phi_\beta(y)}f(y)dy\right|.
\]
We can then continue the argument as in Section \ref{sec: sparse bounds for CZ op} to obtain
\[
M_{T^*_\mcal{F},\lambda}f(x) \lesssim_d (C_K + [\omega]_\text{Dini})Mf(x),
\]
for \(\lambda > 4\sqrt{d}+1\). The only thing left to show is that the operator \(T^*_\mcal{F}\) itself satisfies a weak type estimate. 
But this can be done by using Cotlar's inequality\footnote{Theorem 4.2.4 in \cite{Mgrafakos}.}
\[
T^*f(x)\lesssim Mf(x) + M(|Tf|^r)(x)^{1/r},\ 0<r<1.
\]
Indeed, using this inequality we get
\[
T^*_\mcal{F}f(x) \lesssim Mf(x) + M(|T_\mcal{F}f|^r)(x)^{1/r},
\]
from which it follows that
\[
\|T^*_\mcal{F}f\|_{L^{s,\infty}}\lesssim \|f\|_{L^s},
\]
by using the fact that the Hardy-Littlewood maximal operator sends \(L^{p,\infty}\) to \(L^{p,\infty}\) for every \(p\in ]1,\infty[\) and by using \eqref{eq: weak type bound}. This then gives sparse domination. In particular, if we define the maximal Carleson operator\index{maximal Carleson operator} as
\[
C^*f(x) = \sup_{N\in \R} \sup_{\varepsilon>0}\left|\int_{|x-y|>\varepsilon}\frac{f(y)}{x-y}e^{2\pi i Ny}dy\right|
\]
then, for any \(1<s<\infty\) and \(f\in L^s(\R)\) with compact support, 
\[
C^*f(x) \lesssim_s \sum_{I\in \S}\chi_I(x) \left(\dashint_I |f|^s\right)^{1/s}.
\]

\clearpage
\appendix
\section{Weighted Lebesgue Spaces}\label{appendix: weighted spaces}

In this appendix we will introduce the basic definitions of weighted Lebesgue spaces, including the most important
families of weights and some useful approximation results. We also prove some important properties in the 
two-weight setting.

\subsection{Basic definitions and approximation theorems}

We are interested in considering weighted modifications of the Lebesgue measure. We mostly work in \(\R^d\), however we will
sometimes need to consider also the group \((\R^+, \times)\) endowed with the natural Haar measure given by 
\[ 
    \lambda(E) = \int_E \frac{dx}{x}. 
\] 
For this reason we will consider we have a space \((G, \lambda)\) which will either be \(\R^d\) or \(\R^+\), equipped with the 
Lebesgue measure in the first case, and the previously mentioned Haar measure in the second case.\footnote{For a thorough exposition 
about Haar measures and Harmonic Analysis on locally compact Abelian groups see \cite{folland2}.}

We define a \textit{weight} as a measurable function \(w: G \rightarrow [0, +\infty]\) which is locally integrable and satisfies 
\(0 < w(x) < +\infty\) almost everywhere. A weighted space\index{weighted Lebesgue space} is then a space \(L^p(G, \mu)\), 
where the measure \(\mu\) is defined by 
\[ 
    \mu(E) = \int_E w(x)d\lambda(x),
\] 
for some weight \(w\). By abuse of notation, we usually refer to \(\mu\) as \(w\). This way, we often write \(w(E)\) to mean 
\(\int_Ew d \lambda\). As a consequence of the fact that \(0 < w(x) < \infty\) a.e., we see that \(\lambda(E) = 0\) if and 
only if \(w(E) = 0\). Since we also have \(\lambda(E) = 0\) if and only if \(|E| = 0\), this means that the Lebesgue zero measure 
sets are exactly the same as the \(w\) zero measure sets. In particular, properties which hold \(w\)-almost everywhere will hold 
almost everywhere and vice-versa. Also, any weighted measure will be \(\sigma-\)finite and \(L^\infty(G, w)\) coincides with \(L^\infty(G, dx)\). 
In the next proposition we show that any weighted measure is a Radon measure.\footnote{Recall
that a measure is called a Radon measure if it is finite on compact sets, outer 
regular on Borel sets and inner regular on all open sets.}

\begin{prop}\label{prop: weights are radon}
    If \(w\) is a weight, then the measure \(w(x)d\lambda(x)\) is a Radon measure.
\end{prop}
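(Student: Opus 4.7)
The plan is to verify each of the three defining properties of a Radon measure in turn, using the local integrability of $w$ together with the known regularity properties of the underlying reference measure $\lambda$.

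First I would handle finiteness on compact sets. Any compact set $K \subseteq G$ is contained in some bounded open set (in $\R^d$, one can take a large cube, and in $\R^+$, an interval $[1/N, N]$ for $N$ large). Since $w$ is locally integrable with respect to $\lambda$ and any compact set can be covered by such a bounded set of finite $\lambda$-measure, we get $w(K) \leq \int_{U} w\,d\lambda < \infty$. In particular, the measure $\mu = w\,d\lambda$ is $\sigma$-finite, since $G$ itself can be exhausted by a countable increasing family of such relatively compact open sets.

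Next I would establish outer regularity. Given a Borel set $E$ and $\varepsilon > 0$, the idea is to reduce to sets of finite measure by intersecting with a compact exhaustion $\{K_n\}$ of $G$ and then invoke outer regularity of $\lambda$. More carefully, for each $n$ the set $E \cap K_n$ has finite $\mu$-measure, so we can approximate it from outside by open sets with respect to $\mu$: since $\nu_n(A) := \mu(A \cap K_n)$ is a finite Borel measure on a metric space, a standard approximation argument shows that each finite Borel measure on a metric space is automatically regular (this is essentially the content of Ulam's theorem or the basic approximation lemma for finite Borel measures). Assembling the approximations using a weighted sum $\varepsilon \cdot 2^{-n}$ yields outer regularity of $\mu$ on $E$. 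For inner regularity on open sets, one proceeds similarly, approximating $U \cap K_n$ from inside by closed sets (which are automatically compact after intersecting with a further compact set).

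The main obstacle, though it is really just a matter of bookkeeping rather than a true difficulty, is the standard technical point that on a $\sigma$-compact locally compact Hausdorff space with a countable base (such as $\R^d$ or $\R^+$), any Borel measure which is finite on compact sets is automatically Radon. The cleanest way to present the proof is to observe this general fact and then note that the two properties required, namely local finiteness of $\mu$ and $\sigma$-compactness of $G$, are immediate from local integrability of $w$ and the structure of $G$. Rather than reprove the general theorem, I would likely cite it (e.g., from Folland's \emph{Real Analysis}) and simply verify its hypotheses in our setting.
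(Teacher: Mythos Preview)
Your proposal is correct, but it takes a different route from the paper. You invoke the general theorem that on a $\sigma$-compact locally compact Hausdorff space any Borel measure which is finite on compacts is automatically Radon (or, alternatively, the regularity of finite Borel measures on metric spaces), and then simply verify the hypotheses. The paper instead gives a direct, self-contained argument: for inner regularity on a bounded open set $E$ it uses inner regularity of $\lambda$ to produce compacts $K_n\subseteq E$ with $\lambda(E\setminus K_n)\to 0$, passes to a subsequence with $\1_{E\setminus K_{n_l}}\to 0$ a.e., and then applies dominated convergence (using $w\in L^1(E,\lambda)$ from local integrability) to conclude $w(E\setminus K_{n_l})\to 0$; outer regularity is handled analogously via explicit rectangle coverings, with care taken to keep the approximating open sets inside a fixed compact so that dominated convergence applies. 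Your approach is shorter and more conceptual, while the paper's is self-contained and makes transparent exactly how the regularity of $\lambda$ and the local integrability of $w$ combine to transfer regularity to $w\,d\lambda$.
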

\begin{proof}
    Since \(w\) is locally integrable it is clear that it assigns a finite measure to compact sets. We start by 
    showing inner regularity over all open sets. First suppose that \(E\subseteq G\) is a bounded open set. Using the inner 
    regularity of the Haar measure, we know that for each \(n\in\N_1\) there is some compact set \(K_n\subseteq E\) such that 
    \[ 
        \lambda(E) - \frac{1}{n} \leq \lambda(K_n).
    \] 
    Note that 
    \[ 
        \|\1_{E\setminus K_n}\|_{L^1(G, \lambda)} = \lambda(E\setminus K_n) \leq \frac{1}{n}\xrightarrow[n\rightarrow +\infty]{} 0.
    \] 
    Therefore, there is some subsequence \((n_l)_l\) such that 
    \[ 
        \1_{E\setminus K_{n_l}} \xrightarrow[l\rightarrow +\infty]{} 0 \text{ pointwise almost everywhere}.
    \] 
    But then, by the dominated convergence theorem,
    \[
        \begin{split}
        \int_E wd\lambda &= \int_{E\setminus K_{n_l}}wd\lambda + \int_{K_{n_l}}wd\lambda \\
        &\leq \int_E w\1_{E\setminus K_{n_l}}d\lambda + \sup_{\substack{K\subseteq E \\ K\text{ compact}}}w(K)
        \xrightarrow[l\rightarrow +\infty]{} \sup_{\substack{K\subseteq E \\ K\text{ compact}}}w(K).
        \end{split}
    \] 
    To apply the dominated convergence theorem we used the fact that \(w\in L^1(E, \lambda)\), which holds because \(w\) 
    is locally integrable and \(E\) is bounded. Now let \(E\) be any open set. Fix \(\varepsilon > 0\) and put \(E_N = E \cap B_N\), 
    where \(B_N\) is \(\{x\in \R^d: |x| < N\}\) if \(G = \R^d\) or \(\{x \in \R^+: 1/(N+1) < x < N+1\}\) if \(G = \R^+\). Since 
    \(E_N\) is open and bounded we know that there is some compact subset \(K_N\) such that 
    \[ 
        w(E_N) \leq w (K_N) + \frac{\varepsilon}{2^N}. 
    \] 
    Given that the sequence \((E_N)\) increases to \(E\), we get
    \[ 
        w(E) = \lim_N w(E_N) \leq \sup_{\substack{K\subseteq E \\ K\text{compact}}}w(K).
    \] 
    This establishes inner regularity. 

    Next we will show that \(w\) is outer regular. As before, we start by assuming we have some bounded Borel set \(E\subseteq G\). 
    We want to use the fact that \(\lambda\) is outer regular to obtain open sets \(U\) which approximate closely the measure of 
    \(E\). However, to apply the dominated convergence theorem as before we need to make sure that these open sets are all contained 
    in a large compact set. This technical detail will force us to follow a more elaborate argument, but at its heart the main idea 
    is similar to the proof of inner regularity. 

    Let \(K_N\) denote the set \([-N, N]^d\) if \(G = \R^d\) or the set \(\{x>0: 1/(N+1) \leq x \leq N+1\}\) if \(G = \R^+\). Since \(E\) 
    was assumed to be bounded, then we have \(E \subseteq K_N\) for some \(N\) large enough. Since \(\lambda\) is a Lebesgue-Stieltjes
    measure, we know that for each \(n\in \N_1\), there exists a countable collection of rectangles \(\{T^{(n)}_j\}_j\) such that 
    \[ 
        E \subseteq \bigcup_{j\geq 1}T_j^{(n)}\text{ and }\sum_{j\geq 1}\lambda(T_j^{(n)}) \leq \lambda(E) + \frac{1}{n}.
    \] 
    Without loss of generality we can assume that all rectangles \(T_j^{(n)}\) are contained in \(K_N\), otherwise we could 
    replace \(T_j^{(n)}\) with \(T_j^{(n)}\cap K_N\). Now, for each \(j\), we can find an open rectangle \(U_j^{(n)}\) such that 
    \(T_j^{(n)}\subseteq U_j^{(n)}\), \(\lambda(U_j^{(n)})\leq \lambda(T_j^{(n)}) + 2^{-j}/n\) and \(U_j^{(n)}\subseteq K_{N+1}\).
    Put \(U_n = \cup_jU_j^{(n)}\). Clearly, \(E\subseteq U_n\subseteq K_{N+1}, \forall n\), \(U_n\) is open, and 
    \[ 
        \lambda(U_n) \leq \sum_j \lambda(U_j^{(n)}) \leq \sum_j \lambda(T_j^{(n)}) + \frac{2^{-j}}{n} \leq \lambda(E) + \frac{2}{n}.
    \] 
    Again, 
    \[ 
        \|\1_{U_n\setminus E}\|_{L^1(G, \lambda)}\leq \frac{2}{n} \xrightarrow[n\rightarrow +\infty]{} 0,
    \] 
    so we can find a subsequence \((n_l)_l\) such that 
    \[ 
        \1_{U_n\setminus E}(x) \xrightarrow[l\rightarrow +\infty]{}0\text{ pointwise almost everywhere}.
    \] 
    By the dominated convergence theorem,
    \[ 
        w(U_{n_l}\setminus E) = \int_{K_{N+1}}w\1_{U_{n_l}\setminus E}d\lambda \xrightarrow[l\rightarrow +\infty]{} 0,
    \] 
    because \(w\in L^1(K_{N+1}, \lambda)\). Thus,
    \[ 
        \inf_{\substack{U\supseteq E \\ U\text{ open}}}w(U) \leq w(U_{n_l})\leq w(U_{n_l}\setminus E) + w(E)\xrightarrow[l\rightarrow +\infty]{} w(E).
    \] 
    Finally, let's consider any Borel set \(E\). For each \(N\in \N_1\), we put \(E_N = E \cap K_N\). Fix \(\varepsilon > 0\). 
    From what we've just seen, for each \(N\), there is an open and bounded set \(U_N\supseteq E_N\) such that 
    \[ 
        w(U_N) \leq w(E_N) + \frac{\varepsilon}{2^N} \leq w(E) + \frac{\varepsilon}{2^N}.
    \] 
    Set \(U = \cup_N U_N\). Then, \(U\supseteq E\) is open, and 
    \[ 
        \begin{split}
            w(U) &= w(\cup_N U_N) \leq \sum_Nw(U_N\setminus E_N) + w(\cup_N E_N) \leq \sum_N \frac{\varepsilon}{2^N} + w(E)
            \leq w(E) + \varepsilon.
        \end{split}
    \]
    Since \(\varepsilon>0\) is arbitrary, this shows that
    \[ 
        w(E) = \inf_{\substack{U\supseteq E \\ U\text{ open}}}w(U).
    \]
\end{proof}

It is a general fact that on any locally compact Hausdorff space, the set of compactly supported 
continuous functions are dense in \(L^p\) spaces when the measure is a Radon measure. 
\begin{thm}[Proposition 7.9 from \cite{folland}]
    Let \(X\) be a locally compact Hausdorff space and \(\mu\) a Radon measure on \(X\). Then, the set
    \(C_c(X)\) is dense in \(L^p(X, \mu)\) for all \(p \in [1, +\infty[\).
\end{thm}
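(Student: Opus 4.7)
The plan is to follow the standard three-step approximation scheme. First, I would reduce the problem to approximating indicator functions of Borel sets of finite measure. Indeed, the set of simple functions of the form $\sum_{j=1}^n c_j \chi_{E_j}$ with $\mu(E_j) < \infty$ is dense in $L^p(X, \mu)$ for $1 \leq p < \infty$: this follows by writing $f = f_+ - f_-$ (and similarly for the real/imaginary parts in the complex case), approximating each nonnegative piece pointwise and monotonically by a bounded increasing sequence of simple functions, and invoking dominated convergence in $L^p$. By linearity, it then suffices to show that for any Borel set $E$ with $\mu(E) < \infty$, the function $\chi_E$ can be approximated in $L^p$-norm by elements of $C_c(X)$.

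Next, fix such an $E$ and $\varepsilon > 0$. The key geometric step is to sandwich $E$ between a compact set and an open set of comparable measure. Using outer regularity of $\mu$ on Borel sets, we obtain an open $U \supseteq E$ with $\mu(U \setminus E) < \varepsilon/2$; in particular $U$ has finite measure. Inner regularity on open sets (applied to $U$, or more precisely to $E$ itself, which for Radon measures on Borel sets of finite measure follows from the stated definition by a short bootstrap using outer regularity) then yields a compact $K \subseteq E$ with $\mu(E \setminus K) < \varepsilon/2$, so that $K \subseteq E \subseteq U$ and $\mu(U \setminus K) < \varepsilon$. Now, since $X$ is locally compact Hausdorff and $K$ is compact with $K \subseteq U$ open, Urysohn's lemma produces a function $\varphi \in C_c(X)$ with $0 \leq \varphi \leq 1$, $\varphi \equiv 1$ on $K$, and $\supp(\varphi) \subseteq U$. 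Then $|\chi_E - \varphi| \leq \chi_{U \setminus K}$ pointwise, so
\[
\|\chi_E - \varphi\|_{L^p(\mu)}^p \leq \mu(U \setminus K) < \varepsilon,
\]
completing the approximation.

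The main obstacle, although not really a deep one, is the careful handling of inner regularity. The definition of Radon measure used in the excerpt gives inner regularity only on open sets and outer regularity on Borel sets, but the argument above requires inner regularity by compact sets on finite-measure Borel sets. This stronger property does follow from the two weaker ones by sandwiching $E$ from outside by an open set $U$ of nearly the same measure, then approximating $U$ from inside by a compact set $K$, and finally trimming $K$ if necessary; the bookkeeping is routine but worth stating once carefully. The other ingredient, Urysohn's lemma in the locally compact Hausdorff setting, is purely topological and separate from the measure theory.
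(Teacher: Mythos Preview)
Your proof is correct and is essentially the standard argument (the one given in Folland's text, from which the paper quotes the statement). The paper itself does not supply a proof of this theorem; it merely cites it as Proposition~7.9 of \cite{folland} and then uses it as a black box to deduce density of $C_c^\infty$ in weighted $L^p$ spaces. So there is nothing to compare against in the paper, but your write-up matches the intended reference.

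One small point worth tightening: you correctly flag that the definition of Radon measure in play gives inner regularity only on open sets, and that a bootstrap is needed to obtain it on finite-measure Borel sets. Your sketch (``trimming $K$ if necessary'') is right in spirit, but since you call this ``the main obstacle'' you might spell out the two-line argument explicitly: given Borel $E$ with $\mu(E)<\infty$ and $\delta>0$, choose open $U\supseteq E$ with $\mu(U\setminus E)<\delta$, open $V\supseteq U\setminus E$ with $\mu(V)<\delta$, and compact $K\subseteq U$ with $\mu(U\setminus K)<\delta$; then $K\setminus V$ is compact, contained in $U\setminus V\subseteq E$, and $\mu(E\setminus(K\setminus V))\leq \mu(U\setminus K)+\mu(V)<2\delta$. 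With that made explicit, the proof is complete.
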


We now use this theorem to show that test functions are dense in weighted Lebesgue spaces.
\begin{prop}\label{prop: test functions are dense in weighted spaces}
    Let \(w\) be a weight on \(G\). Then, the set \(C^\infty_c(G)\) is dense in \(L^p(G, w)\), where \(1\leq p < \infty\).
\end{prop}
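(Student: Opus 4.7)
The plan is to combine Proposition \ref{prop: weights are radon} with the standard density theorem for continuous compactly supported functions and then refine the approximation via mollification. Specifically, by Proposition \ref{prop: weights are radon}, the measure \(d\mu = w\,d\lambda\) is a Radon measure on the locally compact Hausdorff space \(G\), so the quoted theorem from Folland gives that \(C_c(G)\) is dense in \(L^p(G, w)\) for every \(1\leq p<\infty\). Thus it suffices to show that every \(f\in C_c(G)\) can be approximated in the \(L^p(G,w)\) norm by elements of \(C^\infty_c(G)\).

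Fix \(f\in C_c(G)\), let \(K = \supp(f)\) and let \(K'\) be any compact neighborhood of \(K\), so that \(K \subseteq \emph{Int}(K')\). The key step is to construct, for each small \(\varepsilon>0\), a function \(f_\varepsilon \in C^\infty_c(G)\) with \(\supp(f_\varepsilon)\subseteq K'\) such that \(f_\varepsilon \to f\) uniformly on \(G\) as \(\varepsilon\to 0^+\). In the case \(G = \R^d\) this is achieved in the usual way: pick a standard mollifier \(\varphi \in C^\infty_c(\R^d)\) with \(\varphi\geq 0\), \(\int \varphi = 1\) and \(\supp(\varphi)\subseteq B(0,1)\), and set \(\varphi_\varepsilon(x) = \varepsilon^{-d}\varphi(x/\varepsilon)\); then \(f_\varepsilon := f * \varphi_\varepsilon\) belongs to \(C^\infty_c(\R^d)\), has support in \(K'\) for \(\varepsilon\) sufficiently small, and converges uniformly to \(f\) because \(f\) is uniformly continuous. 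In the case \(G = \R^+\), I would transfer the problem to \(\R\) via the group isomorphism \(\log\colon (\R^+, \times)\to (\R, +)\), which pushes \(d\lambda\) forward to Lebesgue measure on \(\R\), and apply the Euclidean mollification argument there.

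Once uniform approximation on a common compact set \(K'\) is in hand, the passage to \(L^p(G,w)\) convergence is a one-line estimate: since \(w\) is locally integrable, \(w(K')<\infty\), and hence
\[
\|f-f_\varepsilon\|_{L^p(G,w)}^p = \int_{K'}|f-f_\varepsilon|^p\,w\,d\lambda \leq \|f-f_\varepsilon\|_\infty^p\,w(K')\xrightarrow[\varepsilon\to 0^+]{} 0.
\]
Combined with the initial density of \(C_c(G)\) in \(L^p(G,w)\), a standard triangle-inequality argument gives that \(C^\infty_c(G)\) is dense in \(L^p(G,w)\).

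The only mildly delicate point is ensuring the mollification construction is available on \(G=\R^+\); the cleanest route is via the logarithmic isomorphism noted above, which automatically produces an approximate identity with the desired properties in the multiplicative setting. Beyond this, the argument is entirely routine, and no finer properties of \(w\) are required apart from local integrability, which is built into the definition of a weight.
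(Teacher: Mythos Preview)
Your proposal is correct and follows essentially the same approach as the paper: use Proposition~\ref{prop: weights are radon} together with the density of \(C_c(G)\) in \(L^p\) for Radon measures, then mollify a given \(g\in C_c(G)\) to get uniform approximation on a fixed compact set and convert this to \(L^p(G,w)\) convergence via \(\|g-g_\varepsilon\|_{L^p(w)}\leq \|g-g_\varepsilon\|_\infty\, w(K')^{1/p}\). The only cosmetic difference is that on \(G=\R^+\) the paper writes down the multiplicative approximate identity \(\phi_\varepsilon(x)=\varepsilon^{-1}\phi(x^{1/\varepsilon})\) directly, which is precisely what your logarithmic transfer produces.
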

\begin{proof}
    Let \(f\in L^p(G, w)\) and fix \(\delta > 0\). Since \(w\) is a Radon measure, we know that there is some 
    \(g \in C_c(G)\) such that 
    \[ 
        \|f - g\|_{L^p(G, w)} < \frac{\delta}{2}.
    \] 
    Now let \(\phi \in C^\infty_c(G)\) be a positive function such that 
    \[ 
        \int_G\phi(x) d\lambda(x) = 1,
    \]
    and then define rescaled versions \(\phi_\varepsilon\) in the following way. If \(G = \R^d\) we put 
    \[ 
        \phi_\varepsilon(x) = \frac{1}{\varepsilon^d}\phi\left(\frac{x}{\varepsilon}\right).
    \] 
    If instead \(G = \R^+\), then we set 
    \[ 
        \phi_\varepsilon(x) = \frac{1}{\varepsilon}\phi(x^{1/\varepsilon}). 
    \] 
    With these definitions, \(\phi_\varepsilon \in C^\infty_c(G)\) and \(\{\phi_\varepsilon\}\) forms 
    an approximate identity. It then follows that \(g_\varepsilon := g * \phi_\varepsilon \in C^\infty_c(G)\) and 
    moreoever we know from standard approximate identity results that \(g_\varepsilon \rightarrow g\) in \(L^\infty(K)\)
    for any compact set \(K\), as \(\varepsilon \rightarrow 0^+\).\footnote{See for instance Theorem
    1.2.19 from \cite{Cgrafakos}.} Since both \(g\) and \(g_\varepsilon\) have compact support, and the support of 
    \(g_\varepsilon\) is close to the support of \(g\) when \(\varepsilon\) is small enough, we can find some compact 
    set \(K\) which contains \(\supp(g)\) and \(\supp(g_\varepsilon)\) for all small enough \(\varepsilon\). 
    Since \(g_\varepsilon \rightarrow g\) in \(L^\infty(K)\), we can pick \(\varepsilon\) such that 
    \[ 
        \|g - g_\varepsilon\|_{L^\infty(K, \lambda)}w(K)^{1/p} < \frac{\delta}{2}.
    \] 
    We then obtain 
    \[ 
        \begin{split}
        \|f - g_\varepsilon\|_{L^p(G, w)}&\leq \|f - g\|_{L^p(G,w)} + \|g - g_\varepsilon\|_{L^p(G, w)} \\ 
        &\leq \frac{\delta}{2} + \left(\int_K|g(x) - g_\varepsilon(x)|^pw(x)d\lambda(x)\right)^{1/p} \\ 
        &\leq \frac{\delta}{2} + \|g - g_\varepsilon\|_{L^\infty(K, \lambda)}w(K)^{1/p} \leq \delta. 
        \end{split}
    \] 

\end{proof}
We finish this section by showing that the same result holds in \(\R^d\) in the radial case.

\begin{prop}\label{prop: radial test functions are dense in radial weighted spaces}
    Let \(1\leq p < \infty\) and let \(w\) be a weight in \(\R^d\). If \(f\in L^p(\R^d, w)\) is radial, then there is a sequence
    \((f_n)_n\) of radial \(C^\infty_c(\R^d)\) functions such that 
    \[ 
        \|f - f_n\|_{L^p(\R^d, w)} \xrightarrow[n\rightarrow +\infty]{} 0.
    \] 
\end{prop}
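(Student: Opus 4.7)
My plan is to reduce to Proposition \ref{prop: test functions are dense in weighted spaces} by means of a symmetrization argument over the orthogonal group $O(d)$. Let $dR$ denote the normalized Haar measure on $O(d)$. The key idea is to introduce an auxiliary \emph{radialized weight}
\[
\tilde{w}(x) = \int_{O(d)} w(Rx)\, dR,
\]
which is manifestly radial. I would first verify that $\tilde{w}$ is a weight: local integrability on a compact set $K$ follows by Fubini since $RK \subseteq \overline{B(0, \max_{x \in K}|x|)}$ for every $R \in O(d)$, reducing the integral to a single integral of $w$ over a ball. For positivity and finiteness almost everywhere, note that $\tilde{w}(x) = 0$ on a set of positive measure would force $w$ to vanish on a set of positive measure in $\mathbb{R}^d$ (by Fubini on $\mathbb{R}^d \times O(d)$), contradicting the assumption $w > 0$ a.e.; the analogous argument rules out $\tilde{w}(x) = +\infty$ on a positive measure set, using local integrability of $w$.

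Next, the crucial identity is that for any radial measurable function $h$,
\[
\int_{\mathbb{R}^d} |h(x)|^p \tilde{w}(x)\, dx = \int_{O(d)} \int_{\mathbb{R}^d} |h(x)|^p w(Rx)\, dx\, dR = \int_{\mathbb{R}^d} |h(y)|^p w(y)\, dy,
\]
where I used Fubini, the change of variables $y = Rx$, and the radiality of $h$ (so that $|h(R^{-1}y)|^p = |h(y)|^p$). In particular $f \in L^p(\mathbb{R}^d, \tilde{w})$ with the same norm as in $L^p(\mathbb{R}^d, w)$. Hence, Proposition \ref{prop: test functions are dense in weighted spaces} applied to the weight $\tilde{w}$ furnishes, for each $\varepsilon > 0$, a function $g \in C^\infty_c(\mathbb{R}^d)$ with $\|f - g\|_{L^p(\tilde{w})} < \varepsilon$.

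Then I would set
\[
\tilde{g}(x) = \int_{O(d)} g(Rx)\, dR,
\]
which is a radial element of $C^\infty_c(\mathbb{R}^d)$ (smoothness and compact support are preserved since $O(d)$ is compact). Using that $f$ is radial, we have $f(x) = \int_{O(d)} f(Rx)\,dR$, so Jensen's inequality gives
\[
|f(x) - \tilde{g}(x)|^p \leq \int_{O(d)} |f(Rx) - g(Rx)|^p\, dR.
\]
Integrating against $w$, applying Fubini, and performing the change of variables $y = Rx$, yields
\[
\|f - \tilde{g}\|_{L^p(\mathbb{R}^d, w)}^p \leq \int_{O(d)} \int_{\mathbb{R}^d} |f(y) - g(y)|^p w(R^{-1}y)\, dy\, dR = \|f - g\|_{L^p(\tilde{w})}^p < \varepsilon^p.
\]
Taking $\varepsilon = 1/n$ produces the desired sequence. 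The main obstacle is the verification that $\tilde{w}$ is genuinely a weight (i.e. finite and positive a.e.); once this is in place, the symmetrization over $O(d)$ transfers the approximation from the radialized setting back to the original weighted norm without loss.
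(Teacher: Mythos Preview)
Your argument is correct, but it follows a genuinely different route from the paper. The paper passes to the radial profile: writing \(f(x)=f_0(|x|)\) and using polar coordinates gives
\[
\|f\|_{L^p(\R^d,w)} = \|f_0\|_{L^p(\R^+,v)}, \qquad v(\rho)=\rho^d\int_{S^{d-1}}w(\rho u)\,d\sigma_u,
\]
and then applies Proposition~\ref{prop: test functions are dense in weighted spaces} on the multiplicative group \((\R^+,v)\) to approximate \(f_0\) by functions \(f_0^{(n)}\in C^\infty_c(\R^+)\); the sequence \(f_n(x)=f_0^{(n)}(|x|)\) does the job. Your approach instead stays on \(\R^d\), radializes the \emph{weight} over \(O(d)\), approximates in \(L^p(\tilde w)\), and then radializes the approximant. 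The two are close in spirit (your \(\tilde w\) is essentially the radial lift of the paper's \(v\)), but the mechanics differ: the paper exploits the \(G=\R^+\) case of the density proposition, whereas you only need the \(G=\R^d\) case together with Haar averaging and Jensen. Your method is arguably more conceptual and generalizes directly to other compact symmetry groups; the paper's is slightly more elementary and explains why the earlier proposition was stated for both \(\R^d\) and \(\R^+\). One small remark: once you have shown \(\tilde w\) is locally integrable, finiteness a.e.\ is automatic, so the separate ``analogous argument'' you mention is unnecessary.
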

\begin{proof}
    Let \(f\in L^p(\R^d, w)\) be a radial function with radial projection \(f_0\), that is, \(f(x) = f_0(|x|)\). Then, 
    \[ 
        \|f\|_{L^p(\R^d, w)}^p = \int_{\R^d}|f(x)|^pw(x)dx = \int_0^\infty|f_0(\rho)|^p\rho^{d}\int_{S^{d-1}}w(\rho u)d\sigma_u 
        \frac{d\rho}{\rho}. 
    \] 
    This means that 
    \[ 
        \|f\|_{L^p(\R^d, w)} = \|f_0\|_{L^p(\R^+, v)},\text{ where }v(\rho) = \rho^d \int_{S^{d-1}}w(\rho u)d\sigma_u. 
    \] 
    Since \(f_0\in L^p(\R^+, v)\), we can apply the previous proposition to conclude that there is a sequence \((f_0^{(n)})_n\) of 
    \(C^\infty_c(\R^+)\) functions such that \(f_0^{(n)}\rightarrow f_0\) in \(L^p(\R^+, v)\). If we then define 
    \(f_n(x) = f_0^{(n)}(|x|)\), we get a sequence of radial \(C^\infty_c(\R^d)\) functions and
    \[ 
        \|f_n - f\|_{L^p(\R^d, w)} = \|f_0^{(n)} - f_0\|_{L^p(\R^+, v)} \xrightarrow[n\rightarrow +\infty]{}0.
    \]
\end{proof}

\subsection{Muckenhoupt weights}\label{appendix: Muckenhoupt weights}

%

In this section we will introduce the basic definitions and properties of Muckenhoupt weights.\index{Muckenhoupt weights}
Here we will focus on weights belonging to the classes \(A_p\), \(A_\infty\) and \(A_{p, p}\). 

We say that a weight \(w\) on \(\R^d\) is in \(A_p\)\index{\(A_p\) weight} if
\[ 
    [w]_{A_p} = \sup_Q \left(\dashint_Q w^{-p'/p}\right)^{p/p'}\dashint_Q w < \infty,
\] 
where \(1 < p < \infty\) and the supremum runs through all cubes with sides parallel to the coordinate axes.
\begin{prop}\label{prop: basic properties of Ap weights}
    The following staments hold:
    \begin{enumerate}
        \item If \(w\in A_p\), then \(w\in A_q\) for any \(q\geq p\). In fact, \([w]_{A_q}\leq [w]_{A_p}\). 
        \item \(w\in A_p\) if and only if \(w^{-p'/p}\in A_{p'}\) and moreover \([w^{-p'/p}]_{A_{p'}} = [w]_{A_p}^{p'/p}\).
    \end{enumerate}
\end{prop}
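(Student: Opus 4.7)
My plan is to reduce both statements to straightforward manipulations of the defining expression. Since $p/p' = p-1$ and $p'/p = 1/(p-1)$, it is convenient to rewrite
\[
[w]_{A_p} = \sup_Q \left(\dashint_Q w^{-1/(p-1)}\right)^{p-1}\dashint_Q w.
\]
This is the form I would use throughout.

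For part 1, fix a cube $Q$ and let $d\mu = |Q|^{-1}\chi_Q\,dx$, which is a probability measure. The standard nesting of $L^s(Q,d\mu)$ spaces (a direct consequence of Hölder's inequality on the probability space) yields
\[
\left(\dashint_Q g^a\, dx\right)^{1/a} \leq \left(\dashint_Q g^b\, dx\right)^{1/b}, \qquad 0 < a \leq b,
\]
for any nonnegative measurable $g$. I would apply this with $g = w^{-1}$, $a = 1/(q-1)$ and $b = 1/(p-1)$; the hypothesis $q \geq p$ guarantees $a \leq b$. Raising to the power $1$ (noting $a(q-1) = 1$ and $b(p-1) = 1$) gives
\[
\left(\dashint_Q w^{-1/(q-1)}\right)^{q-1} \leq \left(\dashint_Q w^{-1/(p-1)}\right)^{p-1}.
\]
Multiplying both sides by $\dashint_Q w$ and taking the supremum over $Q$ then yields $[w]_{A_q} \leq [w]_{A_p}$, which is exactly what is wanted.

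For part 2, I would just compute directly, using $(p')' = p$ and $(p')'/p' = p/p'$. Writing out the $A_{p'}$ characteristic of the weight $w^{-p'/p}$,
\[
[w^{-p'/p}]_{A_{p'}} = \sup_Q \left(\dashint_Q (w^{-p'/p})^{-p/p'}\right)^{p'/p}\dashint_Q w^{-p'/p} = \sup_Q \left(\dashint_Q w\right)^{p'/p}\dashint_Q w^{-p'/p},
\]
since $(-p'/p)\cdot(-p/p') = 1$. On the other hand, raising $[w]_{A_p}$ to the power $p'/p$ gives
\[
[w]_{A_p}^{p'/p} = \sup_Q \dashint_Q w^{-p'/p}\cdot\left(\dashint_Q w\right)^{p'/p},
\]
so the two expressions coincide. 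The equivalence $w \in A_p \iff w^{-p'/p} \in A_{p'}$ then follows immediately from finiteness of these quantities being preserved under the map $t \mapsto t^{p'/p}$.

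There really is no serious obstacle here; both parts reduce to elementary inequalities and algebraic manipulations. The only point that requires a moment of care is bookkeeping the exponents $p$, $p'$, $q$, $q'$ correctly, which is why I would systematically use the form with $1/(p-1)$ rather than $p'/p$ so as to make the monotonicity in part 1 transparent.
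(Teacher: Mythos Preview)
Your proposal is correct and follows essentially the same approach as the paper: part 1 is the same H\"older/nesting argument (your monotonicity of $L^s$ norms on a probability space is exactly the paper's H\"older with exponent $t=(q-1)/(p-1)$), and part 2 is the identical direct computation. The only cosmetic difference is your preference for writing the exponents as $1/(p-1)$ rather than $p'/p$.
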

\begin{proof}
    Let \(1 < p < q < \infty\). To prove the first statement we simply use Hölder's inequality with the exponent \(t = (q-1)/(p-1) \in ]1, \infty[\).
    We get,
    \[ 
        \dashint_Qw^{-q'/q}dx \leq \left(\dashint_Q w^{-p'/p}\right)^\frac{p-1}{q-1}.
    \] 
    Therefore,
    \[ 
        \left(\dashint_Q w^{-q'/q}\right)^{q-1}\leq \left(\dashint_Q w^{-p'/p}\right)^{p-1},
    \] 
    from which we immediately deduce that \([w]_{A_q}\leq [w]_{A_p}\). To prove the second statement simply note that 
    \[ 
        [w^{-p'/p}]_{A_{p'}} = \sup_Q\left(\dashint_Q w\right)^{p'-1}\dashint_Q w^{-p'/p} = [w]_{A_p}^{p'/p}.
    \] 
\end{proof}

From this proposition we see that the \(A_p\) classes increase with \(p\). It is interesting to think about the limit 
as \(p\rightarrow +\infty\). By Jensen's inequality we have that 
\[ 
    \log\left(\dashint_Q w^{-p'/p}\right)^{p/p'} = \frac{p}{p'}\log\left(\dashint_Q w^{-p'/p}\right) \geq \dashint_Q\log(w^{-1}),
\] 
so
\[ 
    \exp\left(\dashint_Q\log(w^{-1})\right) \leq \left(\dashint_Q w^{-p'/p}\right)^{p/p'}.
\] 
Moreover, the right-hand side converges to the left-hand side as \(p\rightarrow +\infty\).
\begin{lemma}
    Let \(w\in A_{p_0}\) for some \(p_0 \in ]1, \infty[\). Then, 
    \[ 
        \lim_{p \rightarrow +\infty}\left(\dashint_Q w^{-p'/p}\right)^{p/p'} = \exp\left(\dashint_Q\log(w^{-1})\right). 
    \]
\end{lemma}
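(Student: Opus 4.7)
The plan is to set $s = p'/p = 1/(p-1)$, so that $s \to 0^+$ as $p \to +\infty$ and $p/p' = 1/s$. The claim then becomes the classical geometric mean limit of $L^s$ means:
\[
\lim_{s \to 0^+}\left(\dashint_Q w^{-s}\right)^{1/s} = \exp\left(\dashint_Q \log w^{-1}\right).
\]
Taking logarithms, this reduces to showing that $s^{-1}\log(\dashint_Q w^{-s}) \to \dashint_Q \log w^{-1}$.

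The first step is to further reduce this to showing
\[
\lim_{s \to 0^+}\dashint_Q \frac{w^{-s} - 1}{s}\,dx = \dashint_Q \log w^{-1}.
\]
This uses the fact that $\dashint_Q w^{-s} \to 1$ as $s \to 0^+$ (by dominated convergence with majorant $1 + w^{-s_0}$ for any fixed $s_0 < p_0'/p_0$), together with the Taylor expansion $\log(1+t) = t + O(t^2)$ near $t=0$: writing $I(s) = \dashint_Q w^{-s}$, one has $s^{-1}\log I(s) = s^{-1}(I(s)-1) - s^{-1}(I(s)-1)^2/2 + \cdots$, and the higher-order terms tend to zero because $(I(s)-1)/s$ has a finite limit while $I(s) - 1 \to 0$.

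The heart of the argument is then to apply dominated convergence to $(w^{-s}-1)/s$, whose pointwise limit as $s \to 0^+$ is $\log w^{-1}$. The main obstacle, and the only place the $A_{p_0}$ hypothesis is used in an essential way, is producing an integrable majorant uniformly in $s \in (0, s_0]$. I would split based on whether $w \geq 1$ or $w < 1$. On $\{w \geq 1\}$, applying the elementary inequality $1 - e^{-x} \leq x$ (for $x \geq 0$) with $x = s\log w$ gives $|(w^{-s}-1)/s|\leq \log w$. On $\{w < 1\}$, the function $t \mapsto (e^t-1)/t$ is increasing on $(0,\infty)$, so writing $g = \log w^{-1} > 0$,
\[
\frac{w^{-s}-1}{s} = g\cdot\frac{e^{sg}-1}{sg}\leq g\cdot\frac{e^{s_0 g}-1}{s_0 g} = \frac{w^{-s_0}-1}{s_0}\leq \frac{w^{-s_0}}{s_0}.
\]

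Finally, I would verify that the candidate majorant $(\log w)\chi_{\{w \geq 1\}} + s_0^{-1}w^{-s_0}\chi_{\{w < 1\}}$ lies in $L^1(Q)$. The first term is dominated by $w$ (since $\log w \leq w$ on $\{w \geq 1\}$) and so is integrable by local integrability of $w$. The second is integrable because $s_0 < p_0'/p_0$ and $w \in A_{p_0}$ forces $w^{-p_0'/p_0} \in L^1(Q)$. The same type of estimate, together with $\log w^{-1}\leq C_\varepsilon w^{-\varepsilon}$ on $\{w < 1\}$ for any $\varepsilon > 0$, also yields $\log w^{-1}\in L^1(Q)$, ensuring that the right-hand side of the desired limit is well-defined. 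Once dominated convergence is applied on $(w^{-s}-1)/s$, the preceding reductions give the limit, completing the proof.
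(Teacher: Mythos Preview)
Your proof is correct and follows essentially the same route as the paper: both reduce to showing \(\dashint_Q (w^{-s}-1)/s \to \dashint_Q \log w^{-1}\) by splitting into \(\{w \geq 1\}\) and \(\{w < 1\}\) and invoking convergence theorems, with the \(A_{p_0}\) hypothesis used exactly as you do to get integrability of \(w^{-s_0}\). The only minor differences are that the paper effects the reduction via the squeeze \(\dashint_Q \log w^{-1} \leq s^{-1}\log\bigl(\dashint_Q w^{-s}\bigr) \leq s^{-1}\dashint_Q (w^{-s}-1)\) (Jensen together with \(\log x \leq x-1\)) rather than your Taylor-expansion argument, and on \(\{w \geq 1\}\) it uses monotone convergence in place of your explicit majorant \(\log w\).
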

\begin{proof}
    We have that 
    \[ 
        \dashint_Q\log(w^{-1}) \leq \frac{p}{p'}\log\left(\dashint_Q w^{-p'/p}\right)\leq \frac{p}{p'}\dashint_Q(w^{-p'/p} - 1).
    \] 
    If we put \(q = p'/p\) and \(f(x) = w(x)^{-1}\), then we see that it suffices to prove that 
    \[ 
        \lim_{q \rightarrow 0^+}\dashint_Q\frac{f(x)^q - 1}{q}dx = \dashint_Q \log(f). 
    \] 
    Set \(Q_1 = \{x\in Q: f(x) \leq 1\}\) and \(Q_2 = \{x\in Q: f(x) > 1\}\) and note that the function \((f^q-1)/q\) is 
    increasing in \(q\). Since \(w\in A_{p_0}\), then \(f\in L^{q_0}(Q)\), where \(q_0 = 1/(p_0-1)\). Therefore, if \(x\in Q_2\)
    \[ 
        \frac{|f(x)^q - 1|}{q} = \frac{f(x)^q - 1}{q} \leq \frac{f(x)^{q_0} - 1}{q_0} \in L^1(Q_2)
    \] 
    for \(0 < q < q_0\), and 
    \[ 
        \lim_{q\rightarrow 0^+}\frac{f(x)^q - 1}{q} = \log(f(x)).
    \] 
    So, by the dominated convergence theorem 
    \[ 
        \lim_{q\rightarrow 0^+}\dashint_{Q_2}\frac{f(x)^q - 1}{q} = \dashint_{Q_2} \log(f).
    \] 
    If \(x \in Q_1\), then \(|f(x)^q-1| = 1-f(x)^q\). Since \((1-f(x)^q)/q\) is increasing as \(q\rightarrow 0\), then 
    by the monotone convergence theorem 
    \[ 
        \lim_{q\rightarrow 0^+}\dashint_{Q_1}\frac{1-f(x)^q}{q}dx = -\dashint_{Q_1}\log(f).
    \] 
    Adding the two limits we get the desired result.
\end{proof}

This motivates the following definition: we say a weight \(w\) belongs to the class \(A_\infty\)\index{\(A_\infty\) weight} if 
\[ 
    [w]_{A_\infty} = \sup_Q \exp\left(\dashint_Q\log(w^{-1})\right)\dashint_Q w < \infty. 
\] 
From above we know that \([w]_{A_\infty} \leq [w]_{A_p}\), so any \(A_p\) weight is also an \(A_\infty\) weight. 
One of the most important results in the theory of Muckenhoupt weights is that a partial converse is true: if \(w\in A_\infty\), 
then there is some \(p\) large enough so that \(w \in A_p\). A fundamental building block in this direction is 
the reverse Hölder property\index{reverse Hölder inequality} of \(A_\infty\) weights.

\begin{thm}[Reverse Hölder, Theorem 7.2.2 from \cite{Cgrafakos}]
    Let \(w \in A_p\) for some \(1 < p < \infty\). Then, there exists some \(\gamma > 0\) such that 
    \[ 
        \left(\dashint_Qw^{1+ \gamma}\right)^\frac{1}{1 + \gamma} \lesssim \dashint_Q w,
    \] 
    where the implicit constant depends only on \(d, p\) and \([w]_{A_p}\), and not on \(Q\).
\end{thm}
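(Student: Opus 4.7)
The plan is to extract a strong doubling-type property from the $A_p$ condition, perform an iterated Calderón-Zygmund stopping-time decomposition inside $Q$, and then split the integral of $w^{1+\gamma}$ over the resulting level sets where $w$ decays geometrically.

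First I would establish the auxiliary estimate that for any cube $Q$ and any measurable $E \subseteq Q$,
\[
\left(\frac{|E|}{|Q|}\right)^p \leq [w]_{A_p}\frac{w(E)}{w(Q)}.
\]
This follows by writing $|E| = \int_Q \chi_E \, w^{1/p}\,w^{-1/p}$ and applying Hölder with exponents $p$ and $p'$, then rearranging using the definition of $[w]_{A_p}$. This is the quantitative strong doubling property that makes the rest work.

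Next, set $\alpha_0 = \dashint_Q w$ and perform a local Calderón-Zygmund decomposition (Proposition \ref{local CZ decomposition}) inside $Q$ at height $2^d\alpha_0$, yielding disjoint subcubes $\{Q_j^{(1)}\}$ with $2^d\alpha_0 < \dashint_{Q_j^{(1)}}w \leq 2^{2d}\alpha_0$ and $w\leq 2^d\alpha_0$ a.e. on $Q\setminus \Omega_1$, where $\Omega_1 = \bigcup_j Q_j^{(1)}$. Iterating inside each $Q_j^{(1)}$ at height $2^d \dashint_{Q_j^{(1)}}w$, and then inside the children, produces nested sets $\Omega_0 = Q \supseteq \Omega_1 \supseteq \Omega_2\supseteq \cdots$ with $|\Omega_{n+1}\cap Q_j^{(n)}|\leq 2^{-d}|Q_j^{(n)}|$ for each $Q_j^{(n)}$ in the $n$-th generation, and $w(x)\leq 2^{(n+1)d}\alpha_0$ for a.e.\ $x\in \Omega_n\setminus \Omega_{n+1}$.

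Applying the auxiliary estimate with $E = Q_j^{(n)}\setminus \Omega_{n+1}$ and summing over $j$ gives
\[
\frac{w(\Omega_{n+1}\cap Q_j^{(n)})}{w(Q_j^{(n)})} \leq 1 - \frac{(1-2^{-d})^p}{[w]_{A_p}} =: \eta<1,
\]
so $w(\Omega_{n+1})\leq \eta\, w(\Omega_n)$ and consequently $w(\Omega_n)\leq \eta^n w(Q)$. To conclude, I would split
\[
\int_Q w^{1+\gamma} = \sum_{n\geq 0}\int_{\Omega_n\setminus \Omega_{n+1}}w^{1+\gamma} \leq \sum_{n\geq 0}(2^{(n+1)d}\alpha_0)^\gamma w(\Omega_n) \leq 2^{d\gamma}\alpha_0^\gamma w(Q)\sum_{n\geq 0}(2^{d\gamma}\eta)^n,
\]
which converges provided $\gamma>0$ is chosen so that $2^{d\gamma}\eta<1$. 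Dividing by $|Q|$ and recognizing $\alpha_0 = \dashint_Q w$ then gives the desired reverse Hölder inequality, with constants depending only on $d$, $p$, and $[w]_{A_p}$.

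The main obstacle is really bookkeeping: organizing the nested stopping-time cubes so that the level set $\{w > 2^{(n+1)d}\alpha_0\}$ is controlled (up to null sets) by $\Omega_n$, and making sure the $A_p$ estimate applied to the complement $Q_j^{(n)}\setminus \Omega_{n+1}$ inside each $n$-th generation cube produces a uniform constant $\eta<1$. Once these ingredients are in place, choosing $\gamma$ small enough to make the geometric series converge is routine.
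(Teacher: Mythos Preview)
The paper does not prove this theorem at all; it is stated with a citation to Grafakos and then used as a black box. Your proposal is the standard Calder\'on--Zygmund stopping-time proof (essentially the one in the cited reference), and the strategy is correct: the $A_p$ lower bound on $w(E)/w(Q)$, the iterated decomposition giving geometric decay of $w(\Omega_n)$, and the final summation all work as described.

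One small bookkeeping correction: with the iteration you describe (height $2^d$ times the \emph{local} average at each step), the pointwise bound on $\Omega_n\setminus\Omega_{n+1}$ is $w\le 2^{(2n+1)d}\alpha_0$, not $2^{(n+1)d}\alpha_0$, since each generation picks up a factor $2^{2d}$ (one from the stopping condition, one from the upper bound in the CZ lemma). This does not affect the argument---the geometric series becomes $\sum_n(2^{2d\gamma}\eta)^n$ and you simply choose $\gamma$ with $2^{2d\gamma}\eta<1$. You should also remark that $|\Omega_n|\le 2^{-dn}|Q|\to 0$, so $\bigcap_n\Omega_n$ is null and the layer-cake decomposition of $Q$ is complete a.e.; and that $[w]_{A_p}\ge 1$ (by H\"older applied to $1=w^{1/p}w^{-1/p}$) guarantees $\eta\in(0,1)$.
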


This property has a number of important consequences. 
\begin{prop}\label{prop: important Ap properties}
    The following statements hold:
    \begin{enumerate}
        \item Suppose that \(w\in A_p\), for some \(1 < p < \infty\). Then, there is some \(\gamma_0 > 0\) such that 
        \[ 
            w^{1 + \gamma} \in A_p,\ \forall \gamma \in [0, \gamma_0[.
        \] 
        \item If \(w \in A_p\), \(1 < p < \infty\), then there is some \(q_0 < p\) such that 
        \[ 
            w \in A_q,\ \forall q \in ]q_0, p].
        \] 
    \end{enumerate}
\end{prop}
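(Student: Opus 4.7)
The plan is to deduce both parts of the proposition from the reverse Hölder inequality, using the duality $w\in A_p \iff w^{-p'/p}\in A_{p'}$ established in Proposition \ref{prop: basic properties of Ap weights}. The conceptual point is that reverse Hölder lets us raise an $A_p$ weight to a slightly higher power while keeping the average controlled, and the duality lets us do the same thing to the factor $w^{-p'/p}$ appearing in the $A_p$ characteristic.

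For part 1, I will estimate $[w^{1+\gamma}]_{A_p}$ directly. By reverse Hölder applied to $w$, there is some $\gamma_1 > 0$ with
\[
\dashint_Q w^{1+\gamma} \lesssim \left(\dashint_Q w\right)^{1+\gamma},
\]
for all $\gamma\in[0,\gamma_1[$. Since $w^{-p'/p}\in A_{p'}$, reverse Hölder applies to it as well, yielding $\gamma_2 > 0$ such that
\[
\dashint_Q w^{-(1+\gamma)p'/p} \lesssim \left(\dashint_Q w^{-p'/p}\right)^{1+\gamma},
\]
for $\gamma\in[0,\gamma_2[$. Setting $\gamma_0 = \min(\gamma_1,\gamma_2)$ and multiplying the two estimates shows
\[
\left(\dashint_Q w^{-(1+\gamma)p'/p}\right)^{p/p'}\dashint_Q w^{1+\gamma} \lesssim \left[\left(\dashint_Q w^{-p'/p}\right)^{p/p'}\dashint_Q w\right]^{1+\gamma}\leq [w]_{A_p}^{1+\gamma},
\]
giving the claim.

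For part 2, the key is to choose the exponent cleverly. Applying reverse Hölder to $w^{-p'/p}\in A_{p'}$ produces some $\delta > 0$ such that $\bigl(\dashint_Q w^{-(1+\delta)p'/p}\bigr)^{1/(1+\delta)}\lesssim \dashint_Q w^{-p'/p}$. The idea is to pick $q$ with $q'/q = (1+\delta)p'/p$; solving this, $q = 1 + (p-1)/(1+\delta)\in\,]1,p[$, and a direct computation gives $(1+\delta)(q-1) = p-1$, i.e.\ $(1+\delta)q/q' = p/p'$. Substituting into the reverse Hölder estimate yields
\[
\left(\dashint_Q w^{-q'/q}\right)^{q/q'}\lesssim \left(\dashint_Q w^{-p'/p}\right)^{p/p'},
\]
so $[w]_{A_q}\lesssim [w]_{A_p}<\infty$. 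Setting $q_0 = q$ and invoking the monotonicity from Proposition \ref{prop: basic properties of Ap weights} gives $w\in A_r$ for all $r\in\,]q_0,p]$.

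The only real obstacle is the bookkeeping of the exponents in part 2, specifically recognizing that the specific choice $q-1 = (p-1)/(1+\delta)$ makes the exponents cancel and leaves a clean control by $[w]_{A_p}$; once this algebraic identity is noticed, both parts are essentially immediate consequences of reverse Hölder.
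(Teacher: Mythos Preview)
Your proof is correct and, for part 1, identical to the paper's: reverse H\"older applied to both \(w\) and \(w^{-p'/p}\), then multiplied. For part 2 you and the paper make the same choice of exponent (the paper writes it as \(q=(p+\gamma)/(1+\gamma)\), which equals your \(1+(p-1)/(1+\delta)\)), but there is a small difference in packaging: the paper routes the argument through part 1, bounding \([w]_{A_q}\) by \([w^{1+\gamma}]_{A_p}^{1/(1+\gamma)}\) via H\"older on the \(\dashint_Q w\) factor, whereas you leave \(\dashint_Q w\) untouched and apply reverse H\"older only to \(w^{-p'/p}\), obtaining \([w]_{A_q}\lesssim [w]_{A_p}\) directly. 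Your route is marginally cleaner since it makes part 2 logically independent of part 1, but both are the same idea.
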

\begin{proof}
    Since \(w\in A_p\) we know that there is some \(\gamma_1 > 0\) such that 
    \[ 
        \left(\dashint_Q w^{1 + \gamma_1}\right)^\frac{1}{1+\gamma_1} \lesssim \dashint_Q w. 
    \] 
    Note also that for any \(0 < \gamma < \gamma_1\) we can use Hölder's inequality to obtain 
    \[ 
        \left(\dashint_Q w^{1 + \gamma}\right)^\frac{1}{1 + \gamma}\leq \left(\dashint_Q w^{1 + \gamma_0}\right)^\frac{1}{1+\gamma_0}. 
    \] 
    Given that \(w\in A_p\), it follows from Proposition \ref{prop: basic properties of Ap weights} that 
    \(w^{-p'/p}\in A_{p'}\). But then, there is some \(\gamma_2 > 0\) such that 
    \[ 
        \left(\dashint_Q (w^{-p'/p})^{1+ \gamma_2}\right)^\frac{1}{1+\gamma_2}\lesssim \dashint_Q w^{-p'/p}.
    \] 
    Now take any \(\gamma\) such that \(0 < \gamma < \gamma_0 := \min\{\gamma_1, \gamma_2\}\). Then,
    \[ 
        \begin{split}
            \left(\dashint_Q(w^{1+\gamma})^{-p'/p}\right)^{p-1}\dashint_Qw^{1+\gamma} & \lesssim 
            \left(\dashint_Q w^{-p'/p}\right)^{(1+\gamma)(p-1)} \left(\dashint_Q w\right)^{1+\gamma}\lesssim [w]_{A_p}^{1+\gamma},
        \end{split}
    \] 
    which shows that \(w^{1+\gamma}\in A_p\). 

    Next we show statement 2. From what we've just seen, since \(w\in A_p\) we know that there is some 
    \(\gamma>0\) such that \(w^{1+\gamma}\in A_p\). Now pick \(q = (p+\gamma)/(1+\gamma)\) and note that 
    \(1 < q < p\). Moreover,
    \[ 
        \frac{q'}{q} = \frac{1+\gamma}{p-1}.
    \] 
    Therefore, 
    \[ 
        \left(\dashint_Q w^{-q'/q}\right)^{q-1}\dashint_Q w \leq \left(\dashint_Q w^{-\frac{1+\gamma}{p-1}}\right)^\frac{p-1}{1+\gamma} 
        \left(\dashint_Q w^{1+\gamma}\right)^\frac{1}{1+\gamma} \leq [w^{1+\gamma}]_{A_q}^\frac{1}{1+\gamma},
    \] 
    where in the first step we used Hölder's inequality. This shows that \(w\in A_q\) for some \(q < p\). 
    The result now follows from the first statement of Proposition \ref{prop: basic properties of Ap weights}. 
\end{proof}

As it turns out the Reverse Hölder property is equivalent to the \(A_\infty\) condition.
\begin{prop}[Theorem 7.3.3, (c), (e), (f) from \cite{Cgrafakos}]\label{prop: properties of A_infty}
    The following statements are equivalent:
    \begin{enumerate}
        \item \(w \in A_\infty\);
        \item \(w\) satisfies a Reverse Hölder estimate;
        \item There exist \(0 < \alpha', \beta' < 1\) such that for all cubes \(Q\) and measurable 
        subsets \(A\subseteq Q\), 
        \[ 
            w(A) < \alpha' w(Q) \implies |A| < \beta'|Q|;
        \] 
        \item There is some \(p\in ]1, +\infty[\) such that \(w \in A_p\).
    \end{enumerate}
\end{prop}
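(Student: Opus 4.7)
The plan is to establish the equivalences via the cycle $(4)\Rightarrow(1)\Rightarrow(3)\Rightarrow(2)\Rightarrow(4)$. The first implication was observed in the paragraph preceding the definition of $A_\infty$: Jensen's inequality gives $[w]_{A_\infty}\leq[w]_{A_p}$, so no further work is needed there.

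For $(1)\Rightarrow(3)$, I would argue by Jensen's inequality in a relative-entropy fashion. Given $A\subseteq Q$, I would split
\[
\dashint_Q\log w=\frac{|A|}{|Q|}\dashint_A\log w+\frac{|Q\setminus A|}{|Q|}\dashint_{Q\setminus A}\log w
\]
and bound each piece from above using Jensen, $\dashint_A\log w\leq\log(\dashint_A w)$. The $A_\infty$ hypothesis provides the matching lower bound $\dashint_Q\log w\geq\log(\dashint_Q w)-\log[w]_{A_\infty}$. Setting $s=|A|/|Q|$ and $a=w(A)/w(Q)$ and rearranging yields the relative-entropy estimate
\[
s\log(s/a)+(1-s)\log((1-s)/(1-a))\leq\log[w]_{A_\infty}.
\]
The left-hand side is non-negative and diverges as $a\to 0$ whenever $s$ stays bounded away from zero, so suitable $\alpha',\beta'\in(0,1)$ meeting the requirements of $(3)$ can be extracted.

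For $(3)\Rightarrow(2)$, the key observation is that $(3)$ has an equivalent symmetric form: applying the contrapositive to $Q\setminus A$ gives $|A|/|Q|\leq 1-\beta'\Rightarrow w(A)/w(Q)\leq 1-\alpha'$. Starting from $\lambda_0=\dashint_{Q_0}w$, I would iterate Proposition~\ref{local CZ decomposition} applied to $w$ at height $K\dashint_{Q^{(n-1)}}w$ within each cube $Q^{(n-1)}$ from the previous level, where $K=1/(1-\beta')$. This yields nested families $\{Q_j^{(n)}\}$ whose union $E_n$ satisfies $|E_n|/|Q_0|\leq K^{-n}$ and, by iterated application of the symmetric $(3)$, $w(E_n)/w(Q_0)\leq(1-\alpha')^n$. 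On $Q_0\setminus E_n$, the CZ construction forces $w\leq(2^dK)^{n-1}K\lambda_0$ pointwise a.e. Decomposing $\int_{Q_0}w^{1+\gamma}$ along the level sets $E_n\setminus E_{n+1}$ and combining the pointwise bound on $w$ with the estimate on $w(E_n)$ produces a geometric series that converges provided $\gamma$ is small enough to ensure $(2^dK)^\gamma(1-\alpha')<1$, yielding the desired reverse Hölder inequality.

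The implication $(2)\Rightarrow(4)$ is the main obstacle. The first step is routine: Hölder's inequality combined with reverse Hölder immediately yields quantitative $(3)$, namely $w(A)/w(Q)\lesssim(|A|/|Q|)^{\gamma/(1+\gamma)}$ for every $A\subseteq Q$. Upgrading this to the $A_p$ condition requires controlling $\dashint_Q w^{-1/(p-1)}$ from above in terms of $(\dashint_Q w)^{-1/(p-1)}$, which in turn depends on understanding how small $w$ can be on a given cube. My plan is to bootstrap: show that $\sigma=w^{-1/(p-1)}$ itself lies in $A_\infty$ for $p$ chosen large enough relative to $\gamma$ by verifying the $(3)$-type estimate for $\sigma$ directly from the reverse Hölder inequality for $w$, and then apply the already-established implication $(3)\Rightarrow(2)$ to $\sigma$. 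The reverse Hölder inequality for $\sigma$, combined with the trivial lower bound $(\dashint_Q\sigma)^{p-1}\dashint_Q w\geq 1$ coming from Jensen, then gives the $A_p$ condition. The main technical difficulty lies in showing $\sigma\in A_\infty$, which forces $p$ to be taken sufficiently large depending on the reverse Hölder exponent $\gamma$.
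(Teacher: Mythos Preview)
The paper does not supply a proof of this proposition; it is stated with a bare citation to Theorem~7.3.3 in \cite{Cgrafakos} and then used as a black box. So there is no ``paper's own proof'' to compare against.

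On the merits of your outline: the chain $(4)\Rightarrow(1)\Rightarrow(3)\Rightarrow(2)$ is solid. The relative-entropy computation for $(1)\Rightarrow(3)$ is clean and correct, and the Calder\'on--Zygmund iteration for $(3)\Rightarrow(2)$ is the standard argument.

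The step $(2)\Rightarrow(4)$ as you sketch it has a real gap. Your plan is to show that $\sigma=w^{-1/(p-1)}$ satisfies a $(3)$-type condition ``directly from the reverse H\"older inequality for $w$'', but reverse H\"older controls averages of positive powers of $w$ and says nothing, by itself, about the distribution of $w^{-1/(p-1)}$; there is no obvious mechanism here, and you yourself flag it as the main technical difficulty without resolving it. A route that works avoids $\sigma$ altogether. From reverse H\"older plus ordinary H\"older one gets $w(A)/w(Q)\le C(|A|/|Q|)^{\gamma/(1+\gamma)}$; applying this to complements already yields $(3)$ in the stated form. One then runs a second Calder\'on--Zygmund iteration, this time stopping on small values of $w$ and using $(3)$, to obtain the reverse quantitative bound $|A|/|Q|\le C'(w(A)/w(Q))^{\delta}$ for some $\delta>0$. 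Feeding that into the layer-cake formula for $\int_Q w^{-s}$ (splitting at the level $t\sim|Q|/w(Q)$) gives, for any $0<s<\delta$, the estimate $\bigl(\dashint_Q w^{-s}\bigr)\bigl(\dashint_Q w\bigr)^{s}\lesssim 1$, which is precisely the $A_{1+1/s}$ condition.
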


As a consequence of this proposition we get the following important property of Muckenhoupt weights:
\[ 
    A_\infty = \bigcup_{1 < p < \infty} A_p. 
\] 
There is also another way of characterizing \(A_\infty\) weights in terms of their relative order of growth.
\begin{prop}\label{prop: order of growth of Ainfty}
    A weight \(w\) belongs to \(A_\infty\) if and only if there is some increasing function \(f:[0, \infty[\rightarrow [0, \infty[\) 
    such that\footnote{This function may depend on the weight.} 
    \[ 
        \frac{w(Q)}{w(A)} \leq f\left(\frac{|Q|}{|A|}\right),
    \] 
    for all cubes \(Q\) and all positive measure subsets \(A\subseteq Q\).
\end{prop}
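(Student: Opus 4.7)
The plan is to establish the two directions separately, leveraging the multiple equivalent characterizations of $A_\infty$ collected in Proposition \ref{prop: properties of A_infty}.

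For the forward implication, there is essentially nothing to do. If $w\in A_\infty$, the quantitative estimate recalled just before Remark \ref{remark: A_infty weights give infinite measure to quadrants} gives constants $C_2, \delta_2 > 0$ such that
\[
    \frac{w(Q)}{w(A)}\leq C_2\left(\frac{|Q|}{|A|}\right)^{\delta_2}
\]
for every cube $Q$ and every positive measure subset $A\subseteq Q$. Thus, the function $f(t) = C_2 t^{\delta_2}$ is increasing on $[0,\infty[$ and witnesses the stated property.

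For the reverse implication, the natural route is to verify condition 3 from Proposition \ref{prop: properties of A_infty}, namely the existence of $0 < \alpha', \beta' < 1$ such that $w(A) < \alpha' w(Q)$ forces $|A| < \beta' |Q|$. I would argue by contrapositive: fix $\beta' = 1/2$, and suppose $A\subseteq Q$ satisfies $|A| \geq \beta'|Q|$. Then $|Q|/|A| \leq 2$, and by the monotonicity of $f$ we get $w(Q)/w(A) \leq f(2)$. Note that $f(2) \geq f(1) \geq 1$, since taking $A = Q$ in the hypothesis forces $f(1) \geq 1$; in particular $f(2) > 0$. Choosing for instance $\alpha' = 1/(f(2)+1) < 1$, we obtain $w(A) \geq w(Q)/f(2) > \alpha' w(Q)$, which is the desired contrapositive.

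The argument is essentially a direct translation, so I do not anticipate any genuine obstacle. The only mildly delicate point is the strictness $\alpha' < 1$, which is handled by using $1/(f(2)+1)$ rather than $1/f(2)$, since a priori nothing prevents $f(2)$ from equalling $1$. Once condition 3 of Proposition \ref{prop: properties of A_infty} is verified, we conclude $w \in A_\infty$ and the proof is complete.
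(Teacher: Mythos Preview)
Your proposal is correct and follows essentially the same approach as the paper: for the forward direction you both produce a power function $f(t)=Ct^\delta$ (you cite the estimate stated before Remark \ref{remark: A_infty weights give infinite measure to quadrants}, while the paper re-derives it from $w\in A_p$ via H\"older), and for the reverse direction you both take $\beta'=1/2$ and verify condition 3 of Proposition \ref{prop: properties of A_infty} by contrapositive. The only cosmetic difference is the choice of $\alpha'$: the paper uses $1/\max\{f(2),2\}$ whereas you use $1/(f(2)+1)$; your extra care about strictness is harmless but not actually needed, since the contrapositive of condition 3 only requires $w(A)\geq \alpha' w(Q)$.
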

\begin{proof}
    First suppose that \(w\in A_\infty\). Then, we there is some \(p\in ]1, \infty[\) such that \(w \in A_p\). But then, 
    \[ 
        \begin{split}
            |A|& = \int_Q \1_A w^{1/p} w^{-1/p} \leq \left(\int_Q \1_A w\right)^{1/p}\left(\int_Q w^{-p'/p}\right)^{1/p'} \\
            &\leq w(A)^{1/p}[w]_{A_p}^{1/p} |Q| \left(\int_Q w\right)^{-1/p}. 
        \end{split}
    \]
    So, 
    \[ 
        \frac{w(Q)}{w(A)}\leq [w]_{A_p}\left(\frac{|Q|}{|A|}\right)^p,
    \] 
    which shows the desired statement with \(f(t) = [w]_{A_p}t^p\).
    
    Now let's prove the converse direction. Suppose that there is some increasing function \(f\) which satisfies
    \[ 
        \frac{w(Q)}{w(A)} \leq f\left(\frac{|Q|}{|A|}\right).
    \] 
    Pick
    \[ 
        \alpha' = \frac{1}{\max\{f(2), 2\}}. 
    \]
    Note that \(0 < \alpha' < 1\). Now suppose that \(|A| \geq |Q|/2\). Then, 
    \[ 
        w(Q) \leq f\left(\frac{|Q|}{|A|}\right)w(A) \leq f(2)w(A) \leq \frac{w(A)}{\alpha'}.
    \] 
    This shows that 
    \[ 
        w(A) < \alpha' w(Q) \implies |A| < \frac{1}{2}|Q|.
    \] 
    From Proposition \ref{prop: properties of A_infty}, statement 3, it follows that \(w\in A_\infty\).
\end{proof}

From Proposition \ref{prop: properties of A_infty} we may conclude that if \(w \in A_\infty\), then there are \(C_1, C_2, \delta_1, \delta_2 > 0\) such that,
for all cubes \(Q\) and for all positive measure subsets \(A\subseteq Q\), we have 
\[ 
    C_1 \left(\frac{|Q|}{|A|}\right)^{\delta_1}\leq \frac{w(Q)}{w(A)}\leq C_2 \left(\frac{|Q|}{|A|}\right)^{\delta_2}. 
\] 
Indeed, since \(w\in A_\infty\), then there is some \(p \in ]1, +\infty[\) such that \(w \in A_p\). This way, 
\[ 
    \begin{split}
        |A| &= \int_Q \chi_A w^{1/p}w^{-1/p} \\
            &\leq w(A)^{1/p} |Q|^{1/p'}\left(\dashint_Q w^{-p'/p}\right)^{1/p'} \\ 
            &\leq [w]_{A_p}^{1/p}|Q|\left(\frac{w(A)}{w(Q)}\right)^{1/p},
    \end{split}
\]
which shows that 
\[ 
    \frac{w(Q)}{w(A)} \leq [w]_{A_p}\left(\frac{|Q|}{|A|}\right)^p. 
\] 
For the other inequality we argue using the reverse Hölder estimate,
\[ 
    \begin{split}
        w(A) &= |A| \dashint_A w \\
             &\leq |A|^\frac{\gamma}{1+\gamma}\left(\int_A w^{1+\gamma}\right)^\frac{1}{1+\gamma} \\
             &\leq |A|^\frac{\gamma}{1+\gamma}|Q|^\frac{1}{1+\gamma}\left(\dashint_Q w^{1+\gamma}\right)^\frac{1}{1+\gamma}\\
             &\leq C |A|^\frac{\gamma}{1+\gamma}|Q|^\frac{1}{1+\gamma}\dashint_Q w. 
    \end{split}
\]
Thus, 
\[ 
    \frac{w(Q)}{w(A)} \geq \frac{1}{C}\left(\frac{|Q|}{|A|}\right)^\frac{\gamma}{1+\gamma},
\] 
which shows the other estimate.

Some of these results are still true when we consider two-weight conditions. Given \(1<p<\infty\) and two weights \(v, w\),
we say that the pair \((v, w)\) satisfies the \(A_{p, p}\) condition if 
\[ 
    [v, w]_{A_{p, p}} = \sup_Q \left(\dashint_Q v^{-p'/p}\right)^{1/p'} \left(\dashint_Q w\right)^{1/p} < \infty. 
\] 
We start by observing that \(A_{p, p} \subseteq A_{q, q}\) when \(q > p\). Indeed, by Hölder's inequality,
\[ 
    \dashint_Q v^{-\frac{1}{q-1}} \leq \left(\dashint_Q (v^{-\frac{1}{q-1}})^\frac{q-1}{p-1}\right)^\frac{p-1}{q-1}. 
\] 
Therefore, 
\begin{equation}\label{eq: nested App}
    [v, w]_{A_{q, q}}^q \leq [v, w]_{A_{p, p}}^p.
\end{equation} 
An important observation here is that under the extra assumption that \(v^{-p'/p}, w\in A_\infty\), then these pairs 
of weights exhibit Reverse Hölder-like behavior. 
\begin{lemma}
    Let \(1 < p < \infty\) and suppose \(v, w\) are two weights such that \((v, w) \in A_{p, p}, v^{-p'/p}, w\in A_\infty\). Then, 
    there exists some \(\gamma_0 > 0\) such that 
    \[ 
        (v^{1 + \gamma}, w^{1 + \gamma})\in A_{p, p},\ \forall \gamma \in [0, \gamma_0]. 
    \] 
\end{lemma}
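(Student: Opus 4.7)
The plan is to use the Reverse Hölder inequality for $A_\infty$ weights in a straightforward way. Since $v^{-p'/p}, w \in A_\infty$, Proposition \ref{prop: properties of A_infty} gives us that both weights satisfy a reverse Hölder estimate: there exist $\gamma_1, \gamma_2 > 0$ and a constant $C > 0$ such that, for every cube $Q$,
\[
\left(\dashint_Q w^{1+\gamma_1}\right)^\frac{1}{1+\gamma_1} \leq C\dashint_Q w,\qquad \left(\dashint_Q v^{-(1+\gamma_2)p'/p}\right)^\frac{1}{1+\gamma_2}\leq C\dashint_Q v^{-p'/p}.
\]

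Next I would set $\gamma_0 = \min\{\gamma_1, \gamma_2\}$ and show that the desired estimate holds for every $\gamma \in [0, \gamma_0]$. The key observation is that by Hölder's inequality with the exponent $(1+\gamma_0)/(1+\gamma) \geq 1$, we have for any non-negative locally integrable $u$ and any $\gamma\in[0,\gamma_0]$
\[
\left(\dashint_Q u^{1+\gamma}\right)^\frac{1}{1+\gamma}\leq \left(\dashint_Q u^{1+\gamma_0}\right)^\frac{1}{1+\gamma_0}.
\]
Applying this with $u = w$ and then the reverse Hölder estimate (which is valid since $\gamma_0 \leq \gamma_1$), and doing the analogous thing with $u = v^{-p'/p}$, we obtain
\[
\left(\dashint_Q w^{1+\gamma}\right)^\frac{1}{1+\gamma}\lesssim \dashint_Q w,\qquad \left(\dashint_Q v^{-(1+\gamma)p'/p}\right)^\frac{1}{1+\gamma}\lesssim \dashint_Q v^{-p'/p}.
\]

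Finally, I would combine the two estimates. Raising the first to the power $(1+\gamma)/p$, the second to the power $(1+\gamma)/p'$, and multiplying, we get
\[
\left(\dashint_Q v^{-(1+\gamma)p'/p}\right)^{1/p'}\left(\dashint_Q w^{1+\gamma}\right)^{1/p}\lesssim \left[\left(\dashint_Q v^{-p'/p}\right)^{1/p'}\left(\dashint_Q w\right)^{1/p}\right]^{1+\gamma}\leq [v,w]_{A_{p,p}}^{1+\gamma},
\]
with an implicit constant depending on $\gamma_0$ and the reverse Hölder constants, but uniform in $Q$. Taking the supremum over $Q$ yields $[v^{1+\gamma}, w^{1+\gamma}]_{A_{p,p}}<\infty$, i.e. $(v^{1+\gamma}, w^{1+\gamma})\in A_{p,p}$. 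There is no real obstacle here, as the proof is essentially the two-weight analogue of the first statement in Proposition \ref{prop: important Ap properties}; the only small point to be careful about is bookkeeping with the Hölder exponents to ensure that all averages indeed appear to the correct powers.
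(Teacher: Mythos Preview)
Your proof is correct and follows essentially the same approach as the paper: apply the reverse H\"older inequality to each of $w$ and $v^{-p'/p}$ (available since both are in $A_\infty$), interpolate via H\"older to cover all $\gamma\in[0,\gamma_0]$, and combine to bound $[v^{1+\gamma},w^{1+\gamma}]_{A_{p,p}}$ by $[v,w]_{A_{p,p}}^{1+\gamma}$.
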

\begin{proof}
    Since \(v^{-p'/p}\) and \(w\) belong to \(A_\infty\), we know that there exist \(\gamma_1, \gamma_2 > 0\) such that 
    \begin{align*} 
        \left(\dashint_Q (v^{-p'/p})^{1+\gamma_1}\right)^\frac{1}{1+\gamma_1} &\lesssim \dashint_Q v^{-p'/p} \\ 
        \left(\dashint_Q w^{1+\gamma_0}\right)^\frac{1}{1+\gamma_0} &\lesssim \dashint_Q w \\ 
    \end{align*}
    So, if \(0 < \gamma < \min\{\gamma_1, \gamma_2\}\), then 
    \[ 
        \left(\dashint_Q (v^{1+\gamma})^{-p'/p}\right)^{1/p'}\left(\dashint_Q w^{1+\gamma}\right)^{1/p} \lesssim 
        \left(\dashint_Q v^{-p'/p}\right)^\frac{1 + \gamma}{p'} \left(\dashint_Q w\right)^\frac{1+\gamma}{p} \lesssim 
        [v, w]_{A_{p, p}}^{1+\gamma}.
    \] 
\end{proof}
We can now prove the final result of this section.
\begin{prop}\label{prop: reverse holder-like for App}
    Let \(1 < p < \infty\) and suppose that \(v, w\) are weights such that \((v, w)\in A_{p, p}, v^{-p'/p}, w \in A_\infty\). Then, 
    there exists some \(q_0 \in ]1, p[\) such that 
    \[ 
        (v, w) \in A_{q, q},\ \forall q \in ]q_0, p].
    \]
\end{prop}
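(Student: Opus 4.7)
The plan is to mimic the strategy used for the one-weight analogue (statement 2 of Proposition \ref{prop: important Ap properties}), exploiting the preceding lemma as a substitute for the reverse Hölder inequality. The key idea is to upgrade the hypothesis from $(v,w)\in A_{p,p}$ to the stronger condition $(v^{1+\gamma},w^{1+\gamma})\in A_{p,p}$, and then trade this extra room for a smaller exponent.

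First I would apply the preceding lemma to produce some $\gamma_0>0$ such that $(v^{1+\gamma_0},w^{1+\gamma_0})\in A_{p,p}$. Then I would set $q_0=(p+\gamma_0)/(1+\gamma_0)$; a direct computation shows $1<q_0<p$ and $q_0'/q_0=(1+\gamma_0)/(p-1)$, so that $v^{-q_0'/q_0}=(v^{1+\gamma_0})^{-p'/p}$. This matches exactly the $v$-factor that appears in the $A_{p,p}$ constant of $(v^{1+\gamma_0},w^{1+\gamma_0})$, which is the reason for this particular choice of $q_0$.

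The main step is to compare $[v,w]_{A_{q_0,q_0}}$ with $[v^{1+\gamma_0},w^{1+\gamma_0}]_{A_{p,p}}$. For the $w$-factor I would apply Hölder's inequality to get $\dashint_Q w\leq\bigl(\dashint_Qw^{1+\gamma_0}\bigr)^{1/(1+\gamma_0)}$, so that $\bigl(\dashint_Qw\bigr)^{1/q_0}\leq\bigl(\dashint_Qw^{1+\gamma_0}\bigr)^{1/(p+\gamma_0)}$. The arithmetic of exponents then works out cleanly: since $p'/q_0'=p/(p+\gamma_0)$, both factors entering $[v,w]_{A_{q_0,q_0}}$ get raised to the common power $p/(p+\gamma_0)$, so the product collapses to $[v^{1+\gamma_0},w^{1+\gamma_0}]_{A_{p,p}}^{p/(p+\gamma_0)}$, which is finite. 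Hence $(v,w)\in A_{q_0,q_0}$.

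Finally, to extend from the single value $q_0$ to the full range $]q_0,p]$, I would invoke the nested inclusion \eqref{eq: nested App}: since $A_{q_0,q_0}\subseteq A_{q,q}$ whenever $q\geq q_0$, we get $(v,w)\in A_{q,q}$ for every $q\in]q_0,p]$. I do not foresee a serious obstacle here — essentially all the difficulty was absorbed into the preceding lemma, and what remains is the bookkeeping with Hölder exponents. The only point that requires care is to verify the precise identity $p'/q_0'=p/(p+\gamma_0)$ so that the two exponent manipulations line up and the $A_{p,p}$ constant of the dilated pair can be factored out to a single power.
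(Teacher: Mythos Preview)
Your proposal is correct and follows essentially the same route as the paper's proof: apply the preceding lemma to obtain $(v^{1+\gamma},w^{1+\gamma})\in A_{p,p}$, set $q_0=(p+\gamma)/(1+\gamma)$, use the identity $q_0'/q_0=(1+\gamma)/(p-1)$ together with H\"older on the $w$-factor to bound $[v,w]_{A_{q_0,q_0}}$ by a power of $[v^{1+\gamma},w^{1+\gamma}]_{A_{p,p}}$, and then invoke the nested inclusion \eqref{eq: nested App} to cover the full range. The arithmetic checks you flag (in particular $p'/q_0'=p/(p+\gamma_0)$) are exactly the ones the paper uses implicitly.
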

\begin{proof}
    The argument is again very similar to the proof of Proposition \ref{prop: important Ap properties}. From the previous lemma, we 
    know that there exists some \(\gamma > 0\) such that \((v^{1+\gamma}, w^{1+\gamma}) \in A_{p, p}\). As before we choose 
    \(q = (p+\gamma)/(1+\gamma)\). Then, 
    \[ 
        \left(\dashint_Q v^{-q'/q}\right)^{q-1} \dashint_Q w \leq \left(\dashint_Q (v^{1+\gamma})^{-p'/p}\right)^\frac{p-1}{1+\gamma} 
        \left(\dashint_Q w^{1+\gamma}\right)^\frac{1}{1+\gamma} \leq [v^{1+\gamma}, w^{1+\gamma}]_{A_{p, p}}^\frac{p}{1+\gamma}. 
    \] 
    This implies that \((v, w)\in A_{q, q}\), from which the result follows.
\end{proof}

\clearpage
\addcontentsline{toc}{section}{References}
\fancyhead[R]{}
\fancyhead[L]{References}
\printbibliography[category=cited]


\clearpage
\fancyhead[L]{}
\fancyhead[R]{Index}
\printindex

\end{document}